\newcommand{\1}{\bm{1}}
\newcommand{\C}{\mathbb{C}}
\newcommand{\N}{\mathbb{N}}
\newcommand{\R}{\mathbb{R}}
\newcommand{\T}{\mathbb{T}}
\newcommand{\Z}{\mathbb{Z}}
\newcommand{\boC}{\mathcal{C}}
\newcommand{\boE}{\mathcal{E}}
\newcommand{\boI}{\mathcal{I}}
\newcommand{\boU}{\mathcal{U}}
\newcommand{\boV}{\mathcal{V}}
\newcommand{\gI}{\mathfrak{I}}
\newcommand{\gU}{\mathfrak{U}}
\newcommand{\ga}{\mathfrak{a}}
\newcommand{\gc}{\mathfrak{c}}
\newcommand{\gp}{\mathfrak{p}}
\newcommand{\gu}{\mathfrak{u}}
\newtheorem{cor}{Corollary}
\newtheorem{lem}[cor]{Lemma}
\newtheorem{prop}[cor]{Proposition}
\newtheorem{thm}{Theorem}
\newtheorem*{thm*}{Theorem}
\theoremstyle{definition}
\newtheorem*{merci}{Acknowledgments}
\newtheorem{rem}[cor]{Remark}
\theoremstyle{remark}
\begin{document}

\title{Minimizing travelling waves for the Gross-Pitaevskii equation on $\R \times \T$}
\author{
\renewcommand{\thefootnote}{\arabic{footnote}}
Andr\'e de Laire\footnotemark[1],~Philippe Gravejat\footnotemark[2]~ and Didier Smets\footnotemark[3]}
\footnotetext[1]{
Univ.\ Lille, CNRS, Inria, UMR 8524 - Laboratoire Paul Painlev\'e, Inria, F-59000 Lille, France.
E-mail: {\tt andre.de-laire@univ-lille.fr}}
\footnotetext[2]{CY Cergy Paris Universit\'e, Laboratoire Analyse, G\'eom\'etrie, Mod\'elisation (UMR CNRS 8088), F-95302 Cergy-Pontoise, France. E-mail: {\tt philippe.gravejat@cyu.fr}}
\footnotetext[3]{Sorbonne Universit\'e, Laboratoire Jacques-Louis Lions (UMR CNRS 7598), F-75005 Paris, France. E-mail: {\tt didier.smets@sorbonne-universite.fr}}
\maketitle

\begin{abstract}
We study the Gross-Pitaevskii equation in dimension two with periodic conditions in one
direction, or equivalently on the product space $\R \times \T_L$ where
$L > 0$ and $\T_L = \R / L \Z.$ We focus on the variational problem
consisting in minimizing the Ginzburg-Landau energy under a fixed momentum
constraint. We prove that there exists a threshold value for $L$ below
which minimizers are the one-dimensional dark solitons, and above which
no minimizer can be one-dimensional.
\end{abstract}

%%%%%%%%%%%%%%
%%%%%%%%%%%%%%
%%%%%%%%%%%%%%
\section{Introduction}
%%%%%%%%%%%%%%
%%%%%%%%%%%%%%
%%%%%%%%%%%%%%

We are interested in the Gross-Pitaevskii equation
\begin{equation}
\label{GP}
\tag{GP}
i \partial_t \Psi = \Delta \Psi + \Psi \big( 1 - |\Psi|^2 \big).
\end{equation}
In Physics, this equation is a classical model for Bose-Einstein condensates, superfluidity or supraconductivity~\cite{Gross1, Pitaevs1}. It also gives account of the propagation of dark solitons in nonlinear optics~\cite{KivsLut1}.

Our attention in this paper is devoted to the case where the spatial domain
is the product space $\R \times \T_L$, where $L > 0$ and $\T_L
= \R / L\Z$, so that $\Psi \equiv \Psi(x,y,t) : (\R \times \T_L) \times \R
\to \C$. Solutions of the 1D Gross-Pitaevskii equation can of
course be considered as solutions in this 2D setting with a trivial
dependence on the $y$ variable. 

Dark solitons are special solutions of the 1D Gross-Pitaevskii equation. They are travelling waves of the form
$$
\Psi_c(x, t) = \gu_c(x - c t),
$$
where $c$ is any subsonic speed, i.e. $|c| \leq \sqrt{2}$. Their profile $\gu_c$ is solution to the ordinary differential equation
\begin{equation}
\label{eq:gu-p}
i \, c \, \gu_c' + \gu_c'' + \big( 1 - |\gu_c|^2 \big) \gu_c = 0,
\end{equation}
and is explicitly given by the expression
\begin{equation}
\label{eq:gu-val}
\gu_c(x) = \sqrt{\frac{2 - c^2}{2}} \tanh \bigg( \frac{\sqrt{2 - c^2}}{2} x
\bigg) + i \frac{c}{\sqrt{2}}.
\end{equation}
For $c = 0$, the profile $\gu_0$ vanishes and the corresponding soliton is called the black or kink soliton. The other solitons are called grey solitons.

Variational characterizations of the dark solitons were proved
in~\cite{BetGrSa2, BeGrSaS1}. These characterizations are based on two conserved
quantities. The first one is the 1D Ginzburg-Landau energy
\begin{equation}
\label{def:ge-energy}
E(\psi) := \frac{1}{2} \int_\R |\psi'|^2 + \frac{1}{4} \int_\R \big( 1 - |\psi|^2 \big)^2,
\end{equation}
which is the Hamiltonian of the Gross-Pitaevskii equation. Corresponding to this energy is the energy set
\begin{equation}
\label{def:gX-space}
X(\R) := \big\{ \psi \in H_\text{loc}^1(\R) : \psi' \in L^2(\R) \text{ and } 1 - |\psi|^2 \in L^2(\R) \big\},
\end{equation}
which provides the natural functional framework for analyzing the equation.

The second one is the momentum $P$, which is formally defined as
$$
P(\psi) = \frac{1}{2} \int_\R \langle i \psi', \psi \rangle_\C,
$$
where, here as in the sequel, the notation $\langle z_1, z_2 \rangle_\C := \textup{Re}(z_1 \bar{z}_2)$ stands for the canonical scalar product on the two-dimensional real vector space $\C$.
The expression of $P(\psi)$ above certainly makes sense if $\psi'$ is
compactly supported, but it is generally ill-defined for arbitrary $\psi \in X(\R)$ due to
the possible lack of integrability of the momentum density $\langle i \psi',
\psi \rangle_\C$ at infinity. 
It was shown in~\cite{BeGrSaS1} (see also Appendix~\ref{sec:dimension-one}
below) that a notion of momentum can be rigorously defined on the whole energy set
$X(\R)$ provided its value is understood in the quotient space $\R / \pi\Z.$ 
It was called the untwisted momentum in~\cite{BeGrSaS1}, and denoted by $[P]$. 
Whenever $\psi'$ has compact support, it holds
$$
[P](\psi) = \frac{1}{2} \int_\R \langle i \psi', \psi \rangle_\C \quad \text{ modulo } \pi.
$$

The characterization of the dark solitons on the line can be phrased as follows:

\begin{prop}[\cite{BetGrSa2, BeGrSaS1}]
\label{prop:exist-min1}
Let $p \in \R/\pi\Z$, $p \neq 0$. The minimizers of the variational problem
\begin{equation}
\label{def:gI-problem}
\gI(p) := \inf \big\{ E(\psi) : \psi \in X(\R) \textup{ s.t. } [P](\psi) = p
\big\}
\end{equation}
are exactly the dark soliton $\gu_{c_p}$ and the function obtained from $\gu_{c_p}$
by translation and constant phase shift. The value $c_p \in (-\sqrt{2},
\sqrt{2})$ is characterized by $[P](\gu_{c_p}) = p$.
\end{prop}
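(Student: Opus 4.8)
The plan is to split the statement into three parts: the existence of a minimizer for the variational problem~\eqref{def:gI-problem}, the derivation of the Euler--Lagrange equation satisfied by any such minimizer, and the complete classification of the finite-energy solutions of that equation. The last two parts are essentially a matter of ODE analysis; the analytic core of the argument therefore lies in the existence step, where one must cope simultaneously with the translation invariance of $E$ and $[P]$ and with the fact that the constraint takes its values in the quotient $\R/\pi\Z$. I expect this to be the main obstacle.

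\textbf{Existence.} I would apply the concentration--compactness principle to a minimizing sequence $(\psi_n)$ for $\gI(p)$. First I would record the elementary features of the map $p \mapsto \gI(p)$: it is even (via complex conjugation), bounded, and satisfies $\gI(p) > 0$ for $p \neq 0$, the latter excluding the vanishing scenario. The decisive point is a strict subadditivity inequality, adapted to the circle $\R/\pi\Z$, of the form $\gI(p) < \gI(p_1) + \gI(p_2)$ whenever $p \equiv p_1 + p_2$ is a nontrivial splitting; this rules out dichotomy and, after translating the bumps back to a common window, yields strong convergence of $(\psi_n)$ to a minimizer. Because $[P]$ is only defined modulo $\pi$, the bookkeeping of the momenta of two far-apart pieces of a profile is delicate, and this is where the main difficulty of the whole proof is concentrated.

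\textbf{Euler--Lagrange equation.} Any minimizer $\psi$ satisfies, for a Lagrange multiplier $c$ associated with the momentum constraint, exactly equation~\eqref{eq:gu-p}. Testing this equation against $\psi$ and against $i\psi'$ and using the finite-energy bounds gives the boundary behaviour $1 - |\psi|^2 \to 0$ and $\psi' \to 0$ at $\pm\infty$, which is precisely the information needed to integrate the equation explicitly.

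\textbf{Classification.} Away from its (at most one) zero I would lift $\psi = \rho\, e^{i\phi}$. The imaginary part of~\eqref{eq:gu-p} integrates once to $\rho^2\big(\phi' + \tfrac{c}{2}\big) = A$ for a constant $A$; since $\rho \to 1$ and $\phi' \in L^2$, one gets $A = \tfrac{c}{2}$ and hence $\phi' = \tfrac{c}{2}(1-\rho^2)/\rho^2$. Substituting this into the real part reduces the system to an autonomous second-order equation for $\rho$, which possesses a first integral; integrating it under the finite-energy conditions produces, by separation of variables, precisely the $\tanh$ profile~\eqref{eq:gu-val}, which is a nontrivial finite-energy solution exactly for subsonic multipliers $|c| < \sqrt{2}$, the vanishing case $c = 0$ being treated directly. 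This pins down $\psi$ up to the invariances of translation and constant phase shift. It then remains to check that $c \mapsto [P](\gu_c)$ is a bijection from $(-\sqrt{2}, \sqrt{2})$ onto $(\R/\pi\Z) \setminus \{0\}$, which both identifies $c_p$ uniquely and completes the classification of minimizers up to symmetry.
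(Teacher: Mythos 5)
Your three-step architecture (existence by concentration--compactness, Euler--Lagrange equation, explicit ODE classification) is the one followed by the references \cite{BetGrSa2, BeGrSaS1} that the paper cites for this statement, and your Euler--Lagrange and classification steps are correct as written (including the bijectivity of $c \mapsto [P](\gu_c)$ from $(-\sqrt{2},\sqrt{2})$ onto $(\R/\pi\Z)\setminus\{0\}$, which wraps through the point $\pi/2$ at $c=0$). The gap is in the existence step, and it is exactly the one you flag without resolving: how to run the dichotomy analysis when the constraint functional $[P]$ takes values only in $\R/\pi\Z$. If a minimizing sequence splits into two distant bumps, each bump carries an untwisted momentum defined only modulo $\pi$, and the strict subadditivity you invoke, $\gI(p) < \gI(p_1) + \gI(p_2)$ for every nontrivial splitting $p \equiv p_1 + p_2$, must be checked over all representatives of $p_1$ and $p_2$; moreover, proving it without already knowing the explicit formula $\gI(p) = \tfrac13(2-c_p^2)^{3/2}$ requires an independent proof of the concavity of $\gI$ and of the strict bound $\gI(p) < \sqrt{2}\,p$ of \eqref{eq:estim-sup-I}. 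None of this is supplied, and it is the analytic core of the proof; declaring it "delicate" is not a substitute for the argument.

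The paper (Appendix~\ref{sec:dimension-one}) follows a route that avoids the mod-$\pi$ bookkeeping altogether, and you may want to compare. One first solves the minimization problem \eqref{def:boI-problem} posed on the non-vanishing set $NV\!X(\R)$, where the momentum $P$ is a genuine real number and the splitting analysis is standard; this is Proposition~\ref{prop:boI-problem}, quoted from \cite{BetGrSa2}. One then transfers the result to the untwisted problem via Lemma~\ref{lem:min-E-0}: any $\psi$ with $E(\psi) < \boE_0 = \tfrac{2\sqrt{2}}{3} = E(\gu_0)$ cannot vanish, hence lies in $NV\!X(\R)$ and has a well-defined real momentum, which yields $\gI(p) = \boI(p)$ for $|p| < \pi/2$ together with the identification of the minimizers; the endpoint $p = \pi/2$ is handled by the characterization of the black soliton as the unique minimizer of $\boE_0$ combined with the non-attainment statement $(ii)$ of Proposition~\ref{prop:boI-problem}. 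To complete your own route you would need either to carry out the quotient-valued splitting argument in full, or to adopt this reduction to $NV\!X(\R)$.
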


In the context of the Gross-Pitaevskii equation on the product space $\R \times \T_L$, we consider the vector space
\begin{equation}
\label{def:H-loc-1}
H_\text{loc}^1(\R \times \T_L) := \big\{ \psi \in H_\text{loc}^1(\R^2) : \psi
\text{ is } L\text{-periodic with respect to its second variable } y
\big\}.
\end{equation}
For our analysis, it is convenient to work on a fixed domain independently of $L.$ 
For that purpose, we write $\T$ instead of $\T_L$ when $L = 1$, and given
a function $\psi \in H_\text{loc}^1(\R \times \T)$ and a real parameter $\lambda > 0$, we introduce the rescaled version of the Ginzburg-Landau energy given by
\begin{equation}
\label{def:E-energy}
E_\lambda(\psi) := \frac{1}{2} \int_{\R \times \T} \big( |\partial_x \psi|^2 +
\lambda^2 |\partial_y \psi|^2 \big) + \frac{1}{4} \int_{\R \times \T}
\big( 1 - |\psi|^2 \big)^2.
\end{equation}
Up to a multiplicative factor $\lambda$, the
rescaled energy $E_\lambda(\psi)$ is equal to the Ginzburg-Landau energy of the
function $\psi_\lambda(x, y) = \psi(x, \lambda y)$ on the product space $\R
\times \T_{L}$, where $L = 1/\lambda.$ 

Corresponding to the rescaled Ginzburg-Landau energy $E_\lambda$ is the energy set
\begin{equation}
\label{def:X-space}
X(\R \times \T) := \big\{ \psi \in H_\text{loc}^1(\R \times \T) : \nabla \psi \in L^2(\R \times \T) \text{ and } 1 - |\psi|^2 \in L^2(\R \times \T) \big\}.
\end{equation}

The untwisted momentum $[P]$ along the direction $x$ can be extended to $X(\R
\times \T)$ (see Appendices~\ref{sec:energy-set} and~\ref{sec:momentum}). When
$\partial_x \psi$ has compact support, it satisfies the identity
$$
[P](\psi) = \frac{1}{2} \int_{\R \times \T} \langle i \partial_x \psi, \psi \rangle_\C \quad \text{ modulo } \pi.
$$

For $p \in \R/\pi\Z$, we next consider the minimization problem under constraint
\begin{equation}
\label{def:I-problem}
\boI_\lambda(p) := \inf \big\{ E_\lambda(\psi) : \psi \in X(\R \times \T) \text{ s.t. } [P](\psi) = p \big\}.
\end{equation}

Our main result is

\begin{thm}
\label{thm:crit-val}
Let $p \in \R/\pi \Z.$ There exists $\lambda_p > 0$ such that the following statements hold.

$(i)$ For any $\lambda \geq \lambda_p$, the minimal value $\boI_\lambda(p)$ is equal to
$$
\boI_\lambda(p) = \gI(p).
$$
The dark soliton $\gu_{c_p}$ is a minimizer of the corresponding minimization problem. When $\lambda > \lambda_p$, it is the unique minimizer up to translation and phase shift.

$(ii)$ For any $0 < \lambda < \lambda_p$, the minimal value $\boI_\lambda(p)$ satisfies
$$
\boI_\lambda(p) < \gI(p),
$$
and there does not exist any minimizer depending only on the variable $x$.
\end{thm}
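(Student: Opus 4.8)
The plan is to exploit that every $y$-independent function is an admissible competitor. If $\psi(x,y)=\phi(x)$ then $\partial_y\psi=0$, and since $|\T|=1$ we get $E_\lambda(\psi)=E(\phi)$ and $[P](\psi)=[P](\phi)$; testing $\boI_\lambda(p)$ with $\gu_{c_p}$ thus yields $\boI_\lambda(p)\le\gI(p)$ for every $\lambda>0$. For fixed $\psi$ the map $\lambda\mapsto E_\lambda(\psi)$ is nondecreasing, hence so is $\lambda\mapsto\boI_\lambda(p)$; a short argument using near-minimizers together with the uniform bound $\int|\partial_y\psi|^2\le 2\gI(p)/\lambda^2$ shows this map is moreover continuous. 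I would therefore set $\lambda_p:=\inf\{\lambda>0:\boI_\lambda(p)=\gI(p)\}$ and reduce the theorem to showing this set is nonempty (so $\lambda_p<\infty$) and that $\lambda_p>0$; monotonicity and continuity then give $\{\lambda:\boI_\lambda(p)=\gI(p)\}=[\lambda_p,\infty)$ and strict inequality on $(0,\lambda_p)$. The non-existence of one-dimensional minimizers for $\lambda<\lambda_p$ is then immediate, since any $y$-independent $\psi$ obeys $E_\lambda(\psi)=E(\psi)\ge\gI(p)>\boI_\lambda(p)$.

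For the regime $\lambda\ge\lambda_p$ (Part $(i)$) I would prove $E_\lambda(\psi)\ge\gI(p)$ for every competitor with $E_\lambda(\psi)\le\gI(p)$ by splitting $\psi=u+\tilde\psi$ into its $y$-average $u(x)=\int_\T\psi(x,\cdot)$ and its mean-zero remainder $\tilde\psi$. Orthogonality in $y$ gives $\int|\partial_x\psi|^2=\int_\R|u'|^2+\int|\partial_x\tilde\psi|^2$ and $\int|\partial_y\psi|^2=\int|\partial_y\tilde\psi|^2$, while for the potential $\tfrac14\int(1-|\psi|^2)^2\ge\tfrac14\int_\R(1-|u|^2)^2-\tfrac12\int(1-|u|^2)|\tilde\psi|^2\ge\tfrac14\int_\R(1-|u|^2)^2-\tfrac12\int|\tilde\psi|^2$, the last step using only $1-|u|^2\le1$, so that no a priori $L^\infty$ bound is needed. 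Combining these with the Poincaré inequality $\int|\partial_y\tilde\psi|^2\ge 4\pi^2\int|\tilde\psi|^2$ on $\T$ yields $E_\lambda(\psi)\ge E(u)+\tfrac12\int|\partial_x\tilde\psi|^2+(2\pi^2\lambda^2-\tfrac12)\int|\tilde\psi|^2$. Since $E(u)\ge\gI([P](u))$ and $[P](u)=p-\tfrac12\int\langle i\partial_x\tilde\psi,\tilde\psi\rangle_\C$, with $\gI$ locally Lipschitz of slope $\gI'=c_p$, $|c_p|<\sqrt2$, a Young inequality absorbs the momentum defect into the two positive quadratic terms. For $\lambda$ large this forces $E_\lambda(\psi)\ge\gI(p)$, so $\boI_\lambda(p)=\gI(p)$ and $\lambda_p<\infty$; when $\lambda>\lambda_p$ both coefficients are strictly positive, so equality forces $\tilde\psi\equiv0$ and $[P](u)=p$, whence $u=\gu_{c_p}$ up to translation and phase by Proposition \ref{prop:exist-min1}, giving uniqueness.

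To show $\lambda_p>0$ (Part $(ii)$) it suffices to produce, for all small $\lambda$, a competitor of energy strictly below $\gI(p)$, which I would obtain from the transverse instability of the dark soliton. Expanding the constrained second variation of $E_\lambda$ at $\gu_{c_p}$ in Fourier modes $w_k(x)e^{2\pi iky}$, the modes with $k\neq0$ leave $[P]$ unchanged to first order and contribute $\langle H w_k,w_k\rangle+4\pi^2\lambda^2k^2\|w_k\|_{L^2}^2$, where $H$ is the one-dimensional Hessian of $E-c_pP$ at $\gu_{c_p}$. The key spectral input is that $H$ has a negative eigenvalue $-\mu_0<0$ with an exponentially localized eigenfunction $w_*$. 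Taking $k=\pm1$, the quadratic form equals $(-\mu_0+4\pi^2\lambda^2)\|w_*\|_{L^2}^2<0$ whenever $\lambda<\sqrt{\mu_0}/(2\pi)$. Correcting the path $\gu_{c_p}+\eps\,w_*e^{2\pi iy}$ at second order by a momentum-carrying direction (admissible since $c\mapsto P(\gu_c)$ is strictly monotone) to restore $[P]=p$ exactly, one gets an admissible family with $E_\lambda=\gI(p)+\tfrac{\eps^2}{2}(-\mu_0+4\pi^2\lambda^2)\|w_*\|_{L^2}^2+o(\eps^2)<\gI(p)$, so $\boI_\lambda(p)<\gI(p)$ and $\lambda_p\ge\sqrt{\mu_0}/(2\pi)>0$.

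The main obstacle is precisely this spectral input: rigorously establishing that the linearized operator $H$ about $\gu_{c_p}$ admits a genuine negative $L^2$-eigenvalue for every $p\neq0$, and then converting the negative second variation into a true competitor meeting the exact $\R/\pi\Z$-valued momentum constraint. The latter requires care because the destabilizing direction is momentum-neutral only to first order, so the constraint must be restored by a separate adjustment whose energy cost is controlled. By contrast, the large-$\lambda$ analysis of Part $(i)$ is comparatively soft once the averaging identities and the elementary bound $1-|u|^2\le1$ are in place; it uses only the variational characterization of $\gI$ from Proposition \ref{prop:exist-min1} and the relation $\gI'=c_p$.
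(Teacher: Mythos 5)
Your scaffolding (testing with $\gu_{c_p}$ to get $\boI_\lambda(p)\le\gI(p)$, monotonicity and continuity in $\lambda$, the definition $\lambda_p=\inf\{\lambda:\boI_\lambda(p)=\gI(p)\}$, and the deduction of the strict inequality and of the non-existence of one-dimensional minimizers below $\lambda_p$) matches the paper's. For part $(i)$ you take a genuinely different route: the global inequality $E_\lambda(\psi)\ge E(\hat\psi_0)+\tfrac12\|\partial_x w_0\|_{L^2}^2+(2\pi^2\lambda^2-\tfrac12)\|w_0\|_{L^2}^2$ from the $y$-average decomposition, the pointwise bound $1-|\hat\psi_0|^2\le 1$ and Poincar\'e--Wirtinger, combined with $E(\hat\psi_0)\ge\gI([P](\hat\psi_0))$ and the $\sqrt2$-Lipschitz property of $\gI$ on $\R/\pi\Z$ to absorb the momentum defect $\tfrac12\int\langle i\partial_x w_0,w_0\rangle_\C$ by Young's inequality. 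This appears sound, yields an explicit $p$-independent sufficient threshold ($2\pi^2\lambda^2\ge 3/4$), and bypasses the paper's compactness step (Proposition~\ref{prop:conv-min-p}) and the local coercivity via modulation in hydrodynamical variables (Proposition~\ref{prop:loc-min}); what it loses is any information on minimizing sequences or on local minimality of $\gu_{c_p}$. One repair is needed: your uniqueness argument for $\lambda>\lambda_p$ invokes strict positivity of ``both coefficients,'' but those are positive only above your explicit threshold, not above $\lambda_p$, which may be strictly smaller. The fix is the paper's monotonicity trick: if a minimizer $\psi$ for some $\lambda>\lambda_p$ had $\partial_y\psi\not\equiv 0$, then $\boI_\mu(p)\le E_\mu(\psi)<E_\lambda(\psi)=\gI(p)$ for $\lambda_p<\mu<\lambda$, a contradiction; hence minimizers are $y$-independent and Proposition~\ref{prop:exist-min1} concludes.

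The genuine gap is in part $(ii)$, i.e.\ in proving $\lambda_p>0$. You propose to beat $\gI(p)$ by transverse spectral instability of $\gu_{c_p}$, and you yourself flag the negative eigenvalue as the missing input; this is not a technicality. On the transverse modes $w(x)e^{2\pi iky}$, $k\neq 0$, the diagonal part of the second variation of $E_\lambda$ reduces to $-\tfrac12\partial_x^2+|\gu_{c_p}|^2-\tfrac12$ acting on $w$ (the term $\int\langle\gu,h\rangle_\C^2$ averages in $y$ to $\tfrac12|\gu|^2|w|^2$), and a P\"oschl--Teller computation indicates that this operator is already non-negative once $|c_p|$ is of moderate size; any residual negativity for fast solitons would have to come from the off-diagonal coupling $-c_p\int\langle iw',w\rangle_\C$, which is exactly the delicate part of the Rousset--Tzvetkov analysis and need not produce a negative direction of the constrained Hessian at all. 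If for some $p$ the soliton remains a local constrained minimizer for every $\lambda>0$, no perturbative construction near $\gu_{c_p}$ can establish $\boI_\lambda(p)<\gI(p)$. The paper sidesteps this entirely: Lemma~\ref{lem:prop-I-lambda} shows $\boI_\lambda(p)\to 0$ as $\lambda\to 0$ by an explicit, genuinely two-dimensional competitor --- a regularized vortex pair at separation of order $pL/(2\pi)$ on $\R\times\T_L$ with $L=1/\lambda$ --- whose rescaled energy is $O(\ln(L)/L)$, the momentum constraint being met by a continuity argument in the vortex separation. Since $\gI(p)>0$ for $p\ne 0$, this gives $\boI_\lambda(p)<\gI(p)$ for all small $\lambda$, uniformly in $p$ and non-perturbatively. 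Without this, or an equivalent global construction far from the soliton, your proof that $\lambda_p>0$ does not close.
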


Note that, when $0 < \lambda < \lambda_p$, Theorem~\ref{thm:crit-val} makes
no claim about the existence of minimizers for $\boI_\lambda(p)$, it only
asserts that potential candidates must be truly two-dimensional. The fact 
that minimizers do exist in such cases will be the object of a future work.

Note also that our arguments do not prevent the possible existence of a truly
two-dimensional minimizer for $\lambda = \lambda_p$. 

We have stated Theorem~\ref{thm:crit-val} in the case of the spatial 
domain $\R \times \T.$ With minor modifications, the proofs carry
over to the case of $\R \times \T^2,$ and presumably also to $\R \times M$ 
where $M$ is any compact Riemannian manifold of dimension $d \leq 2$.

Linear transverse instability of solitons for a number of dispersive models,
including the Gross-Pitaevskii equation, was proved by 
F.~Rousset and N.~Tzvetkov in~\cite{RousTzv3} (see also~\cite{RousTzv1}
for the general Hamiltonian framework concerning nonlinear transverse instability). In
particular, although they did not consider their variational characterization, it follows
from~\cite[Theorem~3.3]{RousTzv3} that given a dark soliton $\gu_{c_p}$, there 
exists $\lambda_p>0$ such that $\gu_{c_p}$ is not a minimizer for
$\boI_{\lambda}$ when $\lambda = \lambda_p / k$ for some $k \in \N^*.$ 

In the next section we sketch the main arguments in the proof of
Theorem~\ref{thm:crit-val}. We follow a strategy developed by 
S.~Terracini, N.~Tzvetkov and N.~Visciglia~\cite{TerTzVi1} 
in the different context of the nonlinear Schr\"odinger equations on 
product spaces. In Section~\ref{sec:proof-crit-val}, we provide 
the full details of our proofs. A number of properties and 
ingredients related to the energy spaces $X(\R)$ and $X(\R \times \T)$ 
as well as the untwisted momentum $[P]$, which we found of
independent interest, are gathered in Appendices~\ref{sec:dimension-one}, 
\ref{sec:energy-set} and~\ref{sec:momentum}.

%%%%%%%%%%%%%%%%%%%%%%%%%%%%%%%%%%%%
%%%%%%%%%%%%%%%%%%%%%%%%%%%%%%%%%%%%
%%%%%%%%%%%%%%%%%%%%%%%%%%%%%%%%%%%%
\section{Sketch of the proof of Theorem~\ref{thm:crit-val}}
\label{sec:sketch-crit-val}
%%%%%%%%%%%%%%%%%%%%%%%%%%%%%%%%%%%%
%%%%%%%%%%%%%%%%%%%%%%%%%%%%%%%%%%%%
%%%%%%%%%%%%%%%%%%%%%%%%%%%%%%%%%%%%

The starting point is to check that the minimal energy $\boI_\lambda(p)$ tends
to the 1D minimal energy $\gI(p)$ as $\lambda \to + \infty$. In
this limit, we show that suitable extractions of minimizing sequences tend to
the dark soliton $\gu_{c_p}$, up to possible translation and phase shift. The
key ingredient of the proof is then to check that these dark solitons are strict
local minimizers of the variational problem corresponding to the minimal energy
$\boI_\lambda(p)$. In this case, the functions in the previous minimizing
sequences must be equal to a dark soliton for $\lambda$ large enough. This
property is sufficient to conclude that the minimal energy $\boI_\lambda(p)$ is 
exactly the energy $\gI(p)$ of dark solitons. 

We describe now this strategy with additional details. In a number of places in the sequel, it is convenient to identify $\R / \pi\Z$ with the interval $(- \pi/2, \pi/2].$ The first argument is to describe the behaviour of the minimizing energy with respect to the constraint $p \in (- \pi/2, \pi/2]$. In this direction, we show

\begin{lem}
\label{lem:prop-I-p}
Let $\lambda > 0$ be fixed. The function $\boI_\lambda$ is well-defined on $(- \pi/2, \pi/2]$ and it satisfies
\begin{equation}
\label{eq:sym-I}
\boI_\lambda(- p) = \boI_\lambda(p),
\end{equation}
for any $p \in (- \pi/2, \pi/2)$. Moreover, this function is Lipschitz
continuous, with Lipschitz constant at most $\sqrt{2}$, and it is bounded by
\begin{equation}
\label{eq:estim-sup-I}
\boI_\lambda(p) \leq \gI(p) < \sqrt{2}\, p,
\end{equation}
for any $p \in (0, \pi/2]$.
\end{lem}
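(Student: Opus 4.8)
The plan is to treat the four assertions in turn, reserving the Lipschitz bound for last as the genuine difficulty. Well-definedness and boundedness below are immediate: for every $p \in (-\pi/2, \pi/2]$ the constraint set is nonempty, since by Proposition~\ref{prop:exist-min1} the one-dimensional dark soliton $\gu_{c_p}$, viewed as a function on $\R \times \T$ that is constant in $y$, satisfies $[P](\gu_{c_p}) = p$ when $p \neq 0$, while for $p = 0$ the constant $1$ works; as $E_\lambda \geq 0$, the infimum $\boI_\lambda(p)$ is a finite nonnegative real number. For the symmetry, I would exhibit an energy-preserving involution of $X(\R \times \T)$ reversing the sign of the untwisted momentum. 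Complex conjugation $\psi \mapsto \bar\psi$ is the natural candidate: one has $E_\lambda(\bar\psi) = E_\lambda(\psi)$ since the energy depends only on $|\grad \psi|$ and $|\psi|$, whereas a direct computation gives $\langle i \partial_x \bar\psi, \bar\psi \rangle_\C = - \langle i \partial_x \psi, \psi \rangle_\C$, so that $[P](\bar\psi) = -[P](\psi)$ in $\R/\pi\Z$. This maps the constraint set for $p$ bijectively onto that for $-p$, whence $\boI_\lambda(-p) = \boI_\lambda(p)$ for $p \in (-\pi/2, \pi/2)$.

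For the upper bound, the inequality $\boI_\lambda(p) \leq \gI(p)$ follows by restricting the infimum to functions depending only on $x$: a function $\psi(x,y) = u(x)$ with $u \in X(\R)$ satisfies $E_\lambda(\psi) = E(u)$ and $[P](\psi) = [P](u)$ because $\partial_y \psi = 0$ and $|\T| = 1$, so that $\boI_\lambda(p) \leq \inf\{E(u) : [P](u) = p\} = \gI(p)$. For the strict inequality $\gI(p) < \sqrt{2}\, p$, I would invoke the Hamilton-type relation $\gI'(p) = c_p$, where $c_p$ is the speed appearing as the Lagrange multiplier for the momentum constraint in Proposition~\ref{prop:exist-min1}, together with $\gI(0) = 0$: since the soliton is strictly subsonic, $c_p \in (0, \sqrt{2})$ for $p \in (0, \pi/2]$, and integrating $\gI'(s) = c_s < \sqrt{2}$ yields $\gI(p) = \int_0^p c_s \, ds < \sqrt{2}\, p$.

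The main obstacle is the Lipschitz estimate, which I would obtain through a concatenation bound. Given a near-minimizer $\psi$ for $\boI_\lambda(p)$ and an increment $\delta \in (-\pi/2, \pi/2]$, the idea is to glue, far out in the $x$ variable, a one-dimensional dark soliton $\gu_{c_\delta}$ of untwisted momentum $\delta$, after severing the asymptotically constant tail of $\psi$ and replacing it by its limiting value of modulus one. Using that functions of $X(\R \times \T)$ are asymptotically constant in $x$ in the energy sense, the energy of the glued function converges to $E_\lambda(\psi) + \gI(\delta)$ and its untwisted momentum to $p + \delta$ as the separation tends to infinity; letting the near-minimality error tend to zero as well yields $\boI_\lambda(p + \delta) \leq \boI_\lambda(p) + \gI(\delta)$. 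Exchanging the roles of $p$ and $p + \delta$, and using the already established symmetry $\gI(-\delta) = \gI(\delta)$, gives $|\boI_\lambda(p + \delta) - \boI_\lambda(p)| \leq \gI(|\delta|) \leq \sqrt{2}\, |\delta|$, which is precisely the asserted Lipschitz bound with constant $\sqrt{2}$.

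I expect the bulk of the work to lie in making this gluing rigorous. The two delicate points are the additivity of the untwisted momentum modulo $\pi$ under the concatenation, and the quantitative control of the energy and momentum errors generated by cutting off the tail of $\psi$; both rely on the asymptotic description of the energy set and of the untwisted momentum developed in Appendices~\ref{sec:energy-set} and~\ref{sec:momentum}. It is worth noting that a naive attempt to change the momentum by a purely gauge-like phase multiplication $\psi \mapsto e^{i\theta(x)}\psi$ fails, since such a transformation leaves $[P]$ invariant; this is exactly what forces the genuine, energy-costing soliton insertion above and is the reason the sharp slope $\sqrt{2}$, the sonic speed, appears.
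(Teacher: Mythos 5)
Your proposal is correct and follows essentially the same route as the paper: nonemptiness of the constraint set via $\gu_{c_p}$, symmetry via conjugation, the bound $\boI_\lambda(p)\leq \gI(p)$ via $x$-dependent test functions, the strict inequality $\gI(p)<\sqrt{2}\,p$ via $\gI'(p)=c_p<\sqrt{2}$ and $\gI(0)=0$, and the Lipschitz bound via concatenation of a truncated near-minimizer with an object of prescribed momentum increment. The one deviation is in the glued object: the paper concatenates the compactly supported, near-sonic low-amplitude test functions of Lemma~\ref{lem:scal-test-funct}, whose energy tends to exactly $\sqrt{2}\,|q-p|$, whereas you graft a dark soliton $\gu_{c_\delta}$, which costs $\gI(|\delta|)\leq\sqrt{2}\,|\delta|$ but whose tails must themselves be truncated (e.g.\ by Lemma~\ref{lem:dens-I}) before gluing; both variants yield the stated $\sqrt{2}$-Lipschitz bound, and the momentum additivity and truncation estimates you defer to are precisely Lemmas~\ref{lem:dens-I}, \ref{lem:def-P-2} and~\ref{lem:val-P-2}.
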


The identities and inequalities in Lemma~\ref{lem:prop-I-p} are based on the introduction of suitable test functions. An important tool for exhibiting these special functions is the following approximation result.

\begin{lem}
\label{lem:dens-I}
Let $\lambda > 0$ be fixed. Given a function $\psi \in X(\R \times \T)$, there exists a sequence $(\psi_n)_{n \geq 0}$ of smooth functions in $X(\R \times \T)$, which satisfies the following properties.

$(i)$ Given any integer $n \geq 0$, there exist two positive numbers $R_n^\pm$ and two numbers $\theta_n^\pm$ for which
$$
\psi_n(x, y) = e^{i \theta_n^\pm},
$$
for any $\pm x \geq \pm R_n^\pm$ and any $y \in \T$.

$(ii)$ We have
$$
[P](\psi_n) = [P](\psi),
$$
for any $n \geq 0$.

$(iii)$ We also have
$$
E_\lambda(\psi_n) \to E_\lambda(\psi),
$$
as $n \to \infty$.
\end{lem}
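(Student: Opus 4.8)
The plan is to construct $\psi_n$ by cutting $\psi$ off far out in the $x$-direction, replacing each tail by a unimodular constant through a short affine interpolation, smoothing the result, and finally correcting the momentum so that it matches $[P](\psi)$ exactly. The first ingredient is the asymptotic behaviour of the slices $x \mapsto \psi(x, \cdot)$. Since $E_\lambda(\psi) < \infty$, the quantities $\|\partial_x \psi(x, \cdot)\|_{L^2(\T)}^2$, $\|\partial_y \psi(x, \cdot)\|_{L^2(\T)}^2$ and $\|1 - |\psi(x, \cdot)|^2\|_{L^2(\T)}^2$ are integrable in $x$ over $\R$, so there exist sequences $R_k^\pm \to \pm \infty$ along which all three tend to $0$. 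For such radii, the Poincar\'e inequality and the embedding $H^1(\T) \hookrightarrow C^0(\T)$ force $\psi(R_k^\pm, \cdot)$ to be $C^0$-close to its mean value $m_k^\pm$, and the third bound forces $|m_k^\pm| \to 1$; hence the slice is $C^0$-close to the unimodular constant $a_k^\pm := m_k^\pm / |m_k^\pm| = e^{i \theta_k^\pm}$. This is the only place where the finiteness of the energy is used in an essential way, and I emphasize that $\psi$ itself need \emph{not} converge, since the phase may drift.

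Fixing large good radii $R^\pm$ of this type, I would let $\psi_n$ agree with $\psi$ on $[R^-, R^+] \times \T$, interpolate affinely in $x$ between $\psi(R^+, \cdot)$ and the constant $a^+$ on $[R^+, R^+ + 1] \times \T$, equal $a^+$ for $x \geq R^+ + 1$, and symmetrically on the left. The energy of $\psi_n$ differs from that of $\psi$ by the discarded tail energy over $\{ |x| > R^\pm \}$, which vanishes as $R^\pm \to \pm \infty$ because the energy density is integrable, together with the energy of the two transition strips, which is controlled by $\|\psi(R^\pm, \cdot) - a^\pm\|_{H^1(\T)}^2 + \|1 - |\psi(R^\pm, \cdot)|^2\|_{L^2(\T)}^2$ and hence tends to $0$ along good radii. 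A standard mollification in a neighbourhood of the compact region $\{ R^- \leq x \leq R^+ \}$ where $\psi_n$ is non-constant then produces smooth functions, perturbs the energy arbitrarily little, and preserves the constant tails (after slightly enlarging the radii). This yields properties $(i)$ and $(iii)$.

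The main obstacle is property $(ii)$, the \emph{exact} preservation of the untwisted momentum. The difficulty is intrinsic: the momentum density $\langle i \partial_x \psi, \psi \rangle_\C$ is generally neither integrable nor of a sign over the tails, which is precisely why $[P]$ is only defined modulo $\pi$ through the renormalisation of Appendix~\ref{sec:momentum}. I would first obtain approximate preservation: the functions $\psi_n$ are, up to the affine interpolation and the mollification, exactly the truncations by which $[P](\psi)$ is constructed, so the continuity of $[P]$ established in Appendix~\ref{sec:momentum} (with respect to local convergence together with energy convergence) gives $[P](\psi_n) = [P](\psi) + \eta_n$ with $\eta_n \to 0$ in $\R / \pi \Z$.

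It then remains to correct the residual $\eta_n$ exactly, and this is the delicate point. I would insert, in a plateau region where $\psi_n$ equals a unimodular constant $b$, a single shallow dark-soliton profile $\gu_{c_n}$ viewed as a function of $x$ alone (independent of $y$), rotated so that its left end matches $b$ and patched through short transitions; the part of $\psi_n$ to the right of the insertion is multiplied by the constant phase factor produced by $\gu_{c_n}$, which merely replaces the right-hand constant $e^{i\theta_n^+}$ by another unimodular constant, as allowed by $(i)$. Because both $\psi_n$ and the modified function have constant tails, their momenta are honest integrals and differ by exactly $[P](\gu_{c_n})$; since $c \mapsto [P](\gu_c)$ is continuous on $(-\sqrt 2, \sqrt 2)$ with $[P](\gu_c) \to 0$ as $c \to \pm \sqrt 2$, the intermediate value theorem lets me choose $c_n$ so that this contribution equals exactly $-\eta_n$. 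As the energy $E(\gu_{c_n})$ of a shallow soliton also tends to $0$ with $|\eta_n|$, this correction costs vanishing energy and preserves both smoothness and the constant tails, so the corrected sequence satisfies $(i)$, $(ii)$ and $(iii)$ simultaneously. The entire argument hinges on the compatibility between the renormalised definition of $[P]$ and such truncations, which is the step I expect to require the most care.
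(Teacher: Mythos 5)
Your proposal is correct and follows the same two-stage architecture as the paper: first produce smooth approximations with constant tails whose untwisted momentum merely \emph{converges} to $[P](\psi)$, then graft onto a plateau a compactly supported, low-energy corrector whose momentum exactly cancels the residual discrepancy, using the additivity of the momentum for concatenations of constant-tail functions (Lemmas~\ref{lem:def-P-2} and~\ref{lem:val-P-2}). The differences are in how the two ingredients are realized. For the density step, the paper does not truncate at good radii but invokes G\'erard's decomposition $\hat{\psi}_0 = e^{i\phi} + \varpi$ (Corollary~\ref{cor:dens-X}), approximating $\phi'$ and $\varpi$ by compactly supported functions; your truncation-plus-interpolation argument is a viable alternative, but to apply the continuity of $[P]$ you must check convergence in the metric $d_c$ of~\eqref{def:d-c} (the weighted term $\|\eta_{c}^{1/2}(\psi_n-\psi)\|_{L^2}$ over the tails needs the bound $|z|\leq 1+|1-|z|^2|$, since $w_0\in H^1(\R\times\T)$ is not \emph{a priori} bounded in dimension two), not merely ``local convergence together with energy convergence''. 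For the corrector, the paper uses the scaled phase-bump functions of Lemma~\ref{lem:scal-test-funct}, which are exactly compactly supported and hit any prescribed momentum $p_n$ exactly with energy tending to $\sqrt{2}|p_n|$; you use a shallow dark soliton $\gu_{c_n}$ instead. That also works, but your claim that the inserted piece contributes ``exactly $[P](\gu_{c_n})$'' is imprecise: $\gu_{c_n}$ only reaches its limiting constants exponentially, so after the truncation you need the contribution is that of the patched object, and the intermediate value theorem must be applied to the map $c\mapsto(\text{momentum of the truncated, patched profile})$, checking that its range covers a neighbourhood of $0$ uniformly in the truncation radius. This is fixable but is exactly the extra bookkeeping the paper's explicit test functions are designed to avoid.
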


With Lemma~\ref{lem:dens-I} at hand, we can describe more precisely the minimizing problems $\boI_\lambda(p)$ in the limit $\lambda \to + \infty$. Fix a number $p \in (- \pi/2, \pi/2]$, with $p \neq 0$, consider a sequence $(\varepsilon_n)_{n \geq 0}$ such that $\varepsilon_n \to 0$ and another sequence $(\lambda_n)_{n \geq 0}$ such that $\lambda_n \to +\infty$. We can apply Lemma~\ref{lem:dens-I} to exhibit a sequence $(\psi_n)_{n \geq 0}$ of smooth functions in $X(\R \times \T)$, satisfying statement $(i)$ in Lemma~\ref{lem:dens-I}, and such that
\begin{equation}
\label{eq:cond-psi-n}
[P](\psi_n) = p \quad \text{ modulo } \pi, \text{ and } \quad E_{\lambda_n}(\psi_n) \leq \boI_{\lambda_n}(p) + \varepsilon_n,
\end{equation}
for any $n \geq 0$. This sequence allows us to obtain the following information
on the limit $\lambda_n \to + \infty$.

\begin{lem}
\label{lem:asympt-I}
We have
\begin{equation}
\label{eq:borne-I}
\boI_{\lambda_n}(p) \to \gI(p),
\end{equation}
and
\begin{equation}
\label{eq:limit-zero-y}
\lambda_n^2 \int_{\R \times \T} |\partial_y \psi_n|^2 \to 0,
\end{equation}
as $n \to \infty$.
\end{lem}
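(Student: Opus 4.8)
The plan is to obtain \eqref{eq:borne-I} by trapping $\boI_{\lambda_n}(p)$ between $\gI(p)$ from above, which is free, and $\gI(p) - o(1)$ from below, which is the substance; the transverse decay \eqref{eq:limit-zero-y} will fall out of the same lower-bound argument. The upper bound $\boI_{\lambda_n}(p) \le \gI(p)$ is exactly \eqref{eq:estim-sup-I} in Lemma~\ref{lem:prop-I-p}, so only the matching lower bound needs work. For this I would slice in the transverse variable. For each $y \in \T$, property $(i)$ of Lemma~\ref{lem:dens-I} ensures that $\partial_x \psi_n(\cdot, y)$ is compactly supported, so the slice $\psi_n(\cdot, y)$ lies in $X(\R)$ and carries a well-defined real slice momentum
\begin{equation*}
p_n(y) := \frac{1}{2} \int_\R \langle i \partial_x \psi_n(x, y), \psi_n(x, y) \rangle_\C \, dx, \qquad \int_\T p_n(y)\, dy \equiv p \pmod{\pi}.
\end{equation*}
By the definition of $\gI$ each slice obeys $E(\psi_n(\cdot, y)) \ge \gI([p_n(y)])$, and integrating in $y$ and writing $T_n := \tfrac{\lambda_n^2}{2} \int_{\R \times \T} |\partial_y \psi_n|^2$ yields
\begin{equation*}
E_{\lambda_n}(\psi_n) \ge \int_\T \gI([p_n(y)])\, dy + T_n.
\end{equation*}
It then remains to show $\int_\T \gI([p_n(y)])\, dy \ge \gI(p) - o(1)$, with the error controlled by $T_n$.

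Here lies the main obstacle. One cannot simply discard the $y$-dependence of $p_n$ by convexity: the function $\gI$ is in fact concave, so Jensen's inequality points the wrong way, and an uneven distribution of momentum across slices could a priori push the sliced energy below $\gI(p)$. The point is that any such unevenness is expensive in transverse gradient. Concretely, I would differentiate $p_n$ in $y$ and integrate by parts in $x$, the boundary terms vanishing because $\psi_n$ equals the $y$-independent constant $e^{i \theta_n^\pm}$ for $|x|$ large by property $(i)$, to obtain
\begin{equation*}
\frac{d}{dy} p_n(y) = - \int_\R \langle i \partial_y \psi_n, \partial_x \psi_n \rangle_\C \, dx.
\end{equation*}
Two applications of Cauchy--Schwarz, together with $\| \partial_x \psi_n \|_{L^2}^2 \le 2 E_{\lambda_n}(\psi_n) \le 2(\gI(p) + \varepsilon_n)$ and $\| \partial_y \psi_n \|_{L^2} = \sqrt{2 T_n}/\lambda_n$, then give
\begin{equation*}
\int_\T \Big| \frac{d}{dy} p_n(y) \Big|\, dy \le \| \partial_y \psi_n \|_{L^2} \| \partial_x \psi_n \|_{L^2} \le \frac{C \sqrt{T_n}}{\lambda_n}.
\end{equation*}
Hence the oscillation of $p_n$ is $O(\sqrt{T_n}/\lambda_n)$, and since $\int_\T p_n \equiv p$, the $\sqrt{2}$-Lipschitz continuity of $\gI$ from Lemma~\ref{lem:prop-I-p} upgrades this to $\int_\T \gI([p_n(y)])\, dy \ge \gI(p) - C' \sqrt{T_n}/\lambda_n$.

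Finally I would close the loop. Combining the energy bound in \eqref{eq:cond-psi-n} with $\boI_{\lambda_n}(p) \le \gI(p)$,
\begin{equation*}
\gI(p) + \varepsilon_n \ge E_{\lambda_n}(\psi_n) \ge \gI(p) - \frac{C' \sqrt{T_n}}{\lambda_n} + T_n,
\end{equation*}
so that $T_n - C' \lambda_n^{-1} \sqrt{T_n} - \varepsilon_n \le 0$. Reading this as a quadratic inequality in $\sqrt{T_n}$ and using $\lambda_n \to +\infty$ and $\varepsilon_n \to 0$ forces $T_n \to 0$, which is precisely \eqref{eq:limit-zero-y}. Substituting $T_n \to 0$ back into the displayed inequality gives $\boI_{\lambda_n}(p) \ge \gI(p) - o(1)$, and together with the upper bound this proves \eqref{eq:borne-I}. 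The points I expect to be most delicate are the differentiability and integrability of $y \mapsto p_n(y)$ (both clear from the smoothness of $\psi_n$ and property $(i)$) and the justification that the boundary terms in the integration by parts vanish, which is exactly what property $(i)$ of Lemma~\ref{lem:dens-I} was designed to supply.
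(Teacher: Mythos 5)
Your proposal is correct and follows essentially the same route as the paper: slicing in $y$, bounding the slice energies below by $\gI$ evaluated at the slice momenta, controlling the oscillation of $y \mapsto p_n(y)$ via its derivative, Cauchy--Schwarz and Poincar\'e--Wirtinger, the $\sqrt{2}$-Lipschitz bound on $\gI$, and closing the loop against the upper bound $\boI_{\lambda_n}(p) \leq \gI(p)$. Two bookkeeping points that the paper handles explicitly and that you should too: the slice momentum must include the constant $\tfrac{1}{2}(\theta_n^+ - \theta_n^-)$ from Lemma~\ref{lem:val-P-2} for both of your claims $\int_\T p_n \equiv p \pmod{\pi}$ and $E(\psi_n(\cdot,y)) \geq \gI([p_n(y)])$ to be literally correct, and for $p = \pi/2$ the Lipschitz estimate must be read on the quotient $\R/\pi\Z$ (using the evenness of $\gI$) since the representative of $p_n(y)$ in $(-\pi/2,\pi/2]$ can jump across the endpoint.
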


Once we have established the convergence of the minimizing energies
$\boI_{\lambda_n}(p)$ towards the 1D energy $\gI(p)$, it is natural
to establish the convergence of the minimizing sequences $(\psi_n)_{n \geq 0}$
towards a minimizer of the limit energy. In order to achieve a strong
convergence result, we show that we can assume that the 
functions $\psi_n$ satisfy the Pohozaev identity
\begin{equation}
\label{eq:Pohozaev}
\frac{1}{2} \int_{\R \times \T} |\partial_x \psi_n|^2 = \frac{\lambda_n^2}{2}
\int_{\R \times \T} |\partial_y \psi_n|^2 + \frac{1}{4} \int_{\R \times \T}
\big( 1 - |\psi_n|^2 \big)^2.
\end{equation}
This identity is strongly related to the first variation of the Ginzburg-Landau
energy under the group of dilations along the $x$ variable. More precisely, we
prove: 

\begin{lem}
\label{lem:Pohozaev}
Let $n \geq 0$. There exists a positive number $\tau_n$ such that the functions $\xi_n$ given by
\begin{equation}
\label{def:xi-n}
\xi_n(x, y) = \psi_n(\tau_n x, y), 
\end{equation}
for any $x \in \R$ and $y \in \T$ satisfy
$$
[P](\xi_n) = [P](\psi_n) \quad \text{and} \quad E(\xi_n) \leq E(\psi_n), 
$$
as well as the Pohozaev identity
$$
\frac{1}{2} \int_{\R \times \T} |\partial_x \xi_n|^2 = \frac{\lambda_n^2}{2} \int_{\R \times \T} |\partial_y \xi_n|^2 + \frac{1}{4} \int_{\R \times \T} \big( 1 - |\xi_n|^2 \big)^2.
$$
\end{lem}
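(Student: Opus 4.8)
The plan is to run a one-parameter optimization over the group of dilations in the $x$ direction, the same group whose first variation produces a Pohozaev identity. For $\tau > 0$ set $\psi_n^\tau(x,y) := \psi_n(\tau x, y)$, and abbreviate the three pieces of the energy by
$$
A_n := \frac{1}{2}\int_{\R\times\T}|\partial_x\psi_n|^2, \qquad B_n := \frac{\lambda_n^2}{2}\int_{\R\times\T}|\partial_y\psi_n|^2, \qquad C_n := \frac{1}{4}\int_{\R\times\T}\big(1-|\psi_n|^2\big)^2.
$$
Since $\partial_x\psi_n^\tau(x,y) = \tau\,(\partial_x\psi_n)(\tau x,y)$ and $\partial_y\psi_n^\tau(x,y) = (\partial_y\psi_n)(\tau x,y)$, the substitution $u = \tau x$ in each of the three integrals produces the scaling law
$$
E_{\lambda_n}(\psi_n^\tau) = \tau\,A_n + \frac{1}{\tau}\,\big(B_n + C_n\big),
$$
with the $x$-kinetic term scaling linearly in $\tau$ while the transverse and potential terms scale like $1/\tau$ (here I write $E_{\lambda_n}$ for what the statement abbreviates as $E$).

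I would then check that the dilation leaves the momentum unchanged. By statement $(i)$ of Lemma~\ref{lem:dens-I}, the function $\psi_n$ is constant for $\pm x$ large, so $\partial_x\psi_n$, and hence $\partial_x\psi_n^\tau$, is compactly supported and the explicit formula for $[P]$ applies. The same substitution $u = \tau x$ then shows that the momentum density integrates to a $\tau$-independent quantity,
$$
\frac{1}{2}\int_{\R\times\T}\langle i\,\partial_x\psi_n^\tau, \psi_n^\tau\rangle_\C = \frac{1}{2}\int_{\R\times\T}\langle i\,\partial_x\psi_n, \psi_n\rangle_\C,
$$
the factor $\tau$ coming from $\partial_x\psi_n^\tau$ cancelling the Jacobian $1/\tau$. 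Since the right-hand side represents $[P](\psi_n)$ modulo $\pi$, we conclude $[P](\psi_n^\tau) = [P](\psi_n)$ for every $\tau > 0$, so the momentum constraint is preserved along the whole family and the first required identity holds for any admissible choice of $\tau_n$.

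It remains to minimize $f(\tau) := \tau A_n + \tau^{-1}(B_n+C_n)$ on $(0,\infty)$ and to read off the critical-point condition. I would first record the non-degeneracy $A_n > 0$ and $B_n + C_n > 0$, both forced by $[P](\psi_n) = p \neq 0$: a function independent of $x$, or one with $|\psi_n|\equiv 1$ and $\partial_y\psi_n \equiv 0$, has vanishing $x$-momentum (cf.\ Appendix~\ref{sec:momentum}). Consequently $f$ is smooth and strictly convex on $(0,\infty)$, blows up as $\tau \to 0^+$ and as $\tau \to +\infty$, and attains its minimum at the unique critical point $\tau_n := \sqrt{(B_n+C_n)/A_n} \in (0,\infty)$. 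Setting $\xi_n := \psi_n^{\tau_n}$, minimality gives $E_{\lambda_n}(\xi_n) = f(\tau_n) \leq f(1) = E_{\lambda_n}(\psi_n)$, equivalently the inequality of arithmetic and geometric means $2\sqrt{A_n(B_n+C_n)} \leq A_n + (B_n+C_n)$. Finally, the stationarity condition $f'(\tau_n) = A_n - \tau_n^{-2}(B_n+C_n) = 0$ is equivalent to $\tau_n A_n = \tau_n^{-1}(B_n+C_n)$; inserting the scalings $\tfrac{1}{2}\int_{\R\times\T}|\partial_x\xi_n|^2 = \tau_n A_n$, $\tfrac{\lambda_n^2}{2}\int_{\R\times\T}|\partial_y\xi_n|^2 = \tau_n^{-1}B_n$ and $\tfrac{1}{4}\int_{\R\times\T}(1-|\xi_n|^2)^2 = \tau_n^{-1}C_n$, this reads
$$
\frac{1}{2}\int_{\R\times\T}|\partial_x\xi_n|^2 = \frac{\lambda_n^2}{2}\int_{\R\times\T}|\partial_y\xi_n|^2 + \frac{1}{4}\int_{\R\times\T}\big(1-|\xi_n|^2\big)^2,
$$
which is exactly the claimed Pohozaev identity.

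The computations are otherwise routine, and I expect the only two points genuinely using the structure of the problem to be the momentum invariance, where the compact support furnished by Lemma~\ref{lem:dens-I}$(i)$ is what makes the explicit formula valid and lets the change of variables go through cleanly, and the non-degeneracy $A_n, B_n+C_n > 0$, which is what guarantees that the optimal dilation $\tau_n$ is an honest positive number rather than a degenerate endpoint and is precisely where the hypothesis $p \neq 0$ enters.
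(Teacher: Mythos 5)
Your proof is correct and follows essentially the same route as the paper's: dilate in $x$, check that the momentum is invariant under the change of variables, verify non-degeneracy of the scaling coefficients, and optimize $\tau \mapsto \tau A_n + \tau^{-1}(B_n + C_n)$. The only cosmetic differences are that the paper's momentum formula (Lemma~\ref{lem:val-P-2}) carries an extra boundary term $\tfrac{1}{2}(\theta_n^+ - \theta_n^-)$, which is unchanged by the dilation and so does not affect your argument, and that you make explicit the positivity of $B_n + C_n$ (needed for $\tau_n$ to be a genuine positive number), a point the paper leaves implicit.
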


In view of~\eqref{def:xi-n}, the functions $\xi_n$ remain smooth, belong to $X(\R \times \T)$ and satisfy statement $(i)$ in Lemma~\ref{lem:dens-I}. It also follows from Lemma~\ref{lem:Pohozaev} that they still satisfy the conditions in~\eqref{eq:cond-psi-n}. In particular, the conclusions of Lemma~\ref{lem:asympt-I} are available for these functions, and moreover, they satisfy the Pohozaev identity in~\eqref{eq:Pohozaev}. Replacing the functions $\psi_n$ by the functions $\xi_n$ if necessary, we will simply assume in the sequel that the functions $\psi_n$ satisfy the Pohozaev identity in~\eqref{eq:Pohozaev}.

With this identity at hand, we can establish that a subsequence of the functions $\psi_n$ converges towards a minimizer of the 1D problem $\gI(p)$. More precisely, we introduce the distance $d_{c_p}$ given by
$$
d_{c_p}(\psi_1, \psi_2)^2 = \big\| \nabla \psi_1 - \nabla \psi_2 \big\|_{L^2}^2
+ \big\| \eta_{c_p}^\frac{1}{2} (\psi_1 - \psi_2) \big\|_{L^2}^2 + \big\| (1 -
|\psi_1|^2) - (1 - |\psi_2|^2) \big\|_{L^2}^2,
$$
for functions $\psi_1$ and $\psi_2$ in $X(\R \times \T)$. In the second term,
the weight $\eta_{c_p}$ is given by the expression 
\begin{equation}
\label{def:eta-c}
\eta_c(x) = 1 - |\gu_c(x)|^2 = \frac{2 - c^2}{2 \cosh \big( \frac{\sqrt{2 -
c^2}}{2} x \big)^2},
\end{equation}
which is available for any $|c| < \sqrt{2}$. We refer to Appendix~\ref{sec:energy-set} for more detail about the metric structure corresponding to the distance $d_{c_p}$. Using this distance, we show

\begin{prop}
\label{prop:conv-min-p}
There exist a sequence of real numbers $(a_n)_{n \geq 0}$ and a number $\theta \in \R$ such that there exists an extraction $\varphi : \N \to \N$ for which
$$
d_{c_p} \big( e^{i \theta} \psi_{\varphi(n)}(\cdot - a_{\varphi(n)}, \cdot), \gu_{c_p} \big) \to 0,
$$
as $n \to \infty$.
\end{prop}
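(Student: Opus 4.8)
The plan is to reduce the two–dimensional minimizing sequence $(\psi_n)$ to a one–dimensional one by averaging over the periodic variable, to identify the one–dimensional limit through Proposition~\ref{prop:exist-min1}, and finally to upgrade the convergence to the strong metric $d_{c_p}$ by exploiting the Pohozaev identity~\eqref{eq:Pohozaev}.

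First I would average in $y$, setting $\bar\psi_n(x) := \int_\T \psi_n(x, y)\, dy$. Since the functions $\psi_n$ are constant in $x$ (equal to $e^{i\theta_n^\pm}$) for $\pm x$ large, the averages $\bar\psi_n$ satisfy the one–dimensional analogue of statement $(i)$ of Lemma~\ref{lem:dens-I} and belong to $X(\R)$. The Poincaré inequality on $\T$ gives $\int_{\R \times \T} |\psi_n - \bar\psi_n|^2 \le C \int_{\R \times \T} |\partial_y \psi_n|^2$, and since $\lambda_n \to +\infty$ the right–hand side tends to $0$ by~\eqref{eq:limit-zero-y}; together with the uniform bounds on the energy space (Appendix~\ref{sec:energy-set}) this also yields $\|(1 - |\psi_n|^2) - (1 - |\bar\psi_n|^2)\|_{L^2} \to 0$. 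Using Jensen's inequality $\int_\R |\bar\psi_n'|^2 \le \int_{\R\times\T} |\partial_x \psi_n|^2$ and the fact that the zero–mean fluctuation $\psi_n - \bar\psi_n$ contributes a negligible momentum (bounded by $\|\partial_x(\psi_n-\bar\psi_n)\|_{L^2}\|\psi_n-\bar\psi_n\|_{L^2}\to 0$), a short computation shows that $[P](\bar\psi_n) = [P](\psi_n) + o(1) = p + o(1)$ while $E(\bar\psi_n) \le \frac12 \int_{\R\times\T}|\partial_x\psi_n|^2 + \frac14\int_{\R\times\T}(1-|\psi_n|^2)^2 + o(1)$. Combined with Lemma~\ref{lem:asympt-I}, this proves that $\bar\psi_n$ is a minimizing sequence for the one–dimensional problem: $E(\bar\psi_n) \to \gI(p)$ and $[P](\bar\psi_n) \to p$.

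Next I would establish compactness of $\bar\psi_n$ by the concentration–compactness method, paralleling the one–dimensional theory of Appendix~\ref{sec:dimension-one}. Vanishing is excluded because $\frac14\int(1-|\bar\psi_n|^2)^2$ stays bounded away from $0$, whereas vanishing would force $1 - |\bar\psi_n|^2 \to 0$ in $L^2$; here I use that $\gI(p) > 0$ for $p \neq 0$. Dichotomy is excluded by the strict subadditivity of $\gI$ on $\R/\pi\Z$, which prevents the energy from splitting into two nontrivial solitons. There remain translations $a_n$ such that, after extraction, $\bar\psi_n(\cdot - a_n)$ converges locally to a limit $\gv \in X(\R)$ satisfying $E(\gv) \le \gI(p)$ and $[P](\gv) = p$; hence $\gv$ is a minimizer, and by Proposition~\ref{prop:exist-min1} one has $\gv = e^{-i\theta} \gu_{c_p}$ for some $\theta \in \R$, after extracting further so that the boundary phases $\theta_n^\pm$ converge.

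Finally I would promote this to strong convergence in $d_{c_p}$, and this is where the Pohozaev identity is decisive. Inserting~\eqref{eq:Pohozaev} into the energy convergence $E_{\lambda_n}(\psi_n) \to \gI(p)$ and using~\eqref{eq:limit-zero-y} forces the kinetic and potential energies to converge \emph{separately}, namely $\int_{\R\times\T}|\partial_x\psi_n|^2 \to \gI(p) = \int_\R |\gu_{c_p}'|^2$ and $\int_{\R\times\T}(1-|\psi_n|^2)^2 \to 2\gI(p) = \int_\R (1-|\gu_{c_p}|^2)^2$, the last equalities being the Pohozaev identity for the soliton itself. Since in addition $e^{i\theta}\psi_n(\cdot - a_n) \to \gu_{c_p}$ locally, the weak $L^2$ limits of $\partial_x\psi_n$ and of $1 - |\psi_n|^2$ are identified with the corresponding quantities for $\gu_{c_p}$, and convergence of the norms promotes these weak convergences to strong $L^2$ convergences; the transverse gradient $\partial_y\psi_n$ tends to $0$ in $L^2$ directly by~\eqref{eq:limit-zero-y}. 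The weighted term $\|\eta_{c_p}^{1/2}(e^{i\theta}\psi_n(\cdot - a_n) - \gu_{c_p})\|_{L^2}$ is controlled by the exponential decay of $\eta_{c_p}$: local strong convergence handles any bounded region, while the weight, combined with the uniform bound on $|\psi_n| + |\gu_{c_p}|$, dominates the tails. Passing from $\bar\psi_n$ back to $\psi_n$ only adds the $o(1)$ fluctuation to each of the three terms of $d_{c_p}$. I expect the main obstacle to be the concentration–compactness step, specifically the exclusion of dichotomy, which rests on a strict subadditivity property of $\gI$ on the circle $\R/\pi\Z$; the modular nature of the momentum constraint makes this more delicate than in the usual real–valued setting. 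The reduction by averaging and the final promotion to strong convergence are, by contrast, essentially mechanical once the scale has been fixed by the Pohozaev identity and the limit has been identified as $\gu_{c_p}$.
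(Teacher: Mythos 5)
Your overall architecture matches the paper's: reduce the two-dimensional minimizing sequence to a one-dimensional one, identify the limit as $\gu_{c_p}$ via the 1D theory, and then use the Pohozaev identity~\eqref{eq:Pohozaev} together with~\eqref{eq:limit-zero-y} to force the $x$-kinetic and potential energies to converge separately to $\int_\R |\gu_{c_p}'|^2$ and $\int_\R(1-|\gu_{c_p}|^2)^2$, upgrading weak to strong $L^2$ convergence; the treatment of the weighted term via local convergence plus the decay of $\eta_{c_p}$ is also the paper's. The one structural difference in the reduction is that you average over $y$ (working with $\bar\psi_n=\hat\psi_{n,0}$), whereas the paper selects a good slice $y_*$: it shows via Poincar\'e--Wirtinger that the slice momenta $p_n(y)$ converge to $p$ uniformly in $y$, deduces $\int_\T|E(\psi_n(\cdot,y))-\gI(p)|\,dy\to 0$, and extracts a $y_*$ along which $\psi_n(\cdot,y_*)$ is a 1D minimizing sequence, afterwards propagating the convergence to all $y$ by integrating $\partial_y\psi_n$. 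Your averaging route is also workable, but it requires checking that the cross terms in the expansion~\eqref{eq:dev-E-1-2} (in particular $\int |w_0|^2(1-|\hat\psi_0|^2)$, $\int|w_0|^3$, $\int|w_0|^4$) are $o(1)$, which needs the uniform $L^\infty$ bound on $1-|\hat\psi_{n,0}|^2$ and a Ladyzhenskaya-type estimate; this is more bookkeeping than the slicing argument, for no gain.

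The genuine gap is the compactness step. You assert that after translation and extraction $\bar\psi_n(\cdot-a_n)$ converges locally to some $\gv$ with $E(\gv)\le\gI(p)$ \emph{and} $[P](\gv)=p$; but local convergence alone does not preserve the momentum (it can leak to infinity), and recovering the constraint in the limit is precisely the content of the pre-compactness theorems for 1D minimizing sequences. You propose to re-derive this by concentration--compactness and explicitly leave the exclusion of dichotomy --- the strict subadditivity of $\gI$ on $\R/\pi\Z$ --- as ``the main obstacle,'' so as written the argument is incomplete exactly where it is hardest. The paper does not re-prove any of this: it simply applies the existing compactness results of~\cite[Theorem~3]{BetGrSa2} and~\cite[Theorem~4]{BeGrSaS1} to the 1D minimizing sequence $\psi_n(\cdot,y_*)$, which directly furnish the translations $a_n$, the phase $\theta$, and the convergences in~\eqref{philippe-VI}. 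Your proof would be complete if you replaced the sketched concentration--compactness argument by the same citation applied to $\bar\psi_n$ (which, as you correctly show, is a minimizing sequence with momenta converging to $p$); without that, the identification of the limit is not established.
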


Given an arbitrary function $\psi \in X(\R \times \T)$, we can introduce its Fourier coefficients with respect to the variable $y$,
$$
\hat{\psi}_k(x) := \int_0^1 \psi(x, y) e^{- 2 i \pi k y} \, dy,
$$
and decompose the function $\psi$ as a Fourier series
$$
\psi(x, y) = \sum_{k \in \Z} \hat{\psi}_k(x) e^{2 i \pi k y}.
$$
We check that the Fourier coefficient $\hat{\psi}_0$ lies in $X(\R)$, while
the difference $w_0 = \psi - \hat{\psi}_0$ is in $H^1(\R \times \T)$. Due to the
orthogonality of the functions $\hat{\psi}_0$ and $w_0$, the 
untwisted momentum $[P]$ can be extended from $X(\R)$ to $X(\R \times \T)$ by the
expression 
\begin{equation}
\label{def:P-momentum}
[P](\psi) = [P](\hat{\psi}_0) + \frac{1}{2} \int_\R \langle i \partial_x
w_0, w_0 \rangle_\C \quad \text{ modulo } \pi.
\end{equation}
Given any positive number $\alpha$, we denote
\begin{equation}
\label{def:V-alpha}
\boV_p(\alpha) := \Big\{ \psi = \hat{\psi}_0 + w_0 \in X(\R \times \T) \text{ s.t. } \inf_{(a, \theta) \in \R^2} d_{c_p} \big( e^{i \theta} \hat{\psi}_0(\cdot - a), \gu_{c_p} \big) < \alpha \text{ and } \| w_0 \|_{H^1} < \alpha \Big\},
\end{equation}
where we have set as before $\hat{\psi}_0(x) = \int_\T \psi(x, y) \, dy$ for any
function $\psi \in X(\R \times \T)$. In view of statement $(i)$ in Lemma~\ref{lem:conv-1-2}, we can rephrase Proposition~\ref{prop:conv-min-p} as the fact that there exists an integer $N_\alpha$ such that
\begin{equation}
\label{eq:V-alpha-psi-n}
\psi_{\varphi(n)} \in \boV_p(\alpha),
\end{equation}
for any $n \geq N_\alpha$. Once this property is established, we show that the
profile $\gu_{c_p}$ minimizes the energy $E_\lambda$ at fixed momentum $p$ in
the open set $\boV_p(\alpha)$, provided $\alpha$ is sufficiently small and
$\lambda$ is sufficiently large. More precisely, we prove

\begin{prop}
\label{prop:loc-min}
Let $p \in (- \pi/2, \pi/2]$, with $p \neq 0$. There exist two positive numbers
$\alpha_p$ and $\lambda_p$ such that, given any function $\psi \in \boV_p(\alpha_p)$, with $[P](\psi) = p$ modulo $\pi$, we have
\begin{equation}
\label{eq:local-mini-p}
E_\lambda(\psi) \geq E_\lambda(\gu_{c_p}) = \gI(p),\qquad \forall \lambda \geq
\lambda_p.
\end{equation}
Moreover, equality holds in~\eqref{eq:local-mini-p} if and only if $\psi(x) = e^{- i \theta} \gu_{c_p}(x + a)$ for some $a \in \R$ and $\theta \in \R$.
\end{prop}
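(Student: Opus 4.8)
The plan is to work with the Fourier decomposition $\psi = \hat{\psi}_0 + w_0$ already introduced and to exploit the fact that, since $w_0$ has vanishing average in $y$, the transverse kinetic term $\lambda^2 \int_{\R \times \T} |\partial_y w_0|^2$ becomes arbitrarily expensive as $\lambda \to +\infty$. First I would use the translation and phase invariance of both $E_\lambda$ and $[P]$ to reduce to the case where the infimum in the definition of $\boV_p(\alpha_p)$ is nearly attained at $(a,\theta) = (0,0)$: replacing $\psi$ by $e^{i\theta^*} \psi(\cdot - a^*, \cdot)$ for a near-minimizing pair, we may assume $d_{c_p}(\hat{\psi}_0, \gu_{c_p}) < \alpha_p$ while keeping $\| w_0 \|_{H^1} < \alpha_p$ and $[P](\psi) = p$. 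The equality characterization is invariant under this reduction.

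The core computation is an exact energy splitting. Since $\hat{\psi}_0$ depends only on $x$ while $w_0$ carries only the nonzero $y$-modes, the orthogonality of the Fourier modes gives $\int |\partial_x \psi|^2 = \int_\R |\hat{\psi}_0'|^2 + \int |\partial_x w_0|^2$ and $\int |\partial_y \psi|^2 = \int |\partial_y w_0|^2$, while expanding $(1 - |\psi|^2)^2$ and integrating in $y$ the linear term against the $x$-only function $1 - |\hat{\psi}_0|^2$ (which vanishes by the zero-average property) yields
$$E_\lambda(\psi) = E(\hat{\psi}_0) + \tfrac12 \int_{\R \times \T} |\partial_x w_0|^2 + \tfrac{\lambda^2}{2} \int_{\R \times \T} |\partial_y w_0|^2 - \tfrac12 \int_{\R \times \T} (1 - |\hat{\psi}_0|^2) |w_0|^2 + \tfrac14 \int_{\R \times \T} \big( 2 \langle \hat{\psi}_0, w_0 \rangle_\C + |w_0|^2 \big)^2.$$
Here $E(\hat{\psi}_0)$ is the genuine one-dimensional energy, because the $y$-integration contributes only a factor one. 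The last term is nonnegative, and the only dangerous contribution is the cross term $-\tfrac12 \int (1 - |\hat{\psi}_0|^2) |w_0|^2$. I would control it by a uniform bound $\| 1 - |\hat{\psi}_0|^2 \|_{L^\infty} \leq C_p$ valid on the $\alpha_p$-neighborhood, and then absorb it into the transverse kinetic term via the Wirtinger inequality $\int |\partial_y w_0|^2 \geq 4\pi^2 \int |w_0|^2$ on the unit circle, which produces
$$E_\lambda(\psi) - E(\hat{\psi}_0) \geq \tfrac12 \| \partial_x w_0 \|_{L^2}^2 + \big( 2\pi^2 \lambda^2 - \tfrac12 C_p \big) \| w_0 \|_{L^2}^2.$$

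The remaining point is to reconcile the energy of $\hat{\psi}_0$ with the constraint. By the momentum splitting~\eqref{def:P-momentum}, $\hat{\psi}_0$ does not have momentum $p$ but $[P](\hat{\psi}_0) = p - P_{w_0}$ with $P_{w_0} = \tfrac12 \int \langle i \partial_x w_0, w_0 \rangle_\C$, so that $|P_{w_0}| \leq \tfrac12 \| \partial_x w_0 \|_{L^2} \| w_0 \|_{L^2}$. Since $\hat{\psi}_0 \in X(\R)$ is admissible for the one-dimensional problem at momentum $p - P_{w_0}$, we have $E(\hat{\psi}_0) \geq \gI(p - P_{w_0})$, and the value function $\gI$ is Lipschitz with constant $\sqrt2$ (by the one-dimensional analogue of Lemma~\ref{lem:prop-I-p}, its derivative being the subsonic speed), whence $E(\hat{\psi}_0) \geq \gI(p) - \sqrt2\, |P_{w_0}|$. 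Combining the two estimates, $E_\lambda(\psi) - \gI(p)$ is bounded below by the quadratic form $\tfrac12 X^2 - \tfrac{1}{\sqrt2} X Y + ( 2\pi^2 \lambda^2 - \tfrac12 C_p ) Y^2$ in the variables $X = \| \partial_x w_0 \|_{L^2}$ and $Y = \| w_0 \|_{L^2}$. Choosing $\lambda_p$ so that $4\pi^2 \lambda^2 - C_p > \tfrac12$ holds for all $\lambda \geq \lambda_p$ makes this form positive definite, which gives $E_\lambda(\psi) \geq \gI(p)$ and forces $w_0 = 0$ at equality; then $\psi = \hat{\psi}_0$ depends only on $x$, has momentum $p$ and energy $\gI(p)$, so Proposition~\ref{prop:exist-min1} identifies it with $\gu_{c_p}$ up to translation and phase, as claimed.

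I expect the main obstacle to be the uniform $L^\infty$ control of $1 - |\hat{\psi}_0|^2$ over the whole neighborhood $\boV_p(\alpha_p)$: the distance $d_{c_p}$ only controls this quantity in $L^2$ and controls $\hat{\psi}_0'$ in $L^2$, so turning this into a uniform sup bound requires the energy-set estimates of Appendix~\ref{sec:energy-set} (boundedness of functions in $X(\R)$ and the resulting $H^1(\R) \hookrightarrow L^\infty(\R)$ bound on $1 - |\hat{\psi}_0|^2$). This is in fact the only place where the smallness of $\alpha_p$ enters the estimate. Everything else is a matter of choosing constants in the right order, namely $\alpha_p$ small first and $\lambda_p$ large afterwards given $C_p$; in particular the sharp value of the Lipschitz constant is not needed, since the transverse penalization $\lambda^2 \| w_0 \|_{L^2}^2$ dominates by a wide margin for large $\lambda$.
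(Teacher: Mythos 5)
Your argument is correct, and it takes a genuinely different route from the paper's. The common part is the exact splitting~\eqref{eq:dev-E-1-2}, the absorption of the cross term $-\frac12\int(1-|\hat{\psi}_0|^2)|w_0|^2$ into $\lambda^2\int|\partial_y w_0|^2$ via Lemma~\ref{lem:cont-modulus} and Poincar\'e--Wirtinger, and the bound $|[P](\psi)-[P](\hat{\psi}_0)|\le\frac12\|\partial_x w_0\|_{L^2}\|w_0\|_{L^2}$. The divergence is in the treatment of the one-dimensional part: the paper bounds $E(\hat{\psi}_0)-c_pP(\hat{\psi}_0)$ from below by $E(\gu_{c_p})-c_pP(\gu_{c_p})$ through the coercivity of $E-c_p P$ near the soliton orbit, which requires modulation parameters, the hydrodynamical variables for the grey solitons (Lemmas~\ref{lem:modul-param}--\ref{lem:coer-E-P}), and a separate argument for the black soliton at $p=\pi/2$; you instead use the one-dimensional variational characterization as a black box, $E(\hat{\psi}_0)\ge\gI([P](\hat{\psi}_0))$, together with the Lipschitz continuity of $\gI$, which the paper establishes independently in~\eqref{charles-V}, so there is no circularity. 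The constants do work out: the quadratic form $\frac12X^2-\frac{1}{\sqrt2}XY+\bigl(2\pi^2\lambda^2-\frac{C_p}{2}\bigr)Y^2$ is positive definite as soon as $2\pi^2\lambda^2-\frac{C_p}{2}>\frac14$, and in fact any finite Lipschitz constant for $\gI$ would do, since the cross term can be pushed entirely onto $Y^2$, which the transverse penalization dominates for large $\lambda$. Your route is shorter, handles $p=\pi/2$ and $p\neq\pi/2$ uniformly (working with $[P]$ modulo $\pi$ throughout avoids the lifting issues that force the paper's case distinction), and uses the neighbourhood $\boV_p(\alpha_p)$ only through the uniform $L^\infty$ control of $1-|\hat{\psi}_0|^2$. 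What it does not deliver, and what the paper's heavier machinery does, is a quantitative coercivity estimate in which the excess energy $E_\lambda(\psi)-\gI(p)$ controls $\|w_0\|_{H^1}^2$ plus the squared distance of $\hat{\psi}_0$ to the soliton orbit, which is the natural input for orbital stability. Two points to make explicit in a complete write-up: the Lipschitz bound must be applied to $\gI$ viewed on $\R/\pi\Z$, which is legitimate because the periodization of $\gI$ is the composition of its restriction to $[0,\pi/2]$ with the $1$-Lipschitz distance to $\pi\Z$; and in the equality case the vanishing of the positive definite form yields $\|w_0\|_{L^2}=0$, hence $w_0=0$, after which Proposition~\ref{prop:exist-min1} concludes exactly as you state.
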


For Proposition~\ref{prop:loc-min} we use the stability properties of the solitons $\gu_{c_p}$ with respect to the 1D Gross-Pitaevskii flow. In~\cite{BetGrSm1, GravSme1}, the orbital stability of the dark solitons was derived from the coercivity of the functional $E - c_p [P]$ in the neighbourhood of the profiles $\gu_{c_p}$. Extending this coercivity property to the sets $\boV_p(\alpha)$ for $\alpha$ small enough requires to control the dependence on the variable $y$. For $\lambda$ large enough, this can be done by using the coercivity provided by the term $\lambda^2 \int_{\R \times \T} |\partial_y \psi|^2$ in the energy $E_\lambda(\psi)$. The functional $E_\lambda - c_p [P]$ is then coercive on the sets $\boV_p(\alpha)$ and we obtain~\eqref{eq:local-mini-p} when the untwisted momentum $[P]$ is moreover fixed. 

Combining~\eqref{eq:V-alpha-psi-n} and~\eqref{eq:local-mini-p}, we are finally led to
$$
\boI_\lambda(p) \geq \gI(p),
$$
for $\lambda \geq \lambda_p$. In view of~\eqref{eq:estim-sup-I}, these two quantities are equal as we have claimed in statement $(i)$ of Theorem~\ref{thm:crit-val}.

Before concluding the proof of Theorem~\ref{thm:crit-val}, we need to precise the behaviour of the minimal energy $\boI_\lambda(p)$ with respect to the parameter $\lambda$. In this direction, we establish

\begin{lem}
\label{lem:prop-I-lambda}
Let $p \in (- \pi/2, \pi/2]$ be fixed. The function $\lambda \mapsto \boI_\lambda(p)$ is non-decreasing and continuous on $\R_+^*$, with
\begin{equation}
\label{eq:lim-I-0}
\boI_\lambda(p) \to 0,
\end{equation}
as $\lambda \to 0$.
\end{lem}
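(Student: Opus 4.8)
The plan is to establish the three assertions separately, the monotonicity and continuity being soft consequences of the definition of $E_\lambda$ and the remaining limit carrying all the content. For the monotonicity, I observe that among the three terms defining $E_\lambda(\psi)$ in \eqref{def:E-energy} only $\tfrac{\lambda^2}{2}\int_{\R\times\T}|\partial_y\psi|^2$ depends on $\lambda$, and it is non-decreasing in $\lambda$; since the admissible set $\{\psi\in X(\R\times\T):[P](\psi)=p\}$ is independent of $\lambda$, taking the infimum gives $\boI_{\lambda_1}(p)\le\boI_{\lambda_2}(p)$ whenever $0<\lambda_1\le\lambda_2$. For the continuity I would quantify this: fixing such $\lambda_1\le\lambda_2$ and a near-minimizer $\psi$ of $\boI_{\lambda_1}(p)$ with $E_{\lambda_1}(\psi)\le\boI_{\lambda_1}(p)+\eps$, the a priori bound $\tfrac{\lambda_1^2}{2}\int_{\R\times\T}|\partial_y\psi|^2\le E_{\lambda_1}(\psi)$ and the identity $E_{\lambda_2}(\psi)=E_{\lambda_1}(\psi)+\tfrac{\lambda_2^2-\lambda_1^2}{2}\int_{\R\times\T}|\partial_y\psi|^2$ yield, after letting $\eps\to0$ and combining with monotonicity, the sandwich
$$
\boI_{\lambda_1}(p)\le\boI_{\lambda_2}(p)\le\frac{\lambda_2^2}{\lambda_1^2}\,\boI_{\lambda_1}(p).
$$
Because $\boI_{\lambda_1}(p)\le\gI(p)<\infty$ by \eqref{eq:estim-sup-I}, both bounds converge to $\boI_{\lambda_1}(p)$ as $\lambda_2\to\lambda_1$ (and symmetrically as $\lambda_1\uparrow\lambda_2$), which gives continuity on $\R_+^*$.

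For the limit $\boI_\lambda(p)\to0$, I would first reduce the problem. Using the constant test function $\psi\equiv1$ we have $\boI_\lambda(0)=0$, and by the symmetry \eqref{eq:sym-I} we may assume $p\in(0,\pi/2]$. By the monotonicity already established it suffices to exhibit, for each small $\lambda$, a single admissible competitor with energy tending to $0$. Here I would exploit the scaling recorded after \eqref{def:E-energy}: writing $\psi_\lambda(x,y)=\psi(x,\lambda y)$, the energy $E_\lambda(\psi)$ equals $\lambda$ times the (unscaled) Ginzburg--Landau energy of $\psi_\lambda$ on the long cylinder $\R\times\T_{1/\lambda}$, and an analogous change of variables shows that the untwisted momentum scales as $[P](\psi)=\lambda\,[P](\psi_\lambda)$ modulo $\pi$. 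Thus the task becomes: construct a finite-energy profile $\Phi$ on $\R\times\T_{1/\lambda}$ whose untwisted momentum equals $p/\lambda$ (so that $\lambda$ times it is $p$ modulo $\pi$) and whose Ginzburg--Landau energy is $o(1/\lambda)$; the corresponding $\psi$ then satisfies $[P](\psi)=p$ and $E_\lambda(\psi)=o(1)$.

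The main obstacle is precisely this construction, and I expect it to require a genuinely two-dimensional, vortex-type profile. The point is that the untwisted momentum of \eqref{def:P-momentum} is insensitive to phase windings carried where $|\Phi|=1$: pure phase gradients can be made energetically arbitrarily cheap but contribute no untwisted momentum, while any configuration kept close to modulus one over a long region pays a quadratic potential price $\tfrac14\int(1-|\Phi|^2)^2$ that is incompatible with small energy. The prescribed momentum must therefore be produced by genuine density depletion, and the virtue of the long-cylinder ($\lambda\to0$) regime is that such depletion becomes comparatively cheap. Concretely, I would take $\Phi$ to be a vortex--antivortex pair with the two zeros placed on $\{x=0\}$ at transverse separation $d$ in $\T_{1/\lambda}$, so that $\Phi\to\text{const}$ as $|x|\to\infty$ and $\Phi\in X(\R\times\T_{1/\lambda})$. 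As $d$ increases from $0$ the untwisted momentum of the pair grows continuously and essentially linearly, whereas its energy grows only logarithmically; choosing $d=d_\lambda=O(1/\lambda)$ to pin the momentum at $p/\lambda$ (which fits inside the circle of length $1/\lambda$ since $p\le\pi/2<2\pi$) gives a Ginzburg--Landau energy of order $\log(1/\lambda)$, whence
$$
\boI_\lambda(p)\le E_\lambda(\psi)=\lambda\cdot O\big(\log(1/\lambda)\big)\longrightarrow0.
$$
The delicate steps I anticipate are making the vortex-pair ansatz precise, verifying membership in $X$, computing its untwisted momentum in the sense of \eqref{def:P-momentum} and showing this quantity increases continuously and without bound in $d$, and establishing the logarithmic energy asymptotics; everything else follows from the soft arguments above.
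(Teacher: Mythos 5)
Your proposal follows essentially the same route as the paper: the identical sandwich $\boI_{\lambda_1}(p)\le\boI_{\lambda_2}(p)\le(\lambda_2/\lambda_1)^2\,\boI_{\lambda_1}(p)$ for monotonicity and continuity, and the same rescaling to the long cylinder $\R\times\T_{1/\lambda}$ followed by a transversally separated vortex--antivortex pair whose momentum grows linearly in the separation while its energy grows only logarithmically, yielding $\boI_\lambda(p)\le\lambda\,O(\log(1/\lambda))\to 0$. The only point you gloss over is the endpoint $p=\pi/2$: the paper's quantitative error bound on the pair's momentum makes its achievable range fall just short of $\pi L/2$, so that case is handled separately by the $\sqrt{2}$-Lipschitz continuity of $\boI_\lambda$ in $p$.
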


With Lemma~\ref{lem:prop-I-lambda} at hand, we are in position to complete the proof Theorem~\ref{thm:crit-val}.

\begin{proof}[End of the proof of Theorem~\ref{thm:crit-val}]
Set $\Lambda := \{ \lambda \in (0, + \infty) \text{ s.t. } \boI_\mu(p) = \gI(p)
\text{ for any } \mu \geq \lambda \}$ and $\lambda_p = \inf \Lambda$. We have
just shown in Proposition~\ref{prop:loc-min} that $\Lambda$ is non-empty. Its infimum $\lambda_p$ cannot be equal to $0$ due to~\eqref{eq:lim-I-0}. Hence, $\lambda_p$ is positive and moreover, a minimum by continuity of the map $\lambda \mapsto \boI_\lambda(p)$.

Since this map is also non-decreasing, the minimal value $\boI_\lambda(p)$ is strictly less than $\gI(p)$ when $0 < \lambda < \lambda_p$. Moreover, if a function $\psi \in X(\R \times \T)$, with $[P](\psi) = p$ modulo $\pi$, only depends on the variable $x$, then it follows from Proposition~\ref{prop:exist-min1} that
$$
E_\lambda(\psi) = E(\psi) \geq \gI(p) > \boI_\lambda(p).
$$
Therefore, a possible minimizer cannot only depend on the variable $x$.

When $\lambda \geq \lambda_p$ instead, we have $\boI_\lambda(p) = \gI(p) = E_\lambda(\gu_{c_p})$, so that the profile $\gu_{c_p}$ is a minimizer of the minimization problem~\eqref{def:I-problem}. For $\lambda > \lambda_p$, assume for the sake of a contradiction the existence of a minimizer $\psi \in X(\R \times \T)$ such that $E_\lambda(\psi) = \boI_\lambda(p) = \gI(p)$, $[P](\psi) = p$ modulo $\pi$ and
$$
\int_{\R \times \T} |\partial_y \psi|^2 \neq 0.
$$
For $\lambda_p < \mu < \lambda$, we obtain
$$
\boI_\mu(p) \leq E_\mu(\psi) = E_\lambda(\psi) + \frac{\mu^2 - \lambda^2}{2} \int_{\R \times \T} |\partial_y \psi|^2 < E_\lambda(\psi) = \gI(p),
$$
which contradicts the definition of the minimum $\lambda_p$. Therefore, a possible minimizer cannot depend on the variable $y$, so that it minimizes the 1D Ginzburg-Landau energy at fixed untwisted momentum. In view of Proposition~\ref{prop:exist-min1}, the profile $\gu_{c_p}$ is therefore the unique minimizer of the minimization problem~\eqref{def:I-problem} up to translation and phase shift. This concludes the proof of Theorem~\ref{thm:crit-val}.
\end{proof}

%%%%%%%%%%%%%%%%%%%%%%%%%%%%%%%%%%%
%%%%%%%%%%%%%%%%%%%%%%%%%%%%%%%%%%%
%%%%%%%%%%%%%%%%%%%%%%%%%%%%%%%%%%%
\section{Details in the proof of Theorem~\ref{thm:crit-val}}
\label{sec:proof-crit-val}
%%%%%%%%%%%%%%%%%%%%%%%%%%%%%%%%%%%
%%%%%%%%%%%%%%%%%%%%%%%%%%%%%%%%%%%
%%%%%%%%%%%%%%%%%%%%%%%%%%%%%%%%%%%

In Appendix~\ref{sec:dimension-one}, we introduce the set of
non-vanishing functions
$$
NV\!X(\R) := \big\{ \psi \in X(\R) \text{ s.t. } \inf_{x \in \R} |\psi(x)| > 0 \big\},
$$
and show that, for $\psi = \rho e^{i\theta} \in NV\!X(\R)$, the momentum 
$$
P(\psi) = \frac12 \int_\R (1-\rho^2) \theta' 
$$
is well-defined and satisfies $P(\psi) = [P](\psi)$ modulo $\pi.$ This set is
used throughout for the proof of Theorem~\ref{thm:crit-val}.

%%%%%%%%%%%%%%%%%%%%%%%%%%%%
%%%%%%%%%%%%%%%%%%%%%%%%%%%%
\subsection{Proof of Lemma~\ref{lem:dens-I}}
%%%%%%%%%%%%%%%%%%%%%%%%%%%%
%%%%%%%%%%%%%%%%%%%%%%%%%%%%

Adapting the argument in~\cite[Lemma 3.3]{BetGrSa1} we have:

\begin{lem}
\label{lem:scal-test-funct}
Let $\lambda > 0$, $p \in \R$ and $\alpha \in \R$ be fixed. There exists a
sequence $(\psi_n)_{n \in \N}$ of smooth functions in $NV\!X(\R)$ such that the functions $\psi_n - e^{i \alpha}$ are compactly supported, and with
$$
P(\psi_n) = p \quad \text{and} \quad E_\lambda(\psi_n) = E(\psi_n) \to
\sqrt{2}\, |p|,
$$
as $n \to \infty$.
\end{lem}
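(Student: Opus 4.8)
The plan is to construct the $\psi_n$ by hand in polar form, exploiting the fact that momentum can be generated at an energy cost approaching the sonic rate $\sqrt2$ per unit of momentum. Since the $\psi_n$ will depend on $x$ only, one has $E_\lambda(\psi_n)=E(\psi_n)$, so $\lambda$ plays no role. Writing $\psi=\rho e^{i\theta}\in NV\!X(\R)$ and recalling $|\psi'|^2=(\rho')^2+\rho^2(\theta')^2$, the two relevant quantities are
$$
E(\psi)=\frac12\int_\R\big((\rho')^2+\rho^2(\theta')^2\big)+\frac14\int_\R\big(1-\rho^2\big)^2,\qquad P(\psi)=\frac12\int_\R(1-\rho^2)\,\theta'.
$$
By conjugation (which reverses the sign of $P$ and preserves $E$), and the trivial choice $\psi_n\equiv e^{i\alpha}$ when $p=0$, it suffices to treat $p>0$. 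The heuristic is that on a region where $1-\rho^2\equiv s$ and $\theta'\equiv\omega$ are small constants over a length $\ell$, the momentum is $\tfrac12 s\omega\ell$ while the energy is $\tfrac12\omega^2\ell+\tfrac14 s^2\ell$ to leading order; minimizing the latter at fixed $s\omega\ell$ forces the balance $s=\sqrt2\,\omega$, for which the energy equals exactly $\sqrt2$ times the momentum. This AM--GM balance is the mechanism producing the constant $\sqrt2$.

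Accordingly, I would fix $\omega_n=1/n$ and $s_n=\sqrt2\,\omega_n=\sqrt2/n$, and build $\psi_n=\rho_n e^{i\theta_n}$ from three pieces glued smoothly: a \emph{core} interval of length $\ell_n$ on which $1-\rho_n^2\equiv s_n$ and $\theta_n'\equiv\omega_n$; a \emph{recovery} interval of length $L_n\gg\ell_n$ on which $\rho_n\equiv1$ and $\theta_n$ decreases back so that $\int_\R\theta_n'=0$; and a bounded number of fixed-width transition layers interpolating $\rho_n$ between $1$ and $\sqrt{1-s_n}$ and $\theta_n'$ between its constant values. Outside a compact set we set $\rho_n\equiv1$ and $\theta_n\equiv\alpha$, so $\psi_n-e^{i\alpha}$ is compactly supported, $\psi_n$ is smooth, and $\rho_n\geq\sqrt{1-s_n}>0$ guarantees $\psi_n\in NV\!X(\R)$. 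Because $\rho_n\equiv1$ on the recovery interval, that region does not contribute to $P(\psi_n)$; hence, up to the $O(1)$-width transition corrections, $P(\psi_n)=\tfrac12 s_n\omega_n\ell_n+o(1)$, which depends continuously on $\ell_n$ and, for the chosen $s_n,\omega_n$, is smaller than $p$ for $\ell_n$ small and larger than $p$ for $\ell_n$ large. By the intermediate value theorem I would then \emph{define} $\ell_n$ to be a value for which $P(\psi_n)=p$ \emph{exactly}; one checks $\ell_n\sim\sqrt2\,p\,n^2\to\infty$.

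It then remains to estimate $E(\psi_n)$ piece by piece. On the core, the phase-kinetic and potential terms give $\tfrac12(1-s_n)\omega_n^2\ell_n+\tfrac14 s_n^2\ell_n$; using $s_n=\sqrt2\,\omega_n$ and $\omega_n^2\ell_n\to\sqrt2\,p$, this tends to $\tfrac{\sqrt2}{2}p+\tfrac{\sqrt2}{2}p=\sqrt2\,p$. The transition layers have bounded width and gradients $O(s_n)+O(\omega_n)=O(1/n)$, so their total energy is $O(1/n^2)\to0$; the recovery interval carries only phase-kinetic energy $\tfrac12\int(\theta_n')^2\sim A_n^2/(2L_n)$ with total turning $A_n=\omega_n\ell_n\sim\sqrt2\,p\,n$, which tends to $0$ once $L_n$ is chosen with $L_n/n^2\to\infty$ (for instance $L_n=n^3$ gives $A_n^2/(2L_n)\sim p^2/n$). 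Summing the three contributions yields $E_\lambda(\psi_n)=E(\psi_n)\to\sqrt2\,p=\sqrt2\,|p|$, as required.

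The main obstacle is the bookkeeping that reconciles the \emph{exact} momentum constraint with the \emph{asymptotic} energy identity: one must verify that enforcing $P(\psi_n)=p$ through the choice of $\ell_n$ remains compatible with the scaling $\omega_n^2\ell_n\to\sqrt2\,p$, and that every auxiliary region (the transition layers and the phase-recovery interval) contributes energy tending to $0$ rather than to a positive constant. The sharp value $\sqrt2$ is dictated entirely by the balance $s_n=\sqrt2\,\omega_n$, and the construction is arranged precisely so that the leading core energy saturates this balance while all remaining contributions are of lower order.
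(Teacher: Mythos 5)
Your construction is correct, and it rests on exactly the same mechanism as the paper's proof: slowly modulated test functions in hydrodynamic form $\rho e^{i\theta}$ saturating the sonic balance $1-\rho^2=\sqrt{2}\,\theta'$, with amplitude $O(1/n)$ and support length $O(n^2)$, so that the momentum is held fixed while the energy converges to $\sqrt{2}\,|p|$ from above. The only real difference is in the implementation. The paper scales a single compactly supported profile $\xi$, setting $\rho=1-\mu\varepsilon\,\xi'(\varepsilon x)$ and $\theta=\alpha+\sqrt{2}\,\mu\,\xi(\varepsilon x)$ with $\mu=n$ and $\varepsilon_n\sim p/(\sqrt{2}\,n^2)$; because $\xi$ has compact support, both $\rho$ and $\theta$ return to their background values automatically, and the momentum density $\tfrac12(1-\rho^2)\theta'\approx\sqrt{2}\,\mu^2\varepsilon^2(\xi')^2$ is nonnegative everywhere, so no separate phase-unwinding region is needed. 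Your plateau construction instead accumulates a large total phase $A_n\sim\sqrt{2}\,p\,n$ on the core and must dissipate it over an auxiliary interval of length $L_n\gg n^2$ with $\rho\equiv 1$; this works (the recovery energy $A_n^2/(2L_n)\to 0$ and the region carries no momentum), but it is an extra device, and it makes the intermediate-value adjustment of $\ell_n$ slightly more delicate than the paper's one-parameter tuning of $\varepsilon_n$. Both arguments are complete; the paper's version is just more economical.
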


This lemma relies on constructing special test functions by a scaling argument. For the sake of completeness, we provide the following details.

\begin{proof}
We argue as in the proof of~\cite[Lemma 3.3]{BetGrSa1}. Assume first that $p$ is positive. Consider a function $\xi \in \boC_c^\infty(\R)$ and two positive numbers $\mu$ and $\varepsilon$ such that $\mu \varepsilon \| \partial_x \xi \|_{L^\infty(\R)} < 1$. Set
$$
\rho(x, y) = 1 - \mu \varepsilon \, \partial_x \xi(\varepsilon x), \quad \theta(x, y) = \alpha + \sqrt{2} \mu \, \xi(\varepsilon x) \quad \text{and} \quad \psi(x, y) = \rho(x, y) e^{i \theta(x, y)},
$$
for any $(x, y) \in \R \times \T$. We compute
\begin{equation}
\label{demarre}
\frac{1}{2} \int_{\R \times \T} |\partial_x \psi|^2 = \frac{1}{2} \int_{\R \times \T} \big( (\partial_x \rho)^2 + \rho^2 (\partial_x \theta)^2 \big) = \frac{\mu^2 \varepsilon^3}{2} \int_\R (\partial_{xx} \xi)^2 + \mu^2 \varepsilon \int_\R \big( 1 - \mu \varepsilon \, \partial_x \xi \big)^2 (\partial_x \xi)^2,
\end{equation}
and
\begin{equation}
\label{turgis}
\frac{1}{4} \int_{\R \times \T} \big( 1 - |\psi|^2 \big)^2 = \mu^2 \varepsilon \int_{\R \times \T} (\partial_x \xi)^2 - \mu^3 \varepsilon^2 \int_{\R \times \T} (\partial_x \xi)^3 + \frac{\mu^4 \varepsilon^3}{4} \int_{\R \times \T} (\partial_x \xi)^4.
\end{equation}
Hence, the function $\psi$ lies in $NV\!X(\R)$, so that, by definition~\eqref{def:P-NV} and Lemma~\ref{lem:P-1-2}, its momentum is given by
$$
P(\psi) = \frac{1}{2} \int_{\R \times \T} \big( 1 - \rho^2 \big) \partial_x \phi = \sqrt{2} \mu^2 \varepsilon \int_{\R \times \T} (\partial_x \xi)^2 - \frac{\mu^3 \varepsilon^2}{\sqrt{2}}\int_{\R \times \T} (\partial_x \xi)^3.
$$
We now assume that the $L^2$-norm of the derivative $\partial_x \xi$ is equal to
$1$ and we choose $\mu_n = n$ for a given integer $n$. At least when $n$
is large enough, we can find a positive number $\varepsilon_n$ such that
$P(\psi) = p$. Moreover, we have
$$
\varepsilon_n \sim \frac{p}{\sqrt{2} n^2},
$$
as $n \to \infty$. In particular, we check that
$$
\mu_n \varepsilon_n \to 0,
$$
as $n \to \infty$, so that the condition $\mu_n \varepsilon_n \| \partial_x \xi \|_{L^\infty(\R)} < 1$ is indeed satisfied for $n$ large enough. In view of~\eqref{demarre} and~\eqref{turgis}, we also obtain
$$
E_\lambda(\psi) \underset{n \to \infty}{\to} \sqrt{2} p.
$$
In conclusion, the functions $\psi_n = \psi$ satisfy all the statements in
Lemma~\ref{lem:scal-test-funct} for $p$ positive. When $p$ is negative,
the functions $\tilde \psi_n = e^{2 i \alpha} \overline{\psi_n}$ also
satisfy these conclusions, while for $p=0$ it suffices to take $\psi_n =
1$. This completes the proof of Lemma~\ref{lem:scal-test-funct}.
\end{proof}

Combining Lemma~\ref{lem:scal-test-funct} with Corollary~\ref{cor:dens-X}, we are now in position to complete the proof of Lemma~\ref{lem:dens-I}.

\begin{proof}[End of the proof of Lemma~\ref{lem:dens-I}]
Going to Corollary~\ref{cor:dens-X} and Lemma~\ref{lem:cont-P2}, we can find a
sequence of smooth functions $\tilde{\psi}_n$ in $X(\R \times \T)$,
which satisfy statements $(i)$ and $(iii)$ of Lemma~\ref{lem:dens-I} for
numbers $\tilde R_n^\pm$ and $\tilde \theta_n^\pm$, as well as
\begin{equation}
\label{senechal}
[P](\tilde{\psi}_n) \to [P](\psi),
\end{equation}
in the limit $n \to \infty$. Hence we are reduced to check that we can modify
the functions $\tilde{\psi}_n$, so that their momentum is exactly equal
to the momentum of $\psi$. When these two quantities are actually equal,
we simply set $\psi_n = \tilde{\psi}_n$. When they are not, we invoke
Lemma~\ref{lem:scal-test-funct} with $\alpha = \tilde{\theta}_n^+$ and $p_n \in
(- \pi/2, \pi/2]$ such that $p_n = [P](\tilde{\psi}_n) - [P](\psi)$
modulo $\pi$. This provides a smooth function $\check{\psi}_n$ such that
the function $\check{\psi}_n - e^{i \tilde{\theta}_n^+}$ is compactly supported
in an interval of the form $[- \check{R}_n^-, \check{R}_n^+]$, with
$P(\check{\psi}_n) = p_n$ and $E_\lambda(\check{\psi}_n) \leq \sqrt{2} |p_n|$. We next set
$$
\psi_n(x, y) = \begin{cases} \tilde{\psi}_n(x, y) & \text{if } x \leq \tilde{R}_n^+ + 1, \\ \check{\psi}_n(x - \tilde{R}_n^+ - \check{R}_n^- - 2, y) & \text{if } x \geq \tilde{R}_n^+ + 1. \end{cases}
$$
By construction, the function $\psi_n$ is smooth, belongs to $X(\R \times \T)$ and satisfies statement $(i)$ of Lemma~\ref{lem:dens-I}. Moreover, it follows from Lemmas~\ref{lem:def-P-2} and~\ref{lem:val-P-2} that
$$
[P](\psi_n) = [P](\tilde{\psi}_n) + P(\check{\psi}_n) = [P](\psi),
$$
modulo $\pi$. Finally, we also derive from~\eqref{senechal} that
$$
E_\lambda(\psi_n) = E_\lambda(\tilde{\psi}_n) + E_\lambda(\check{\psi}_n) \to E_\lambda(\psi),
$$
as $n \to \infty$. This concludes the proof of Lemma~\ref{lem:dens-I}.
\end{proof}

%%%%%%%%%%%%%%%%%%%%%%%%%%%%%
%%%%%%%%%%%%%%%%%%%%%%%%%%%%%
\subsection{Proof of Lemma~\ref{lem:prop-I-p}}
%%%%%%%%%%%%%%%%%%%%%%%%%%%%%
%%%%%%%%%%%%%%%%%%%%%%%%%%%%%

For an arbitrary $p \in (- \pi/2, \pi/2]$, there exists at least some function in $X(\R)$
(and hence in $X(\R \times \T)$) with untwisted momentum equal to $p$. Indeed,
if $p \neq 0$ it suffices to consider $\gu_{c_p}$, and for $p = 0$ one can take
a constant function with unit modulus. As a consequence, the infimum of the minimization problem~\eqref{def:I-problem} is finite and the corresponding minimal energy $\boI_\lambda(p)$ is well-defined on $(- \pi/2, \pi/2]$.

Observe next that the energy $E_\lambda$ is non-negative, so that the minimal energy $\boI_\lambda$ is also non-negative. Note similarly that when a function $\psi \in X(\R \times \T)$ is decomposed as $\psi = \hat{\psi}_0 + w_0$ according to Proposition~\ref{prop:X-1-2}, then its conjugate $\overline{\psi}$ is given by $\overline{\psi} = \overline{\hat{\psi}_0} + \overline{w_0}$. In view of Lemma~\ref{lem:def-P-1}, ~\eqref{eq:dev-E-1-2} and~\eqref{def:P-2}, this gives
$$
E_\lambda \big( \overline{\psi} \big) = E_\lambda \big( \psi \big) \quad \text{and} \quad [P] \big( \overline{\psi} \big) = - [P]\big( \psi \big),
$$
so that by~\eqref{def:I-problem}, the function $\boI_\lambda$ satisfies~\eqref{eq:sym-I}.

In order to establish~\eqref{eq:estim-sup-I}, we consider the minimizer
$\gu_{c_p}$ of Proposition~\ref{prop:exist-min1} as a test function for the
minimization problem $\boI_\lambda(p)$. Since $[P](\gu_{c_p}) = p$ modulo
$\pi$, we infer from Proposition~\ref{prop:boI-problem} that 
\begin{equation}
\label{eq:gE-val}
\gI(p) = E(\gu_{c_p}) = \frac{1}{3} \big(2 - c_p^2 \big)^\frac{3}{2},
\end{equation}
and therefore
$$
\boI_\lambda(p) \leq E_\lambda(\gu_{c_p}) = E(\gu_{c_p}) = \gI(p).
$$
At this stage, consider the function $\Xi$ given by
\begin{equation}
\label{def:Xi}
\Xi(c) = \frac{\pi}{2} - \arctan \Big( \frac{c}{\sqrt{2 - c^2}} \Big) - \frac{c}{2} \sqrt{2 - c^2},
\end{equation}
for $0 \leq c < \sqrt{2}$. This function is smooth on $[0, \sqrt{2})$, with
\begin{equation}
\label{eq:deriv-Chi}
\Xi'(c) = - \sqrt{2 - c^2} \neq 0.
\end{equation}
Since $\Xi(c) \to \pi/2$ as $c \to 0$, and $\Xi(c) \to 0$ as $c \to \sqrt{2}$,
the function $\Xi$ is a smooth diffeomorphism from $[0, \sqrt{2})$ to $(0,
\pi/2]$. Recall that $c_p$ was defined by the identity $[P](\gu_{\gc_p}) = p$ modulo
$\pi$. By Proposition~\ref{prop:boI-problem}, we have $c_p := \text{sign}(p) \gc_p$ where $\gc_p$ is the unique number in $[0, \sqrt{2})$ such that
\begin{equation}
\label{def:gc-p}
\frac{\pi}{2} - \arctan \bigg( \frac{\gc_p}{(2 - \gc_p^2)^{1/2}} \bigg) -
\frac{\gc_p}{2} (2 - \gc_p^2)^{1/2} = |p|.
\end{equation}
The function $p \to \gc_p$ is therefore well-defined and smooth on $(0, \pi/2]$, with
\begin{equation}
\label{eq:deriv-cp}
\frac{d \gc_p}{dp} = - \frac{1}{(2 - \gc_p^2)^{1/2}}.
\end{equation}
Moreover, we also have that $\gc_p \to \sqrt{2}$ as $p \to 0$. Going back to~\eqref{eq:gE-val}, we deduce that the function $\gI$ is smooth on $(0, \pi/2]$, with
$$
\gI(p) \to 0 = \gI(0),
$$
as $p \to 0$, and
\begin{equation}
\label{charles-V}
|\gI'(p)| = c_p < \sqrt{2},
\end{equation}
for $p \in (0, \pi/2]$. Hence we obtain that $\gI(p) < \sqrt{2} p.$

We finally turn to the Lipschitz continuity of the function $\boI_\lambda$. We
argue as in the proof of~\cite[Lemma 3.4]{BetGrSa1}. Fix two numbers $p$ and $q$
in $(0, \pi/2]$, with $p \neq q$. Since there exists a function $\psi \in X(\R
\times \T)$ such that $[P](\psi) = p$ modulo $\pi$, we can invoke
Lemma~\ref{lem:dens-I} to exhibit a minimizing sequence of smooth functions
$\tilde{\psi}_n$ for the problem $\boI_\lambda(p)$, which moreover satisfy
statement $(i)$ in Lemma~\ref{lem:dens-I} for numbers $\tilde{R}_n^\pm$ and
$\tilde{\theta}_n^\pm$. Going back to Lemma~\ref{lem:scal-test-funct}, there
also exists a sequence of smooth functions $\check{\psi}_n \in NV\!X(\R)$ such
that the function $\check{\psi}_n - e^{i \tilde{\theta}_n^+}$ is compactly supported in an interval of the form $[- \check{R}_n^-, \check{R}_n^+]$, with $[P](\check{\psi}_n) = q - p$ modulo $\pi$ and $E_\lambda(\check{\psi}_n) \to \sqrt{2} |q - p|$, as $n \to \infty$. Setting
$$
\psi_n(x, y) = \begin{cases} \tilde{\psi}_n(x, y) & \text{if } x \leq \tilde{R}_n^+ + 1, \\ \check{\psi}_n(x - \tilde{R}_n^+ - \check{R}_n^- - 2, y) & \text{if } x \geq \tilde{R}_n^+ + 1, \end{cases}
$$
we check that the function $\psi_n$ is smooth and belongs to $X(\R \times \T)$, with
$$
E_\lambda(\psi_n) = E_\lambda(\tilde{\psi}_n) + E_\lambda(\check{\psi}_n).
$$
Moreover, we can apply Lemmas~\ref{lem:def-P-2} and~\ref{lem:val-P-2} in order to compute
\begin{align*}
[P](\psi_n) = \frac{1}{2} \int_{\R \times \T} \big\langle i \partial_x \psi_n, \psi_n \big\rangle_\C + \frac{1}{2} \big( \theta_n^+ - \theta_n^- \big) = [P](\tilde{\psi}_n) + [P](\check{\psi}_n) = q,
\end{align*}
modulo $\pi$. Hence, we obtain
$$
\boI_\lambda(q) \leq E_\lambda(\psi_n) \leq E_\lambda(\tilde{\psi}_n) + E_\lambda(\check{\psi}_n),
$$
so that, in the limit $n \to \infty$,
$$
\boI_\lambda(q) \leq \boI_\lambda(p) + \sqrt{2} |q - p|.
$$
Since the numbers $p$ and $q$ are arbitrary chosen, we can permute them in the previous inequality and conclude that the function $\boI_\lambda$ is $\sqrt{2}$-Lipschitz on the interval $(0, \pi/2]$.

In the general case where $- \pi/2 < p, q \leq \pi/2$, we can combine~\eqref{eq:sym-I} and~\eqref{eq:estim-sup-I} in order to write
$$
\big| \boI_\lambda(q) - \boI_\lambda(p) \big| = \big| \boI_\lambda(|q|) - \boI_\lambda(|p|) \big| \leq \sqrt{2} \big| |q| - |p| \big| \leq \sqrt{2} |q - p|.
$$
Hence, the function $\boI_\lambda$ is Lipschitz continuous on $(- \pi/2, \pi/2]$, with Lipschitz constant at most $\sqrt{2}$. This completes the proof of Lemma~\ref{lem:prop-I-p}. \qed

%%%%%%%%%%%%%%%%%%%%%%%%%%%%%
%%%%%%%%%%%%%%%%%%%%%%%%%%%%%
\subsection{Proof of Lemma~\ref{lem:asympt-I}}
%%%%%%%%%%%%%%%%%%%%%%%%%%%%%
%%%%%%%%%%%%%%%%%%%%%%%%%%%%%

Our starting point is inequality~\eqref{eq:estim-sup-I}, which gives the upper bound 
\begin{equation}
\label{jean-le-bon}
E_{\lambda_n}(\psi_n) \leq \gI(p) + \varepsilon_n,
\end{equation}
in view of the second assumption in~\eqref{eq:cond-psi-n}. We next rely on formula~\eqref{eq:val-P-2} to obtain an easy-to-handle expression of the momentum $[P](\Psi_n)$. Going back to statement $(i)$ in Lemma~\ref{lem:dens-I}, there exist positive numbers $R_n^\pm$ and real numbers $\theta_n^\pm$ for which
$$
\psi_n(x, y) = e^{i \theta_n^\pm},
$$
for any $\pm x \geq \pm R_n^\pm$ and any $y \in \T$. According to Lemma~\ref{lem:val-P-2}, the momentum $[P](\psi_n)$ is then given by
\begin{equation}
\label{philippe-le-bon}
[P](\psi_n) = \gp(\psi_n) := \int_\T p_n(y) \, dy \quad \text{ modulo } \pi,
\end{equation}
with
\begin{equation}
\label{charles-le-mauvais}
p_n(y) := \frac{1}{2} \int_\R \langle i \partial_x \psi_n(x, y), \psi_n(x, y) \rangle_\C \, dx + \frac{1}{2} \big( \theta_n^+ - \theta_n^- \big) = [P] \big( \psi_n(\cdot, y) \big) \quad \text{ modulo } \pi,
\end{equation}
for almost any $y \in \T$. Since the functions $\psi_n$ are smooth and their derivatives are compactly supported, the functions $p_n$ in the previous definition are well-defined and smooth on $\T$, with
$$
p_n'(y) = \int_\R \langle i \partial_x \psi_n(x, y), \partial_y \psi_n(x, y) \rangle_\C \, dx,
$$
by integration by parts. Hence, we infer from the Cauchy-Schwarz inequality and~\eqref{jean-le-bon} that
$$
\int_\T |{p'_n}(y)| \, dy \leq \frac{1}{\lambda_n} E_{\lambda_n}(\psi_n) \leq \frac{1}{\lambda_n} \big( \gI(p) + \varepsilon_n \big),
$$
so that the Poincar\'e-Wirtinger inequality in~\cite{Brezis0} provides
\begin{equation}
\label{philippe-le-hardi}
\big\| p_n - \gp(\psi_n) \big\|_{L^\infty(\T)} \leq \int_\T |p_n'(y)| \, dy \to 0,
\end{equation}
as $n \to \infty$. At this stage, we write
\begin{equation}
\label{philippe-le-bel}
E_{\lambda_n}(\psi_n) = \int_\T E \big( \psi_n(\cdot, y) \big) \, dy +
\frac{\lambda_n^2}{2} \int_{\R \times \T} |\partial_y \psi_n|^2 \geq \int_\T \gI \big( q_n(y) \big) \, dy + \frac{\lambda_n^2}{2} \int_{\R \times \T} |\partial_y \psi_n^2|.
\end{equation}

For $p \neq \pi/2$, the number $q_n(y)$ is the unique number in the interval $(-
\pi/2, \pi/2]$ such that $q_n(y) = p_n(y)$ modulo $\pi$. Since $\gp(\psi_n) = p$
modulo $\pi$, we can invoke~\eqref{charles-V} and~\eqref{philippe-le-hardi} in 
order to obtain
\begin{equation}
\label{charles-VIII}
\big| \gI \big( q_n(y) \big) - \gI \big( p \big) \big| \leq \sqrt{2} \, \big| q_n(y) - p \big| = \sqrt{2} \, \big| p_n(y) - \gp(\psi_n) \big|,
\end{equation}
for $n$ large enough.

For $p = \pi/2$, the number $q_n(y)$ is the absolute value of the unique number $\tilde{q}_n(y)$ in the interval $(- \pi/2, \pi/2]$ such that $\tilde{q}_n(y) = p_n(y)$ modulo $\pi$. Using the fact that the minimal energy $\gI$ is an even function, formula~\eqref{philippe-le-bel} remains available for this definition of the numbers $q_n(y)$. Moreover, we can argue as in the case $p \neq \pi/2$, so as to get again
$$
\Big| \gI \Big( q_n(y) \Big) - \gI \Big( \frac{\pi}{2} \Big) \Big| \leq \sqrt{2} \, \Big| q_n(y) - \frac{\pi}{2} \Big| = \sqrt{2} \, |p_n(y) - \gp(\psi_n)|.
$$

In view of~\eqref{philippe-le-bel}, we conclude that
$$
E_{\lambda_n}(\psi_n) \geq \gI \big( p \big) - \sqrt{2} \big\| p_n - \gp(\psi_n) \big\|_{L^\infty(\T)} + \frac{\lambda_n^2}{2} \int_{\R \times \T} |\partial_y \psi_n^2|.
$$
Combining with~\eqref{jean-le-bon} and~\eqref{philippe-le-hardi}, we first deduce that
\begin{equation}
\label{jean-sans-peur}
\lambda_n^2 \int_{\R \times \T} |\partial_y \psi_n^2| \to 0,
\end{equation}
as $n \to \infty$. Moreover, it also follows from~\eqref{eq:estim-sup-I} and~\eqref{eq:cond-psi-n} that
$$
\gI(p) \geq \boI_{\lambda_n}(p) \geq \gI \big( p \big) - \varepsilon_n - \sqrt{2} \big\| p_n - \gp(\psi_n) \big\|_{L^\infty(\T)} + \frac{\lambda_n^2}{2} \int_{\R \times \T} |\partial_y \psi_n^2|.
$$
In view of~\eqref{philippe-le-hardi} and~\eqref{jean-sans-peur}, this is enough to obtain the convergence in~\eqref{eq:borne-I} and complete the proof of Lemma~\ref{lem:asympt-I}. \qed

%%%%%%%%%%%%%%%%%%%%%%%%%%%%%%
%%%%%%%%%%%%%%%%%%%%%%%%%%%%%%
\subsection{Proof of Lemma~\ref{lem:Pohozaev}}
%%%%%%%%%%%%%%%%%%%%%%%%%%%%%%
%%%%%%%%%%%%%%%%%%%%%%%%%%%%%%

It is classical that the Pohozaev identity in~\eqref{eq:Pohozaev} is based on applying the scaling $(x, y) \mapsto (\tau x, y)$ for positive numbers $\tau$. For a fixed integer $n \geq 0$, we therefore introduce the functions
$$
\xi_\tau(x, y) = \psi_n(\tau x, y).
$$
Using the notation of the proof of Lemma~\ref{lem:asympt-I}, we check that the functions $\xi_\tau$ are smooth on $\R \times \T$ and satisfy statement $(i)$ in Lemma~\ref{lem:dens-I} for the same numbers $\theta_n^\pm$ as the function $\psi_n$. Arguing as for~\eqref{philippe-le-bon}, their untwisted momentum $[P](\xi_\tau)$ is given by the formula
$$
[P](\xi_\tau) = \gp(\xi_\tau) := \frac{1}{2} \int_{\R \times \T} \langle i
\partial_x \xi_\tau(x, y), \xi_\tau(x, y) \rangle_\C \, dx \, dy + \frac{1}{2}
\big( \theta_n^+ - \theta_n^- \big) \quad \text{ modulo } \pi.
$$
By definition of the functions $\xi_\tau$ and by~\eqref{philippe-le-bon}, this quantity reduces to
\begin{equation}
\label{charles-le-fou}
\gp(\xi_\tau) = \frac{1}{2} \int_{\R \times \T} \langle i \partial_x \psi_n(x, y), \psi_n(x, y) \rangle_\C \, dx \, dy + \frac{1}{2} \big( \theta_n^+ - \theta_n^- \big) = \gp(\psi_n) = [P](\psi_n) \quad \text{ modulo } \pi,
\end{equation}
for any positive number $\tau$. Similarly, we compute
\begin{equation}
\label{charles-le-temeraire}
\frac{1}{2} \int_{\R \times \T} |\partial_x \xi_\tau|^2 = \frac{\tau}{2} \int_{\R \times \T} |\partial_x \psi_n|^2 := A_n \tau,
\end{equation}
and
\begin{equation}
\label{charles-le-travaillant}
\frac{\lambda_n^2}{2} \int_{\R \times \T} |\partial_y \xi_\tau|^2 + \frac{1}{4} \int_{\R \times \T} \big( 1 - |\xi_\tau|^2 \big)^2 = \frac{1}{\tau} \bigg( \frac{\lambda_n^2}{2} \int_{\R \times \T} |\partial_y \psi_n|^2 + \frac{1}{4} \int_{\R \times \T} \big( 1 - |\psi_n|^2 \big)^2 \bigg) := \frac{B_n}{\tau}.
\end{equation}
Observe here that $A_n \neq 0$. Otherwise, the function $\psi_n$ would not
depend on the variable $x$, so that the numbers $\theta_n^-$ and $\theta_n^-$
would also be equal. As a consequence, the quantity $\gp(\psi_n)$ in~\eqref{philippe-le-bon} would be equal to $0$, and not to $p$ modulo $\pi$. Since $A_n \neq 0$, we can combine~\eqref{charles-le-temeraire} and~\eqref{charles-le-travaillant} to derive that the energies $E(\xi_\tau)$ are minimal for $\tau$ being chosen as
$$
\tau_n = \sqrt{\frac{B_n}{A_n}}.
$$
It suffices then to set $\xi_n = \xi_{\tau_n}$ in order to obtain
$$
E(\xi_n) \leq E(\psi_n),
$$
by minimality, as well as the Pohozaev identity
$$
\frac{1}{2} \int_{\R \times \T} |\partial_x \xi_n|^2 = \sqrt{A_n B_n} = \frac{\lambda_n^2}{2} \int_{\R \times \T} |\partial_y \xi_n|^2 + \frac{1}{4} \int_{\R \times \T} \big( 1 - |\xi_n|^2 \big)^2 
$$
by~\eqref{charles-le-temeraire} and~\eqref{charles-le-travaillant}. In view of~\eqref{charles-le-fou}, this completes the proof of Lemma~\ref{lem:Pohozaev}. \qed

%%%%%%%%%%%%%%%%%%%%%%%%%%%%%%%%%%
%%%%%%%%%%%%%%%%%%%%%%%%%%%%%%%%%%
\subsection{Proof of Proposition~\ref{prop:conv-min-p}}
%%%%%%%%%%%%%%%%%%%%%%%%%%%%%%%%%%
%%%%%%%%%%%%%%%%%%%%%%%%%%%%%%%%%%

We go on with the notation of the proofs of Lemmas~\ref{lem:asympt-I} and~\ref{lem:Pohozaev}. Our first goal is to exhibit a number $y_* \in \T$ such that, up to a possible subsequence, the functions $\psi_n(\cdot, y_*)$ form an almost minimizing sequence for the 1D minimization problem $\gI(p)$. In view of~\eqref{charles-le-mauvais}, the untwisted momentum $[P](\psi_n(\cdot, y))$ is equal to $p_n(y)$ modulo $\pi$ for almost any $y \in \T$, so that our aim is to find a number $y_* \in \T$ such that
$$
p_n(y_*) \to p \quad \text{ modulo } \pi, \text{ and } \quad e_n(y_*) := E(\psi_n(\cdot, y_*)) \to \gI(p),
$$
as $n \to \infty$. In this direction, we first recall that $p = [P](\psi_n) = \gp(\psi_n)$ modulo $\pi$ by~\eqref{philippe-le-bon}. Going back to the proof of Lemma~\ref{lem:asympt-I}, and more precisely to~\eqref{philippe-le-hardi}, it follows that
\begin{equation}
\label{louis-XI}
p_n(y) \to p \quad \text{ modulo } \pi,
\end{equation}
as $n \to \infty$, uniformly with respect to $y \in \T$. We similarly deduce from~\eqref{charles-VIII} that
$$
\int_\T |e_n(y) - \gI(p)| \, dy \leq \int_\T |e_n(y) - \gI(q_n(y))| \, dy +
\sqrt{2} \int_\T |p_n(y) - \gp(\psi_n)| \, dy.
$$
Since $e_n(y) \geq \gI(q_n(y))$ by definition of the 1D minimal energy $\gI$, we infer again from~\eqref{charles-VIII} that
$$
\int_\T |e_n(y) - \gI(p)| \, dy \leq E_{\lambda_n}(\psi_n) - \gI(p) + 2 \sqrt{2}
\int_\T |p_n(y) - \gp(\psi_n)| \, dy.
$$
Invoking~\eqref{eq:cond-psi-n},~\eqref{eq:borne-I} and~\eqref{philippe-le-hardi}, we are led to
$$
\int_\T |e_n(y) - \gI(p)| \, dy \to 0,
$$
as $n \to \infty$. As a consequence, we can find a number $y_* \in \T$ such that, up to a possible subsequence, we have
$$
e_n(y_*) \to \gI(p),
$$
as $n \to \infty$. In view of~\eqref{louis-XI}, we conclude that the functions
$\psi_n(\cdot,y_*)$ form a minimizing sequence for $\gI(p)$. In particular, we
can apply the compactness results in~\cite[Theorem 3]{BetGrSa2}
and~\cite[Theorem 4]{BeGrSaS1} to this sequence. This provides a sequence of real numbers $(a_n)_{n \geq 0}$, as well as a number $\theta \in \R$, such that, up to a further subsequence,
\begin{equation}
\label{philippe-VI}
\begin{split}
& e^{i \theta} \, \psi_n(\cdot - a_n, y_*) \to \gu_{c_p} \text{ in } L_\text{loc}^\infty(\R), \\[5pt]
& 1 - |e^{i \theta} \, \psi_n(\cdot - a_n, y_*)|^2 \to 1 - |\gu_{c_p}|^2 \text{ in } L^2(\R), \\[5pt]
& e^{i \theta} \, \partial_x \psi_n(\cdot - a_n, y_*) \to \gu_{c_p}' \text{ in } L^2(\R),
\end{split}
\end{equation}
as $n \to \infty$. 

We now extend the convergence to any number $y \in \T$. This follows from the smoothness of the functions $\psi_n$, which guarantees that
$$
e^{i \theta} \, \psi_n(x - a_n, y) - e^{i \theta} \, \psi_n(x - a_n, y_*) = e^{i \theta} \int_{y_*}^y \partial_y \psi_n(x - a_n, y') \, dy'.
$$
Invoking the Cauchy-Schwarz inequality, we are led to
$$
\int_{- R}^R \big| e^{i \theta} \, \psi_n(x - a_n, y) - e^{i \theta} \, \psi_n(x - a_n, y_*) \big|^2 \, dx \leq \int_{\R \times \T} |\partial_y \psi_n(x', y')|^2 \, dx' \, dy',
$$
for any positive number $R$. In view of~\eqref{eq:limit-zero-y} and the first convergence in~\eqref{philippe-VI}, we deduce that
$$
\int_{- R}^R \big| e^{i \theta} \, \psi_n(x - a_n, y) - \gu_{c_p}(x) \big|^2 \, dx \to 0,
$$
as $n \to \infty$, uniformly with respect to $y \in \T$. This is enough to guarantee that the functions $e^{i \theta} \, \psi_n(\cdot - a_n, \cdot)$ converge to the function $\gu_{c_p}$ in $L_\text{loc}^2(\R \times \T)$.

At this stage, we rely on the second condition in~\eqref{eq:cond-psi-n} and the convergence in~\eqref{eq:borne-I} in order to claim that both the sequences $(e^{i \theta} \, \nabla \psi_n(\cdot - a_n, \cdot))_{n \geq 0}$ and $(1 - |\psi_n(\cdot - a_n, \cdot)|^2)_{n \geq 0}$ are bounded in $L^2(\R \times \T)$. Up to a further subsequence, we can find two functions $\Xi \in L^2(\R \times \T)$ and $\eta \in L^2(\R \times \T)$ such that
\begin{equation}
\label{charles-VII}
e^{i \theta} \, \nabla \psi_n(\cdot - a_n, \cdot) \rightharpoonup \Xi \text{ in } L^2(\R \times \T), \quad \text{and} \quad 1 - |\psi_n(\cdot - a_n, \cdot)|^2 \rightharpoonup \eta \text{ in } L^2(\R \times \T),
\end{equation}
as $n \to \infty$. Since $|z| \leq 1 + \big| 1 - |z|^2 \big|$ for any complex number $z$, the sequence $(e^{i \theta} \, \psi_n(\cdot - a_n, \cdot))_{n \geq 0}$ is also bounded in $H_\text{loc}^1(\R \times \T)$. Applying the Rellich theorem, we can find another function $\psi_\infty \in H_\text{loc}^1(\R \times \T)$ such that, up to a further subsequence,
\begin{equation}
\label{louis-XII}
e^{i \theta} \, \psi_n(\cdot - a_n, \cdot) \to \psi_\infty \text{ in } L_\text{loc}^q(\R \times \T),
\end{equation}
as $n \to \infty$, for any number $1 \leq q < + \infty$. Since this convergence holds for $q = 2$, the function $\psi_\infty$ is equal to $\gu_{c_p}$, and we deduce from~\eqref{charles-VII} and~\eqref{louis-XII} that $\Xi = \nabla \gu_{c_p}$ and $\eta = 1 - |\gu_{c_p}|^2$.

We now transform the weak convergences in~\eqref{charles-VII} into strong convergences. We first observe that
\begin{equation}
\label{henri-II}
e^{i \theta} \, \partial_y \psi_n(\cdot - a_n, \cdot) \to \partial_y \gu_{c_p} = 0 \text{ in } L^2(\R \times \T),
\end{equation}
by~\eqref{eq:limit-zero-y}. We next rely on the Pohozaev identity~\eqref{eq:Pohozaev} in order to obtain
\begin{align*}
E_{\lambda_n}(\psi_n) & = \int_{\R \times \T} |e^{i \theta} \, \partial_x \psi_n(x - a_n, y)|^2 \, dx \, dy\\
& = \lambda_n^2 \int_{\R \times \T} |\partial_y \psi_n|^2 + \frac{1}{2} \int_{\R \times \T} \big( 1 - |\psi_n(x - a_n, y)|^2 \big)^2 \, dx \, dy.
\end{align*}
In view of~\eqref{eq:cond-psi-n} and~\eqref{eq:borne-I}, we note that $E_{\lambda_n}(\psi_n) \to \gI(p)$ as $n \to \infty$. Combining with~\eqref{eq:limit-zero-y}, we are led to
\begin{equation}
\label{francois-I}
\int_{\R \times \T} |e^{i \theta} \, \partial_x \psi_n(x - a_n, y)|^2 \, dx \, dy \to \gI(p), \text{ and } \frac{1}{2} \int_{\R \times \T} \big( 1 - |\psi_n(x - a_n, y)|^2 \big)^2 \, dx \, dy \to \gI(p),
\end{equation}
as $n \to \infty$. We finally express the quantity $\gI(p)$ in terms of the travelling-wave profile $\gu_{c_p}$. Recall that this profile solves~\eqref{eq:gu-p} with $c = c_p$. We multiply this equation by the derivative $\gu_{c_p}'$ and integrate it taking into account the exponential decay of the functions $\gu_{c_p}'$ and $1 - |\gu_{c_p}|^2$. This gives
$$
\frac{1}{2} |\gu_{c_p}'|^2 = \frac{1}{4} \big( 1 - |\gu_{c_p}|^2 \big)^2.
$$
It is then enough to invoke Proposition~\ref{prop:exist-min1} in order to obtain
$$
\gI(p) = E(\gu_{c_p}) = \int_\R |\gu_{c_p}'|^2 = \frac{1}{2} \int_\R \big( 1 - |\gu_{c_p}|^2 \big)^2.
$$
In view of~\eqref{francois-I}, we deduce that
$$
\big\| e^{i \theta} \, \partial_x \psi_n(\cdot - a_n, \cdot) \big\|_{L^2(\R \times \T)} \to \big\| \gu_{c_p}' \big\|_{L^2(\R \times \T)},
$$
and
$$
\big\| 1 - |\psi_n(\cdot - a_n, \cdot)|^2 \big\|_{L^2(\R \times \T)} \to \big\| 1 - |\gu_{c_p}|^2 \big\|_{L^2(\R \times \T)},
$$
as $n \to \infty$. Combining with~\eqref{henri-II}, we conclude that the convergences in~\eqref{charles-VII} are actually strong.

In order to complete the proof of Proposition~\ref{prop:conv-min-p}, it only remains to establish that
\begin{equation}
\label{montauban}
\int_{\R \times \T} \eta_{c_p} \big| e^{i \theta} \, \psi_n(\cdot - a_n, \cdot) - \gu_{c_p} \big|^2 \to 0,
\end{equation}
as $n \to \infty$. Consider a positive number $R$ and write the decomposition
\begin{equation}
\label{marsan}
\int_{\R \times \T} \eta_{c_p} \big| e^{i \theta} \, \psi_n(\cdot - a_n, \cdot) - \gu_{c_p} \big|^2 = I_R + J_R,
\end{equation}
with
\begin{equation}
\label{rouen}
I_R := \int_{(- R, R) \times \T} \eta_{c_p} \big| e^{i \theta} \, \psi_n(\cdot - a_n, \cdot) - \gu_{c_p} \big|^2 \to 0,
\end{equation}
as $n \to \infty$ by~\eqref{louis-XII}, and
$$
J_R := \int_{(- R, R)^c \times \T} \eta_{c_p} \big| e^{i \theta} \, \psi_n(\cdot - a_n, \cdot) - \gu_{c_p} \big|^2.
$$
Concerning this integral, we have
\begin{equation}
\label{mont}
J_R \leq 2 \int_{(- R, R)^c \times \T} \eta_{c_p} \Big( 2 + \big| e^{i \theta} \,\psi_n(\cdot - a_n, \cdot) \big|^2 - 1 + \big| \gu_{c_p} \big|^2 - 1 \Big).
\end{equation}
Since $\eta_{c_p} \in L^2(\R \times \T)$, we infer from~\eqref{charles-VII} that
$$
\int_{(- R, R)^c \times \T} \eta_{c_p} \Big( 2 + \big| e^{i \theta} \,\psi_n(\cdot - a_n, \cdot) \big|^2 - 1 + \big| \gu_{c_p} \big|^2 - 1 \Big) \to 2 \int_{(- R, R)^c \times \T} \eta_{c_p} \big| \gu_{c_p} \big|^2.
$$
in the limit $n \to \infty$. The right-hand side of this limit can be made as small as necessary for $R$ large enough. Combining with~\eqref{marsan},~\eqref{rouen} and~\eqref{mont} is enough to complete the proof of~\eqref{montauban}. This concludes the proof of Proposition~\ref{prop:conv-min-p}. \qed

%%%%%%%%%%%%%%%%%%%%%%%%%%%%%%%%%%%%%%%%%%%%%%%
%%%%%%%%%%%%%%%%%%%%%%%%%%%%%%%%%%%%%%%%%%%%%%%
\subsection{Proof of Proposition~\ref{prop:loc-min} for $p \neq \frac{\pi}{2}$}
%%%%%%%%%%%%%%%%%%%%%%%%%%%%%%%%%%%%%%%%%%%%%%%
%%%%%%%%%%%%%%%%%%%%%%%%%%%%%%%%%%%%%%%%%%%%%%%

The proof of Proposition~\ref{prop:loc-min} is based on a coercivity estimate related to the orbital stability of the dark solitons in dimension one. The technical derivation of this estimate turns out to be different for the grey solitons on the one hand, and the black soliton on the other hand. This claim originates in the fact that we can use the hydrodynamical framework for handling the grey solitons, which is no more possible for the black soliton. This is the reason why we split the proof of Proposition~\ref{prop:loc-min} into two parts dealing first with the case of the grey solitons for $p \neq \pi/2$.

Given a positive number $\alpha$, consider a function $\psi$ in $\boV_p(\alpha)$. In view of Proposition~\ref{prop:X-1-2}, we can decompose this function as $\psi = \hat{\psi}_0 + w_0$, with $\hat{\psi}_0 \in X(\R)$ and $w_0 \in H^1(\R \times \T)$. Moreover, it follows from~\eqref{def:V-alpha} that $\| w_0 \|_{H^1} < \alpha$ and
$$
\inf_{(a, \theta) \in \R^2} d_{c_p} \big( e^{i \theta} \hat{\psi}_0(\cdot - a), \gu_{c_p} \big) < \alpha.
$$
We first use this control on the function $\hat{\psi}_0$ in order to estimate the difference between the energies $E_\lambda(\psi)$ and $E(\hat{\psi}_0)$. More precisely, we show the following inequality, which is still available for $p = \pi/2$.

\begin{lem}
\label{lem:reduc-E}
Let $p \in (- \pi/2, \pi/2]$, with $p \neq 0$. There exists a positive number $\alpha_p$ for which we can find a positive number $C_p$ such that we have
\begin{equation}
\label{eq:cont-E-1-2}
E_\lambda(\psi) \geq E(\hat{\psi}_0) + \frac{1}{2} \int_{\R \times \T} \Big( |\partial_x w_0|^2 + (\lambda^2 - C_p) |\partial_y w_0|^2 + |w_0|^2 \Big),
\end{equation}
for any function $\psi \in \boV_p(\alpha_p)$.
\end{lem}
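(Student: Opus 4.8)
My plan is to substitute the Fourier decomposition $\psi = \hat\psi_0 + w_0$ of Proposition~\ref{prop:X-1-2} directly into $E_\lambda$ and to exploit that $w_0$ is mean-free in $y$, i.e.\ $\int_\T w_0(x, \cdot) = 0$ for a.e.\ $x$. Setting $\eta_0 := 1 - |\hat\psi_0|^2$, the kinetic part splits cleanly: since $\partial_y \hat\psi_0 = 0$ and $\int_\T \partial_x w_0(x,\cdot) = 0$, all cross terms vanish and
\begin{equation*}
\frac12 \int_{\R \times \T} \big( |\partial_x \psi|^2 + \lambda^2 |\partial_y \psi|^2 \big) = \frac12 \int_\R |\hat\psi_0'|^2 + \frac12 \int_{\R \times \T} |\partial_x w_0|^2 + \frac{\lambda^2}{2} \int_{\R \times \T} |\partial_y w_0|^2.
\end{equation*}

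For the potential part I would expand $1 - |\psi|^2 = \eta_0 - 2 \langle \hat\psi_0, w_0 \rangle_\C - |w_0|^2$ and square it. The contribution linear in $w_0$ integrates to zero, since $\eta_0$ and $\hat\psi_0$ do not depend on $y$ while $w_0$ has zero $y$-average, leaving
\begin{equation*}
\frac14 \int_{\R \times \T} \big( 1 - |\psi|^2 \big)^2 = \frac14 \int_\R \eta_0^2 + \int_{\R \times \T} \langle \hat\psi_0, w_0 \rangle_\C^2 - \frac12 \int_{\R \times \T} \eta_0 |w_0|^2 + \int_{\R \times \T} \langle \hat\psi_0, w_0 \rangle_\C |w_0|^2 + \frac14 \int_{\R \times \T} |w_0|^4.
\end{equation*}
Adding the two identities gives $E_\lambda(\psi) = E(\hat\psi_0) + \frac12 \int |\partial_x w_0|^2 + \frac{\lambda^2}{2} \int |\partial_y w_0|^2 + \boR(w_0)$, where $\boR(w_0)$ gathers the four $w_0$-terms above; it thus remains to prove $\boR(w_0) \geq \frac12 \int_{\R \times \T} |w_0|^2 - \frac{C_p}{2} \int_{\R \times \T} |\partial_y w_0|^2$.

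To estimate $\boR(w_0)$ I would discard the two nonnegative terms $\int \langle \hat\psi_0, w_0 \rangle_\C^2$ and $\frac14 \int |w_0|^4$ and control the rest. The closeness condition defining $\boV_p(\alpha_p)$ provides $(a, \theta)$ with $d_{c_p}(e^{i\theta} \hat\psi_0(\cdot - a), \gu_{c_p}) < \alpha_p$; since the relevant norms are invariant under translation and phase, this bounds $\|\nabla \hat\psi_0\|_{L^2}$ and $\|1 - |\hat\psi_0|^2\|_{L^2}$, hence the energy of $\hat\psi_0$. By the $L^\infty$ bound valid on bounded-energy subsets of $X(\R)$ (Appendix~\ref{sec:dimension-one}), both $\|\hat\psi_0\|_{L^\infty}$ and $\|\eta_0\|_{L^\infty}$ are then bounded by a constant $K_p$ depending only on $p$. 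The quadratic defect is therefore harmless: $-\frac12 \int \eta_0 |w_0|^2 \geq -\frac{K_p}{2} \int |w_0|^2$. For the cubic term I would use $\int |w_0|^3 \leq \|w_0\|_{L^4}^2 \|w_0\|_{L^2}$ together with the two-dimensional Gagliardo--Nirenberg inequality $\|w_0\|_{L^4}^2 \leq C \|w_0\|_{L^2} \|w_0\|_{H^1}$, so that $\|w_0\|_{H^1} < \alpha_p$ yields $\big| \int \langle \hat\psi_0, w_0 \rangle_\C |w_0|^2 \big| \leq K_p \int |w_0|^3 \leq C K_p \alpha_p \int |w_0|^2$.

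Finally I would trade transverse derivatives for mass using the Poincar\'e--Wirtinger inequality on $\T$: as $w_0$ is mean-free in $y$, one has $\int |\partial_y w_0|^2 \geq \kappa \int |w_0|^2$ with $\kappa = 4\pi^2$. Keeping the term $\frac{C_p}{2} \int |\partial_y w_0|^2$, the required bound reduces to
\begin{equation*}
\Big( \frac{\kappa C_p}{2} - \frac{K_p}{2} - C K_p \alpha_p - \frac12 \Big) \int_{\R \times \T} |w_0|^2 \geq 0,
\end{equation*}
which holds as soon as $C_p$ is taken large enough (depending on $K_p$ and $\kappa$) and $\alpha_p$ small enough; there is no circularity, since fixing $\alpha_p \leq 1$ already fixes $K_p$, after which $C_p$ and a possibly smaller $\alpha_p$ are chosen. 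I expect the cubic term $\int \langle \hat\psi_0, w_0 \rangle_\C |w_0|^2$ to be the main obstacle: it is not sign-definite and is of lower order than the stabilizing quartic, so taming it genuinely requires the two-dimensional Sobolev/Gagliardo--Nirenberg estimate and the smallness of $\alpha_p$, whereas the quadratic defect is absorbed cost-free by the transverse kinetic energy.
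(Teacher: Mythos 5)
Your proof is correct and follows the same overall route as the paper: the expansion of $E_\lambda(\psi)$ from Proposition~\ref{prop:X-1-2} (your computation reproduces~\eqref{eq:dev-E-1-2} exactly), an $L^\infty$ bound on $\hat{\psi}_0$ and $1-|\hat{\psi}_0|^2$ coming from the closeness to $\gu_{c_p}$ in the $d_{c_p}$ metric, and the Poincar\'e--Wirtinger inequality in $y$ to absorb the defect into $\frac{C_p}{2}\int|\partial_y w_0|^2$. The one place where you diverge is the treatment of the term $\int \langle \hat\psi_0, w_0\rangle_\C\, |w_0|^2$, which you identify as the main obstacle and tame via the Ladyzhenskaya/Gagliardo--Nirenberg inequality on $\R\times\T$ together with the smallness $\|w_0\|_{H^1}<\alpha_p$. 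The paper sidesteps this entirely by observing the algebraic identity
\begin{equation*}
\langle \hat{\psi}_0, w_0 \rangle_\C^2 + |w_0|^2 \langle \hat{\psi}_0, w_0 \rangle_\C + \tfrac{1}{4} |w_0|^4 = \Big( \langle \hat{\psi}_0, w_0 \rangle_\C + \tfrac{1}{2} |w_0|^2 \Big)^2 \geq 0,
\end{equation*}
so the square, cubic and quartic terms are discarded as a block and the only remaining negative contribution is $-\tfrac12\int |w_0|^2(1-|\hat\psi_0|^2)$. Your argument is valid (the 2D interpolation inequality does hold on $\R\times\T$ with the full $H^1$ norm, and your bookkeeping of constants is consistent and non-circular), but it costs you an extra tool and an extra smallness requirement on $\alpha_p$ that the perfect-square grouping renders unnecessary.
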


\begin{proof}
The proof relies on the expansion of the energy $E_\lambda(\psi)$ in~\eqref{eq:dev-E-1-2}. Due to the identity
$$
\langle \hat{\psi}_0, w_0 \rangle_\C^2 + |w_0|^2 \langle \hat{\psi}_0, w_0
\rangle_\C + \frac{1}{4} |w_0|^4 = \Big( \langle \hat{\psi}_0, w_0
\rangle_\C + \frac{1}{2} |w_0|^2 \Big)^2,
$$
we indeed deduce from~\eqref{eq:dev-E-1-2} that
\begin{equation}
\label{djeco}
E_\lambda(\psi) - E(\hat{\psi}_0) \geq \frac{1}{2} \int_{\R \times \T} \big( |\partial_x w_0|^2 + \lambda^2 |\partial_y w_0|^2 \big) - \frac{1}{2} \int_{\R \times \T} |w_0|^2 (1 - |\hat{\psi}_0|^2).
\end{equation}
Invoking Lemma~\ref{lem:cont-modulus}, we can find a positive number $\alpha_p$ such that, when $\psi$ is in $\boV_p(\alpha_p)$, we get
$$
\big\| (1 - |e^{i \theta} \hat{\psi}_0(\cdot - a)|^2) - (1 - |\gu_{c_p}|^2) \big\|_{L^\infty} < 1, 
$$
for given numbers $(a, \theta) \in \R^2$. As a consequence, we obtain
$$
\frac{1}{2} \int_{\R \times \T} |w_0|^2 (1 - |\hat{\psi}_0|^2) \leq \frac{1}{2} \Big( 1 + \big\| 1 - |\gu_{c_p}|^2 \big\|_{L^\infty} \Big) \int_{\R \times \T} |w_0|^2,
$$
and we can invoke the Poincar\'e-Wirtinger inequality in order to find a positive number $C_p$ such that
$$
\frac{1}{2} \int_{\R \times \T} |w_0|^2 + \frac{1}{2} \int_{\R \times \T} |w_0|^2 (1 - |\hat{\psi}_0|^2) \leq \frac{C_p}{2} \int_{\R \times \T} |\partial_y w_0|^2.
$$
Combining with~\eqref{djeco}, we obtain~\eqref{eq:cont-E-1-2}. This completes the proof of Lemma~\ref{lem:reduc-E}.
\end{proof}

Our next goal is to provide a similar control for the momentum. When $p \neq \pi/2$, it follows from~\eqref{eq:gE-val} that the energy $E(\gu_{c_p})$ is strictly less than $2 \sqrt{2}/3$. Combining~\eqref{def:V-alpha} and the continuity of the Ginzburg-Landau energy $E$ on $X(\R)$ (see Appendix~\ref{sec:dimension-one}), we can decrease, if necessary, the value of the number $\alpha_p$ so that the energy $E(\hat{\psi}_0)$ is strictly less than $2 \sqrt{2}/3$ when $\psi \in \boV_p(\alpha_p)$. In view of Lemma~\ref{lem:min-E-0}, this guarantees that the function $\hat{\psi}_0$ lies in the non-vanishing set $NV\!X(\R)$ defined in~\eqref{def:NVX} below. As a consequence, the set $\boV_p(\alpha_p)$ is a subset of $Y(\R \times \T)$ and the momentum $P$ in statement $(ii)$ of Lemma~\ref{lem:def-P-2} is well-defined on this set. Moreover, we can show

\begin{lem}
\label{lem:reduc-P}
Let $p \in (- \pi/2, \pi/2)$, with $p \neq 0$. There exist a positive number $\alpha_p$ such that
\begin{equation}
\label{eq:cont-P-1-2}
\big| P(\psi) - P(\hat{\psi}_0) \big| \leq \frac{1}{4} \int_{\R \times \T} |\partial_x w_0|^2 + \frac{1}{2 \pi} \int_{\R \times \T} |\partial_y w_0|^2,
\end{equation}
for any function $\psi \in \boV_p(\alpha_p)$. Moreover, when $[P](\psi)
= p$ modulo $\pi$, the momentum $P(\psi)$ in this inequality is equal to
\begin{equation}
\label{eq:val-P-p}
P(\psi) = p.
\end{equation}
\end{lem}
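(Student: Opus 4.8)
The plan is to read off the desired difference directly from the momentum decomposition and then estimate the resulting cross term. Since we have already arranged that $\boV_p(\alpha_p) \subset Y(\R \times \T)$ with $\hat{\psi}_0 \in NV\!X(\R)$, the momentum $P$ is genuinely defined (not merely modulo $\pi$) on this set, and formula~\eqref{def:P-momentum} together with statement $(ii)$ of Lemma~\ref{lem:def-P-2} yields the exact identity
\[
P(\psi) - P(\hat{\psi}_0) = \frac{1}{2} \int_{\R \times \T} \langle i \partial_x w_0, w_0 \rangle_\C.
\]
The whole inequality~\eqref{eq:cont-P-1-2} thus reduces to bounding this single term. I would first estimate the integrand pointwise by $\tfrac{1}{2} |\partial_x w_0| \, |w_0|$, and then apply Young's inequality in the form $\tfrac{1}{2} a b \leq \tfrac{1}{4} a^2 + \tfrac{1}{4} b^2$ to obtain
\[
\big| P(\psi) - P(\hat{\psi}_0) \big| \leq \frac{1}{4} \int_{\R \times \T} |\partial_x w_0|^2 + \frac{1}{4} \int_{\R \times \T} |w_0|^2.
\]

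The second ingredient is to absorb the term $\tfrac{1}{4} \int |w_0|^2$ into the $\partial_y$-term. The key structural fact is that $w_0 = \psi - \hat{\psi}_0$ has vanishing mean in $y$ for each fixed $x$, by the very definition of $\hat{\psi}_0$ as the $y$-average of $\psi$. Hence the Poincar\'e-Wirtinger inequality in~\cite{Brezis0} applies slice-wise in $y$ (equivalently, by Parseval the lowest nonzero Fourier mode on $\T$ contributes the factor $4\pi^2$), and after integration in $x$ gives
\[
\int_{\R \times \T} |w_0|^2 \leq \frac{1}{4 \pi^2} \int_{\R \times \T} |\partial_y w_0|^2.
\]
Since $\tfrac{1}{4} \cdot \tfrac{1}{4\pi^2} = \tfrac{1}{16 \pi^2} \leq \tfrac{1}{2\pi}$, combining the two steps produces exactly~\eqref{eq:cont-P-1-2}. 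This portion of the argument is entirely routine.

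The more delicate point is the exact value~\eqref{eq:val-P-p}: I must upgrade the congruence $P(\psi) \equiv p \pmod{\pi}$ to the genuine equality $P(\psi) = p$. The plan is to show that, after possibly shrinking $\alpha_p$, the real number $P(\psi)$ lies within distance strictly less than $\pi$ of $p$, which pins down the representative. For the principal part, the phase and translation invariance of the genuine momentum on $NV\!X(\R)$, together with the continuity of $P$ with respect to the distance $d_{c_p}$ (Lemma~\ref{lem:cont-P2}), give
\[
\big| P(\hat{\psi}_0) - p \big| = \big| P\big( e^{i \theta} \hat{\psi}_0(\cdot - a) \big) - P(\gu_{c_p}) \big|
\]
as small as we wish, using $P(\gu_{c_p}) = p$ from Proposition~\ref{prop:exist-min1}; this is precisely where the restriction $p \neq \pi/2$ (the grey soliton) is needed, so that the genuine value of the momentum of the profile is unambiguously $p$. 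For the remainder, the cross term is controlled by $\tfrac{1}{4} \|w_0\|_{H^1}^2 < \tfrac{1}{4} \alpha_p^2$ through the inequality just established. Choosing $\alpha_p$ small enough then forces $|P(\psi) - p| < \pi$, and since $P(\psi) \equiv p \pmod{\pi}$ we conclude $P(\psi) = p$.

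I expect this last modular rigidity step to be the main obstacle. The inequality~\eqref{eq:cont-P-1-2} itself is soft, but deducing the genuine equality requires combining the quantitative continuity of the one-dimensional momentum on $NV\!X(\R)$ with the smallness of \emph{both} $\|w_0\|_{H^1}$ and $d_{c_p}\big(e^{i\theta} \hat{\psi}_0(\cdot - a), \gu_{c_p}\big)$ encoded in the definition~\eqref{def:V-alpha} of $\boV_p(\alpha_p)$, and with the genuine (rather than modular) characterization of $P(\gu_{c_p})$ for grey solitons.
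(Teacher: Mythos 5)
Your proof of the inequality \eqref{eq:cont-P-1-2} is correct and essentially identical to the paper's: the exact identity $P(\psi) - P(\hat{\psi}_0) = \tfrac{1}{2}\int_{\R\times\T}\langle i\partial_x w_0, w_0\rangle_\C$ from \eqref{def:P-2}, Young's inequality, and the Poincar\'e--Wirtinger inequality applied to the mean-zero (in $y$) function $w_0$. Your treatment of the rigidity step \eqref{eq:val-P-p} is also correct, but it runs through a different mechanism than the paper's. The paper pins down the integer $k$ in $P(\psi) = p + k\pi$ by an \emph{energy} argument: since $E(\hat{\psi}_0) < 2\sqrt{2}/3$ for $\alpha_p$ small, Proposition~\ref{prop:boI-problem} forces $|P(\hat{\psi}_0)| < \pi/2$, and combined with $|p| < \pi/2$ and the smallness of the cross term this excludes $k \neq 0$ (at the cost of taking $\alpha_p^2/4 < \pi/2 - |p|$, so the threshold degenerates as $|p| \to \pi/2$). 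You instead pin down $k$ by a \emph{continuity} argument: translation and phase invariance of $P$ on $NV\!X(\R)$ plus continuity of $P$ at $\gu_{c_p}$ with respect to $d_{c_p}$ give $|P(\hat{\psi}_0) - p|$ small, hence $|P(\psi) - p| < \pi$. Both are valid; yours avoids Proposition~\ref{prop:boI-problem}$(ii)$ entirely but requires knowing the genuine (not merely untwisted) value $P(\gu_{c_p}) = p$ -- for which the correct reference is Proposition~\ref{prop:boI-problem}$(i)$ (equivalently \eqref{def:gc-p}) rather than Proposition~\ref{prop:exist-min1}, which only fixes $[P](\gu_{c_p})$ modulo $\pi$ -- and the 1D continuity statement you want is Lemma~\ref{lem:cont-P}, not Lemma~\ref{lem:cont-P2}. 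These are citation slips, not gaps.
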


\begin{proof}
The proof is based on the definition of the momentum $P(\psi)$ in~\eqref{def:P-2}, which gives
$$
\big| P(\psi) - P(\hat{\psi}_0) \big| \leq \frac{1}{2} \int_{\R \times \T} |\partial_x w_0| |w_0| \leq \frac{1}{4} \int_{\R \times \T} |\partial_x w_0|^2 + \int_{\R \times \T} |w_0|^2.
$$
Inequality~\eqref{eq:cont-P-1-2} then follows from the Poincar\'e inequality. Observe that the smallness of the number $\alpha_p$ is only used here in order that the quantity $P(\psi)$ and $P(\hat{\psi}_0)$ make sense. 

Concerning~\eqref{eq:val-P-p}, we recall that the energy $E(\hat{\psi}_0)$ is strictly less than $2 \sqrt{2}/3$ when $\alpha_p$ is small enough. Hence, it follows from Proposition~\ref{prop:boI-problem} that
\begin{equation}
\label{vilac}
\big| P(\hat{\psi}_0) \big| < \frac{\pi}{2}.
\end{equation}
Moreover, we know that $[P](\psi) = P(\psi)$ modulo $\pi$ on the one hand, and $[P](\psi) = p$ modulo $\pi$ on the other hand. As a consequence, there exists an integer $k \in \Z$ such that $P(\psi) = p + k \pi$. In view of~\eqref{eq:cont-P-1-2}, we are led to
$$
\big| p + k \pi - P(\hat{\psi}_0) \big| \leq \frac{1}{4} \| w_0 \|_{H^1}^2 < \frac{\alpha_p^2}{4}.
$$
Combining with~\eqref{vilac}, we can decrease the value of the number $\alpha_p$ if necessary so that $k = 0$ and
$P(\psi) = p$. This completes the proof of Lemma~\ref{lem:reduc-P}.
\end{proof}

Collecting~\eqref{eq:cont-E-1-2} and~\eqref{eq:cont-P-1-2}, we obtain
\begin{equation}
\label{eq:cont-E-P-1-2}
E_\lambda(\psi) - c_p P(\psi) \geq E(\hat{\psi}_0) - c_p P(\hat{\psi}_0) + \frac{1}{2} \int_{\R \times \T} \bigg( \Big( 1 - \frac{|c_p|}{2} \Big) |\partial_x w_0|^2 + \Big( \lambda^2 - C_p - \frac{|c_p|}{\pi} \Big) |\partial_y w_0|^2 + |w_0|^2 \bigg).
\end{equation}
Since $|c_p| < \sqrt{2}$, the last term in this inequality is non-negative for $\lambda > \sqrt{C_p + \frac{\sqrt{2}}{\pi}}$.
Under this condition, it vanishes if and only $w_0$ is identically equal to $0$.

Our goal is now to control from below the term $E(\hat{\psi}_0) - c_p
P(\hat{\psi}_0)$. Since the function $\hat{\psi}_0$ is in $NV\!X(\R)$, we can
rely on the hydrodynamical formulation $\hat{\psi}_0 = \rho_0 e^{i \theta_0}$
and analyze the quantities $E(\hat{\psi}_0)$ and $P(\hat{\psi}_0)$ in terms of
the variables $\eta_0 := 1 -\rho_0^2$ and $v_0 := \theta_0'$. In view
of~\eqref{eq:hydro-E} and~\eqref{def:P-NV}, the energy $E(\hat{\psi}_0)$ and the
momentum $P(\hat{\psi}_0)$ are then given by
$$
E \big( \hat{\psi}_0 \big) = E(\eta_0, v_0) := \frac{1}{8} \int_\R \frac{(\eta_0')^2}{1 - \eta_0} + \frac{1}{2} \int_\R (1 - \eta_0) v_0^2 + \frac{1}{4} \int_\R \eta_0^2,
$$
and
$$
P \big( \hat{\psi}_0 \big) = P(\eta_0, v_0) := \frac{1}{2} \int_\R \eta_0 v_0.
$$
Recall also that the pair $(\eta_0, v_0)$ belongs to the non-vanishing set
$NV(\R)$ defined in~\eqref{def:NV}.

Similarly, we can lift the profile $\gu_{c_p}$ as $\gu_{c_p} = \rho_{c_p} e^{i \theta_{c_p}}$ and introduce the corresponding variables $\eta_{c_p} := 1 - \rho_{c_p}^2$ and $v_{c_p} := \theta_{c_p}'$, which are also in $NV(\R)$. With this notation at hand, we can consider the neighbourhoods of the pair $(\eta_{c_p}, v_{c_p})$ given by 
\begin{equation}
\label{def:U-beta}
\boU_p(\beta) := \Big\{ (\eta, v) \in NV(\R) \text{ s.t. } \inf_{a \in \R} \big(\| \eta_0(\cdot - a) - \eta_{c_p} \|_{H^1}^2 + \| v_0(\cdot - a) - v_{c_p} \|_{L^2}^2 \big)< \beta^2 \Big\},
\end{equation}
for any positive number $\beta$. We first show that the pair $(\eta_0, v_0)$ lies in one of these neighbourhoods when $\hat{\psi}_0$ is in $\boV_p(\alpha_p)$. More precisely, we show

\begin{lem}
\label{lem:U-V}
Given any positive number $\beta$, there exists a positive number $\alpha \leq \alpha_p$ such that
$$
(\eta_0, v_0) \in \boU_p(\beta),
$$
for any function $\psi \in \boV_p(\alpha)$.
\end{lem}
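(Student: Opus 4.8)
The plan is to translate the smallness of $\hat{\psi}_0$ in the $d_{c_p}$-metric (after an optimal phase shift and translation) into smallness of the hydrodynamical variables $(\eta_0, v_0)$ in the $H^1 \times L^2$ topology defining $\boU_p(\beta)$. Since $\psi \in \boV_p(\alpha)$, I would first select a pair $(a, \theta) \in \R^2$ with $d_{c_p}(e^{i \theta} \hat{\psi}_0(\cdot - a), \gu_{c_p}) < \alpha$ and set $\tilde{\psi} := e^{i \theta} \hat{\psi}_0(\cdot - a)$. The key preliminary observation is that the hydrodynamical variables are insensitive to the constant phase $e^{i \theta}$ and simply translate with $a$, so that the variables of $\tilde{\psi}$ are exactly $(\eta_0(\cdot - a), v_0(\cdot - a))$. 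It therefore suffices to bound $\| \eta_0(\cdot - a) - \eta_{c_p} \|_{H^1}^2 + \| v_0(\cdot - a) - v_{c_p} \|_{L^2}^2$ by $C_p \alpha^2$ and then choose $\alpha$ small accordingly. Moreover, since $\tilde{\psi}$ and $\gu_{c_p}$ depend only on $x$, the three terms of $d_{c_p}$ reduce to one-dimensional norms, giving control of $\| u' \|_{L^2}$, $\| \eta_{c_p}^{1/2} u \|_{L^2}$ and $\| \eta - \eta_{c_p} \|_{L^2}$, where $u := \tilde{\psi} - \gu_{c_p}$ and $\eta := 1 - |\tilde{\psi}|^2 = \eta_0(\cdot - a)$.

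Before estimating, I would record a uniform lower bound on the modulus. For grey solitons one has $|\gu_{c_p}|^2 = 1 - \eta_{c_p} \geq c_p^2/2 > 0$; combining the $L^\infty$ control of the modulus difference furnished by Lemma~\ref{lem:cont-modulus} with a small enough $\alpha_p$ yields $|\tilde{\psi}| \geq m_p > 0$ uniformly on $\boV_p(\alpha_p)$. This non-vanishing legitimizes the hydrodynamical formulas and, more importantly, permits division by $\rho^2 = |\tilde{\psi}|^2$ in the sequel.

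The core of the argument is a set of algebraic identities expressing the hydrodynamical differences through $u$ and $u'$. For the modulus I would use $\eta' = - 2 \langle \tilde{\psi}', \tilde{\psi} \rangle_\C$ to write
$$
\eta' - \eta_{c_p}' = - 2 \langle u', \tilde{\psi} \rangle_\C - 2 \langle \gu_{c_p}', u \rangle_\C,
$$
whose $L^2$ norm is controlled by $\| u' \|_{L^2}$ (as $\tilde{\psi}$ is bounded) plus $\| \eta_{c_p}^{1/2} u \|_{L^2}$, after using $|\gu_{c_p}'| \leq C \eta_{c_p}^{1/2}$. Together with the direct $L^2$ bound on $\eta - \eta_{c_p}$ this gives $\| \eta - \eta_{c_p} \|_{H^1} \leq C_p \alpha$. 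For the velocity I would start from $\rho^2 v = - \langle i \tilde{\psi}', \tilde{\psi} \rangle_\C$, expand
$$
\rho^2 v - \rho_{c_p}^2 v_{c_p} = - \langle i u', \tilde{\psi} \rangle_\C - \langle i \gu_{c_p}', u \rangle_\C,
$$
and then pass to $v - v_{c_p}$ by dividing by $\rho^2$ and absorbing the error $v_{c_p} (\rho_{c_p}^2 - \rho^2)/\rho^2 = v_{c_p}(\eta - \eta_{c_p})/\rho^2$. Using $\rho^2 \geq m_p^2$, the bound $|\gu_{c_p}'| \leq C \eta_{c_p}^{1/2}$ and the boundedness of $v_{c_p}$ yields $\| v - v_{c_p} \|_{L^2} \leq C_p \alpha$.

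The main obstacle, and precisely the reason the weight $\eta_{c_p}^{1/2}$ appears in $d_{c_p}$, is that $d_{c_p}$ controls $u$ only in the weighted norm $\| \eta_{c_p}^{1/2} u \|_{L^2}$ and not in plain $L^2$: the function $u$ need not be small far from the origin. The estimates go through because every undifferentiated occurrence of $u$ is multiplied by a factor ($\gu_{c_p}'$ or $v_{c_p}$) that decays exponentially like $\eta_{c_p}$, hence like $\eta_{c_p}^{1/2}$ up to a bounded factor, and can thus be paired with the weight. Collecting the two bounds gives $\| \eta_0(\cdot - a) - \eta_{c_p} \|_{H^1}^2 + \| v_0(\cdot - a) - v_{c_p} \|_{L^2}^2 \leq C_p \alpha^2$; taking $\alpha := \min(\alpha_p, \beta/\sqrt{C_p})$ makes the left-hand side less than $\beta^2$, so that $(\eta_0, v_0) \in \boU_p(\beta)$, which completes the proof.
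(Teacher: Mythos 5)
Your proposal is correct and follows essentially the same route as the paper: after fixing an optimal pair $(a,\theta)$, you use the same algebraic identities for $\eta_0' - \eta_{c_p}'$ and $v_0 - v_{c_p}$, the same uniform modulus bounds from Lemma~\ref{lem:cont-modulus}, and the same key pairing of the weight $\eta_{c_p}^{1/2}$ with the exponential decay of $\gu_{c_p}'$ (via $|\gu_{c_p}'|^2 = \eta_{c_p}^2/2$). The conclusion by choosing $\alpha$ proportional to $\beta$ matches the paper's argument.
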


\begin{proof}
Consider a positive number $\alpha$ such that $\alpha \leq \alpha_p$. Under this condition, the function $\hat{\psi}_0$ is in $NV\!X(\R)$ when $\psi$ belongs to $\boV_p(\alpha)$. In particular, the functions $\eta_0$ and $v_0$ are well-defined. In view of~\eqref{def:V-alpha}, we can also find numbers $(a, \theta) \in \R^2$ such that $d_{c_p}(e^{i \theta} \hat{\psi}_0(\cdot - a),\gu_{c_p}) < \alpha$, so that by~\eqref{def:d-c},
\begin{equation}
\label{janod}
\big\| \eta_0(\cdot - a) - \eta_{c_p} \big\|_{L^2} \leq d_{c_p} \big( e^{i \theta} \hat{\psi}_0(\cdot - a), \gu_{c_p} \big) < 2 \alpha.
\end{equation}
We next write
$$
\eta_0'(\cdot - a) - \eta_{c_p}' = -2 \langle \hat{\psi}_0'(\cdot - a) -
\gu_{c_p}', \hat{\psi}_0(\cdot - a) \rangle_\C - 2 \langle \gu_{c_p}', \hat{\psi}_0(\cdot - a) - \gu_{c_p} \rangle_\C.
$$
Invoking Lemma~\ref{lem:cont-modulus}, we can decrease the value of the number $\alpha$ if necessary, so that
\begin{equation}
\label{hasbro}
\big\| \hat{\psi}_0 \big\|_{L^\infty}^2 \leq 1 + \big\| \gu_{c_p} \big\|_{L^\infty}^2.
\end{equation}
Moreover, it follows from~\eqref{eq:gu-p} and~\eqref{def:eta-c} that
\begin{equation}
\label{fisher}
\big| \gu_{c_p}' \big|^2 = \frac{\eta_{c_p}^2}{2} \leq \frac{2 - c_p^2}{4} \eta_{c_p}.
\end{equation}
Hence, we are led to
$$
\big\| \eta'_0(\cdot - a) - \eta_{c_p}' \big\|_{L^2} \leq 2 \big( 1 + \big\| \gu_{c_p} \big\|_{L^\infty}^2 \big)^\frac{1}{2} \big\| \hat{\psi}_0'(\cdot - a) - \gu_{c_p}' \big\|_{L^2} + \big( 2 - c_p^2 \big)^\frac{1}{2} \Big\| \eta_{c_p}^\frac{1}{2} \big( \hat{\psi}_0(\cdot - a) - \gu_{c_p} \big) \Big\|_{L^2},
$$
and there exists a positive number $C_p$, depending only on $p$, such that
\begin{equation}
\label{price}
\big\| \eta'_0(\cdot - a) - \eta_{c_p}' \big\|_{L^2} \leq C_p \, d_{c_p} \big( e^{i \theta} \hat{\psi}_0(\cdot - a), \gu_{c_p} \big) < 2 C_p \alpha.
\end{equation}
Similarly, we write
\begin{align*}
v_0(\cdot - a) - v_{c_p} = \frac{1}{|\psi_0(\cdot - a)|^2} \bigg( & \big\langle i (\gu_{c_p}' - \hat{\psi}_0(\cdot - a)'), \hat{\psi}_0(\cdot - a) \big\rangle_\C + \big\langle i \gu_{c_p}', \gu_{c_p} - \hat{\psi}_0(\cdot - a) \big\rangle_\C \\
& + \frac{\big\langle i \gu_{c_p}', \gu_{c_p} \big\rangle_\C}{|\gu_{c_p}|^2} \big( \eta_{c_p} - \eta_0(\cdot - a) \big) \bigg).
\end{align*}
Invoking again Lemma~\ref{lem:cont-modulus} and using~\eqref{eq:gu-val}, we can decrease the value of the number $\alpha$ if necessary, so that
$$
\inf_{x \in \R} |\hat{\psi}_0(x - a)|^2 \geq \inf_{x \in \R}|\gu_{c_p}(x)|^2 - \frac{c^2}{4} = \frac{c^2}{4}.
$$
Combining with~\eqref{hasbro} and~\eqref{fisher}, we deduce that
$$
\big\| v_0(\cdot - a) - v_{c_p} \big\|_{L^2} \leq C_p \, d_{c_p} \big( e^{i \theta} \hat{\psi}_0(\cdot - a), \gu_{c_p} \big) < 2 C_p \alpha,
$$
for a further positive number $C_p$. In view of~\eqref{janod} and~\eqref{price}, we conclude that
$$
\big\| \eta_0(\cdot - a) - \eta_{c_p} \big\|_{H^1}^2 + \big\| v_0(\cdot - a) - v_{c_p} \big\|_{L^2}^2 < (4 + 8 C_p^2) \alpha^2.
$$
It is then enough to fix the choice of $\alpha \leq \beta/(4 + 8 C_p)^{1/2}$ in order to complete the proof of Lemma~\ref{lem:U-V}.
\end{proof}

The sets $\boU_p(\beta)$ were already introduced in~\cite{BetGrSm1} in order to prove the orbital stability of chains of $N$ solitons. All the results in~\cite{BetGrSm1} are stated for an arbitrary integer $N \geq 1$, and in particular, hold for a single soliton. We now explicit the results in~\cite{BetGrSm1} on which we rely for completing the proof of Proposition~\ref{prop:loc-min}.

We begin by~\cite[Proposition 2]{BetGrSm1}, which provides a decomposition of each pair $(\eta_0, v_0)$ in $\boU_p(\beta)$ as the sum of a modulated soliton plus a remainder term satisfying suitable orthogonality conditions. More precisely, we can rephrase this proposition as

\begin{lem}[\cite{BetGrSm1}]
\label{lem:modul-param}
There exist two positive numbers $\beta_1$ and $C_1$, depending only on $c_p$,
and two functions $\ga \in \boC^1(\boU_p(\beta_1), \R)$ and $\gc \in
\boC^1(\boU_p(\beta_1), (- \sqrt{2}, 0) \cup (0, \sqrt{2}))$ such that, for any pair $(\eta_0, v_0) \in \boU_p(\beta_1)$, the function
\begin{equation}
\label{def:vareps}
\varepsilon := (\varepsilon_\eta, \varepsilon_v) := \big( \eta_0(\cdot - a) - \eta_c, v_0(\cdot - a) - v_c \big),
\end{equation}
with $a := \ga(\eta_0, v_0)$ and $c := \gc(\eta_0, v_0)$, satisfies the orthogonality conditions
\begin{equation}
\label{eq:cond-ortho}
\big\langle (\varepsilon_\eta, \varepsilon_v), (\eta_c', v_c') \big\rangle_{L^2 \times L^2} = dP(\eta_c,v_c)(\varepsilon_\eta, \varepsilon_v) = 0.
\end{equation}
Moreover, if there exist numbers $a_* \in \R$ and $\beta \leq \beta_1$ such that
$$
\| (\eta_0(\cdot - a_*), v_0(\cdot - a_*)) - (\eta_{c_p}, v_{c_p}) \|_{H^1 \times L^2} < \beta,
$$
then
\begin{equation}
\label{eq:cont-modul}
\big\| \varepsilon \big\|_{H^1 \times L^2} + \big| c - c_p \big| + \big| a - a_* \big| \leq C_1 \beta. 
\end{equation}
\end{lem}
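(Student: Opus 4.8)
The statement is a classical modulation lemma, and the plan is to obtain it as a direct application of the implicit function theorem, the two orthogonality conditions being the natural transversality conditions associated with the translation and Galilean symmetries. I would introduce, for $(a, c) \in \R \times \big((-\sqrt{2}, 0) \cup (0, \sqrt{2})\big)$ and $(\eta_0, v_0)$ in a neighbourhood of $(\eta_{c_p}, v_{c_p})$ in $H^1(\R) \times L^2(\R)$, the map $G = (G_1, G_2)$ defined by
\begin{equation*}
G_1(a, c, \eta_0, v_0) := \big\langle (\eta_0(\cdot - a) - \eta_c, v_0(\cdot - a) - v_c), (\eta_c', v_c') \big\rangle_{L^2 \times L^2},
\end{equation*}
and
\begin{equation*}
G_2(a, c, \eta_0, v_0) := dP(\eta_c, v_c)\big( \eta_0(\cdot - a) - \eta_c, v_0(\cdot - a) - v_c \big),
\end{equation*}
so that the orthogonality conditions in~\eqref{eq:cond-ortho} are precisely $G_1 = G_2 = 0$. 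At the base point $(0, c_p, \eta_{c_p}, v_{c_p})$ the remainder vanishes, hence $G = 0$ there.

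The crucial step is to check that the differential of $G$ with respect to $(a, c)$ at this base point is invertible. Since the translated data are always paired against the smooth, exponentially decaying functions $(\eta_c', v_c')$ and $(\eta_c, v_c)$, one may transfer the translation onto these factors; this renders $G$ of class $\boC^1$ jointly in all arguments, despite the mere $L^2$ control on $v_0$, while the dependence on $(\eta_0, v_0)$ is affine and $c \mapsto (\eta_c, v_c)$ is smooth by the explicit formulas. A direct computation then shows that the $2 \times 2$ Jacobian in $(a, c)$ is triangular, the off-diagonal entry $\partial_a G_2 = dP(\eta_{c_p}, v_{c_p})(-\eta_{c_p}', -v_{c_p}')$ vanishing because $\int_\R (\eta_{c_p} v_{c_p})' = 0$, an expression of the translation invariance of the momentum. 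The diagonal entries are
\begin{equation*}
\partial_a G_1 = -\big\| (\eta_{c_p}', v_{c_p}') \big\|_{L^2 \times L^2}^2 \neq 0, \qquad \partial_c G_2 = -\frac{d}{dc}\Big|_{c = c_p} P(\eta_c, v_c),
\end{equation*}
the second following upon differentiating $c \mapsto dP(\eta_c, v_c)(\eta_{c_p} - \eta_c, v_{c_p} - v_c)$ and using that its argument vanishes at $c = c_p$. Identifying $P(\eta_c, v_c)$ with $\Xi$, this derivative equals $-\sqrt{2 - c_p^2}$ by~\eqref{eq:deriv-Chi}, hence is nonzero since $c_p \neq 0$; both diagonal entries being nonzero, the Jacobian is invertible.

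With the nondegeneracy in hand, the implicit function theorem provides $\boC^1$ functions $a, c$ of $(\eta_0, v_0)$ near $(\eta_{c_p}, v_{c_p})$ solving $G = 0$, with $(a, c) = (0, c_p)$ at the base point; smallness of the neighbourhood keeps $c$ in $(-\sqrt{2}, 0) \cup (0, \sqrt{2})$. I would then extend these to $\boC^1$ functions $\ga, \gc$ on the whole translation-invariant tube $\boU_p(\beta_1)$ by equivariance, noting that a shift of the data translates $\ga$ by the corresponding amount and leaves $\gc$ and the remainder norm unchanged. The quantitative estimate~\eqref{eq:cont-modul} finally follows from the local Lipschitz continuity of the implicit functions: given $a_*$ with $\|(\eta_0(\cdot - a_*), v_0(\cdot - a_*)) - (\eta_{c_p}, v_{c_p})\|_{H^1 \times L^2} < \beta$, I would apply the local result to the translate $(\eta_0(\cdot - a_*), v_0(\cdot - a_*))$, obtaining parameters within $C_1 \beta$ of $(0, c_p)$, and transfer back by equivariance to control $|a - a_*|$, $|c - c_p|$ and $\|\varepsilon\|_{H^1 \times L^2}$.

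The main obstacle is the nondegeneracy of the Jacobian, and specifically the fact that $\tfrac{d}{dc} P(\eta_c, v_c) \neq 0$ at $c = c_p$: this is exactly the strict monotonicity of the soliton momentum in the speed, which holds for grey solitons ($c_p \neq 0$) but degenerates at the sonic endpoints. The two remaining points — the $\boC^1$ regularity of $G$ notwithstanding the weak control on $v_0$, and the passage from a ball around $(\eta_{c_p}, v_{c_p})$ to the full tube $\boU_p(\beta_1)$ — are routine, being handled respectively by pairing against smooth rapidly decaying functions and by translation equivariance.
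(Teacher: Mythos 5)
Your proposal is essentially correct, but note that the paper does not prove this lemma at all: it is imported verbatim as a rephrasing of \cite[Proposition~2]{BetGrSm1}, so there is no in-paper argument to compare against. Your reconstruction via the implicit function theorem applied to the map $G=(G_1,G_2)$ encoding the two orthogonality conditions, with the triangular Jacobian ($\partial_a G_2=\tfrac12\int_\R(\eta_{c_p}v_{c_p})'=0$, $\partial_a G_1=-\|(\eta_{c_p}',v_{c_p}')\|_{L^2\times L^2}^2$, $\partial_c G_2=-\tfrac{d}{dc}P(\eta_c,v_c)|_{c=c_p}$), followed by translation equivariance to cover the whole tube $\boU_p(\beta_1)$, is the standard argument and matches what the cited reference does. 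Two small imprecisions are worth fixing. First, your justification ``$-\sqrt{2-c_p^2}$ is nonzero since $c_p\neq 0$'' is a non sequitur: this quantity is nonzero for every subsonic speed $|c_p|<\sqrt{2}$, including $c_p=0$; the hypothesis $c_p\neq 0$ is needed not for the nondegeneracy of the Jacobian but to keep the modulated speed in $(-\sqrt{2},0)\cup(0,\sqrt{2})$, where the hydrodynamical variables $(\eta_c,v_c)$ make sense. Second, the bound on $\|\varepsilon\|_{H^1\times L^2}$ in~\eqref{eq:cont-modul} does not follow from Lipschitz continuity of the implicit functions alone: since $\eta_0$ is only in $H^1$ and $v_0$ only in $L^2$, you cannot estimate $\|\eta_0(\cdot-a)-\eta_0(\cdot-a_*)\|_{H^1}$ by $|a-a_*|$ directly. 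You should instead write $\varepsilon_\eta=\big(\eta_0(\cdot-a)-\eta_{c_p}(\cdot-a+a_*)\big)+\big(\eta_{c_p}(\cdot-a+a_*)-\eta_{c_p}\big)+\big(\eta_{c_p}-\eta_c\big)$ (and similarly for $\varepsilon_v$), so that the translation and the speed variation act only on the smooth, exponentially decaying profiles; each bracket is then $O(\beta)$. With these two points repaired the argument is complete.
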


We use the decomposition in Lemma~\ref{lem:modul-param} to expand the quantities $E(\eta_0, v_0)$ and $P(\eta_0, v_0)$ at second order. Using the invariance by translation of the energy $E(\eta_0, v_0)$, we first obtain
\begin{equation}
\label{eq:dev-E}
E \big( \eta_0, v_0 \big) = E \big( (\eta_c, v_c) + \varepsilon \big) = E \big( \eta_c, v_c \big) + dE \big( \eta_c, v_c \big)(\varepsilon) + \frac{1}{2} d^2E \big( \eta_c, v_c \big)(\varepsilon, \varepsilon) + R_c(\varepsilon),
\end{equation}
with $c = \gc(\eta_0, v_0)$. In this identity, we have set
$$
dE \big( \eta_c, v_c \big)(\varepsilon) := \frac{1}{2} \int_\R \Big( \frac{(\eta_c')^2 \varepsilon_\eta}{4 (1 - \eta_c)^2} + \frac{\eta_c' \varepsilon_\eta'}{2 (1 - \eta_c)} - v_c^2 \varepsilon_\eta + 2 (1 - \eta_c) v_c \varepsilon_v + \eta_c \varepsilon_\eta \Big),
$$
$$
d^2 E \big( \eta_c, v_c \big)(\varepsilon, \varepsilon) := \int_\R \Big( \frac{(\varepsilon_\eta')^2}{4 (1 - \eta_c)} + \frac{\eta_c' \varepsilon_\eta \varepsilon_\eta'}{2 (1 - \eta_c)^2} + \frac{(\eta_c')^2 \varepsilon_\eta^2}{4 (1 - \eta_c)^3} - 2 v_c \varepsilon_\eta \varepsilon_v + (1 - \eta_c) \varepsilon_v^2 + \frac{1}{2} \varepsilon_\eta^2 \Big),
$$
and
$$
R_c(\varepsilon) := \frac{1}{2} \int_\R \Big( \frac{(\varepsilon_\eta')^2 \varepsilon_\eta}{4 (1 - \eta_c) (1 - \eta_c -\varepsilon_\eta)} + \frac{\eta_c' \varepsilon_\eta^2 \varepsilon_\eta'}{2 (1 - \eta_c)^2 (1 - \eta_c -\varepsilon_\eta)} + \frac{(\eta_c')^2 \varepsilon_\eta^3}{4 (1 - \eta_c)^3 (1 - \eta_c -\varepsilon_\eta)} - \varepsilon_\eta \varepsilon_v^2 \Big).
$$
Similarly, the invariance by translation of the momentum $P(\eta_0, v_0)$ provides
\begin{equation}
\label{eq:dev-P}
P \big( \eta_0, v_0 \big) = P \big( (\eta_c, v_c) + \varepsilon \big) = P \big( \eta_c, v_c \big) + dP \big( \eta_c, v_c \big)(\varepsilon) + \frac{1}{2} d^2 P \big( \eta_c, v_c \big)(\varepsilon, \varepsilon),
\end{equation}
with
\begin{equation}
\label{eq:dev-P-1-2}
dP \big( \eta_c, v_c \big)(\varepsilon) := \frac{1}{2} \int_\R \big( \eta_c \varepsilon_v + v_c \varepsilon_\eta \big), \quad \text{and} \quad d^2 P \big( \eta_c, v_c \big)(\varepsilon, \varepsilon) := \int_\R \varepsilon_\eta \varepsilon_v.
\end{equation}
The previous identities give an expansion at second order of the quantity
$E(\eta_0, v_0) - c_p P(\eta_0, v_0).$ We now estimate each term in this expansion in order to bound from below this quantity.

\begin{lem}
\label{lem:coer-E-P}
Consider a function $(\eta_0, v_0) \in \boU_p(\beta_1)$, where $\beta_1$ is the positive number in Lemma~\ref{lem:modul-param}, and set $\varepsilon = \big( \eta_0(\cdot - a) - \eta_c, v_0(\cdot - a) - v_c \big)$, with $a = \ga(\eta_0, v_0)$ and $c = \gc(\eta_0, v_0)$. There exist two positive numbers $\beta_2 \leq \beta_1$ and $K_2$, depending only on $c_p$, such that
\begin{equation}
\label{eq:estim-0}
E \big( \eta_c, v_c \big) - c_p P \big( \eta_c, v_c \big) \geq E \big( \gu_{c_p} \big) - c_p P \big( \gu_{c_p} \big) - K_2 \big|c - c_p|^2,
\end{equation}
\begin{equation}
\label{eq:ident-1}
dE \big( \eta_c, v_c \big)(\varepsilon) - c_p \, dP \big( \eta_c, v_c \big)(\varepsilon) = 0,
\end{equation}
\begin{equation}
\label{eq:estim-2}
d^2 E \big( \eta_c, v_c \big)(\varepsilon, \varepsilon) - c_p \, d^2 P \big( \eta_c, v_c \big)(\varepsilon, \varepsilon) \geq K_2 \Big( \big\| \varepsilon \big\|_{H^1 \times L^2}^2 - \big| c - c_p \big|^2 \Big),
\end{equation}
and
\begin{equation}
\label{eq:estim-R}
R_c(\varepsilon) \geq - K_2 \big\| \varepsilon \big\|_{H^1 \times L^2}^3,
\end{equation}
when $(\eta_0, v_0) \in \boU_p(\beta_2)$.
\end{lem}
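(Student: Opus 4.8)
The plan is to treat the four estimates separately, since they are of quite different natures. Throughout I use two structural facts. First, by equation~\eqref{eq:gu-p} the profile $\gu_c$ is a critical point of the functional $E - c P$, so that in the hydrodynamical variables one has the identity $dE(\eta_c, v_c) = c\, dP(\eta_c, v_c)$. Second, the profiles $(\eta_c, v_c)$, as well as the scalars $E(\gu_c)$ and $P(\gu_c)$, depend smoothly on the speed $c$, which is transparent from the explicit expressions~\eqref{eq:gu-val} and~\eqref{eq:gE-val}, and along the branch one has $\frac{d}{dc} P(\gu_c) = -(2 - c^2)^{1/2} \neq 0$ by~\eqref{eq:deriv-cp}.

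For~\eqref{eq:estim-0}, I would set $g(c) := E(\eta_c, v_c) - c_p P(\eta_c, v_c) = E(\gu_c) - c_p P(\gu_c)$ and expand it around $c = c_p$. Differentiating and using the critical point relation $\frac{d}{dc} E(\gu_c) = dE(\gu_c)(\partial_c \gu_c) = c\, dP(\gu_c)(\partial_c \gu_c) = c\, \frac{d}{dc} P(\gu_c)$, one finds $g'(c) = (c - c_p)\, \frac{d}{dc} P(\gu_c)$, so that $c_p$ is a critical point of $g$. Since $g$ is smooth near $c_p$, a Taylor expansion with a uniform bound on $g''$ gives $|g(c) - g(c_p)| \leq K_2 |c - c_p|^2$, which yields~\eqref{eq:estim-0} after recalling $g(c_p) = E(\gu_{c_p}) - c_p P(\gu_{c_p})$. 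For~\eqref{eq:ident-1}, I would simply insert the identity $dE(\eta_c, v_c) = c\, dP(\eta_c, v_c)$, so that the left-hand side reduces to $(c - c_p)\, dP(\eta_c, v_c)(\varepsilon)$; this vanishes because $\varepsilon$ satisfies the momentum orthogonality condition $dP(\eta_c, v_c)(\varepsilon) = 0$ in~\eqref{eq:cond-ortho}.

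The estimate~\eqref{eq:estim-2} is the analytical heart of the argument and the step I expect to be the main obstacle. I would write $d^2 E(\eta_c, v_c) - c_p\, d^2 P(\eta_c, v_c) = \big( d^2 E(\eta_c, v_c) - c\, d^2 P(\eta_c, v_c) \big) + (c - c_p)\, d^2 P(\eta_c, v_c)$ and rely on the coercivity of the Hessian of $E - c P$ at the soliton $(\eta_c, v_c)$, restricted to the subspace cut out by the orthogonality conditions~\eqref{eq:cond-ortho}. This coercivity is precisely the spectral ingredient underlying the orbital stability proved in~\cite{BetGrSm1}: the two conditions remove, respectively, the kernel direction $(\eta_c', v_c')$ coming from translation invariance and the unique negative direction, which is controlled through the momentum since $\frac{d}{dc} P(\gu_c) \neq 0$. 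Invoking this result, with a constant $\Lambda > 0$ locally uniform in the speed, provides $\big( d^2 E - c\, d^2 P\big)(\eta_c, v_c)(\varepsilon, \varepsilon) \geq \Lambda \|\varepsilon\|_{H^1 \times L^2}^2$ for $c$ near $c_p$. It then remains to absorb the perturbation: since $|d^2 P(\eta_c, v_c)(\varepsilon, \varepsilon)| \leq C \|\varepsilon\|_{H^1 \times L^2}^2$, and $|c - c_p|$ can be made small by choosing $\beta_2$ small via~\eqref{eq:cont-modul}, the term $(c - c_p)\, d^2 P(\eta_c, v_c)(\varepsilon, \varepsilon)$ is controlled and~\eqref{eq:estim-2} follows, the harmless weakening by $-K_2 |c - c_p|^2$ on the right leaving ample room. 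The delicate point is the local uniformity in the speed $c$ of the coercivity constant, which I would secure by a continuity argument on the quadratic forms and on the associated orthogonal projections.

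Finally, for~\eqref{eq:estim-R} I would estimate the cubic remainder $R_c(\varepsilon)$ term by term. Because $p \neq \pi/2$, the grey soliton is non-vanishing, so $1 - \eta_c = \rho_c^2$ is bounded below by a positive constant; and for $\beta_2$ small the same holds for $1 - \eta_c - \varepsilon_\eta = 1 - \eta_0(\cdot - a)$, since $(\eta_0, v_0)$ then stays in $NV(\R)$ close to $(\eta_{c_p}, v_{c_p})$. The denominators being thus uniformly bounded away from zero, and $\eta_c'$ being bounded, each of the four integrals defining $R_c(\varepsilon)$ is a cubic expression in $\varepsilon_\eta$, $\varepsilon_\eta'$ and $\varepsilon_v$; placing one factor in $L^\infty$ through the Sobolev embedding $H^1(\R) \hookrightarrow L^\infty(\R)$ and the remaining two factors in $L^2$, each is bounded in absolute value by $C \|\varepsilon\|_{H^1 \times L^2}^3$, which gives~\eqref{eq:estim-R}.
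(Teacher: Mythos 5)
Your proposal is correct and follows essentially the same route as the paper: a Taylor expansion of $c \mapsto E(\gu_c) - c_p P(\gu_c)$ using the relation $\tfrac{d}{dc}E(\gu_c) = c\,\tfrac{d}{dc}P(\gu_c)$ for~\eqref{eq:estim-0}, the Euler--Lagrange equation combined with the orthogonality condition $dP(\eta_c,v_c)(\varepsilon)=0$ for~\eqref{eq:ident-1}, the decomposition $d^2E - c_p d^2P = (d^2E - c\,d^2P) + (c-c_p)d^2P$ together with the coercivity of~\cite[Proposition 1]{BetGrSm1} for~\eqref{eq:estim-2}, and lower bounds on the denominators plus the Sobolev embedding for~\eqref{eq:estim-R}. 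The only cosmetic difference is that you absorb the cross term $(c-c_p)\,d^2P(\varepsilon,\varepsilon)$ by the smallness of $|c-c_p|$, whereas the paper uses a Young-type splitting into $(c-c_p)^2$ and $\|\varepsilon\|_{H^1\times L^2}^4$; both are fine.
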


\begin{proof}
Concerning~\eqref{eq:estim-0}, recall that the modulated speed $c$ lies in $(- \sqrt{2}, 0) \cup (0, \sqrt{2})$ by Lemma~\ref{lem:modul-param}. Hence, it follows from~\cite[Proposition 1]{BetGrSa2} that the energy $E(\eta_c, v_c)$ and the momentum $P(\eta_c, v_c)$ are given by
\begin{equation}
\label{eq:form-E-P-c}
E(\eta_c, v_c) = \frac{1}{3} \big( 2 - c^2 \big)^\frac{3}{2}, \quad \text{ and } \quad P(\eta_c, v_c) = \text{sign}(c) \Big( \frac{\pi}{2} - \arctan \bigg( \frac{|c|}{\sqrt{2 - c^2}} \bigg) - \frac{|c|}{2} \sqrt{2 - c^2} \Big).
\end{equation}
In view of~\eqref{eq:cont-modul}, we can decrease if necessary the value of the
number $\beta_1$ such that all the modulated speeds $c$ corresponding to
pairs in $\boU_p(\beta_1)$ are in a compact subset of the interval,
either $(- \pi/2 , 0)$, or $(0, \pi/2)$, containing the speed $c_p$. In
this case, we can use the smoothness of the maps $c \mapsto E(\eta_c,
v_c)$ and $c \mapsto P(\eta_c, v_c)$ on both these intervals in order to find a positive number $K$, depending only on $c_p$, such that
\begin{align*}
E \big( \eta_c, v_c \big) - c_p P \big( \eta_c, v_c \big) - & \Big( E \big( \eta_{c_p}, v_{c_p} \big) - c_p P \big( \eta_{c_p}, v_{c_p} \big) \Big) \\
\geq & \frac{d}{dc} \Big( E \big( \eta_c, v_c \big) \Big)_{|c = c_p} - c_p \frac{d}{dc} \Big( P \big( \eta_c, v_c \big) \Big)_{|c = c_p} - K \big( c - c_p \big)^2.
\end{align*}
Since $E(\eta_{c_p}, v_{c_p}) - c_p P(\eta_{c_p}, v_{c_p}) = E(\gu_{c_p}) - c_p P(\gu_{c_p})$ by definition, the estimate in~\eqref{eq:estim-0} follows from the property that
$$
\frac{d}{dc} \Big( E \big( \eta_c, v_c \big) \Big)_{|c = c_p} = - c_p \big( 2 - c_p^2 \big)^\frac{1}{2} = c_p \frac{d}{dc} \Big( P \big( \eta_c, v_c \big) \Big)_{|c = c_p},
$$
which results from~\eqref{eq:deriv-Chi}.

For the proof of~\eqref{eq:ident-1}, we first use the second orthogonality condition in~\eqref{eq:cond-ortho} in order to write
$$
dE \big( \eta_c, v_c \big)(\varepsilon) - c_p \, dP \big( \eta_c, v_c \big)(\varepsilon) = dE \big( \eta_c, v_c \big)(\varepsilon) = dE \big( \eta_c, v_c \big)(\varepsilon) - c \, dP \big( \eta_c, v_c \big)(\varepsilon).
$$
We next rephrase the equation satisfied by the profile $\gu_c$ in terms of the hydrodynamic pair $(\eta_c, v_c)$. In view of~\eqref{eq:gu-p}, we are led to the system
$$
\begin{cases} \frac{\eta_{c}''}{2 (1 - \eta_{c})} + \frac{(\eta_{c}')^2}{4 (1 - \eta_{c})^2} + c v_{c} + v_{c}^2 - \eta_{c} = 0, \\[5pt]
(1 - \eta_{c}) v_{c} = \frac{c}{2} \eta_{c}. \end{cases}
$$
It is then enough to multiply the first equation in this system by $\varepsilon_\eta$, the second one by $\varepsilon_v$, and to integrate by parts in order to obtain
$$
dE \big( \eta_c, v_c \big)(\varepsilon) - c \, dP \big( \eta_c, v_c \big)(\varepsilon) = 0,
$$
and therefore,~\eqref{eq:ident-1}.

We now turn to~\eqref{eq:estim-2}. We rewrite the second order term as
\begin{equation}
\label{corolle}
\begin{split}
d^2 E \big( \eta_c, v_c \big)(\varepsilon, \varepsilon) - c_p \, d^2 P \big( \eta_c, v_c \big)(\varepsilon, \varepsilon) = & d^2 E \big( \eta_c, v_c \big)(\varepsilon, \varepsilon) - c \, d^2 P \big( \eta_c, v_c \big)(\varepsilon, \varepsilon) \\
& + \big( c - c_p \big) d^2 P \big( \eta_c, v_c \big)(\varepsilon, \varepsilon).
\end{split}
\end{equation}
In view of~\eqref{eq:dev-P-1-2}, we have
\begin{equation}
\label{clementoni}
\big( c - c_p \big) d^2 P \big( \eta_c, v_c \big)(\varepsilon, \varepsilon) \geq - \big| c- c_p \big| \big\| \varepsilon \|_{H^1 \times L^2}^2 \geq - \frac{1}{2 \delta} \big( c- c_p \big)^2 - \frac{\delta}{2} \big\| \varepsilon \|_{H^1 \times L^2}^4,
\end{equation}
for any positive number $\delta$. Recall that the function $\varepsilon$
satisfies the two orthogonal conditions in~\eqref{eq:cond-ortho},
whereas by~\eqref{eq:cont-modul}, the modulated speeds $c$ lie in a
compact subset of the interval $(- \pi/2 , 0)$ or $(0, \pi/2)$,
containing the speed $c_p$. As a consequence, we can apply~\cite[Proposition 1]{BetGrSm1} in order to find a positive number $K$, depending only on $c_p$, such that
$$
d^2 E \big( \eta_c, v_c \big)(\varepsilon, \varepsilon) - c \, d^2 P \big( \eta_c, v_c \big)(\varepsilon, \varepsilon) \geq K \big\| \varepsilon \|_{H^1 \times L^2}^2.
$$
Combining with~\eqref{corolle} and~\eqref{clementoni}, we obtain
$$
d^2 E \big( \eta_c, v_c \big)(\varepsilon, \varepsilon) - c_p \, d^2 P \big( \eta_c, v_c \big)(\varepsilon, \varepsilon) \geq K \big\| \varepsilon \|_{H^1 \times L^2}^2 - \frac{1}{2 \delta} \big( c- c_p \big)^2 - \frac{\delta}{2} \big\| \varepsilon \|_{H^1 \times L^2}^4.
$$
At this stage, we can decrease if necessary the value of the number $\beta_2$ so that $\| \varepsilon \|_{H^1 \times L^2} \leq 1$ by~\eqref{eq:cont-modul}. It is then enough to choose $\delta = K/2$ in order to obtain~\eqref{eq:estim-2}.

Finally, the estimate in~\eqref{eq:estim-R} essentially results from the Sobolev embedding theorem. In view of~\eqref{def:eta-c}, there indeed exists a positive number $\kappa \leq 1$, depending only on $c_p$, such that
$$
1 - \eta_c \geq \kappa,
$$
for any modulated speed $c$ in a compact subset of either $(- \pi/2 , 0)$, or
$(0, \pi/2)$, containing $c_p$. Decreasing if necessary the value of the number $\beta_2$, we deduce from~\eqref{eq:cont-modul} and the Sobolev embedding theorem that
$$
1 - \eta_c - \varepsilon_\eta \geq \frac{\kappa}{2}.
$$
In view of~\eqref{def:eta-c}, the derivative $\eta_c'$ is also uniformly bounded by a positive number depending only on $c_p$. Using once again the Sobolev embedding theorem, we are led to
$$
R_c(\varepsilon) \geq - \frac{K}{\kappa^4} \big\| \varepsilon \big\|_{H^1 \times L^2}^3,
$$
where, as before, $K$ only depends on $c_p$. This completes the proof of~\eqref{eq:estim-R}, as well as of Lemma~\ref{lem:coer-E-P}. 
\end{proof}

We are now in position to conclude the proof of Proposition~\ref{prop:loc-min} when $p \neq \pi/2$.

\begin{proof}[End of the proof of Proposition~\ref{prop:loc-min}]
Going back to~\eqref{eq:dev-E} and~\eqref{eq:dev-P} and invoking Lemma~\ref{lem:coer-E-P}, we can write
\begin{equation}
\label{eq:bound-E-P-c}
\begin{split}
E \big( \hat{\psi}_0 \big) - c_p P \big( \hat{\psi}_0 \big) = & E \big( \eta_0, v_0 \big) - c_p P \big( \eta_0, v_0 \big) \\
\geq & E \big( \gu_{c_p} \big) - c_p P \big( \gu_{c_p} \big) + K_2 \Big( \big\| \varepsilon \big\|_{H^1 \times L^2}^2 - \big\| \varepsilon \big\|_{H^1 \times L^2}^3 - 2 \big|c - c_p|^2 \Big).
\end{split}
\end{equation}
In order to estimate the difference $c - c_p$, we rely on the formula in~\eqref{eq:form-E-P-c} for the momentum $P(\eta_c, v_c)$. Since the modulated speed $c$ lies in a compact subset containing $c_p$ by~\eqref{eq:cont-modul}, we infer from~\eqref{eq:deriv-Chi} and~\eqref{eq:form-E-P-c} the existence of a positive number $K$, depending only on $c_p$, such that
\begin{equation}
\label{mahut}
\big| c - c_p \big| \leq K \big| P(\eta_c, v_c) - P(\eta_{c_p}, v_{c_p}) \big|.
\end{equation}
Combining~\eqref{eq:dev-P} with~\eqref{eq:cond-ortho} and~\eqref{eq:dev-P-1-2}, we check that
$$
\big| P(\eta_c, v_c) - P(\eta_0, v_0) \big| \leq \frac{1}{2} \big\| \varepsilon \big\|_{H^1 \times L^2}^2.
$$
On the other hand, it follows from~\eqref{eq:cont-P-1-2} and~\eqref{eq:val-P-p} that
$$
\big| P(\eta_{c_p}, v_{c_p}) - P(\eta_0, v_0) \big| = \big| p - P(\hat{\psi}_0) \big| \leq \frac{1}{4} \big\| \nabla w_0 \big\|_{L^2}^2.
$$
Hence, we obtain
$$
\big| c - c_p \big| \leq \frac{K}{2} \Big( \big\| \varepsilon \big\|_{H^1 \times L^2}^2 + \big\| \nabla w_0 \big\|_{L^2}^2 \Big).
$$
Introducing this inequality into~\eqref{eq:bound-E-P-c}, we are led to
$$
E \big( \hat{\psi}_0 \big) - c_p P \big( \hat{\psi}_0 \big) \geq E \big( \gu_{c_p} \big) - c_p P \big( \gu_{c_p} \big) + K_2 \big\| \varepsilon \big\|_{H^1 \times L^2}^2 - K_2 \big\| \varepsilon \big\|_{H^1 \times L^2}^3 - K^2 \big\| \varepsilon \big\|_{H^1 \times L^2}^4 - K^2 \big\| \nabla w_0 \big\|_{L^2}^4.
$$
At this stage, we can again decrease the value of the number $\beta_2$ so that~\eqref{eq:cont-modul} provides the inequality
$$
K_2 \big\| \varepsilon \big\|_{H^1 \times L^2}^2 - K_2 \big\| \varepsilon \big\|_{H^1 \times L^2}^3 - K^2 \big\| \varepsilon \big\|_{H^1 \times L^2}^4 \geq \frac{K_2}{2} \big\| \varepsilon \big\|_{H^1 \times L^2}^2.
$$
As a consequence, we obtain
$$
E \big( \hat{\psi}_0 \big) - c_p P \big( \hat{\psi}_0 \big) \geq E \big( \gu_{c_p} \big) - c_p P \big( \gu_{c_p} \big) + \frac{K_2}{2} \big\| \varepsilon \big\|_{H^1 \times L^2}^2 - K^2 \big\| \nabla w_0 \big\|_{L^2}^4.
$$
We next invoke Lemma~\ref{lem:U-V} in order to find a number $\alpha$ such that $(\eta_0, v_0) \in \boU_p(\beta_2)$ when $\psi \in \boV_p(\alpha)$. In this case, we derive from~\eqref{def:V-alpha} and~\eqref{eq:cont-E-P-1-2} that
\begin{align*}
E_\lambda \big( \psi \big) - c_p P \big( \psi \big) \geq & E \big( \gu_{c_p} \big) - c_p P \big( \gu_{c_p} \big) + \frac{K_2}{2} \big\| \varepsilon \big\|_{H^1 \times L^2}^2 + \frac{1}{2} \Big( 1 - \frac{|c_p|}{2} - 2 K^2 \alpha^2 \Big) \big\| \partial_x w_0 \big\|_{L^2}^2 \\
& + \frac{1}{2} \Big( \lambda^2 - C_p - \frac{|c_p|}{\pi} - 2 K^2 \alpha^2 \Big) \big\| \partial_y w_0 \big\|_{L^2}^2 + \frac{1}{2} \big\| w_0 \big\|_{L^2}^2.
\end{align*}
We finally fix the choice of the number $\alpha_p$ so that $1 - \sqrt{2}/2 - 2 K^2 \alpha_p^2 > 0$, and the choice of the number $\lambda_p$ so that $\lambda_p^2 - C_p - \sqrt{2}/\pi - 2 K^2 \alpha_p^2 > 0$. The previous choices guarantee that
$$
E_\lambda \big( \psi \big) - c_p P \big( \psi \big) \geq E \big( \gu_{c_p} \big) - c_p P \big( \gu_{c_p} \big),
$$
when $\psi \in \boV_p(\alpha_p)$ and $\lambda \geq \lambda_p$. This inequality is exactly~\eqref{eq:local-mini-p} due to the facts that $P(\psi) = P(\gu_{c_p}) = p$ and $E(\gu_{c_p}) = E_\lambda(\gu_{c_p})$. Moreover, equality holds if and only if
$$
\big\| \varepsilon \big\|_{H^1 \times L^2} = \big\| w_0 \big\|_{H^1} = 0.
$$
In this case, we observe that $(\eta_0, v_0) = (\eta_c(\cdot + a), v_c(\cdot + a))$, so that there exists a number $\theta \in \R$ for which $\hat{\psi}_0 = e^{- i \theta} \gu_c(\cdot + a)$. As a consequence, we have
$$
\psi = \hat{\psi}_0 + w_0 = e^{- i \theta} \gu_c(\cdot + a) + 0 = e^{- i \theta} \gu_c(\cdot + a).
$$
Since $p = P(\psi)$ by Lemma~\ref{lem:reduc-P}, we deduce that $P(\gu_c) = p$, and we conclude that $c = c_p$. This completes the proof of Proposition~\ref{prop:loc-min} for $p \neq \pi/2$.
\end{proof}

%%%%%%%%%%%%%%%%%%%%%%%%%%%%%%%%%%%%%%%%%%%%%%
%%%%%%%%%%%%%%%%%%%%%%%%%%%%%%%%%%%%%%%%%%%%%%
\subsection{Proof of Proposition~\ref{prop:loc-min} for $p = \frac{\pi}{2}$}
%%%%%%%%%%%%%%%%%%%%%%%%%%%%%%%%%%%%%%%%%%%%%%
%%%%%%%%%%%%%%%%%%%%%%%%%%%%%%%%%%%%%%%%%%%%%%

For $p = \pi/2$, Proposition~\ref{prop:loc-min} also relies on a coercivity estimate, but for the black soliton $\gu_0$. This estimate was derived in~\cite[Proposition 1]{GravSme1} for revisiting the orbital stability of $\gu_0$. We can rephrase it as

\begin{lem}[\cite{GravSme1}]
\label{lem:coer-0}
For $\psi = \gu_0 + \varepsilon \in X(\R)$, set $\eta_\varepsilon := - 2 \langle \gu_0, \varepsilon \rangle_\C - |\varepsilon|^2$. There exists a universal positive number $\Lambda_0$ such that
\begin{equation}
\label{eq:coer-E0}
E(\psi) - E(\gu_0) \geq \Lambda_0 \, \big( \| \varepsilon \|_{H_0}^2 + \| \eta_\varepsilon \|_{L^2}^2 \big) - \frac{1}{\Lambda_0} \, \| \varepsilon \|_{H_0}^3,
\end{equation}
as soon as
\begin{equation}
\label{cond:ortho-0}
\int_\R \langle \varepsilon, \gu_0' \rangle_\C = \int_\R \langle \varepsilon, i \, \gu_0' \rangle_\C = \int_\R \langle \varepsilon, i \, \gu_0 \rangle_\C \big( 1 - |\gu_0|^2 \big) = 0.
\end{equation}
\end{lem}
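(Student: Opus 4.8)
The statement is \cite[Proposition 1]{GravSme1}, and I would follow that reference; let me sketch the strategy. The plan is to expand the energy $E$ to second order around the black soliton $\gu_0$, diagonalise the resulting quadratic form into its real and imaginary parts, and reduce the coercivity to the spectral analysis of two explicit scalar Schr\"odinger operators. The three orthogonality conditions in \eqref{cond:ortho-0} are tuned exactly to the non-positive directions of these operators.

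First I would expand. Writing $1 - |\psi|^2 = \eta_0 + \eta_\varepsilon$ with $\eta_0 = 1 - |\gu_0|^2$ as in \eqref{def:eta-c} and $\eta_\varepsilon = - 2 \langle \gu_0, \varepsilon \rangle_\C - |\varepsilon|^2$, a direct computation from \eqref{def:ge-energy} gives $E(\psi) - E(\gu_0) = \ell(\varepsilon) + Q(\varepsilon) + R(\varepsilon)$, where the linear part is
$$
\ell(\varepsilon) = \int_\R \langle \gu_0', \varepsilon' \rangle_\C - \int_\R \eta_0 \langle \gu_0, \varepsilon \rangle_\C = 0,
$$
the last equality following from one integration by parts together with the equation $\gu_0'' + \eta_0 \gu_0 = 0$ satisfied by $\gu_0$ (this is \eqref{eq:gu-p} with $c = 0$). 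The quadratic and remainder terms are
$$
Q(\varepsilon) = \frac{1}{2} \int_\R |\varepsilon'|^2 - \frac{1}{2} \int_\R \eta_0 |\varepsilon|^2 + \int_\R \langle \gu_0, \varepsilon \rangle_\C^2, \qquad R(\varepsilon) = \int_\R \langle \gu_0, \varepsilon \rangle_\C |\varepsilon|^2 + \frac{1}{4} \int_\R |\varepsilon|^4 .
$$
The remainder $R$ is cubic and quartic in $\varepsilon$, and the Sobolev embedding $H^1(\R) \hookrightarrow L^\infty(\R)$ together with the boundedness of $\gu_0$ yields $|R(\varepsilon)| \leq C \|\varepsilon\|_{H_0}^3$ in the weighted norm $\|\cdot\|_{H_0}$ of \cite{GravSme1}, whose definition reflects that the phase direction is not controllable in full $L^2$.

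Next I would diagonalise $Q$. Since $\gu_0$ is real, writing $\varepsilon = \varepsilon_1 + i \varepsilon_2$ we have $\langle \gu_0, \varepsilon \rangle_\C = \gu_0 \varepsilon_1$, so $Q$ decouples as $Q(\varepsilon) = Q_1(\varepsilon_1) + Q_2(\varepsilon_2)$, with associated operators $L_1 = - \partial_{xx} + 2 - 3 \eta_0$ and $L_2 = - \partial_{xx} - \eta_0$. These are explicit P\"oschl--Teller operators. Differentiating the equation for $\gu_0$ shows $L_1 \gu_0' = 0$, and since $\gu_0'$ is nodeless it is the ground state; hence $L_1 \geq 0$ with kernel $\operatorname{span}\{\gu_0'\}$, essential spectrum $[2, +\infty)$ and a positive spectral gap above $0$. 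The first orthogonality condition in \eqref{cond:ortho-0}, which reads $\int_\R \varepsilon_1 \gu_0' = 0$, removes this kernel and makes $Q_1$ coercive on the constrained real part. The remaining two conditions involve only $\varepsilon_2$: using $\gu_0' \propto \eta_0$ they read $\int_\R \varepsilon_2 \eta_0 = 0$ and $\int_\R \varepsilon_2 \gu_0 \eta_0 = 0$.

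The main obstacle is the coercivity of $Q_2$, i.e.\ of $L_2$, under these last two constraints. Here $L_2$ has essential spectrum $[0, +\infty)$, exactly one negative eigenvalue (eigenvalue $-1/2$, eigenfunction proportional to $\eta_0^{1/2} = \cosh(\cdot/\sqrt2)^{-1}$), and a zero-energy resonance $\gu_0 = \tanh(\cdot/\sqrt2) \notin L^2(\R)$ coming from the degenerate phase invariance; this non-$L^2$ marginal mode is precisely why the hydrodynamical framework fails and why a weighted setting is forced. I would show that the first $\varepsilon_2$-condition neutralises the single negative direction (note $\int \eta_0^{1/2} \cdot \eta_0 > 0$, so the constraint genuinely controls the $\eta_0^{1/2}$-component) while the second removes the resonance $\gu_0$, and then obtain a strictly positive lower bound $Q_2(\varepsilon_2) \geq \delta \|\varepsilon_2\|_{X}^2$ in the weighted norm through a standard min-max/compactness argument ruling out threshold-approaching sequences. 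Combining the two coercivities gives $Q(\varepsilon) \geq 2 \Lambda_0 \big( \|\varepsilon\|_{H_0}^2 + \|\eta_\varepsilon\|_{L^2}^2 \big)$, where the $\|\eta_\varepsilon\|_{L^2}^2$ contribution is recovered from the term $\int_\R \langle \gu_0, \varepsilon \rangle_\C^2$ in $Q$ after absorbing the quartic part of $\|\eta_\varepsilon\|_{L^2}^2$. Finally, inserting the cubic bound on $R$ and shrinking the constant yields \eqref{eq:coer-E0}.
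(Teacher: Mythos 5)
The paper does not prove this lemma at all: it is imported verbatim from \cite[Proposition 1]{GravSme1}, so there is no internal argument to measure yours against. Your sketch does reproduce the correct skeleton of the external proof: expansion of $E$ around $\gu_0$, vanishing of the linear term via $\gu_0''+\eta_0\gu_0=0$, decoupling of the Hessian into the P\"oschl--Teller operators $L_1=-\partial_{xx}+2-3\eta_0$ and $L_2=-\partial_{xx}-\eta_0$, and the correct matching of the three conditions in \eqref{cond:ortho-0} with the kernel direction $\gu_0'$ of $L_1$, the negative eigenfunction $\eta_0^{1/2}$ of $L_2$, and the zero-energy resonance $\gu_0$ of $L_2$.

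Two steps, however, would fail or are missing as written. First, the bound $|R(\varepsilon)|\leq C\|\varepsilon\|_{H_0}^3$ ``by Sobolev embedding'' is false: $\|\cdot\|_{H_0}$ is the weighted norm $\big(\int_\R|\varepsilon'|^2+\int_\R\eta_0|\varepsilon|^2\big)^{1/2}$, and since $\eta_0$ decays exponentially, a function with $\varepsilon'\in L^2(\R)$ and finite $H_0$-norm may grow like $|x|^{1/2}$ at infinity; neither $\|\varepsilon\|_{L^\infty}$ nor $\|\varepsilon\|_{L^4}$ is controlled by $\|\varepsilon\|_{H_0}$, so $\int_\R|\varepsilon|^4$ cannot be absorbed this way. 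The correct mechanism is structural rather than perturbative: writing $1-|\psi|^2=\eta_0+\eta_\varepsilon$ one has the \emph{exact} identity $E(\psi)-E(\gu_0)=\frac12\int_\R|\varepsilon'|^2-\frac12\int_\R\eta_0|\varepsilon|^2+\frac14\int_\R\eta_\varepsilon^2$, so the cubic and quartic terms you placed in $R$ are precisely the non-quadratic part of $\frac14\|\eta_\varepsilon\|_{L^2}^2$, which sits on the favourable side of \eqref{eq:coer-E0}; the cubic error $\|\varepsilon\|_{H_0}^3$ only arises afterwards, when one trades $\|\eta_\varepsilon\|_{L^2}^2$ against $\int_\R\langle\gu_0,\varepsilon\rangle_\C^2$ and weighted quantities. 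Second, the coercivity of $L_2$ under the two $\varepsilon_2$-constraints is the actual content of \cite[Proposition 1]{GravSme1} and cannot be dispatched by ``a standard min-max/compactness argument'': the resonance $\gu_0\notin L^2(\R)$ sits at the bottom of the essential spectrum $[0,+\infty)$, so there is no spectral gap above the constrained subspace, and the lower bound must be established by hand in the weighted space $H_0$ (this is exactly why the estimate is stated in $\|\cdot\|_{H_0}$ rather than $H^1$). You correctly identify this as the main obstacle, but the sketch does not overcome it, and the heuristic that a single constraint with nonzero overlap against $\eta_0^{1/2}$ ``neutralises'' the negative direction is not by itself sufficient; one needs the sign condition of Vakhitov--Kolokolov type on $\langle L_2^{-1}\eta_0,\eta_0\rangle$ or an equivalent explicit computation.
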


The orthogonality conditions in~\eqref{cond:ortho-0} are necessary to control one negative and two null directions of the energy $E$ in the neighbourhood of the black soliton $\gu_0$. As in Lemma~\ref{lem:modul-param}, they can be imposed by introducing suitable modulation parameters related to the speed of the solitons and their invariance by translation and phase shift. These properties were already invoked for constructing modulation parameters in~\cite[Proposition 2]{GravSme1}.
Setting
$$
\gU_0(\beta) := \Big\{ \psi \in X(\R) \text{ s.t. } \inf_{(a, \theta) \in \R^2} d_0 \big( e^{i \theta} \psi(\cdot - a), \gu_0 \big) < \beta \Big\},
$$
for any positive number $\beta$, we can summarize this construction as follows.

\begin{lem}[\cite{GravSme1}]
\label{lem:modul-0}
There exist two positive numbers $\beta_0$ and $A_0$, and three continuously differentiable functions $\ga \in \boC^1(\gU_0(\beta_0), \R)$, $\vartheta \in \boC^1(\gU_0(\beta_0), \R / 2 \pi \Z)$ and $\gc \in \boC^1(\gU_0(\beta_0), (- \sqrt{2}, \sqrt{2}))$ such that for any $\psi \in \gU_0(\beta_0)$, the function
$$
\varepsilon := e^{i \theta} \psi(\cdot - a) - \gu_c,
$$
with $a = \ga(\psi)$, $\theta = \vartheta(\psi)$ and $c = \gc(\psi)$, satisfies the orthogonality conditions
\begin{equation}
\label{cond:ortho-0-c}
\int_\R \langle \varepsilon, \gu_c' \rangle_\C = \int_\R \langle \varepsilon, i \, \gu_c' \rangle_\C = \int_\R \langle \varepsilon, i \, \textup{Re}(\gu_c) \rangle_\C \big( 1 - |\gu_c|^2 \big) = 0.
\end{equation}
Moreover, if
$$
\big\| e^{i \theta_*} \, \psi \big( \cdot - a_* \big) - \gu_0 \big\|_{H_0} \leq \beta,
$$
for numbers $a_* \in \R$, $\theta_* \in \R$ and $\beta \leq \beta_0$, then,
\begin{equation}
\label{eq:est-modul-0}
\big\| \varepsilon \big\|_{H_0} + \big| c \big| + \big| a - a_* \big| + \big| e^{i \theta} - e^{i \theta_*} \big| \leq A_0 \beta.
\end{equation}
\end{lem}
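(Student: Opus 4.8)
This is a modulation lemma, and the plan is to obtain the parameters $(a,\theta,c)$ from the implicit function theorem: the three free parameters match the translation, phase and speed symmetries acting near $\gu_0$, and the three functionals in~\eqref{cond:ortho-0-c} are designed to be transverse to these. First I would introduce the map
\begin{equation*}
\Phi(a,\theta,c,\psi) := \Big( \int_\R \langle \varepsilon, \gu_c' \rangle_\C,\ \int_\R \langle \varepsilon, i\, \gu_c' \rangle_\C,\ \int_\R \langle \varepsilon, i\, \textup{Re}(\gu_c) \rangle_\C\, \eta_c \Big),
\end{equation*}
with $\varepsilon = e^{i\theta}\psi(\cdot - a) - \gu_c$ and $\eta_c = 1 - |\gu_c|^2$, defined for $(a,\theta,c)$ near $(0,0,0)$ and $\psi$ in an $H_0$-neighbourhood of $\gu_0$. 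Each slot is a continuous linear functional of $\varepsilon$: the first two pair $\varepsilon$ against the exponentially decaying $\gu_c'$, while in the third the weight $\eta_c$ is inserted precisely to compensate the non-decay of $\textup{Re}(\gu_c)$. One verifies that $\Phi$ is of class $\boC^1$ jointly (the only delicate point being the differentiability of $a \mapsto \psi(\cdot - a)$, which is harmless here since the functionals pair against decaying weights and $\psi' \in L^2$), and that $\Phi(0,0,0,\gu_0) = 0$.

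The crux is the invertibility of the partial differential $D_{(a,\theta,c)}\Phi$ at the base point. As $\varepsilon$ vanishes there, only $\partial_a\varepsilon = -\gu_0'$, $\partial_\theta\varepsilon = i\gu_0$ and $\partial_c\varepsilon = -i/\sqrt{2}$ contribute, the last using $\partial_c\gu_c|_{c=0} = i/\sqrt{2}$ read off from~\eqref{eq:gu-val}. Since $\gu_0$ is real and odd while $\gu_0'$ and $\eta_0$ are even, most cross terms vanish by parity and the Jacobian reduces to
\begin{equation*}
\begin{pmatrix} -\|\gu_0'\|_{L^2}^2 & 0 & 0 \\ 0 & 0 & -\sqrt{2} \\ 0 & \int_\R \gu_0^2\, \eta_0 & 0 \end{pmatrix},
\end{equation*}
with determinant $-\sqrt{2}\,\|\gu_0'\|_{L^2}^2 \int_\R \gu_0^2\, \eta_0 \neq 0$; here I use $\int_\R \gu_0' = 2$ for the $(2,3)$ entry and $\int_\R \gu_0\, \eta_0 = 0$ (odd integrand) to annihilate the $(3,3)$ entry. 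This nondegeneracy is exactly what forces the weighted form of the third condition.

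The implicit function theorem then yields $\boC^1$ maps $\ga, \vartheta, \gc$ solving $\Phi = 0$ on a small $H_0$-ball around $\gu_0$, vanishing at $\psi = \gu_0$, so that boundedness of their differential gives $\|\varepsilon\|_{H_0} + |c| + |a| + |\vartheta| \lesssim \|\psi - \gu_0\|_{H_0}$ locally. To reach the whole tube $\gU_0(\beta_0)$, I would exploit the equivariance of~\eqref{cond:ortho-0-c} under the symmetry group, namely $\ga(e^{i\theta_0}\psi(\cdot - a_0)) = \ga(\psi) + a_0$, $\vartheta(\cdots) = \vartheta(\psi) + \theta_0$ and $\gc(\cdots) = \gc(\psi)$: given $\psi \in \gU_0(\beta_0)$, I pick $(a_*, \theta_*)$ almost realizing the infimum defining $\gU_0(\beta_0)$, apply the local construction to $e^{i\theta_*}\psi(\cdot - a_*)$, and transport back by the group action, using local uniqueness in the implicit function theorem to ensure the result is independent of the choice of $(a_*, \theta_*)$. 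The quantitative bound~\eqref{eq:est-modul-0} then follows by combining the local Lipschitz estimate with the continuity of $c \mapsto \gu_c$ and of the translation--phase action, starting from the hypothesis $\|e^{i\theta_*}\psi(\cdot - a_*) - \gu_0\|_{H_0} \leq \beta$.

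The main obstacle I expect lies not in the implicit function machinery but in two analytic points specific to the black soliton. First, the natural space $H_0$ is one in which $\gu_0$ does not decay, so the very definition and continuity of $\Phi$ must be routed through the decaying weights $\gu_c'$ and $\eta_c$ rather than through integrability of the test functions; getting the functional-analytic setting right is the delicate preliminary. Second, promoting the purely local construction near $\gu_0$ to a single-valued $\boC^1$ choice on the full translation--phase tube requires the equivariance identities above together with a careful uniqueness argument, and this globalization is where the bookkeeping is heaviest.
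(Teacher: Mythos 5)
The paper does not prove this lemma: it is imported verbatim from \cite[Proposition 2]{GravSme1}, and your implicit-function-theorem modulation argument is precisely the standard route taken there, with the Jacobian computation at $(0,0,0,\gu_0)$ checking out (in particular $\partial_c\gu_c|_{c=0}=i/\sqrt{2}$, the entry $-\sqrt{2}$ from $\int_\R\gu_0'=2$, and the vanishing of the cross terms by parity and reality of $\gu_0$). Your sketch, including the handling of the non-decay of $\gu_0$ via the weights $\gu_c'$ and $\eta_c$ and the globalization to the tube $\gU_0(\beta_0)$ by equivariance, is correct and consistent with the cited source.
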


\begin{rem}
Here, the smoothness of the maps $\ga$, $\vartheta$ and $\gc$ must be understood
with respect to the differential structure provided by the vector space $H(\R)$.
\end{rem}

The orthogonality conditions in~\eqref{cond:ortho-0-c} differ from the ones in~\eqref{cond:ortho-0}. However, a coercivity estimate similar to~\eqref{eq:coer-E0} remains available under these latest conditions. Corollary 1 in~\cite{GravSme1} indeed guarantees that

\begin{lem}[\cite{GravSme1}]
\label{lem:coer-0c}
For $|c| < \sqrt{2}$ and $\psi = \gu_c + \varepsilon \in X(\R)$, set $\eta_\varepsilon := - 2 \langle \gu_c, \varepsilon \rangle_\C - |\varepsilon|^2$. Given any number $0 < \sigma < \sqrt{2}$, there exists a positive number $\Lambda_\sigma$, depending only on $\sigma$, such that
\begin{equation}
\label{eq:coer-Ec}
E(\psi) - E(\gu_0) \geq \Lambda_\sigma \, \big( \| \varepsilon \|_{H_0}^2 + \| \eta_\varepsilon \|_{L^2}^2 \big) - \frac{1}{\Lambda_\sigma} \, \big( c^2 + \| \varepsilon \|_{H_0}^3 \big),
\end{equation}
as soon as $|c| \leq \sigma$, and $\varepsilon$ satisfies the orthogonality conditions in~\eqref{cond:ortho-0-c}.
\end{lem}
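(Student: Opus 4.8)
The plan is to treat~\eqref{eq:coer-Ec} as the $\gu_c$-analogue of the black-soliton coercivity in Lemma~\ref{lem:coer-0}, and to isolate the comparison between $E(\gu_c)$ and $E(\gu_0)$, since this comparison is exactly what produces the $c^2/\Lambda_\sigma$ correction. Writing $\psi = \gu_c + \varepsilon$, the first step is to expand the energy around $\gu_c$ \emph{itself}: since $\varepsilon = \psi - \gu_c$ lies in the energy space $H_0$ and decays at infinity (its phase matching that of $\gu_c$), this is the natural framework, whereas expanding around $\gu_0$ would force one to work with $\psi - \gu_0$, which fails to be square integrable for $c \neq 0$ because of the non-decaying imaginary part of $\gu_c - \gu_0$. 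Since $\gu_c$ solves~\eqref{eq:gu-p}, it is a critical point of $E - cP$, so that $E'(\gu_c) = c\, i\, \gu_c'$. The key algebraic observation is that the \emph{first-order term in $E$ already vanishes} under the conditions~\eqref{cond:ortho-0-c}: indeed $dE(\gu_c)(\varepsilon) = c \int_\R \langle \varepsilon, i \gu_c' \rangle_\C$, and the second condition in~\eqref{cond:ortho-0-c} is precisely $\int_\R \langle \varepsilon, i \gu_c' \rangle_\C = 0$. One is thus left with the purely quadratic-plus-cubic expansion
\begin{equation*}
E(\psi) - E(\gu_c) = \frac{1}{2} d^2 E(\gu_c)(\varepsilon, \varepsilon) + R_c(\varepsilon),
\end{equation*}
where $R_c$ collects the terms of order at least three in $\varepsilon$.

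The central step is the coercivity of the Hessian $d^2 E(\gu_c)$ on the subspace cut out by~\eqref{cond:ortho-0-c}, uniformly for $|c| \leq \sigma$, in the form
\begin{equation*}
d^2 E(\gu_c)(\varepsilon, \varepsilon) \geq 2 \Lambda_\sigma \big( \| \varepsilon \|_{H_0}^2 + \| \eta_\varepsilon \|_{L^2}^2 \big).
\end{equation*}
For $c = 0$ this is contained in Lemma~\ref{lem:coer-0} under~\eqref{cond:ortho-0}, and for small $|c|$ it follows by perturbation, since both the quadratic form $d^2 E(\gu_c)$ and the three directions in~\eqref{cond:ortho-0-c} depend continuously on $c$ through the explicit profile~\eqref{eq:gu-val}, so that positivity on a subspace of finite codimension is stable. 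To reach the full range $|c| \leq \sigma$, one establishes coercivity at each fixed $c$ by a spectral analysis of the linearized operator — the three conditions in~\eqref{cond:ortho-0-c} remove its finitely many non-positive directions, while its essential spectrum stays bounded away from $0$ as long as $|c| < \sqrt{2}$ — and then uses continuity of the coercivity constant in $c$ together with the compactness of $[-\sigma, \sigma]$ to extract a single $\Lambda_\sigma$. This is the content borrowed from~\cite{GravSme1}. The cubic remainder is handled exactly as in~\eqref{eq:estim-R}: by the Sobolev embedding theorem, and using that the relevant weights are uniformly bounded for $|c| \leq \sigma$, one gets $R_c(\varepsilon) \geq - \frac{1}{2 \Lambda_\sigma} \| \varepsilon \|_{H_0}^3$.

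It then remains to convert the estimate around $\gu_c$ into one around $\gu_0$. By~\eqref{eq:gE-val}, $E(\gu_c) = \frac{1}{3}(2 - c^2)^{3/2}$, so that
\begin{equation*}
E(\gu_c) - E(\gu_0) = \frac{1}{3} \Big( (2 - c^2)^{3/2} - 2^{3/2} \Big) \geq - K c^2,
\end{equation*}
for $|c| \leq \sigma$, with $K$ depending only on $\sigma$. Writing $E(\psi) - E(\gu_0) = \big( E(\psi) - E(\gu_c) \big) + \big( E(\gu_c) - E(\gu_0) \big)$, inserting the coercivity bound for the first term and the inequality above for the second, and absorbing $K$ into the constant, yields~\eqref{eq:coer-Ec}.

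The main obstacle is the uniform-in-$c$ coercivity of the Hessian in the second step, which is precisely why the statement is imported from~\cite{GravSme1}: the energy space $H_0$ adapted to the black soliton is non-standard, because $\gu_0$ vanishes at a point and phase perturbations do not decay, so the weights defining $\| \cdot \|_{H_0}$ degenerate; keeping both the spectral gap and the coercivity constant under control as $c$ varies over $[-\sigma, \sigma]$ is the delicate analytic point. By contrast, the vanishing of the linear term and the energy comparison are elementary.
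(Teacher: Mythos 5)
The paper does not actually prove this lemma: it is imported verbatim from Corollary~1 of~\cite{GravSme1}, exactly as Lemmas~\ref{lem:coer-0}, \ref{lem:modul-0} and~\ref{lem:dev-P-0} are, so there is no internal argument to measure your proposal against. Your reconstruction has the architecture one would expect from the cited source, and the two elementary ingredients you isolate are correct: by~\eqref{eq:gu-p} one has $dE(\gu_c)(\varepsilon) = c \int_\R \langle \varepsilon, i\,\gu_c' \rangle_\C$, which vanishes under the second condition in~\eqref{cond:ortho-0-c}, and the comparison $E(\gu_c) - E(\gu_0) = \tfrac{1}{3}\big((2-c^2)^{3/2} - 2^{3/2}\big) \geq - \tfrac{\sqrt{2}}{2}\, c^2$ (by convexity of $t \mapsto (2-t)^{3/2}$) accounts for the $c^2/\Lambda_\sigma$ loss.

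Two caveats. First, the entire analytic content of the lemma is your middle step: the coercivity of the second variation of $E$ itself (not of $E - cP$) at $\gu_c$ under~\eqref{cond:ortho-0-c}, uniformly for $|c| \leq \sigma$, in the degenerate $H_0$-topology. Perturbation from $c = 0$ only covers small $|c|$, and the assertion that the three conditions remove ``the finitely many non-positive directions'' of $d^2E(\gu_c)$ at every subsonic speed is precisely what Corollary~1 of~\cite{GravSme1} establishes; your proof, like the paper's, ultimately rests on that citation, so the lemma remains an import rather than a proved statement. Second, a structural imprecision: $\|\eta_\varepsilon\|_{L^2}$ is not controlled by $\|\varepsilon\|_{H_0}$, since the weight $\eta_0$ decays and $\varepsilon$ need not belong to $L^2(\R)$ while $\eta_\varepsilon$ contains $|\varepsilon|^2$; hence the term $\|\eta_\varepsilon\|_{L^2}^2$ in the lower bound cannot be read off a genuine quadratic form $d^2E(\gu_c)$. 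It must be kept as the exact potential-energy contribution in the identity $E(\gu_c + \varepsilon) - E(\gu_c) = dE(\gu_c)(\varepsilon) + \tfrac{1}{2}\int_\R |\varepsilon'|^2 - \tfrac{1}{2}\int_\R \eta_c |\varepsilon|^2 + \tfrac{1}{4}\int_\R \eta_\varepsilon^2$, after which the cubic remainder you invoke is also absorbed more cleanly. Neither caveat invalidates your outline, but the key coercivity step should be flagged as borrowed, not proved.
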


At this stage, consider a function $\psi \in \boV_{\pi/2}(\alpha)$ for a number
$0 < \alpha < \beta_0$. By definition, the function $\hat{\psi}_0$ lies in the
subset $\gU_0(\alpha)$ of $\gU_0(\beta_0)$. Applying Lemma~\ref{lem:modul-0}, we
can find numbers $a_0 \in \R$, $\theta_0 \in \R$ and $c_0 \in (- \sqrt{2},
\sqrt{2})$ such that the function $\varepsilon_0 := e^{i \theta_0}
\hat{\psi}_0(\cdot - a_0) - \gu_{c_0}$ satisfies the orthogonality conditions
in~\eqref{cond:ortho-0-c}. Combining~\eqref{eq:est-modul-0}
and~\eqref{eq:coer-Ec}, and decreasing if necessary the value of the number
$\alpha$, we find a positive number $\Lambda_\alpha$, depending only on $\alpha$, such that
$$
E \big( \hat{\psi}_0 \big) - E \big( \gu_0 \big) \geq \Lambda_\alpha \, \big( \| \varepsilon_0 \|_{H_0}^2 + \| \eta_{\varepsilon_0} \|_{L^2}^2 \big) - \frac{c_0^2}{\Lambda_\alpha},
$$
with $\eta_{\varepsilon_0} := - 2 \langle \gu_{c_0}, \varepsilon_0 \rangle_\C - |\varepsilon_0|^2$ as before. Assuming that $\alpha \leq \alpha_{\pi/2}$, where the number $\alpha_{\pi/2}$ is given by Lemma~\ref{lem:reduc-E}, we infer from this lemma that
\begin{equation}
\label{champion}
E_\lambda \big( \psi \big) \geq E_\lambda \big( \gu_0 \big) + \frac{1}{2} \int_{\R \times \T} \Big( |\partial_x w_0|^2 + (\lambda^2 - C_p) |\partial_y w_0|^2 + |w_0|^2 \Big) + \Lambda_\alpha \, \big( \| \varepsilon_0 \|_{H_0}^2 + \| \eta_{\varepsilon_0} \|_{L^2}^2 \big) - \frac{c_0^2}{\Lambda_\alpha}.
\end{equation}
As a consequence, we are essentially reduced to control the modulated speed $c_0$ with respect to the various norms of the functions $w_0$, $\varepsilon_0$ and $\eta_{\varepsilon_0}$. As in the previous case $p \neq \pi/2$, we derive this control from the property that $[P](\psi) = \pi/2$ modulo $\pi$. In this direction, our main tool is the following consequence of Propositions 4 and 5 in~\cite{GravSme1}. 

\begin{lem}[\cite{GravSme1}]
\label{lem:dev-P-0}
There exist two positive numbers $\beta_1 < \beta_0$ and $A_1$ such that any function $\psi \in \gU_0(\beta_1)$ satisfies
\begin{equation}
\label{eq:dev-[P]-0}
[P](\psi) = [P](\gu_c) - \int_\R \langle i \gu_c', \varepsilon \rangle_\C + R_c(\varepsilon) \quad \text{ modulo } \pi,
\end{equation}
with
\begin{equation}
\label{eq:est-Rc}
\big| R_c(\varepsilon) \big| \leq A_1 \Big( \| \varepsilon \|_{H_0}^2 + \| \eta_\varepsilon \|_{L^2}^2 \Big).
\end{equation}
In the previous formulae, we have set, as before, $\varepsilon = e^{i \theta} \psi(\cdot - a) - \gu_c$, with $a = \ga(\psi)$, $\theta = \vartheta(\psi)$ and $c = \gc(\psi)$, as well as $\eta_\varepsilon := - 2 \langle \gu_c, \varepsilon \rangle_\C - |\varepsilon|^2$.
\end{lem}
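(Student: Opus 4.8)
The plan is to derive the expansion by combining the symmetry invariances of the untwisted momentum with the two expansion results quoted from~\cite{GravSme1}, using the modulation parameters supplied by Lemma~\ref{lem:modul-0}. First I would exploit the fact that $[P]$ is invariant, modulo $\pi$, under the translations $\psi \mapsto \psi(\cdot - a)$ and the constant phase shifts $\psi \mapsto e^{i\theta}\psi$, these being symmetries of the Gross-Pitaevskii equation (see Appendix~\ref{sec:momentum}). Given $\psi \in \gU_0(\beta_1)$, Lemma~\ref{lem:modul-0} produces the parameters $a = \ga(\psi)$, $\theta = \vartheta(\psi)$ and $c = \gc(\psi)$ for which $\varepsilon = e^{i\theta}\psi(\cdot - a) - \gu_c$ obeys the orthogonality conditions~\eqref{cond:ortho-0-c}. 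The invariances then give
\[
[P](\psi) = [P]\big( e^{i\theta}\psi(\cdot - a) \big) = [P]\big( \gu_c + \varepsilon \big) \quad \text{ modulo } \pi,
\]
so that the whole task is reduced to expanding $[P](\gu_c + \varepsilon)$ around the profile $\gu_c$.

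Second, I would invoke Proposition~4 of~\cite{GravSme1}, which performs exactly this expansion. Formally it amounts to differentiating the momentum density $\langle i(\gu_c + \varepsilon)', \gu_c + \varepsilon \rangle_\C$ and integrating by parts, which identifies the linear contribution as $-\int_\R \langle i\gu_c', \varepsilon \rangle_\C$ (the precise sign and the boundary contributions at $\pm\infty$ being dictated by the definition of $[P]$ on $X(\R)$), and collects the remaining terms into $R_c(\varepsilon)$. This yields
\[
[P]\big( \gu_c + \varepsilon \big) = [P](\gu_c) - \int_\R \langle i\gu_c', \varepsilon \rangle_\C + R_c(\varepsilon) \quad \text{ modulo } \pi,
\]
which is~\eqref{eq:dev-[P]-0}. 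Third, the remainder estimate~\eqref{eq:est-Rc} is the content of Proposition~5 of~\cite{GravSme1}, bounding $R_c(\varepsilon)$ by a constant times $\| \varepsilon \|_{H_0}^2 + \| \eta_\varepsilon \|_{L^2}^2$. To apply both propositions uniformly, I would fix $\beta_1 < \beta_0$ small enough that, by the modulation estimate~\eqref{eq:est-modul-0}, the quantities $\| \varepsilon \|_{H_0}$ and $|c|$ stay below whatever smallness thresholds are required in~\cite{GravSme1}.

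The main obstacle is that neither the expansion nor the remainder bound can be obtained by a naive Taylor development of the momentum: the density $\langle i\psi', \psi \rangle_\C$ is not integrable for $\psi \in X(\R)$, the functional $[P]$ is only defined modulo $\pi$, and the perturbation $\varepsilon$ does \emph{not} decay at infinity (its phase tends to nonzero constants), so even the leading quadratic piece $\tfrac12\int_\R \langle i\varepsilon', \varepsilon\rangle_\C$ is not absolutely convergent in the plain $L^2$ sense. This is precisely why the density variable $\eta_\varepsilon = -2\langle \gu_c, \varepsilon \rangle_\C - |\varepsilon|^2$ enters the bound~\eqref{eq:est-Rc}: it is the quantity genuinely controlled by the Ginzburg-Landau energy, in place of the unavailable $L^2$ norm of $\varepsilon$. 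The rigorous treatment of these boundary effects and of the $\pi\Z$ indeterminacy is exactly what Propositions~4 and~5 of~\cite{GravSme1} carry out, so the present proof is in essence their transcription into the notation above, combined with the symmetry reduction and the modulation bounds of Lemma~\ref{lem:modul-0}.
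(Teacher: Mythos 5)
Your proposal matches the paper exactly: the paper offers no independent proof of this lemma, stating it verbatim as "the following consequence of Propositions 4 and 5 in~\cite{GravSme1}", which is precisely the reduction (translation/phase invariance of $[P]$ modulo $\pi$, modulation via Lemma~\ref{lem:modul-0}, then the expansion and remainder bound quoted from those two propositions) that you describe. Your additional remarks on why a naive Taylor expansion of the momentum density fails and why $\eta_\varepsilon$ must appear in the bound are accurate and consistent with the framework of Appendix~\ref{sec:dimension-one}.
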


With Lemma~\ref{lem:dev-P-0} at hand, we are in position to conclude the proof of Proposition~\ref{prop:loc-min} for $p = \pi/2$.

\begin{proof}[End of the proof of Proposition~\ref{prop:loc-min} for $p = \pi/2$]
Decreasing if necessary the value of $\alpha$, we can apply Lemma~\ref{lem:dev-P-0} to the function $\hat{\psi}_0$. In view of the second orthogonality condition in~\eqref{cond:ortho-0-c}, this provides the identity
$$
[P] \big( \hat{\psi}_0 \big) = [P] \big( \gu_{c_0} \big) + R_{c_0} \big( \varepsilon_0 \big) \quad \text{ modulo } \pi,
$$
with $R_{c_0}(\varepsilon_0)$ satisfying~\eqref{eq:est-Rc} for $\varepsilon = \varepsilon_0$ and $\eta_\varepsilon = \eta_{\varepsilon_0}$. Going to~\eqref{def:P-2}, we deduce that
\begin{equation}
\label{bruguera}
[P] \big( \psi \big) - [P] \big( \gu_{c_0} \big) = \frac{1}{2} \int_{\R \times \T} \langle i \partial_x w_0, w_0 \rangle_\C + R_{c_0} \big( \varepsilon_0 \big) \quad \text{ modulo } \pi.
\end{equation}
Recall now that $[P](\psi) = \pi/2 = \Xi(0)$ modulo $\pi$, while the value modulo $\pi$ of $[P](\gu_{c_0})$ is equal to $\text{sign}(c_0) \, \Xi(|c_0|)$ by~\cite[Proposition 1]{BetGrSa2}. Here, $\Xi$ refers to the function in~\eqref{def:Xi}. Moreover, for $\alpha$ small enough, the right-hand side of~\eqref{bruguera} is small by~\eqref{eq:est-Rc}, so as the modulated speed $c_0$ by~\eqref{eq:est-modul-0}. As a consequence, we derive from the identity modulo $\pi$ in~\eqref{bruguera} that
$$
\Big| \Xi(0) - \Xi(|c_0|) \Big| = \bigg| \frac{1}{2} \int_{\R \times \T} \langle i \partial_x w_0, w_0 \rangle_\C + R_{c_0} \big( \varepsilon_0 \big) \bigg|.
$$
Using~\eqref{eq:deriv-Chi}, we can argue as for~\eqref{mahut} in order to derive from~\eqref{eq:est-modul-0} and~\eqref{eq:est-Rc} the existence of a positive number $A_\alpha$, depending only on $\alpha$, such that
$$
|c_0| = \big| 0 - |c_0| \big| \leq A_\alpha \Big( \| \varepsilon_0 \|_{H_0}^2 + \| \eta_{\varepsilon_0} \|_{L^2}^2 + \delta \| w_0 \|_{L^2}^2 + \frac{1}{\delta} \| \partial_x w_0 \|_{L^2}^2 \Big).
$$
for any positive number $\delta$. It then remains to introduce this inequality
into~\eqref{champion} and to choose the number $\delta$ large enough in order to 
deduce from the Poincaré inequality that
$$
E_\lambda \big( \psi \big) \geq E_\lambda \big( \gu_0 \big) + \Lambda \Big( \| \varepsilon_0 \|_{H_0}^2 + \| \eta_{\varepsilon_0} \|_{L^2}^2 + \| w_0 \|_{H^1}^2 \Big) \geq E_\lambda \big( \gu_0 \big),
$$
for $\alpha$ small enough, $\lambda$ large enough, and a further positive number $\Lambda$, depending only on $\alpha$ and $\lambda$. This concludes the proof of~\eqref{eq:local-mini-p}.

Moreover, this inequality is an equality if and only if $\varepsilon_0 = 0$ and $w_0 = 0$, that is if and only if $\psi = e^{- i \theta_0} \gu_{c_0}(\cdot + a_0)$. In view of~\eqref{def:gc-p}, the only possibility for the untwisted momentum $[P](\psi)$ to be equal to $\pi/2$ modulo $\pi$ is that $c_0 = 0$. In conclusion, equality can only hold if $\psi = e^{- i \theta_0} \gu_0(\cdot + a_0)$. This completes the proof of Proposition~\ref{prop:loc-min} for $p = \pi/2$.
\end{proof}

%%%%%%%%%%%%%%%%%%%%%%%%%%%%%%%%%
%%%%%%%%%%%%%%%%%%%%%%%%%%%%%%%%%
\subsection{Proof of Lemma~\ref{lem:prop-I-lambda}}
%%%%%%%%%%%%%%%%%%%%%%%%%%%%%%%%%
%%%%%%%%%%%%%%%%%%%%%%%%%%%%%%%%%

Consider a function $\psi \in X(\R \times \T)$ such that $[P](\psi) = p$ modulo $\pi$. Given two positive numbers $\lambda_1$ and $\lambda_2$, with $\lambda_1 < \lambda_2$, we have
$$
E_{\lambda_1}(\psi) \leq E_{\lambda_2}(\psi) \leq \Big( \frac{\lambda_2}{\lambda_1} \Big)^2 E_{\lambda_1}(\psi).
$$
In view of~\eqref{def:I-problem}, we obtain
$$
\boI_{\lambda_1}(p) \leq \boI_{\lambda_2}(p) \leq \Big( \frac{\lambda_2}{\lambda_1} \Big)^2 \boI_{\lambda_1}(p),
$$
which is enough to guarantee that the map $\lambda \mapsto \boI_\lambda(p)$ is non-decreasing and continuous on $\R_+^*$.

Concerning the proof of~\eqref{eq:lim-I-0}, we rely on the scaling
\begin{equation}
\label{eq:scale-L}
\psi_L(x, y) = \psi(x, \lambda y),
\end{equation}
which transforms a function $\psi \in X(\R \times \T)$ in a function $\psi_L \in X(\R \times \T_L)$. Here, we have set $L = 1/\lambda$. The notation $\T_L$ refers to the torus of size $L$ and the energy set $X(\R \times \T_L)$ is defined according to~\eqref{def:X-space}, with $\T$ replaced by $\T_L$. In the limit $\lambda \to 0$, the length $L$ tends to $+ \infty$ and the minimization problem $\boI_\lambda(p)$ can be related to the problem of minimizing the Ginzburg-Landau energy in the whole plane $\R^2$ for a fixed large momentum.

Indeed, we can compute
\begin{equation}
\label{monfils}
E \big( \psi_L \big) := \frac{1}{2} \int_{\R \times \T_L} |\nabla \psi_L|^2 + \frac{1}{4} \int_{\R \times \T_L} \big( 1 - |\psi_L|^2 \big)^2 = L E_\lambda(\psi).
\end{equation}
Going to Lemma~\ref{lem:def-P-2}, we also check that the definition of the untwisted momentum on the set $X(\R \times \T)$ extends literally to the set $X(\R \times \T_L)$, up to the fact that this quantity is now valued into $\R/\pi L \Z$. Moreover, we can derive from Lemma~\ref{lem:def-P-2} that the untwisted momentum $[P]_L(\psi_L)$ is equal to
\begin{equation}
\label{medvedev}
[P]_L(\psi_L) = L [P](\psi) \quad \text{ modulo } \pi L.
\end{equation}
As a consequence, we obtain
$$
\boI_\lambda(p) = \frac{1}{L} \inf \Big\{ E(\psi_L) : \psi_L \in X(\R \times \T_L) \text{ s.t. } [P]_L(\psi_L) = p L \text{ modulo } \pi L \Big\}.
$$
At least formally, the previous infimum is related to the limit $q \to + \infty$ of the minimal value of the Ginzburg-Landau energy in $\R^2$ with fixed momentum equal to $q$. This latter minimization problem was solved in~\cite{BetGrSa1}. It follows from~\cite{BethSau1} that the limit $q \to + \infty$ of this problem is divergent as $2 \pi \ln(q)$. This asymptotics is based on the property that the corresponding minimizer is a pair of vortices in uniform translation. We are now going to use this special configuration as a test function in order to show~\eqref{eq:lim-I-0}.

In order to clarify the construction, we now identify the space $\R^2$ to the complex plane $\C$ by setting $z = x + i y$ in the sequel. We introduce the complex-valued function $\xi$ defined on the disc $D(0, 2) := \{ z \in \C \text{ s.t. } |z| < 2 \}$ by
\begin{equation}
\label{verdasco}
\xi(z) = \frac{\overline{z - i}}{|z - i|} \, \frac{z + i}{|z + i|} \, e^{i \varphi(z)}.
\end{equation}
In this expression, $\varphi$ refers to a real-valued harmonic function on $D(0, 2)$ such that $\xi = 1$ on the circle $\partial D(0, 2)$. We can check that the value of $\varphi$ can be fixed so that
\begin{equation}
\label{isner}
\varphi(z) = \arctan \Big( \frac{2 \textup{Re}(z)}{1 - |z|^2} \Big),
\end{equation}
for any $z \in \partial D(0, 2)$. Observe that $\varphi$ is even with respect to the variable $\textup{Im}(z)$. Observe also that $f$ has exactly two vortices with opposite degrees at the points $\pm i$. Given a number $R \geq 1$, we next introduce the rescaled and regularized version $\xi_R$ of $\xi$ given by
\begin{equation}
\label{robredo}
\xi_R(z) = \begin{cases} 1 \text{ if } |z| \geq 2 R, \\
|z \pm i R| \, \xi \big( \frac{z}{R} \big) \text{ if } |z \pm i R| < 1, \\
\xi \big( \frac{z}{R} \big) \text{ otherwise}. \end{cases}
\end{equation}
The function $\xi_R$ is well-defined and continuous on $\R^2$. Given a number $L \geq 4 R$, we can consider its restriction to the set $\{ z \in \C : |\textup{Im}(z)| \leq L/2 \}$ and extend it as a $L$-periodic function with respect to the variable $y$. Denote by $\psi_L$ the corresponding extension and define a function $\psi : \R \times \T \to \C$ according to the scaling in~\eqref{eq:scale-L}.

The extension $\psi_L$ belongs to $H_\text{loc}^1(\R \times \T_L)$, where this
set is defined as in~\eqref{def:H-loc-1}, with $1$-periodic functions replaced by $L$-periodic functions. It is even with respect to the variable $y$ and identically equal to $1$ outside the disc $D(0, 2 R)$. We now estimate the value of its energy $E(\psi_L)$. A direct computation first provides
\begin{equation}
\label{roddick}
\frac{1}{4} \int_{\R \times \T_L} \big( 1 - |\psi_L|^2 \big)^2 = \frac{1}{2} \int_{|z - i R| < 1} \big( 1 - |z - i R|^2 \big)^2 \, dx \, dy = \frac{\pi}{6}.
\end{equation}
Concerning the gradient $\nabla \psi_L$, we next check that
\begin{align*}
|\nabla \psi_L(z)|^2 = & 1 + \frac{4 R^2}{|z + i R|^2} + \frac{1}{R^2} \Big| \nabla \varphi \Big( \frac{z}{R}\Big) \Big|^2 |z - i R|^2 - \frac{2}{R} \partial_x \varphi \Big( \frac{z}{R} \Big) \Big( R - y + (y +R) \frac{|z - i R|^2}{|z + i R|^2} \Big) \\
& - \frac{2}{R} \partial_y \varphi \Big( \frac{z}{R} \Big) \Big( x - x \frac{|z - i R|^2}{|z + i R|^2} \Big),
\end{align*}
for any $|z - i R| < 1$. Using the inequality $2 a b \leq a^2 + b^2$ and the fact that $|z + i R| \geq R \geq 1$ for $y \geq 0$, we can bound this quantity by
$$
|\nabla \psi_L(z)|^2 \leq 13 + \frac{2}{R^2} \Big| \nabla \varphi \Big( \frac{x}{R} \Big) \Big|^2,
$$
when $|z - i R|< 1$. Hence, we obtain
\begin{equation}
\label{gasquet}
\frac{1}{2} \int_{|z - i R| < 1} |\nabla \psi_L|^2 \leq \frac{13 \pi}{2} + \int_{|z - i| < 1/R} |\nabla \varphi|^2.
\end{equation}
By symmetry with respect to the axis $x$, the same inequality is true replacing $|z - i R| < 1$ by $|z + i R| < 1$ in the left-hand side, and $|z - i| < 1/R$ by $|z + i| < 1/R$ in the right-hand side. Similarly, we compute
\begin{align*}
|\nabla \psi_L(z)|^2 & = \frac{1}{|z - i R|^2} + \frac{1}{|z + i R|^2} + 2 \frac{R^2 - |z|^2}{|z - i R|^2 |z + i R|^2} + \frac{1}{R^2} \Big| \nabla \varphi \Big( \frac{z}{R} \Big) \Big|^2\\
& + \frac{2}{R} \partial_x \varphi \Big( \frac{z}{R} \Big) \Big( \frac{y - R}{|z - i R|^2} - \frac{y + R}{|z + i R|^2} \Big) + \frac{2}{R} \partial_y \varphi \Big( \frac{z}{R} \Big) \Big( \frac{x}{|z + i R|^2} - \frac{x}{|z - i R|^2} \Big),
\end{align*}
for any $z \in \omega_R := \{ z \in D(0, 2 R) \text{ s.t. } |z - i R| > 1 \text{ and } |z + i R| > 1 \big\}$. As a consequence, we can write 
\begin{equation}
\label{tsonga}
\frac{1}{2} \int_{\omega_R} |\nabla \psi_L(z)|^2 \leq I_1 +\frac{1}{2} \int_{\omega_1} |\nabla \varphi|^2 + I_2,
\end{equation}
with $\omega_1 := \{ z \in D(0, 2) \text{ s.t. } |z - i| > 1/R \text{ and } |z + i| > 1/R \big\}$. In this inequality, we have set
$$
I_1 := \frac{1}{2} \int_{\omega_R} \Big( \frac{1}{|z - i R|^2} + \frac{1}{|z + i R|^2} + 2 \frac{R^2 - |z|^2}{|z - i R|^2 |z + i R|^2} \Big) \, dx \, dy,
$$
and
$$
I_2:= \frac{1}{R} \int_{\omega_R} \bigg( \partial_x \varphi \Big( \frac{z}{R} \Big) \Big( \frac{y - R}{|z - i R|^2} - \frac{y + R}{|z + i R|^2} \Big) + \partial_y \varphi \Big( \frac{x}{R} \Big) \Big( \frac{x}{|z + i R|^2} - \frac{x}{|z - i R|^2} \Big) \bigg) \, dx \, dy.
$$
We first estimate the integral $I_1$ using the fact that its integrand is symmetric with respect to the variable $y$. Setting $\omega_R^+ := \{ z \in \omega_R \text{ s.t. } y \geq 0 \}$, we combine the inequality $|z + i R| \geq R \geq 1$ for $y \geq 0$ and the identity $R^2 - |z|^2 = 2 R (R - y) - |z - i R|^2$ in order to get
\begin{align*}
I_1 = & \int_{\omega_R^+} \Big( \frac{1}{|z - i R|^2} - \frac{1}{|z + i R|^2} + \frac{4 R (R - y)}{|z - i R|^2 |z + i R|^2} \Big) \, dx \, dy
\\ \leq & \int_{\omega_R^+} \Big( \frac{1}{|z - i R|^2} + \frac{4}{R |z - i R|} \Big) \, dx \, dy.
\end{align*}
When $z \in \omega_R^+$ and $|z - i R| \geq R$, we have
$$
\frac{1}{|z - i R|^2} + \frac{4}{R |z - i R|} \leq \frac{5}{R^2},
$$
so that
\begin{equation}
\label{melzer}
I_1 \leq 5 \pi + \int_{D(0, R) \setminus D(0, 1)} \Big( \frac{1}{|z|^2} + \frac{4}{R |z|} \Big) \, dx \, dy \leq 2 \pi \ln(R) + 13 \pi.
\end{equation}
We next integrate by parts the integral $I_2$ in order to obtain
$$
I_2 = \int_{\partial \omega_R} \bigg( \nu_x(z) \Big( \frac{y - R}{|z - i R|^2} - \frac{y + R}{|z + i R|^2} \Big) + \nu_y(z) \Big( \frac{x}{|z + i R|^2} - \frac{x}{|z - i R|^2} \Big) \bigg) \varphi \Big( \frac{z}{R} \Big) \, d\gamma(z),
$$
where $\nu(z) = (\nu_x(z), \nu_y(z))$ is the outward unit normal vector to $\partial \omega_R$ and $d\gamma$ is the infinitesimal length element of the curve $\partial \omega_R$. Recall at this stage that the function $\varphi$ is harmonic on the disc $D(0, 2)$. In view of~\eqref{isner}, it follows from the maximum principle that
\begin{equation}
\label{nadal}
\| \varphi \|_{L^\infty(D(0, 2))} \leq \frac{\pi}{2},
\end{equation}
so that
$$
I_2 \leq \frac{\pi}{2} \bigg( \int_{\partial D(0, 2 R)} \frac{4}{R} \, d\gamma(z) + 2 \int_{\partial D(0, 1)} 4 \, d\gamma(z) \bigg) \leq 16 \pi^2.
$$
Combining with~\eqref{gasquet},~\eqref{tsonga} and~\eqref{melzer}, we finally get
\begin{equation}
\label{murray}
\frac{1}{2} \int_{\R \times \T_L} |\nabla \psi_L|^2 \leq 2 \pi \ln(R) + 16 \pi^2 + 26 \pi + \int_{D(0, 2)} |\nabla \varphi|^2.
\end{equation}
In view of~\eqref{roddick}, we deduce the existence of a universal positive constant $C$ such that
\begin{equation}
\label{djokovic}
E(\psi_L) \leq 2 \pi \ln(R) + C.
\end{equation}
Note in particular that the function $\psi_L$ lies in $X(\R \times \T_L)$, so that we are allowed to define its untwisted momentum $[P]_L(\psi_L)$ according to Lemma~\ref{lem:def-P-2}.

In order to compute this quantity, we first rely on~\eqref{robredo} from which
we derive that the function $[\hat{\psi}_L]_0$ is identically equal to $1$ for
$|x| \geq R$. As a consequence, the function $\theta_0 = 0$ is one of its phase functions on the intervals $I_R^\pm$. In view of~\eqref{def:P-1} and~\eqref{def:P-2}, we obtain
$$
P_{\theta_0}(\psi_L) = \frac{1}{2} \int_\R \langle i [\hat{\psi}_L]_0', [\hat{\psi}_L]_0 \rangle_\C + \frac{1}{2} \int_{\R \times \T_L} \langle i \partial_x w_0, w_0 \rangle_\C,
$$
with $w_0 = \psi_L - [\hat{\psi}_L]_0$ as before. Due to the orthogonality of the functions $[\hat{\psi}_L]_0$ and $w_0$, and the compactly supported nature of their derivatives, the previous formula can be simplified as
$$
P_{\theta_0}(\psi_L) = \frac{1}{2} \int_{\R \times \T_L} \langle i \partial_x \psi_L, \psi_L \rangle_\C.
$$
Going back to~\eqref{robredo}, we derive from the local integrability of the map $z \mapsto y/|z|^2$ that
\begin{equation}
\label{delpotro}
P_{\theta_0}(\psi_L) = J_R + J_+ + J_-,
\end{equation}
where we have set
$$
J_R := \frac{1}{2} \int_{D(0, 2 R)} \bigg( - \frac{y - R}{|z - i R|^2} + \frac{y + R}{|z + i R|^2} - \frac{1}{R} \partial_x \varphi \Big( \frac{z}{R} \Big) \bigg) \, dx \, dy,
$$
and
\begin{equation}
\label{andujar}
J_\pm := \frac{1}{2} \int_{D(\pm i R, 1)} \big( 1 - |z \mp i R|^2 \big) \bigg( \frac{y - R}{|z - i R|^2} - \frac{y + R}{|z + i R|^2} + \frac{1}{R} \partial_x \varphi \Big( \frac{z}{R} \Big) \bigg) \, dx \, dy.
\end{equation}
Integrating by parts, we check that
$$
\frac{1}{2 R} \int_{D(\pm i R, 1)} \big( 1 - |z \mp i R|^2 \big) \, \partial_x \varphi \Big( \frac{z}{R} \Big) \, dx \, dy = \int_{D(\pm i R, 1)} x \, \varphi \Big( \frac{z}{R} \Big) \, dx \, dy,
$$
so that by~\eqref{nadal}, we obtain
\begin{equation}
\label{fish}
\big| J_\pm \big| \leq \frac{1}{2} \int_{D(0, 1)} \frac{dx \, dy}{|z|} + \frac{\pi}{2} + \frac{\pi^2}{2} \leq \frac{3 \pi}{2} + \frac{\pi^2}{2}.
\end{equation}
On the other hand, a direct scaling provides
\begin{equation}
\label{ljubicic}
J_R = R J_1 := \frac{R}{2} \int_{D(0, 2)} \Big( - \frac{y - 1}{|z - i|^2} + \frac{y + 1}{|z + i|^2} - \partial_x \varphi(z) \Big) \, dx \, dy.
\end{equation}
Applying the Fubini theorem, we can write the integral $J_1$ as
$$
J_1 = \frac{1}{2} \int_{- 2}^2 j_1(y) \, dy,
$$
with
$$
j_1(y) = \int_{- \sqrt{4 - y^2}}^{\sqrt{4 - y^2}} \Big( - \frac{y - 1}{x^2 + (y - 1)^2} + \frac{y + 1}{x^2 + (y + 1)^2} - \partial_x \varphi(x, y) \Big) \, dx,
$$
for $y \neq \pm 1$. In view of~\eqref{isner}, the integrals $j_1(y)$ are equal to
$$
j_1(y) = - 2 \arctan \Big( \frac{\sqrt{4 - y^2}}{y - 1} \Big) + 2 \arctan \Big( \frac{\sqrt{4 - y^2}}{y + 1} \Big) + 2 \arctan \Big( \frac{2 \sqrt{4 - y^2}}{3} \Big).
$$
At this stage, we can check that
$$
j_1'(y) = 0,
$$
when $y \neq \pm 1$, so that
$$
j_1(y) = \begin{cases} \lim_{y \to 2} j_1(y) = 0 & \text{for } 1 < y \leq 2, \\
 \lim_{y \to 1^-} j_1(y) = 2 \pi & \text{for } - 1 < y < 1, \\
\lim_{y \to - 2} j_1(y) = 0 & \text{for } - 2 \leq y < - 1. \end{cases}
$$
By the Fubini theorem, the integral $J_1$ is then equal to $J_1 = 2 \pi$, so that $J_R = 2 \pi R$. In view of~\eqref{delpotro} and~\eqref{fish}, we obtain
\begin{equation}
\label{hewitt}
\big| P_{\theta_0}(\psi_L) - 2 \pi R \big| \leq 3 \pi + \pi^2.
\end{equation}

On the other hand, we can derive from~\eqref{delpotro},~\eqref{andujar} and~\eqref{ljubicic} that the map $R \mapsto P_{\theta_0}(\psi_L)$ is continuous on $[1, 4 L]$. In view of~\eqref{hewitt}, the range of this function covers the interval $[5 \pi + \pi^2, \pi L/2 - 3 \pi - \pi^2]$. In particular, given a fixed number in $(0, \pi/2)$, we can find, for $L$ large enough, a positive number $R_L$ such that $[P]_L(\psi_L) = P_{\theta_0}(\psi_L) = p L$ modulo $L \pi$, and
$$
\Big| R_L - \frac{p L}{2 \pi} \Big| \leq \frac{3 + \pi}{2}.
$$
In this case, we deduce from~\eqref{murray} that
$$
E(\psi_L) \leq 2 \pi \ln(L) + 2 \pi \ln(p) + C,
$$
where $C$ is a further universal constant. As a consequence, the function $\psi$ corresponding to $\psi_L$ by the scaling in~\eqref{eq:scale-L} lies in $X(\R \times \T)$, with $[P](\psi) = p$ modulo $\pi$ by~\eqref{medvedev}. Using~\eqref{monfils}, we are led to
$$
\boI_\lambda(p) \leq E_\lambda(\psi) \leq \frac{1}{L} \Big( 2 \pi \ln(L) + 2 \pi \ln(p) + C \Big),
$$
so that $\boI_\lambda(p)$ tends to $0$ when $\lambda = 1/L \to 0$. Since the minimal value $\boI_\lambda$ is an even function of $p$ by~\eqref{eq:sym-I}, the same property holds for $p \in (- \pi/2, 0)$. This completes the proof of Lemma~\ref{lem:prop-I-lambda} when $p \neq \pi/2$.

For $p = \pi/2$, it follows from the non-negativity and the Lipschitz continuity of the function $\boI_\lambda$ that
$$
0 \leq \boI_\lambda \Big( \frac{\pi}{2} \Big) \leq \boI_\lambda(p) + \sqrt{2} \Big( \frac{\pi}{2} - p \Big),
$$
for any $0 < p < \pi/2$. In the limit $\lambda \to 0$, this gives
$$
0 \leq \liminf_{\lambda \to 0} \boI_\lambda \Big( \frac{\pi}{2} \Big) \leq \limsup_{\lambda \to 0} \boI_\lambda \Big( \frac{\pi}{2} \Big) \leq \sqrt{2} \Big( \frac{\pi}{2} - p \Big).
$$
Letting $p \to \pi/2$, we conclude that the quantity $\boI_\lambda(\pi/2)$ also tends to $0$ as $\lambda \to 0$. This completes the proof of Lemma~\ref{lem:prop-I-lambda}. \qed

%%%%%%%%%%%%%%%%%%%%%%%%%%%%%%%%%
%%%%%%%%%%%%%%%%%%%%%%%%%%%%%%%%%
%%%%%%%%%%%%%%%%%%%%%%%%%%%%%%%%%
\appendix
\numberwithin{thm}{section}
\numberwithin{cor}{section}
\numberwithin{equation}{section}
\section{Energy set and momentum in dimension one}
\label{sec:dimension-one}
%%%%%%%%%%%%%%%%%%%%%%%%%%%%%%%%%
%%%%%%%%%%%%%%%%%%%%%%%%%%%%%%%%%
%%%%%%%%%%%%%%%%%%%%%%%%%%%%%%%%%

In this section, we collect useful results concerning the energy set $X(\R)$ and the momentum $P$ in dimension one. In particular, we recall several statements established in~\cite{BetGrSa2, Gerard2, BetGrSm2, GravSme1}.

In dimension one, the energy set is defined as
$$
X(\R) = \big\{ \psi \in H_\text{loc}^1(\R) : \psi' \in L^2(\R) \text{ and } 1 - |\psi|^2 \in L^2(\R) \big\}.
$$
As a consequence of the Sobolev embedding theorem, a function $\psi$ in this set is actually $1/2$-H\"older continuous on $\R$. Moreover, this function is bounded (see~\cite{Gerard2}), so that the energy set is a subset of the Zhidkov space
$$
Z^1(\R) := \big\{ \psi \in \boC_b^0(\R) : \psi' \in L^2(\R) \big\}.
$$
This property guarantees that the function $\eta := 1 - |\psi|^2$ belongs to the Sobolev space $H^1(\R)$, so that it owns a vanishing limit at $\pm \infty$. In particular, we can find a positive number $R$ such that $\rho(x) := |\psi(x)| \geq 1/2$ for $|x| \geq R$. We can therefore lift the function $\psi$ as $\psi = \rho e^{i \theta}$ on both the intervals $I_R^- = (- \infty, - R]$ and $I_R^+ = [R, + \infty)$. The phase function $\theta$ is continuous on these intervals, with a derivative $\theta'$ in $L^2(I_R^\pm)$. Note that this phase function is defined up to two factors in $2 \pi \Z$, one on each interval $I_R^\pm$. 

This double indeterminacy is removed when the function $\psi$ does not vanish on
the whole line, that is belongs to the non vanishing energy set
\begin{equation}
\label{def:NVX}
NV\!X(\R) := \big\{ \psi \in X(\R) \text{ s.t. } \inf_{x \in \R} |\psi(x)| > 0 \big\}.
\end{equation}
In this case, the phase function $\theta$ is defined up to only one phase factor in $2 \pi \Z$. Moreover, the energy $E(\psi)$ is given by the hydrodynamical expression
\begin{equation}
\label{eq:hydro-E}
E(\psi) = \frac{1}{8} \int_\R \frac{(\eta')^2}{1 - \eta} + \frac{1}{2} \int_\R (1 - \eta) v^2 + \frac{1}{4} \int_\R \eta^2,
\end{equation}
in which we have set $v := \theta'$. In particular, there is a natural
correspondence between the fact that the function $\psi$ is in $NV\!X(\R)$ and the property that the pair $(\eta, v)$ lies in 
\begin{equation}
\label{def:NV}
NV(\R) := \Big\{ (\eta, v) \in H^1(\R) \times L^2(\R) \text{ s.t. } \inf_{x \in \R} \eta(x) < 1 \Big\}. 
\end{equation}

Concerning the definition of the momentum $P$, it is formally given by the integral
$$
\frac{1}{2} \int_\R \langle i \partial_x \psi, \psi \rangle_\C.
$$
Due to a possible lack of integrability at infinity, this quantity is not necessarily well-defined when $\psi \in X(\R)$. In order to give it a rigorous meaning, we assume first that the function $\psi$ can be lifted as $\psi = \rho e^{i \theta}$ and write the hydrodynamical expression
\begin{equation}
\label{eq:density-P}
\frac{1}{2} \langle i \partial_x \psi, \psi \rangle_\C = - \frac{1}{2} \rho^2 \theta' = \frac{1}{2} \eta \theta' - \frac{1}{2} \theta'.
\end{equation}
When the pair $(\eta, \theta')$ lies in $NV(\R)$, the function $\eta \theta'$ is
integrable on $\R$, but in general, the derivative $\theta'$ is not. We refer to~\cite{BetGrSa2, BeGrSaS1} for a discussion about several ways to by-pass this difficulty. A convenient way to define the momentum, in the sense that the quantity defined in this way will satisfy the natural properties of the momentum, is simply to drop the term containing the derivative $\theta'$ and to set
\begin{equation}
\label{def:P-NV}
P(\psi) = \frac{1}{2} \int_\R \eta \theta'.
\end{equation}
Once the decision is made to choose this definition, it is necessary to extend
it to functions which can vanish. A natural way to perform this extension is to rely on the property that the functions $\psi \in X(\R)$ can be lifted at least on intervals of the form $I_R^\pm$ for $R$ large enough. Hence, we can expect that the previous formula for the momentum will be available on these intervals.

In order to check this claim, we introduce a smooth cut-off function $\chi : \R \to [0, 1]$ with $\chi(x) = 0$ for $|x| \leq 1$ and $\chi(x) = 1$ for $|x| \geq 2$, and we set $\chi_r(x) = \chi(x/r)$ for any positive number $r$. When the function $\psi$ does not vanish on $\R$, the expression for its momentum in~\eqref{def:P-NV} can be rephrased as
\begin{equation}
\label{def:P-1}
P(\psi) = P_\theta(\psi) := \frac{1}{2} \int_\R \big( \langle i \psi', \psi \rangle_\C + (\chi_r \, \theta)' \big),
\end{equation}
in view of~\eqref{eq:density-P}. This identity is true for any choice of the positive number $r$. Given an arbitrary function $\psi \in X(\R)$, we can fix this choice so that the right-hand side $P_\theta(\psi)$ of the previous formula makes sense. Note however that this quantity possibly depends on the choice of the phase function $\theta$. This leads to the following definition of the momentum.

\begin{lem}
\label{lem:def-P-1}
Given a function $\psi \in X(\R)$, consider a positive number $R$ such that $|\psi(x)| \geq 1/2$ for $|x| \geq R$ and a phase function $\theta \in \boC^0(I_R^\pm)$ such that $\psi = |\psi| e^{i \theta}$ on $I_R^\pm$. Choose a smooth cut-off function $\chi : \R \to [0, 1]$ such that $\chi(x) = 0$ for $|x| \leq 1$ and $\chi(x) = 1$ for $|x| \geq 2$, and set $\chi_r(x) = \chi(x/r)$ for a number $r > R$.

$(i)$ The quantity $P_\theta(\psi)$ given by formula~\eqref{def:P-1} is well-defined and does not depend on the choice of neither the function $\chi$, nor the number $r$.

$(ii)$ When the function $\psi$ is in $NV\!X(\R)$, the momentum $P_\theta(\psi)$ does not depend on the choice of the phase function $\theta$.

$(iii)$ Given an arbitrary function $\psi \in X(\R)$, the value modulo $\pi$ of the quantity $P_\theta(\psi)$ does not depend on the choice of the phase function $\theta$, and it is possible to fix this choice such that $P_\theta(\psi) \in (- \pi/2, \pi/2]$. In particular, the untwisted momentum $[P] : X(\R) \to \R / \pi \Z$ defined by $[P](\psi) = P_\theta(\psi)$ modulo $\pi$ is well-defined.
\end{lem}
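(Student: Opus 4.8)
The plan is to build everything on the hydrodynamic identity \eqref{eq:density-P}, which asserts that on each interval $I_R^\pm$ the momentum density $\frac12\langle i\psi',\psi\rangle_\C$ equals $-\frac12\rho^2\theta' = \frac12\eta\theta' - \frac12\theta'$. The whole point of the corrector $\frac12(\chi_r\theta)'$ is to cancel the non-integrable term $-\frac12\theta'$ in the tails. To establish well-definedness in $(i)$, I would split $\R$ into the bounded region $\{|x|\le 2r\}$ and the two tails $\{|x|\ge 2r\}$. On the bounded region, $\psi$ is bounded with $\psi'\in L^2$, so $\langle i\psi',\psi\rangle_\C$ is integrable, while $(\chi_r\theta)'=\chi_r'\theta+\chi_r\theta'$ is integrable because $\theta'\in L^2(I_R^\pm)$ and $\chi_r'$ has bounded support contained in $\{r\le |x|\le 2r\}\subset\{|x|\ge R\}$. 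On the tails $\chi_r\equiv 1$, so by \eqref{eq:density-P} the integrand is exactly $\langle i\psi',\psi\rangle_\C+\theta'=-\rho^2\theta'+\theta'=\eta\theta'$, which lies in $L^1$ by Cauchy--Schwarz since $\eta\in H^1(\R)\subset L^2$ and $\theta'\in L^2(I_R^\pm)$. Hence the defining integral converges absolutely.

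For the independence of $\chi$ and $r$ in $(i)$, given two admissible pairs $(\chi,r)$ and $(\tilde\chi,\tilde r)$, the difference of the candidate values is $\frac12\int_\R\big((\chi_r-\tilde\chi_{\tilde r})\theta\big)'$. The function $(\chi_r-\tilde\chi_{\tilde r})\theta$ is absolutely continuous and compactly supported, since it vanishes on $\{|x|\le\min(r,\tilde r)\}$, where both cut-offs vanish, and on $\{|x|\ge\max(2r,2\tilde r)\}$, where both equal $1$; by the fundamental theorem of calculus its integral is $0$. For $(ii)$, when $\psi\in NV\!X(\R)$ it admits a global continuous lifting $\psi=\rho e^{i\theta}$, and I would show that $P_\theta(\psi)=\frac12\int_\R\eta\theta'$, which is formula \eqref{def:P-NV} and depends on $\theta$ only through $\theta'$; as $\theta'$ is intrinsically determined by $\psi$, independence of the phase follows. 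This identity reduces, via \eqref{eq:density-P}, to $\int_\R\big((\chi_r-1)\theta\big)'=0$, which again holds by the fundamental theorem of calculus because $(\chi_r-1)\theta$ is absolutely continuous and vanishes at $\pm\infty$.

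The crux is $(iii)$. Any two admissible phase functions $\theta,\tilde\theta$ on $I_R^\pm$ differ by a constant in $2\pi\Z$ on each connected component, say $\tilde\theta=\theta+2\pi m$ on $I_R^-$ and $\tilde\theta=\theta+2\pi n$ on $I_R^+$. Only the corrector feels this, and since the support of $\chi_r$ lies in $\{|x|\ge r>R\}\subset I_R^-\cup I_R^+$, a direct boundary-term computation gives
\[
P_{\tilde\theta}(\psi)-P_\theta(\psi)=\tfrac12\int_\R\big(\chi_r(\tilde\theta-\theta)\big)'=\tfrac12\big(2\pi n\,[\chi_r]_0^{+\infty}+2\pi m\,[\chi_r]_{-\infty}^{0}\big)=\pi(n-m)\in\pi\Z,
\]
using $\chi_r(0)=0$ and $\chi_r(\pm\infty)=1$. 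Thus $P_\theta(\psi)$ modulo $\pi$ is independent of the lifting, and since $n-m$ ranges over all of $\Z$ the branch can be adjusted so that $P_\theta(\psi)\in(-\pi/2,\pi/2]$; this makes $[P](\psi):=P_\theta(\psi)\bmod\pi$ well-defined as a map $X(\R)\to\R/\pi\Z$.

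I expect the main obstacle to be the integrability argument in $(i)$: for generic $\psi\in X(\R)$ the density $\langle i\psi',\psi\rangle_\C$ is genuinely non-integrable at infinity, and the entire construction hinges on recognizing that the cut-off corrector removes exactly the offending $-\frac12\theta'$, leaving the integrable hydrodynamic density $\frac12\eta\theta'$. Once that cancellation is secured, every remaining assertion is careful bookkeeping of boundary terms through the fundamental theorem of calculus, with the $\pi\Z$-valued shift in $(iii)$ being the precise reason why only the class modulo $\pi$ survives.
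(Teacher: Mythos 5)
Your proposal is correct and follows essentially the same route as the paper: well-definedness via the cancellation identity $\langle i\psi',\psi\rangle_\C+(\chi_r\theta)'=\eta\,\theta'$ on the tails together with Cauchy--Schwarz, independence of $(\chi,r)$ by integrating the derivative of the compactly supported function $(\chi_r-\tilde\chi_{\tilde r})\theta$, and the $\pi(k_+-k_-)$ boundary-term computation for the phase indeterminacy in $(ii)$ and $(iii)$. The only cosmetic difference is that for $(ii)$ you reduce to the intrinsic hydrodynamic formula $\tfrac12\int_\R\eta\theta'$, whereas the paper simply observes that a global lifting is unique up to a single constant $2k\pi$ so that $k_+=k_-$ and the shift vanishes.
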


In the sequel, we drop the dependence on the phase function $\theta$ of the momentum $P_\theta(\psi)$ when the function $\psi$ is in $NV\!X(\R)$. This quantity is only defined on $NV\!X(\R)$. Since it is the only one to be defined without ambiguity, this is also the only one which we will call momentum.

\begin{proof}
The fact that the quantity $P_\theta(\psi)$ is well-defined follows from the property that $\psi$ belongs to $H_\text{loc}^1(\R)$ and from the identity
\begin{equation}
\label{castres}
\langle i \psi', \psi \rangle_\C + (\chi_r \, \theta)' = \eta \, \theta',
\end{equation}
which holds on the intervals $I_{2 r}^\pm$. In view of~\eqref{eq:hydro-E}, the
derivative $\theta'$ indeed lies in $L^2(I_{2 r}^\pm)$, while the function $\eta$ is in $L^2(\R)$. This is enough to guarantee that the function in~\eqref{castres} is integrable on $I_{2 r}^\pm$, so that the quantity $P_\theta(\psi)$ is well-defined. Moreover, its value does not depend on the choice of either the function $\chi$, or the number $r$, since
$$
\frac{1}{2} \int_\R \big( (\chi_r - \tilde{\chi}_{\tilde{r}}) \, \theta \big)' = 0,
$$
when the function $\tilde{\chi}$ and the number $\tilde{r}$ satisfy the assumptions of Lemma~\ref{lem:def-P-1}.

Note finally that
$$
\frac{1}{2} \int_{\R_-} \big( 2 \pi k_- \chi_r \big)' + \frac{1}{2} \int_{\R_+}
\big( 2 \pi k_+ \chi_r \big)' = \pi \big( k_+ - k_-),
$$
for $(k_-,k_+) \in \Z^2$. Statement $(ii)$ then follows from the fact that the phase function $\theta$ is defined up to a single phase factor $2 k \pi = 2 k_- \pi = 2 k_+ \pi$, when $\psi$ does not vanish. In the general case, we can add any phase factors $2 \pi k_\pm$ to the value of the phase $\theta$ on the intervals $I_R^\pm$. The previous computation then guarantees that we can fix this choice such that the quantity $P_\theta(\psi)$ lies in the interval $(- \pi/2, \pi/2]$, but also that this quantity is only known modulo $\pi$. This completes the proof of Lemma~\ref{lem:def-P-1}.
\end{proof}

We now turn to the regularity properties of the momentum $P$ and untwisted momentum $[P]$. In order to establish their continuity, we endow the energy set $X(\R)$ with a suitable metric structure. For a fixed number $0 \leq c < \sqrt{2}$, we introduce the weighted Sobolev space
$$
H_c(\R) := \big\{ \psi \in \boC^0(\R) \text{ s.t. } \psi' \in L^2(\R) \text{ and } \eta_c^{1/2} \psi \in L^2(\R) \big\}.
$$
This space is a Hilbert space for the norm given by the formula
$$
\| \psi \|_{H_c}^2 := \int_\R \big( |\psi'|^2 + \eta_c |\psi|^2 \big),
$$
where $\eta_c$ is given, as before, by~\eqref{def:eta-c}. Using the exponential decay of the functions $\eta_c$ and the $1/2$-H\"older continuity of the functions $\psi$ in $H_c(\R)$, we can check that all the norms $\| \cdot \|_{H_c}$ are equivalent. As a consequence, the space $H_c(\R)$ does not depend on $c$, and we set $H(\R) := H_c(\R)$ for simplicity. The energy set $X(\R)$ then appears as the subset of $H(\R)$ given by
$$
X(\R) = \big\{ \psi \in H(\R) \text{ s.t. } \eta = 1 - |\psi|^2 \in L^2(\R) \big\},
$$
and we can endow it with the metric structure corresponding to the distances
\begin{equation}
\label{def:d-c}
d_c(\psi_1, \psi_2) := \Big( \| \psi_1 - \psi_2 \|_{H_c}^2 + \| \eta_1 - \eta_2 \|_{L^2}^2 \Big)^\frac{1}{2}.
\end{equation}

This metric structure guarantees the continuity of the Ginzburg-Landau energy $E$, and it is also very convenient for dealing with the continuity of the momentum and the stability of the dark solitons (see e.g.~\cite{BetGrSa2, Gerard2, BeGrSaS1, GravSme1}). On the other hand, it is badly taylored to deal with the differentiability properties of the momentum (see~\cite{Gerard2}). This is the reason why we use an alternative approach to establish the differentiability of this quantity. This approach is based on the observation that the energy set $X(\R)$ is stable by addition of functions in $H^1(\R)$ (see~\cite[Lemma 1]{Gerard1}). In particular, given a function $\psi \in X(\R)$, the affine space $\psi + H^1(\R)$ provides a natural framework for tackling the differentiability of the momentum around the function $\psi$.

Before going into more details, we observe that the metric structure corresponding to the distances $d_c$ guarantees a uniform control on the modulus of the functions $\psi \in X(\R)$.

\begin{lem}
\label{lem:cont-modulus}
Let $0 \leq c < \sqrt{2}$ and consider a function $\psi_0 \in X(\R)$. Given any positive number $\varepsilon$, there exists a positive number $\delta$ such that, if $d_c(\psi, \psi_0) < \delta$, then
\begin{equation}
\label{eq:estim-eta}
\big\| |\psi|^2 - |\psi_0|^2 \big\|_{L^\infty} < \varepsilon.
\end{equation}
\end{lem}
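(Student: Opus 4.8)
The plan is to reduce everything to a quantitative control of the function $f := \eta - \eta_0 = |\psi_0|^2 - |\psi|^2$, since the desired quantity is exactly $\big\| |\psi|^2 - |\psi_0|^2 \big\|_{L^\infty} = \| f \|_{L^\infty}$. The second term in the definition~\eqref{def:d-c} of the distance $d_c$ already gives $\| f \|_{L^2} \leq d_c(\psi, \psi_0)$, so $\| f \|_{L^2}$ can be made as small as we wish. Since $f$ is real-valued, the one-dimensional interpolation (Agmon) inequality
$$
\| f \|_{L^\infty}^2 \leq 2 \| f \|_{L^2} \| f' \|_{L^2}
$$
will then do the job, provided we can bound $\| f' \|_{L^2}$ uniformly for $\psi$ in a fixed $d_c$-neighbourhood of $\psi_0$. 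I would restrict once and for all to functions $\psi$ with $d_c(\psi, \psi_0) \leq 1$, so that all relevant quantities are controlled by constants depending only on $\psi_0$.

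For the derivative I would use $f' = - 2 \big( \langle w', \psi \rangle_\C + \langle \psi_0', w \rangle_\C \big)$, where $w := \psi - \psi_0$, which follows from the identity $\frac{d}{dx} |\psi|^2 = 2 \langle \psi', \psi \rangle_\C$ together with the splitting $\langle \psi', \psi \rangle_\C - \langle \psi_0', \psi_0 \rangle_\C = \langle w', \psi \rangle_\C + \langle \psi_0', w \rangle_\C$. Taking $L^2$ norms yields
$$
\| f' \|_{L^2} \leq 2 \| \psi \|_{L^\infty} \| w' \|_{L^2} + 2 \| w \|_{L^\infty} \| \psi_0' \|_{L^2},
$$
where $\| w' \|_{L^2} \leq d_c(\psi, \psi_0) \leq 1$ and $\| \psi_0' \|_{L^2}$ is a fixed finite number. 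Hence everything reduces to uniform $L^\infty$ bounds on $\psi$, and therefore on $w = \psi - \psi_0$.

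The main obstacle, and the real content of the argument, is precisely this uniform $L^\infty$ bound, since a priori an element of $X(\R)$ is only known to be bounded, with no control that is uniform as $\psi$ ranges over a $d_c$-ball. I would obtain it by a concentration argument based on Hölder continuity. Writing $A := \| \psi \|_{L^\infty}^2$ and choosing $x_0$ with $|\psi(x_0)|^2$ close to $A$, the estimate $|\psi(x) - \psi(x_0)| \leq \| \psi' \|_{L^2} |x - x_0|^{1/2}$ (valid since $\psi' \in L^2$) forces $|\psi(x)|^2 \geq A/4$, hence $|\eta(x)| \geq A/8$, on an interval of length of order $A / \| \psi' \|_{L^2}^2$ around $x_0$ once $A$ is large. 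Integrating the inequality $\eta^2 \geq A^2/64$ over this interval yields a bound of the form $A^3 \leq \kappa \, \| \psi' \|_{L^2}^2 \, \| \eta \|_{L^2}^2$ for a universal constant $\kappa$. Since $\| \psi' \|_{L^2} \leq \| \psi_0' \|_{L^2} + 1$ and $\| \eta \|_{L^2} \leq \| \eta_0 \|_{L^2} + d_c(\psi, \psi_0) \leq \| \eta_0 \|_{L^2} + 1$ are both bounded by constants depending only on $\psi_0$, this provides a uniform bound $\| \psi \|_{L^\infty} \leq C_0$, and then $\| w \|_{L^\infty} \leq C_0 + \| \psi_0 \|_{L^\infty}$.

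Collecting these estimates, $\| f' \|_{L^2}$ is bounded by a constant $C_1$ depending only on $\psi_0$, so the interpolation inequality gives $\| f \|_{L^\infty}^2 \leq 2 C_1 \, d_c(\psi, \psi_0)$. It then suffices to take $\delta = \min\big( 1, \varepsilon^2 / (2 C_1) \big)$ to conclude that $d_c(\psi, \psi_0) < \delta$ implies $\big\| |\psi|^2 - |\psi_0|^2 \big\|_{L^\infty} = \| f \|_{L^\infty} < \varepsilon$. I expect the only delicate point to be the concentration argument for the uniform modulus bound; the remaining steps are routine applications of the Cauchy--Schwarz and interpolation inequalities.
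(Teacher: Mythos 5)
Your argument is correct, and it reaches the conclusion by a genuinely different route from the paper's. Both proofs start from the same algebraic identity for $(\eta-\eta_0)'$ and both need a uniform $L^\infty$ bound on $\psi$ over a $d_c$-ball, but you obtain that bound by a quantitative concentration argument (H\"older continuity near a near-maximum of $|\psi|$ forces $\eta^2\gtrsim A^2$ on an interval of length $\gtrsim A/\|\psi'\|_{L^2}^2$, whence $A^3\lesssim\|\psi'\|_{L^2}^2\|\eta\|_{L^2}^2$), which is essentially the standard proof that $X(\R)\subset L^\infty$ made quantitative; the paper instead observes that $1-|\psi|$ lies in $H^1(\R)$ with norm controlled by $\|\eta\|_{L^2}+\|\psi'\|_{L^2}$ and invokes the Sobolev embedding, which is shorter. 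The more substantive difference is the final step: the paper makes $\|\eta'-\eta_0'\|_{L^2}$ itself small, which requires splitting $\R$ into a compact interval (where the $H_c$-weight is bounded below, so $\|\psi-\psi_0\|_{L^\infty([-R,R])}\lesssim d_c$) and tails $I_R^\pm$ chosen so that $\|\psi_0'\|_{L^2(I_R^\pm)}$ is small, before applying $H^1\hookrightarrow L^\infty$. You avoid this tail-splitting entirely by settling for a mere uniform bound on $\|(\eta-\eta_0)'\|_{L^2}$ and transferring the smallness of $\|\eta-\eta_0\|_{L^2}\leq d_c(\psi,\psi_0)$ through the interpolation inequality $\|f\|_{L^\infty}^2\leq 2\|f\|_{L^2}\|f'\|_{L^2}$; this is a cleaner endgame and even yields the quantitative rate $\||\psi|^2-|\psi_0|^2\|_{L^\infty}\lesssim d_c(\psi,\psi_0)^{1/2}$, at the modest cost of a slightly longer boundedness argument up front. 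Both proofs are complete; I see no gap in yours.
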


\begin{proof}
We aim at establishing an $H^1$-control on the difference between the functions $\eta = 1 - |\psi|^2$ and $\eta_0 = 1 - |\psi_0|^2$. An $L^2$-control on this difference is directly provided by~\eqref{def:d-c}, so that we focus on the differences
\begin{equation}
\label{brive}
\eta' - \eta_0' = 2 \big( \langle \psi, \psi_0' - \psi' \rangle_\C + \langle \psi_0 - \psi, \psi_0' \rangle_\C \big).
\end{equation}
Observe first that
$$
\big\| 1 - |\psi| \big\|_{L^2} \leq \| \eta \|_{L^2} \leq \| \eta_0 \|_{L^2} + \delta \quad \text{and} \quad \big\| |\psi|' \big\|_{L^2} \leq \| \psi' \|_{L^2} \leq \| \psi_0' \|_{L^2} + \delta,
$$
when $d_c(\psi, \psi_0) < \delta$. Hence, by the Sobolev embedding theorem, there exists a positive number $C$ such that
\begin{equation}
\label{biarritz}
\big\| 1 - |\psi| \big\|_{L^\infty} \leq C \big( \| \psi_0' \|_{L^2} + \| \eta_0 \|_{L^2} + \delta \big). 
\end{equation}
Note in particular that the function $1 - |\psi_0|$ satisfies this inequality. With these bounds at hand, we estimate~\eqref{brive} as
\begin{equation}
\label{clermont}
\begin{split}
\big\| \eta' - \eta_0' \big\|_{L^2} \leq & 2 \| \psi \|_{L^\infty} \| \psi' -
\psi_0' \|_{L^2} + 2\| \psi - \psi_0 \|_{L^\infty([- R, R])} \| \psi_0' \|_{L^2}\\
& + 2\big( \| \psi \|_{L^\infty} + \| \psi_0 \|_{L^\infty} \big) \, \big( \| \psi_0' \|_{L^2(I_R^-)} + \| \psi_0' \|_{L^2(I_R^+)} \big).
\end{split}
\end{equation}
We next fix the choice of the positive number $R$ in this inequality such that 
$$
\| \psi_0' \|_{L^2(I_R^-)} + \| \psi_0' \|_{L^2(I_R^+)} \leq \delta.
$$
We then derive from~\eqref{def:eta-c},~\eqref{def:d-c} and the Sobolev embedding theorem the existence of a positive number $C$, depending only on $c$ and $R$, such that
$$
\| \psi - \psi_0 \|_{L^\infty([- R, R])} \leq C \| \psi - \psi_0 \|_{H_c} \leq C d_c(\psi, \psi_0).
$$
In view of~\eqref{biarritz} and~\eqref{clermont}, we are led to
$$
\big\| \eta' - \eta_0' \big\|_{L^2} \leq C \big( 1 + \| \psi_0' \|_{L^2} + \|
\eta_0 \|_{L^2} + \delta \big) \, d_c(\psi, \psi_0).
$$
Since $\| \eta - \eta_0 \|_{L^2} \leq d_c(\psi, \psi_0) < \delta$ by~\eqref{def:d-c}, we infer from the Sobolev embedding theorem that
$$
\big\| |\psi|^2 - |\psi_0|^2 \big\|_{L^\infty} = \big\| \eta - \eta_0 \big\|_{L^\infty} \leq C \big( 1 + \| \psi_0' \|_{L^2} + \| \eta_0 \|_{L^2} + \delta \big) \, d_c(\psi, \psi_0).
$$
for a further positive number $C$. In order to obtain~\eqref{eq:estim-eta}, we finally fix the choice of the positive number $\delta$ such that $C (1 + \| \psi_0' \|_{L^2} + \| \eta_0 \|_{L^2} + \delta) \delta < \varepsilon$. This completes the proof of Lemma~\ref{lem:cont-modulus}.
\end{proof}

We deduce from Lemma~\ref{lem:cont-modulus} that $NV\!X(\R)$ 
is an open subset of $X(\R)$. We also infer from this lemma that the momentum $P$ is continuous on this set. We additionally show that its natural differential at a function $\psi \in NV\!X(\R)$ is given by the function $i \psi'$.

\begin{lem}
\label{lem:cont-P}
The momentum $P$ is continuous on the non-vanishing energy set $NV\!X(\R)$. Moreover, given a function $\psi \in NV\!X(\R)$, there exists a positive number $\delta$ such that the ball $B(\psi, \delta) := \{ \psi + h : h \in H^1(\R) \text{ s.t. } \| h \|_{H^1} < \delta \}$ is a subset of $NV\!X(\R)$ on which
\begin{equation}
\label{eq:dev-P-H1}
P(\psi + h) = P(\psi) + \int_\R \langle i \psi', h \rangle_\C + \frac{1}{2} \int_\R \langle i h', h \rangle_\C.
\end{equation}
In particular, the restriction of the momentum $P$ to the ball $B(\psi, \delta)$ is continuously~\footnote{With respect to the metric structure induced by the $H^1$-norm.} differentiable, with
$$
dP(\psi)(h) = \int_\R \langle i \psi', h \rangle_\C,
$$
for any function $h \in H^1(\R)$.
\end{lem}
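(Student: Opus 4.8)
The plan is to establish the exact identity~\eqref{eq:dev-P-H1} first, then read off the $C^1$-differentiability as an immediate consequence, and finally treat the $d_c$-continuity of $P$ by a separate metric argument. As a preliminary, I would check that $B(\psi,\delta)\subset NV\!X(\R)$ for $\delta$ small: since $\psi\in NV\!X(\R)$ we have $m:=\inf_\R|\psi|>0$, and the Sobolev embedding $H^1(\R)\hookrightarrow L^\infty(\R)$ gives $\|h\|_{L^\infty}\leq C\|h\|_{H^1}$, so choosing $\delta<m/(2C)$ forces $|\psi+h|\geq m-\|h\|_{L^\infty}\geq m/2>0$; together with the stability of $X(\R)$ under additions from $H^1(\R)$ (\cite{Gerard1}), this puts $\psi+h$ in $NV\!X(\R)$.

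For the expansion I would use the cut-off representation~\eqref{def:P-1} with a common $\chi_r$, valid for both non-vanishing functions. Expanding the momentum density and using the pointwise relation $\langle ih,\psi'\rangle_\C=-\langle i\psi',h\rangle_\C$ gives
\[
\langle i(\psi+h)',\psi+h\rangle_\C-\langle i\psi',\psi\rangle_\C=2\langle i\psi',h\rangle_\C+\langle ih',h\rangle_\C+\tfrac{d}{dx}\langle ih,\psi\rangle_\C .
\]
Since $\psi',h,h'\in L^2(\R)$, the terms $\langle i\psi',h\rangle_\C$ and $\langle ih',h\rangle_\C$ are integrable on $\R$, and substituting into~\eqref{def:P-1} yields
\[
P(\psi+h)-P(\psi)=\int_\R\langle i\psi',h\rangle_\C+\tfrac12\int_\R\langle ih',h\rangle_\C+\tfrac12\int_\R\Big(\tfrac{d}{dx}\langle ih,\psi\rangle_\C+(\chi_r\tilde\theta)'-(\chi_r\theta)'\Big),
\]
where $\tilde\theta$ is a phase of $\psi+h$. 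The crux is to show the last integral vanishes. Integrating over $[-T,T]$ and letting $T\to\infty$ reduces it to boundary terms, and here I would exploit that the momentum is phase-independent on $NV\!X(\R)$ (Lemma~\ref{lem:def-P-1}$(ii)$): writing $\psi+h=\psi\,(1+h/\psi)$ with $|h/\psi|\leq\|h\|_{L^\infty}/m<1$, I choose $\tilde\theta=\theta+\arg(1+h/\psi)$ with the principal branch, so that $\tilde\theta-\theta\to0$ at $\pm\infty$. Combined with $\langle ih,\psi\rangle_\C\to0$ at $\pm\infty$ (because $h\in H^1(\R)$ vanishes at infinity while $\psi$ is bounded), all boundary terms vanish and~\eqref{eq:dev-P-H1} follows.

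Differentiability is then immediate from~\eqref{eq:dev-P-H1}: the form $h\mapsto\int_\R\langle i\psi',h\rangle_\C$ is bounded and linear on $H^1(\R)$ because $|\int_\R\langle i\psi',h\rangle_\C|\leq\|\psi'\|_{L^2}\|h\|_{L^2}$, while the remainder obeys $|\int_\R\langle ih',h\rangle_\C|\leq\|h\|_{H^1}^2$; hence $P$ is Fréchet differentiable with $dP(\psi)(h)=\int_\R\langle i\psi',h\rangle_\C$, and continuity of the differential on $B(\psi,\delta)$ holds since it depends on the base point only through $\psi'\in L^2(\R)$.

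The remaining and most delicate point is the continuity of $P$ for the metric $d_c$, which cannot be reduced to the $H^1$-expansion because the difference of two elements of $NV\!X(\R)$ need not lie in $H^1(\R)$. I would start from the representation $P(\psi)=-\tfrac12\int_\R\frac{\eta}{1-\eta}\langle i\psi',\psi\rangle_\C$, which follows from~\eqref{def:P-NV} and~\eqref{eq:density-P} using $1-\eta=|\psi|^2\geq m^2$. For $\psi_n\to\psi$ in $d_c$, Lemma~\ref{lem:cont-modulus} yields $\||\psi_n|^2-|\psi|^2\|_{L^\infty}\to0$, hence a uniform bound $|\psi_n|^2\geq m^2/4$ for large $n$ and $\frac{\eta_n}{1-\eta_n}\to\frac{\eta}{1-\eta}$ in $L^2(\R)$, while $\psi_n'\to\psi'$ in $L^2(\R)$ and $\|\psi_n\|_{L^\infty}$ stays bounded. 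The genuine obstacle is the term $\int_\R\frac{\eta_n}{1-\eta_n}\langle i\psi',\psi_n-\psi\rangle_\C$, since $\psi_n-\psi$ need not be small at infinity; I would control it by splitting $\R=[-R,R]\cup\{|x|>R\}$, using that $\psi_n\to\psi$ uniformly on $[-R,R]$ (from the $H_c$-control in~\eqref{def:d-c} and Sobolev) while $\|\psi'\|_{L^2(|x|>R)}$ is arbitrarily small for $R$ large, uniformly in $n$. Letting first $R\to\infty$ and then $n\to\infty$ gives $P(\psi_n)\to P(\psi)$, which completes the proof.
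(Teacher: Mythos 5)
Your proof is correct, but it reaches \eqref{eq:dev-P-H1} by a different route than the paper. For the expansion, the paper first treats $h\in\boC_c^\infty(\R)$, where $\psi+h$ and $\psi$ share the same phase outside the support of $h$ (this requires the small continuation argument with $\|\tfrac{\psi+h}{|\psi+h|}-\tfrac{\psi}{|\psi|}\|_{L^\infty}<1$ to rule out a $2\pi k$ jump), and then extends to general $h\in H^1(\R)$ by density, which forces it to establish the $d_c$-continuity of $P$ \emph{first} in order to pass to the limit in $P(\psi+h_n)$. You instead prove the identity directly for every $h\in H^1(\R)$ by writing out the difference of momentum densities, choosing the explicit phase $\tilde\theta=\theta+\arg(1+h/\psi)$ (legitimate since $|h/\psi|<1$ keeps $1+h/\psi$ in the right half-plane, and $P$ is phase-independent on $NV\!X(\R)$ by Lemma~\ref{lem:def-P-1}), and checking that all boundary terms vanish at $\pm\infty$. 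This is more self-contained: the expansion no longer depends on the continuity statement, and the density step disappears. For the continuity itself, both arguments start from the hydrodynamic representation and Lemma~\ref{lem:cont-modulus}; the paper shows $\eta\to\eta_0$ and $\theta'\to\theta_0'$ in $L^2$ and concludes from $P=\tfrac12\int\eta\theta'$ by Cauchy--Schwarz alone, whereas your formula $P=-\tfrac12\int\tfrac{\eta}{1-\eta}\langle i\psi',\psi\rangle_\C$ leaves the cross term $\int\tfrac{\eta}{1-\eta}\langle i\psi',\psi_n-\psi\rangle_\C$, which you must handle with the $[-R,R]$ splitting because $\psi_n-\psi$ is only controlled locally uniformly; the paper's variant avoids that splitting, but yours is equally valid (state the limit as: fix $R$ to make the tail small uniformly in $n$, then let $n\to\infty$, rather than ``letting first $R\to\infty$'').
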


\begin{proof}
Recall that the momentum $P$ is well-defined on $NV\!X(\R)$ by the formula
$$
P(\psi) = \frac{1}{2} \int_\R \eta \theta',
$$
in which we have set, as before, $\psi = \rho e^{i \theta}$ and $\eta = 1 - \rho^2$. In particular, the continuity of this quantity will follow from the continuity from $NV\!X(\R)$ to $L^2(\R)$ of the maps $\psi \mapsto \eta$ and $\psi \mapsto \theta'$. Since the continuity of the first one is a direct consequence of~\eqref{def:d-c}, we focus on the continuity of the latter one.

Given a fixed function $\psi_0 = \rho_0 e^{i \theta_0} \in NV\!X(\R)$, we compute
$$
\theta_0' = - \frac{\langle i \psi_0', \psi_0 \rangle_\C}{\rho_0^2}.
$$
Extending this formula to an arbitrary function $\psi$ of $NV\!X(\R)$, we are led to the expression
$$
\theta' - \theta'_0 = - \frac{\langle i (\psi' - \psi_0'), \psi \rangle_\C}{\rho^2} - \langle i \psi_0', \psi \rangle_\C \frac{\rho_0^2 - \rho^2}{\rho^2 \rho_0^2} - \frac{\langle i \psi_0', \psi - \psi_0 \rangle_\C}{\rho_0^2}.
$$
For a positive number $\delta$ small enough, we deduce from Lemma~\ref{lem:cont-modulus} that
$$
\inf_{x \in \R} \rho(x) \geq \frac{m_0}{2} := \frac{1}{2} \inf_{x \in \R} \rho_0(x),
$$
when $d_c(\psi, \psi_0) < \delta$. Hence, we obtain
$$
\big\| \theta' - \theta_0' \big\|_{L^2} \leq \frac{1}{m_0^3} \Big( 2 m_0^2 \big\| \psi' - \psi_0' \big\|_{L^2} + 2 \| \psi_0' \|_{L^2} \big\| \rho^2 - \rho_0^2 \big\|_{L^\infty} + m_0 \big\| \psi_0' (\psi - \psi_0) \big\|_{L^2} \Big).
$$
Invoking~\eqref{def:d-c} for estimating the first norm in the right-hand side of
this inequality, Lemma~\ref{lem:cont-modulus} for the second one, and
arguing as in the proof of Lemma~\ref{lem:cont-modulus} for the last
one, we infer that the map $\psi \mapsto \theta'$ is continuous from $NV\!X(\R)$ to $L^2(\R)$.

Concerning differentiability, we deduce from the Sobolev embedding theorem the existence of a positive number $C$ such that
\begin{equation}
\label{toulon}
\inf_{x \in \R} |\psi(x) + h(x)| \geq \inf_{x \in \R} |\psi(x)| - \| h \|_{L^\infty} \geq \inf_{x \in \R} |\psi(x)| - C \| h \|_{H^1} > \frac{1}{2} \inf_{x \in \R} |\psi(x)| > 0,
\end{equation}
when $\| h \|_{H^1} < \delta = \inf_{x \in \R} |\psi(x)|/(2 C)$. In this case, the function $\psi + h$ belongs to $NV\!X(\R)$, so that the ball $B(\psi, \delta)$ is a subset of $NV\!X(\R)$.

We next consider a function $h \in \boC_c^\infty(\R)$ such that $\psi + h \in B(\psi, \delta)$. Combining the inequality
$$
\Big| \frac{\psi + h}{|\psi + h|} - \frac{\psi}{|\psi|} \Big| \leq \frac{3 |h|}{|\psi + h|} + \frac{|h|^2}{|\psi + h| (|\psi| + |\psi + h|)},
$$
with~\eqref{toulon} and the Sobolev embedding theorem, we can find a further positive number $C$, depending only on $\psi$, such that
$$
\Big\| \frac{\psi + h}{|\psi + h|} - \frac{\psi}{|\psi|} \Big\|_{L^\infty} \leq
C \big( 1 + \| h \|_{H^1} \big) \| h \|_{H^1}.
$$
Decreasing the value of $\delta$ if necessary, we can assume that
\begin{equation}
\label{perpignan}
\Big\| \frac{\psi + h}{|\psi + h|} - \frac{\psi}{|\psi|} \Big\|_{L^\infty} < 1.
\end{equation}
In another direction, it follows from the fact that $h$ has compact support that the phase functions $\theta_h$ and $\theta$ of the functions $\psi + h$, respectively $\psi$, are equal at $\pm \infty$ up to constants $2 k_\pm \pi$, with $k_\pm \in \Z$. We can choose the integer $k_- = 0$ and also deduce from~\eqref{perpignan} and a continuation argument that $|\theta_h - \theta| < 2 \pi$ on $\R$. In this case, we necessarily have $k_+ = 0$, so that $\theta_h = \theta$ at infinity.

Going back to~\eqref{def:P-1}, we can choose a cut-off function $\chi$ and a number $r$ in this definition such that the support of the functions $h$ and $\chi_r$ are disjoint. Since the values of the phase functions $\theta_h$ and $\theta$ are equal at $\pm \infty$, we have
\begin{align*}
P(\psi + h) = \frac{1}{2} \int_\R \big( \langle i (\psi' + h'), \psi + h \rangle_\C + (\chi_r \, \theta)' \big) = P(\psi) + \frac{1}{2} \int_\R \big( \langle i (\psi' + h'), h \rangle_\C + \langle i h', \psi \rangle_\C \big),
\end{align*}
which yields~\eqref{eq:dev-P-H1} by integrating by parts the last term in the right-hand side of the previous formula.

Given an arbitrary function $h \in H^1(\R)$, with $\psi + h \in B(\psi, \delta)$, we next introduce a sequence of functions $h_n \in \boC_c^\infty(\R)$ such that $h_n \to h$ in $H^1(\R)$ as $n \to \infty$. At least for $n$ large enough, we have
\begin{equation}
\label{pau}
P(\psi + h_n) = P(\psi) + \int_\R \langle i \psi', h_n \rangle_\C + \frac{1}{2} \int_\R \langle i h_n', h_n \rangle_\C.
\end{equation}
In the limit $n \to \infty$, the right-hand side of this identity tends to the right-hand side of~\eqref{eq:dev-P-H1}. Concerning the left-hand side, we show that $\psi + h_n \to \psi + h$ in $X(\R)$ as $n \to \infty$. This convergence holds in $H(\R)$ due to the property that $h_n \to h$ in $H^1(\R)$ as $n \to \infty$. Moreover, we compute
$$
\big( 1 - |\psi + h|^2 \big) - \big( 1 - |\psi + h_n|^2 \big) = 2 \langle \psi, h_n - h \rangle_\C + |h_n|^2 - |h|^2.
$$
Since the function $\psi$ is bounded on $\R$, it follows from the Sobolev embedding theorem that
$$
\big\| (1 - |\psi + h|^2) - (1 - |\psi + h_n|^2) \big\|_{L^2} \to 0,
$$
in the limit $n \to \infty$. Now that the convergence in $X(\R)$ is proved, we infer from the continuity of the momentum $P$ that the left-hand side of~\eqref{pau} tends to $P(\psi + h)$ as $n \to \infty$. This concludes the proof of~\eqref{eq:dev-P-H1}. The continuous differentiability of the restriction of $P$ to the ball $B(\psi, \delta)$ is then a direct consequence of the quadratic expansion in~\eqref{eq:dev-P-H1}. This completes the proof of Lemma~\ref{lem:cont-P}.
\end{proof}

At this stage, it is tempting to extend by continuity the momentum $P$ to the
whole set $X(\R)$, but this is not possible. Consider indeed two smooth cut-off functions $\chi : \R \to [0, 1]$ and $\theta : \R \to [0, 1]$, with $\chi(x) = 1$ for $|x| \leq 1$ and $\chi(x) = 0$ for $|x| \geq 2$, respectively $\theta(x) = 0$ for $x \leq - 2$ and $\theta(x) = 1$ for $x \geq 2$. Given a fixed integer $k \in\Z$, set
$$
\psi_n^k(x) = \Big( \gu_0(x) + \frac{i}{n} \chi(n x) \Big) e^{2 i \pi k \theta(n x)},
$$
for any $n \geq 1$. The functions $\psi_n^k$ belong to $NV\!X(\R)$ and they satisfy
$$
\big( 1 - |\psi_n^k(x)|^2 \big) - \big( 1 - |\gu_0(x)|^2 \big) = - \frac{1}{n^2} \chi(n x)^2,
$$
$$
\big| \psi_n^k(x) - \gu_0(x) \big| \leq |\gu_0(x)| \big| e^{2 i \pi k \theta(n x)} - 1 \big| + \frac{1}{n} \chi(n x),
$$
and
$$
\big| \big( \psi_n^k \big)'(x) - \gu_0'(x) \big| \leq |\gu_0'(x)| \big| e^{2 i \pi k \theta(n x)} - 1 \big| + |\chi'(n x)| + 2 \pi |k| \big( n |\gu_0(x)| |\theta'(n x)| + \chi(n x) |\theta'(n x)| \big).
$$
Using the inequality $|\gu_0(x)| \leq |x|/\sqrt{2}$ and applying the dominated convergence theorem, we deduce from the three previous formulae the convergence in $X(\R)$ of the functions $\psi_n^k$ towards the function $\gu_0$ as $n \to \infty$ for any fixed integer $k \in \Z$. On the other hand, we infer from the formula $\psi_n^{k + 1}(x) = \psi_n^k(x) \, e^{2 i \pi \theta(n x)}$ that
$$
P(\psi_n^{k + 1}) - P(\psi_n^k) = n \pi \int_{\R} \big( 1 - |\psi_n^k(x)|^2 \big) \theta'(n x) \, dx = \pi \int_\R \bigg( 1 - \Big| \gu_0 \Big( \frac{y}{n} \Big) \Big|^2 - \frac{1}{n^2} \chi(y) ^2 \bigg) \theta'(y) \, dy \to \pi,
$$
as $n \to \infty$. As a consequence, the momentum $P$ cannot be extended by continuity for the function $\gu_0$.

However, the previous counter-example fails to contradict the possible
continuity of a momentum that would only be defined modulo $\pi$, and we can
indeed show the continuity of the untwisted momentum $[P]$ on $X(\R)$.

\begin{lem}
\label{lem:cont-[P]}
The untwisted momentum $[P]$ is continuous on $X(\R)$. Moreover, it satisfies
\begin{equation}
\label{eq:dev-[P]-H1}
[P](\psi + h) = [P](\psi) + \int_\R \langle i \psi', h \rangle_\C + \frac{1}{2} \int_\R \langle i h', h \rangle_\C \quad \text{ modulo } \pi,
\end{equation}
for any functions $\psi \in X(\R)$ and $h \in H^1(\R)$.
\end{lem}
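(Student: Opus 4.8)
The plan is to establish the continuity of $[P]$ first, since the expansion formula~\eqref{eq:dev-[P]-H1} will then follow by a density argument. For the expansion, I would begin with a compactly supported perturbation $h \in \boC_c^\infty(\R)$: choosing $R$ so large that $h$ is supported in $(-R,R)$ and $|\psi| \geq 1/2$ on $I_R^\pm$, the functions $\psi$ and $\psi+h$ coincide on the tails $I_R^\pm$, so a single phase function $\theta$ serves for both in formula~\eqref{def:P-1}. Taking the cut-off $\chi_r$ with $r = R$, its support is disjoint from that of $h$, and expanding $\langle i(\psi+h)', \psi+h\rangle_\C$ exactly as in the proof of Lemma~\ref{lem:cont-P}---the integrals now run over the compact set $(-R,R)$, so the integration by parts $\int_\R \langle ih',\psi\rangle_\C = \int_\R \langle i\psi', h\rangle_\C$ carries no boundary term---yields the \emph{exact} identity $P_\theta(\psi+h) = P_\theta(\psi) + \int_\R \langle i\psi', h\rangle_\C + \frac12\int_\R \langle ih', h\rangle_\C$, which is~\eqref{eq:dev-P-H1} but now valid for every $\psi \in X(\R)$. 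Reducing modulo $\pi$ gives~\eqref{eq:dev-[P]-H1} for compactly supported $h$. To pass to an arbitrary $h \in H^1(\R)$, I would approximate $h$ by $h_n \in \boC_c^\infty(\R)$ with $h_n \to h$ in $H^1$; the right-hand side converges since $\psi' \in L^2$, while on the left $\psi + h_n \to \psi + h$ in $(X(\R), d_c)$ (the $H(\R)$-part is immediate and the modulus part is controlled as at the end of the proof of Lemma~\ref{lem:cont-P}), so the continuity of $[P]$ closes the argument.

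It therefore remains to prove that $[P]$ is continuous on $(X(\R), d_c)$, which is the heart of the matter. Fix $\psi \in X(\R)$ and a sequence $\psi_n \to \psi$. First I would record that the functions $\eta_n := 1 - |\psi_n|^2$ are bounded in $H^1(\R)$ (their $L^2$-norm is controlled by $d_c$ and $\|\eta_n'\|_{L^2} \leq 2\|\psi_n\|_{L^\infty}\|\psi_n'\|_{L^2}$, with $\|\psi_n\|_{L^\infty}$ bounded as in~\eqref{biarritz}); being uniformly Hölder and convergent to $\eta := 1 - |\psi|^2$ in $L^2$, they converge to $\eta$ \emph{uniformly} on $\R$. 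Fixing $R$ with $|\psi| \geq 1/2$ on $I_R^\pm$, this uniform convergence guarantees that $\eta_n < 1$ on $I_R^\pm$ for $n$ large, so that $\psi_n$ is non-vanishing on the tails and admits a phase function $\theta_n$ there. On the other hand, since $\eta_c$ is bounded from below on every compact set, convergence in $d_c$ forces $\psi_n \to \psi$ in $H^1$, hence uniformly, on each interval $[-M, M]$.

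With a common cut-off $\chi_R$, I would then split $P_{\theta_n}(\psi_n)$ into a bulk contribution on $[-2R, 2R]$ and a tail contribution on $\{|x| > 2R\}$. Using the hydrodynamical identity~\eqref{eq:density-P}, the tail part equals $\frac12\int_{|x|>2R} \eta_n \theta_n'$, bounded by $\|\eta_n\|_{L^2(|x|>2R)}\|\theta_n'\|_{L^2(|x|>2R)}$; since $|\theta_n'| \leq C|\psi_n'|$ on the tails, and both $\|\eta_n\|_{L^2(|x|>2R)}$ and $\|\psi_n'\|_{L^2(|x|>2R)}$ can be made small by taking $R$ large (using $\eta, \psi' \in L^2$) and then $n$ large, this contribution is negligible uniformly in $n$, and likewise for $\psi$. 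The bulk contribution reduces, by the fundamental theorem of calculus, to $\frac12\int_{-2R}^{2R}\langle i\psi_n', \psi_n\rangle_\C + \frac12(\theta_n(2R) - \theta_n(-2R))$, where the integral converges to its counterpart for $\psi$ by the uniform $H^1$-convergence on $[-2R, 2R]$, and since $\psi_n(\pm 2R) \to \psi(\pm 2R) \neq 0$, one may choose the (independent) phase branches on $I_R^+$ and $I_R^-$ so that $\theta_n(\pm 2R) \to \theta(\pm 2R)$. Combining the three pieces shows $P_{\theta_n}(\psi_n) \to P_\theta(\psi)$ for this branch choice.

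The main obstacle is precisely this tail behaviour: the distance $d_c$ weights functions by the exponentially decaying $\eta_c^{1/2}$, so it provides essentially no control on the phase of $\psi_n$ far out, and indeed the preceding counter-example shows that $P$ itself is genuinely discontinuous at $\gu_0$. What rescues the argument is that any change in the branches of $\theta_n$ on $I_R^\pm$ alters $P_{\theta_n}(\psi_n)$ only by an integer multiple of $\pi$, by the computation at the end of the proof of Lemma~\ref{lem:def-P-1}; hence $P_{\theta_n}(\psi_n) \to P_\theta(\psi)$ holds modulo $\pi$ independently of all branch choices, which is exactly the continuity of $[P] : X(\R) \to \R/\pi\Z$.
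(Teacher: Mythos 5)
Your proposal is correct and follows essentially the same route as the paper: the expansion~\eqref{eq:dev-[P]-H1} is obtained for compactly supported $h$ by integration by parts with a common phase on the tails and then extended by density, and the continuity of $[P]$ is proved by the same splitting of $P_\theta$ into a bulk integral, boundary phase terms, and a tail term $\tfrac12\int_{|x|\geq 2r}\eta\,\theta'$, with the branch ambiguity absorbed modulo $\pi$ exactly as in the paper. The only immaterial differences are that you re-derive the uniform control of $|\psi_n|^2$ directly instead of citing Lemma~\ref{lem:cont-modulus}, and that you make the tail contribution uniformly small by taking $R$ large rather than proving its convergence for a fixed $r$.
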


\begin{proof}
The proof of continuity is based on Lemma~\ref{lem:cont-modulus}. Consider a
function $\psi_0 \in X(\R)$ and choose a positive number $R$ such that
$|\psi_0| \geq 1/4$ on $I_R^\pm$. Applying Lemma~\ref{lem:cont-modulus}, we can find a positive number $\delta$ such that any function $\psi$ satisfies the condition $|\psi| \geq 1/2$ on $I_R^\pm$, as soon as $d_c(\psi, \psi_0) < \delta$ for a fixed number $0 \leq c < \sqrt{2}$. Setting as before $\psi = \rho e^{i \theta}$ and $\psi_0 = \rho_0 e^{i \theta_0}$ on $I_R^\pm$, the quantities $P_\theta(\psi)$ and $P_{\theta_0}(\psi_0)$ are then given by formula~\eqref{def:P-1} for a suitable cut-off function $\chi$ and a number $r > R$, which is independent of the function $\psi$ satisfying the condition $d_c(\psi, \psi_0) < \delta$. In particular, we obtain
\begin{equation}
\label{ferrand}
\begin{split}
P_\theta(\psi) - P_{\theta_0}(\psi_0) = & \frac{1}{2} \int_{|x| \leq 2 r} \Big( \langle i \psi', \psi \rangle_\C - \langle i \psi_0', \psi_0 \rangle_\C \Big) + \frac{1}{2} \Big( \theta(2 r) - \theta_0(2 r) - \theta(- 2 r) + \theta_0(- 2 r) \Big) \\
& + \frac{1}{2} \int_{|x| \geq 2 r} \Big( \eta \theta' - \eta_0 \theta_0' \Big),
\end{split}
\end{equation}
with $\eta = 1 - |\psi|^2$ and $\eta_0 = 1 - |\psi_0|^2$. When $d_c(\psi, \psi_0) \to 0$, the first term in the right-hand side of~\eqref{ferrand} tends to $0$ by definition of the $\| \cdot \|_{H_c}$-norm. Arguing as in the proof of Lemma~\ref{lem:cont-P}, we check that the third term also tends to $0$. Concerning the second one, we derive from the Sobolev embedding theorem that the convergence in $H(\R)$ implies the local uniform convergence. In particular, we have
$\psi(\pm 2 r) \to \psi_0(\pm 2r)$ as $d_c(\psi, \psi_0) \to 0$. Since $|\psi_0(\pm 2 r)| \geq 1/4$, this in turn implies that $e^{i \theta(\pm 2 r)} \to e^{i \theta_0(\pm 2r)}$, so that
$$
\theta(\pm 2 r) \to \theta_0(\pm 2 r) \quad \text{ modulo } 2 \pi.
$$
In view of~\eqref{ferrand}, we conclude that
$$
P_\theta(\psi) \to P_{\theta_0}(\psi_0) \quad \text{ modulo } \pi,
$$
which is enough to guarantee the continuity of the untwisted momentum $[P]$ on $X(\R)$.

Concerning~\eqref{eq:dev-[P]-H1}, we argue as for~\eqref{eq:dev-P-H1}. Assume first that $h$ is smooth and compactly supported. With the notation of Lemma~\ref{lem:def-P-1}, we can choose the number $R$ in the definition of the quantity $P_\theta(\psi)$ such that the support of $h$ is a subset of $[- R, R]$. In this case, the function $\psi + h$ owns the same phase $\theta$ as the function $\psi$ on the intervals $I_R^\pm$. Hence the quantity $P_\theta(\psi + h)$ is well-defined by
$$
P_\theta(\psi + h) = \frac{1}{2} \int_\R \Big( \big\langle i (\psi' + h'), \psi + h \big\rangle_\C + \big( \chi_r \theta \big)',
$$
which is equal to
$$
P_\theta(\psi + h) = P_\theta(\psi) + \int_\R \langle i \psi', h \rangle_\C + \frac{1}{2} \int_\R \langle i h', h \rangle_\C,
$$
by integration by parts. In view of Lemma~\ref{lem:def-P-1}, this is
exactly~\eqref{eq:dev-[P]-H1}. For an arbitrary $h \in H^1(\R)$, we
argue by density, as in the proof of Lemma~\ref{lem:cont-P}, using the continuity of the untwisted momentum and the property that the right-hand side of~\eqref{eq:dev-[P]-H1} is continuous with respect to the convergence in $H^1(\R)$. This completes the proof of Lemma~\ref{lem:cont-[P]}.
\end{proof}

Due to the previous dual definition of the momentum, two strategies are at hand when we aim at minimizing a quantity under a fixed momentum $p$. The first one is to minimize under a fixed untwisted momentum $[P]$, but in this case, the constraint $p$ must be assumed to be in $\R/\pi \Z$. The second one is to restrict the minimization set to the non-vanishing energy set $NV\!X(\R)$ in case it is possible to define the corresponding minimization problem for any number $p \in \R$. However, this minimization problem does not necessarily own a minimizer due to the fact that a minimizing sequence could converge to a function, which vanishes on $\R$, and so does not remain in $NV\!X(\R)$.

When the goal is to minimize the Ginzburg-Landau energy $E$, this second strategy leads to the minimization problem
\begin{equation}
\label{def:boI-problem}
\boI(p) := \inf \big\{ E(\psi) : \psi \in NV\!X(\R) \text{ s.t. } P(\psi) = p \big\},
\end{equation}
where the number $p$ can take any arbitrary value in $\R$. Note that this problem is well-defined. Consider indeed a function $\psi = \rho e^{i \theta} \in NV\!X(\R)$, with $P(\psi) \neq 0$ (for instance a dark soliton $\gu_c$ for $c \neq 0$) and set $\psi_\mu = \rho e^{i \mu \theta}$ for any number $\mu \in \R$. The functions $\psi_\mu$ remain in $NV\!X(\R)$ and their momentum
$$
P(\psi_\mu) = \mu P(\psi),
$$
take any arbitrary value in $\R$. Hence, the minimization problems $\boI(p)$ do make sense. An important tool in order to solve them is the following lemma.

\begin{lem}[\cite{BetGrSa2}]
\label{lem:min-E-0}
Let
$$
\boE_0 := \inf \Big\{ E(\psi) : \psi \in X(\R) \textup{ s.t. } \inf_{x \in \R} |\psi(x)| = 0 \Big\}.
$$
The black soliton $\gu_0$ is the unique minimizer of the minimization problem $\boE_0$ up to the invariances by translation and phase shift. In particular, when
$$
E(\psi) < \boE_0 = E(\gu_0) = \frac{2 \sqrt{2}}{3},
$$
the function $\psi$ does not vanish on $\R$, so that it belongs to $NV\!X(\R)$.
\end{lem}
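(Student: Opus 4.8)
The plan is to establish the sharp lower bound $E(\psi)\ge \tfrac{2\sqrt2}{3}$ for every $\psi\in X(\R)$ that vanishes somewhere, by a completing-the-square (Bogomol'nyi) argument carried out on the modulus, and then to read off the equality cases. By the translation invariance of $E$ we may assume that $\psi(0)=0$. Since $\psi\in X(\R)$, the function $\eta=1-|\psi|^2$ lies in $H^1(\R)$, so $\rho:=|\psi|$ is continuous with $\rho(0)=0$ and $\rho(\pm\infty)=1$. The diamagnetic inequality $|\psi'|\ge \big||\psi|'\big|=|\rho'|$ gives, pointwise a.e.,
\[
E(\psi)\ \ge\ \frac12\int_\R(\rho')^2+\frac14\int_\R(1-\rho^2)^2=:\widetilde E(\rho),
\]
so it suffices to minimise $\widetilde E$ over nonnegative $\rho$ with $\rho(0)=0$ and $\rho(\pm\infty)=1$.

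Next I would split the integral at the zero and complete the square with opposite signs on the two half-lines, using the pointwise identity
\[
\frac12(\rho')^2+\frac14(1-\rho^2)^2=\frac12\Big(\rho'\mp\tfrac{1}{\sqrt2}(1-\rho^2)\Big)^2\pm\frac{1}{\sqrt2}\,\rho'(1-\rho^2).
\]
The crossed term is the exact derivative $\rho'(1-\rho^2)=\big(\rho-\rho^3/3\big)'$, so its integral over a half-line depends only on the boundary values of $\rho$ and is insensitive to any lack of monotonicity. Choosing the $+$ sign on $(0,+\infty)$ and the $-$ sign on $(-\infty,0)$, and inserting $\rho(0)=0$, $\rho(\pm\infty)=1$, each half-line contributes at least $\tfrac{1}{\sqrt2}\cdot\tfrac23=\tfrac{\sqrt2}{3}$, whence $\widetilde E(\rho)\ge \tfrac{2\sqrt2}{3}$ and therefore $E(\psi)\ge \tfrac{2\sqrt2}{3}$. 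Since $\gu_0$ vanishes at $0$ and satisfies $E(\gu_0)=\tfrac13(2)^{3/2}=\tfrac{2\sqrt2}{3}$ by~\eqref{eq:gE-val}, the infimum $\boE_0$ equals $\tfrac{2\sqrt2}{3}=E(\gu_0)$. In particular any $\psi$ with $E(\psi)<\tfrac{2\sqrt2}{3}$ cannot vanish, i.e.\ $\psi\in NV\!X(\R)$, which is the consequence actually invoked in the main text.

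For the characterisation of minimisers I would trace back the equality cases. Equality in the diamagnetic inequality forces the phase of $\psi$ to be locally constant on each interval where $\rho>0$, while equality in the two completed squares forces the first-order equations $\rho'=\tfrac1{\sqrt2}(1-\rho^2)$ on $(0,\infty)$ and $\rho'=-\tfrac1{\sqrt2}(1-\rho^2)$ on $(-\infty,0)$; integrating with $\rho(0)=0$ yields $\rho=|\tanh(\cdot/\sqrt2)|$, up to the translation fixing the zero. It then remains to reconstruct $\psi$ from $\rho$ and the two constant phases $\phi_\pm$ on the half-lines, and here lies the genuinely delicate point, which I expect to be the main obstacle: the minimal modulus profile alone does \emph{not} determine the orbit, since the real function $|\tanh(\cdot/\sqrt2)|$ has exactly the minimal energy yet is not of the form $e^{i\phi}\gu_0(\cdot-a)$.

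To resolve this one must upgrade a minimiser to a smooth solution of the stationary equation $\psi''+(1-|\psi|^2)\psi=0$, so that $\psi'$ is continuous across the origin. Matching $\psi'(0^-)=-\tfrac1{\sqrt2}e^{i\phi_-}$ with $\psi'(0^+)=\tfrac1{\sqrt2}e^{i\phi_+}$ then forces $e^{i\phi_+}=-e^{i\phi_-}$, i.e.\ $\phi_+=\phi_-+\pi$, which recombines the two half-line expressions into $\psi=e^{i\phi_+}\gu_0$, a single element of the orbit of $\gu_0$ under translation and phase shift, while precisely excluding the corner profile $|\tanh(\cdot/\sqrt2)|$. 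One also checks en route that the zero set reduces to a single point. This regularity-and-phase-matching step is where I expect the most care to be needed, and I would lean on the detailed analysis of~\cite{BetGrSa2} to carry it through.
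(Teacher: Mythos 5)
The paper does not actually prove this lemma: it is quoted from~\cite{BetGrSa2}, and the only part of it that is ever used in the sequel is the value $\boE_0 = \tfrac{2\sqrt2}{3}$ together with the consequence that $E(\psi) < \tfrac{2\sqrt2}{3}$ forces $\psi \in NV\!X(\R)$. Your argument for that part is correct and self-contained: since $1-|\psi|^2 \in H^1(\R)$ the infimum of $|\psi|$ is attained, the diamagnetic inequality reduces the problem to the modulus, and the Bogomol'nyi factorisation with opposite signs on the two sides of a zero gives the sharp bound $\tfrac{2\sqrt2}{3}$ from the boundary values alone. This is a clean and complete proof of everything the paper relies on.

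The uniqueness part of your proposal, however, contains a genuine gap, and it is exactly at the step you yourself flag as delicate. A minimiser of $\boE_0$ cannot be ``upgraded to a smooth solution of the stationary equation'': the constraint $\inf_{\R}|\psi| = 0$ is a closed, one-sided constraint (its complement $NV\!X(\R)$ is open), so a minimiser is not an unconstrained critical point of $E$ and satisfies at best a variational inequality, not the Euler--Lagrange equation. Consequently the phase-matching identity $e^{i\phi_+} = -e^{i\phi_-}$ cannot be derived. In fact your own ``obstacle'' is a genuine minimiser: $|\gu_0| = |\tanh(\cdot/\sqrt2)|$ belongs to $X(\R)$, vanishes at the origin, and has $E(|\gu_0|) = E(\gu_0) = \tfrac{2\sqrt2}{3}$ because the energy density depends only on $|\psi|$ and $\bigl||\psi|'\bigr|$; more generally, gluing $e^{i\phi_-}\gu_0$ on $(-\infty,a)$ to $e^{i\phi_+}\gu_0$ on $(a,+\infty)$ with arbitrary independent constants $\phi_\pm$ produces a three-parameter family of minimisers, of which the orbit $\{e^{i\theta}\gu_0(\cdot-a)\}$ is the proper slice $\phi_+ = \phi_-$. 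So the equality analysis you trace (constant phase on each component of $\{\rho>0\}$, and $\rho = |\tanh(\cdot/\sqrt2)|$) is where the characterisation of minimisers actually terminates; the additional rigidity you hope to extract from regularity is not available, and the uniqueness assertion must be understood as allowing an independent constant phase on each side of the zero rather than a single global phase shift.
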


Given a fixed number $p \in \R$, and provided that there exists a function $\psi \in X(\R)$ 
such that $E(\psi) < 2 \sqrt{2}/3$ and $P(\psi) = p$, Lemma~\ref{lem:min-E-0} guarantees 
that the possible limits of a minimizing sequence for the problem $\boI(p)$ still belong 
to $NV\!X(\R)$. This property was invoked in~\cite{BetGrSa2} to address the resolution 
of the minimization problem $\boI(p)$ for $|p| < \pi/2$. For an arbitrary choice
of $p$, we have

\begin{prop}
\label{prop:boI-problem}
$(i)$ For $|p| < \pi/2$, denote by $\gc_p$ the unique number in $(0, \sqrt{2})$, 
which solves
\begin{equation}
\label{def:gc-p-app}
\frac{\pi}{2} - \arctan \bigg( \frac{\gc_p}{\sqrt{2 - \gc_p^2}} \bigg) -
\frac{\gc_p}{2} \sqrt{2 - \gc_p^2} = |p|.
\end{equation}
and set $c_p = \text{sign}(p) \, \gc_p$. 
The dark soliton profile $\gu_{c_p}$ is the unique minimizer of the variational 
problem~\eqref{def:boI-problem} up to translation and phase shift. 
Moreover, the corresponding minimal value is given by
\begin{equation}
\label{eq:gE-val-app}
\gI(p) = E(\gu_{c_p}) = \frac{1}{3} \big(2 - c_p^2 \big)^\frac{3}{2}.
\end{equation}

$(ii)$ For $|p| \geq \pi/2$, the variational problem~\eqref{def:boI-problem} 
does not own any minimizer, and its minimal value is equal to
$$
\boI(p) = \frac{2 \sqrt{2}}{3}.
$$
\end{prop}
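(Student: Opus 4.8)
The plan is to handle the two regimes $|p| < \pi/2$ and $|p| \geq \pi/2$ separately, in both cases using the identity $P(\psi) = [P](\psi)$ modulo $\pi$ valid on $NV\!X(\R)$ to transfer information between the untwisted problem $\gI$ of Proposition~\ref{prop:exist-min1} and the honest-momentum problem $\boI$. Throughout I may assume $p > 0$, since the case $p < 0$ follows from the conjugation symmetry $E(\overline{\psi}) = E(\psi)$, $P(\overline{\psi}) = - P(\psi)$, while $p = 0$ is settled at once by unit constants, which give $\boI(0) = 0$ (and $\gc_p$ is only defined for $p \neq 0$).

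For statement $(i)$ with $0 < p < \pi/2$, I would first record $E(\gu_c) = \tfrac{1}{3}(2 - c^2)^{3/2}$ from~\eqref{eq:gE-val} and $P(\gu_c) = \text{sign}(c)\, \Xi(|c|)$ from~\eqref{eq:form-E-P-c}; the latter shows that the honest momentum of $\gu_{c_p}$ equals $p$ exactly, with no wrapping since $|p| < \pi/2$. Since $\gu_{c_p} \in NV\!X(\R)$ is then admissible, this yields the upper bound $\boI(p) \leq E(\gu_{c_p})$. For the matching lower bound, any competitor $\psi \in NV\!X(\R)$ with $P(\psi) = p$ satisfies $[P](\psi) = p$ in $\R/\pi\Z$, whose representative in $(-\pi/2, \pi/2]$ is $p$ itself, so Proposition~\ref{prop:exist-min1} gives $E(\psi) \geq \gI(p) = E(\gu_{c_p})$. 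Equality forces $\psi$ to be a minimizer of $\gI(p)$, hence $\gu_{c_p}$ up to translation and phase, proving both the value $\tfrac{1}{3}(2 - c_p^2)^{3/2}$ and the uniqueness statement (phase shifts and translations preserve both $E$ and the honest $P$).

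For statement $(ii)$ the more delicate half is the lower bound, which I would obtain by the phase-dilation trick recorded just before the proposition. Given $\psi = \rho e^{i \theta} \in NV\!X(\R)$ with $P(\psi) = p \geq \pi/2$ (necessarily $\theta' \not\equiv 0$), set $\psi_\mu = \rho e^{i \mu \theta}$ with $\mu_0 = \tfrac{\pi}{2p} \in (0, 1]$, so that $P(\psi_{\mu_0}) = \mu_0 p = \pi/2$ while the hydrodynamic formula~\eqref{eq:hydro-E} gives $E(\psi_{\mu_0}) \leq E(\psi)$, only the kinetic term $\tfrac{1}{2}\int(1 - \eta) v^2$ being affected and scaling by $\mu_0^2 \leq 1$. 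Since $[P](\psi_{\mu_0}) = \pi/2$ modulo $\pi$ and the speed selected in Proposition~\ref{prop:exist-min1} is $c = 0$ (as $\Xi(|c|) = \pi/2$ forces $|c| = 0$), I get $E(\psi) \geq E(\psi_{\mu_0}) \geq \gI(\pi/2) = E(\gu_0) = \tfrac{2\sqrt{2}}{3}$, hence $\boI(p) \geq \tfrac{2\sqrt{2}}{3}$. The same dilation yields non-existence: a minimizer with $P = p \geq \pi/2$ would, after rescaling its phase by $\mu_0$, produce an $NV\!X(\R)$ function of momentum $\pi/2$ and energy $\leq \tfrac{2\sqrt{2}}{3}$; equality in $\gI(\pi/2)$ forces it to be $\gu_0$, which vanishes and so lies outside $NV\!X(\R)$, a contradiction; for $p > \pi/2$ one even has the strict drop $E(\psi_{\mu_0}) < E(\psi)$ unless $\theta' \equiv 0$, incompatible with $P = p \neq 0$.

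For the reverse inequality $\boI(p) \leq \tfrac{2\sqrt{2}}{3}$ I would inject extra phase winding into the core of a slow grey soliton: starting from $\gu_c$ with $c \to 0^+$, whose modulus drops to $c/\sqrt{2}$ on a window of width $\sim c$ about the origin, I replace $\theta_c$ by $\theta_c + \phi$, where $\phi$ is smooth, equals $0$ near $-\infty$ and a constant $\Delta\theta$ near $+\infty$, and is supported in that window. Using $\rho_c^2 \theta_c' = \tfrac{c}{2}\eta_c$, both the cross term $\int \rho_c^2 \theta_c' \phi'$ and the quadratic term $\tfrac{1}{2}\int \rho_c^2 (\phi')^2$ in the energy increment are $O(c)$ uniformly for bounded $\Delta\theta$, while the momentum increment $\tfrac{1}{2}\int \eta_c \phi'$ tends to $\Delta\theta/2$; choosing $\Delta\theta \to 2(p - \pi/2)$ to enforce $P = p$ for each $c$ and letting $c \to 0$ drives the energy to $E(\gu_0) = \tfrac{2\sqrt{2}}{3}$. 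I expect this upper-bound construction to be the main obstacle: one must check that a \emph{bounded} amount of phase circulation can be injected at asymptotically \emph{zero} energy cost, which hinges on the core of $\gu_c$ shrinking at width $\sim c$ exactly fast enough to keep $\rho_c^2 (\phi')^2$ and $\rho_c^2 \theta_c' \phi'$ of order $c$, together with the exact enforcement of the constraint $P = p$ along the family; by contrast, both lower bounds reduce transparently to Proposition~\ref{prop:exist-min1} through the phase-dilation identity.
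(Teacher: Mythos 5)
Your proof is correct, but it follows a genuinely different route from the paper's, most visibly in statement $(ii)$. The paper disposes of $(i)$ by citing the resolution of the $NV\!X(\R)$-constrained problem directly (Theorem~2 of the reference for~\eqref{def:boI-problem}), whereas you derive it from the untwisted characterization in Proposition~\ref{prop:exist-min1}; this is legitimate since that proposition is quoted as an external result, though you should source the identity $E(\gu_c) = \tfrac{1}{3}(2-c^2)^{3/2}$ from the explicit profile~\eqref{eq:gu-val} (or from the cited literature) rather than from~\eqref{eq:gE-val}, which the paper itself deduces from the very proposition you are proving. For $(ii)$, the paper's argument is global and soft: it imports from an external reference that $\boI$ is non-negative, even and concave on $\R_+$, computes $\boI'(p) = \gc_p \to 0$ as $p \to \pi/2^-$, and squeezes $\boI(p) = 2\sqrt{2}/3$ for $p \geq \pi/2$ between the concavity upper bound and the non-negativity lower bound; non-existence is then obtained via the Euler--Lagrange equation, which forces a putative minimizer to be a non-constant solution of~\eqref{eq:gu-p}, hence a soliton $\gu_\sigma$ whose momentum lies in $(-\pi/2,\pi/2)$, a contradiction. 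Your route is local and variational: the phase dilation $\psi_\mu = \rho e^{i\mu\theta}$ reduces both the lower bound and non-existence to the untwisted problem at $p = \pi/2$ (exploiting that the minimizer there, $\gu_0$, vanishes and so cannot arise from an $NV\!X(\R)$ competitor), and the upper bound comes from an explicit phase-winding insertion in the shrinking core of $\gu_c$ as $c \to 0^+$. Your construction does check out --- the core where $|\gu_c|^2 \lesssim c^2$ has width $\sim c$, so both $\tfrac12\int\rho_c^2(\phi')^2$ and $\int \rho_c^2 \theta_c' \phi' = \tfrac{c}{2}\int \eta_c \phi'$ are $O(c)$ while $\tfrac12\int\eta_c\phi' \to \Delta\theta/2$, and the constraint $P=p$ can be enforced exactly since the momentum increment is linear in $\Delta\theta$ with coefficient $\tfrac12 + O(c^2)$. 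What each approach buys: the paper's is shorter but leans on the concavity of $\boI$ established elsewhere; yours is self-contained modulo Proposition~\ref{prop:exist-min1}, at the cost of the test-function construction, which is indeed the delicate step you correctly identified.
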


\begin{rem}
We can use Proposition~\ref{prop:boI-problem} to complement the proof of 
Proposition~\ref{prop:exist-min1} with respect to~\cite{BetGrSa2}. 
Observe indeed that
$$
\gI(p) \leq \inf_{k \in \Z} \boI(p + k \pi),
$$
for any number $p \in (- \pi/2, \pi/2]$. Combining Lemma~\ref{lem:min-E-0} 
and Proposition~\ref{prop:boI-problem}, we deduce that $\gI(p) = \boI(p)$ 
for $|p| < \pi/2$. In particular, the conclusion in 
Proposition~\ref{prop:exist-min1} follows from 
Proposition~\ref{prop:boI-problem} for this range of values of $p$.
\end{rem}

\begin{proof}[Proof of Proposition~\ref{prop:boI-problem}]

In view of Lemma~\ref{lem:min-E-0}, statement $(i)$ is 
exactly~\cite[Theorem 2]{BetGrSa2}. 
We turn now to statement $(ii)$. First, it was proved in~\cite[Theorem 2]{deLaMen1} 
that the minimal energy $\boI$ is a non-negative, even, continuous 
function on $\R$, whose restriction to $\R_+$ is concave. 
Moreover, it was computed in~\cite[Theorem 2]{BetGrSa2} that 
$$
\boI(p) = \frac{1}{3} \big( 2 - c_p^2 \big)^\frac{3}{2},
$$
for $0 \leq p \leq \pi/2$.
Since
\begin{equation}
\label{eq:deriv-cp-app}
\frac{d \gc_p}{dp} = - \frac{1}{(2 - \gc_p^2)^{\frac12}},
\end{equation}
$\boI$ is continuously differentiable on $(0, \pi/2)$ and
$$
\boI'(p) = \gc_p \to 0,
$$
as $p \to \pi/2$. Since $\boI$ is also concave on $\R$, 
we deduce that
\begin{equation}
\label{lou}
\boI(p) \leq \boI(\pi/2) = \frac{2 \sqrt{2}}{3},
\end{equation}
for any $p \geq \pi/2$.

Assume next the existence of a number $p > \pi/2$ such that 
$\boI(p) < 2 \sqrt{2}/3$. Since $\boI(\pi/2) = 2 \sqrt{2}/3$, 
we again infer from the concavity of the function $\boI$ the existence of a 
number $q > p$ such that $\boI(q) < 0$. This inequality contradicts the 
non-negativity of the function $\boI$, so that $\boI(p) \geq 2 \sqrt{2}/3$ 
for any number $p > \pi/2$. In view of~\eqref{lou}, this inequality is an 
equality, and since $\boI$ is an even function, it also holds for $p < - \pi/2$.

In order to complete the proof of statement $(ii)$, we next assume the 
existence of a minimizer $\psi_p$ for the variational problem $\boI(p)$ 
with $p \in \R \setminus (- \pi/2, \pi/2)$ being fixed. 
In view of Lemma~\ref{lem:cont-P}, this minimizer is characterized by 
the equation $dE(\psi_p) = \sigma dP(\psi_p)$ for a suitable Lagrange 
multiplier $\sigma \in \R$. The differentials $dE$ and $dP$ in this 
identity are chosen acting on the space $H^1(\R)$. Again by 
Lemma~\ref{lem:cont-P}, the minimizer $\psi_p$ is then a solution 
to~\eqref{eq:gu-p} in $X(\R)$. Since $P(\psi_p) \neq 0$, this solution is 
not constant. As a consequence, the minimizer $\psi_p$ is equal to the dark 
soliton $\gu_\sigma$ up to the invariances by translation and phase shift. 
In particular, the number $\sigma$ lies in $(- \sqrt{2}, \sqrt{2})$, 
with $\sigma \neq 0$ since the black soliton vanishes. However, it follows 
from~\cite[Proposition 1]{BetGrSa2} that the momentum $P(\gu_\sigma)$ belongs 
to the interval $(-\pi/2, \pi/2)$. This contradicts the fact that 
$|P(\psi_p)| \geq \pi/2$, so that there is no minimizer for $|p| \geq \pi/2$. 
\end{proof}

%%%%%%%%%%%%%%%%%%%%%%%%%%%%%%%%%
%%%%%%%%%%%%%%%%%%%%%%%%%%%%%%%%%
%%%%%%%%%%%%%%%%%%%%%%%%%%%%%%%%%
\section{Properties of the energy set $X(\R \times \T)$}
\label{sec:energy-set}
%%%%%%%%%%%%%%%%%%%%%%%%%%%%%%%%%
%%%%%%%%%%%%%%%%%%%%%%%%%%%%%%%%%
%%%%%%%%%%%%%%%%%%%%%%%%%%%%%%%%%

In this section, we gather some properties of the energy set
$$
X(\R \times \T) = \Big\{ \psi \in H_\text{loc}^1(\R \times \T) : \nabla \psi \in L^2(\R \times \T) \text{ and } 1 - |\psi|^2 \in L^2(\R \times \T) \Big\},
$$
which are required for defining properly the momentum and providing a suitable functional framework to solve the minimization problems $\boI_\lambda(p)$. The derivation of these properties heavily relies on the following links between the energy sets $X(\R)$ and $X(\R \times \T)$.

\begin{prop}
\label{prop:X-1-2}
Let $\lambda$ be a fixed positive number.

$(i)$ Given a function $\psi \in X(\R)$, set $\Psi(x, y) = \psi(x)$ for any $(x, y) \in \R \times \T$. The function $\Psi$ is in $X(\R \times \T)$, with
$$
E_\lambda(\Psi) = E(\psi).
$$

$(ii)$ Given a function $\psi \in X(\R \times \T)$, set $\hat{\psi}_0(x) =
\int_0^1 \psi(x, y) \, dy$ and $w_0(x, y) = \psi(x, y)- \hat{\psi}_0(x)$
for almost any $(x, y) \in \R \times \T$. The functions $\hat{\psi}_0$ and $w_0$ belong to $X(\R)$, respectively $H^1(\R \times \T)$, with
\begin{equation}
\label{eq:dev-E-1-2}
\begin{split}
E_\lambda(\psi) = & E(\hat{\psi}_0) + \frac{1}{2} \int_{\R \times \T} \big( |\partial_x w_0|^2 + \lambda^2 |\partial_y w_0|^2 \big)\\
& + \int_{\R \times \T} \Big( \langle \hat{\psi}_0, w_0 \rangle_\C^2 - \frac{1}{2}
|w_0|^2 (1 - |\hat{\psi}_0|^2) + |w_0|^2 \langle \hat{\psi}_0, w_0 \rangle_\C + \frac{1}{4} |w_0|^4 \Big).
\end{split}
\end{equation}
\end{prop}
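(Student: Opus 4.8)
The plan is to treat the two statements separately, the first being elementary and the second carrying the real content. For statement $(i)$, the extension $\Psi(x,y)=\psi(x)$ satisfies $\partial_x\Psi=\psi'$ and $\partial_y\Psi\equiv 0$, so $\Psi\in H_\text{loc}^1(\R\times\T)$ follows immediately from $\psi\in H_\text{loc}^1(\R)$; and since the torus $\T$ has unit length, every integral over $\R\times\T$ of a $y$-independent integrand reduces to the corresponding integral over $\R$. The identity $E_\lambda(\Psi)=E(\psi)$ is then immediate, the term $\frac{\lambda^2}{2}\int_{\R\times\T}|\partial_y\Psi|^2$ vanishing. For statement $(ii)$ I would proceed in three stages: regularity of the two pieces, the $L^2$-integrability of $1-|\hat\psi_0|^2$, and finally the algebraic expansion giving \eqref{eq:dev-E-1-2}.

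First I would establish regularity. Differentiating under the integral sign gives $\hat\psi_0'=\int_0^1\partial_x\psi(\cdot,y)\,dy$, so $\hat\psi_0\in H_\text{loc}^1(\R)$, and Cauchy--Schwarz in $y$ yields $\|\hat\psi_0'\|_{L^2(\R)}\le\|\partial_x\psi\|_{L^2(\R\times\T)}<\infty$. For $w_0=\psi-\hat\psi_0$ one has $\partial_x w_0=\partial_x\psi-\hat\psi_0'$ and $\partial_y w_0=\partial_y\psi$, both in $L^2(\R\times\T)$. Since $\int_0^1 w_0(x,y)\,dy=0$ for a.e.\ $x$, the Poincaré--Wirtinger inequality~\cite{Brezis0} applied slice-wise and integrated in $x$ gives $\|w_0\|_{L^2(\R\times\T)}\le\frac{1}{2\pi}\|\partial_y w_0\|_{L^2(\R\times\T)}$, whence $w_0\in H^1(\R\times\T)$.

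Second, and this is the point I expect to be the \emph{main obstacle}, I would show $1-|\hat\psi_0|^2\in L^2(\R)$. Integrating the pointwise identity $1-|\psi|^2=(1-|\hat\psi_0|^2)-2\langle\hat\psi_0,w_0\rangle_\C-|w_0|^2$ over $y$ and using $\int_0^1 w_0\,dy=0$ produces $1-|\hat\psi_0(x)|^2=\int_0^1\big(1-|\psi(x,y)|^2\big)\,dy+\int_0^1|w_0(x,y)|^2\,dy$. The first term is in $L^2(\R)$ directly by Cauchy--Schwarz in $y$ together with $1-|\psi|^2\in L^2(\R\times\T)$. The second term is where the naive $L^1$ bound coming from $w_0\in L^2$ is insufficient; the fix is to invoke the two-dimensional Gagliardo--Nirenberg/Sobolev embedding $H^1(\R\times\T)\hookrightarrow L^4(\R\times\T)$ to get $w_0\in L^4$, after which Cauchy--Schwarz in $y$ gives $\big(\int_0^1|w_0|^2\,dy\big)^2\le\int_0^1|w_0|^4\,dy$, so that integrating in $x$ bounds the $L^2(\R)$-norm of $x\mapsto\int_0^1|w_0|^2\,dy$ by $\|w_0\|_{L^4(\R\times\T)}^2$. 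Combining the two estimates yields $1-|\hat\psi_0|^2\in L^2(\R)$, completing $\hat\psi_0\in X(\R)$.

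Third, I would derive the energy identity by direct expansion. Writing $\partial_x\psi=\hat\psi_0'+\partial_x w_0$ and $\partial_y\psi=\partial_y w_0$, the kinetic cross term $\int_{\R\times\T}\langle\hat\psi_0',\partial_x w_0\rangle_\C$ vanishes because $\hat\psi_0'$ is $y$-independent and $\int_0^1\partial_x w_0\,dy=\partial_x\int_0^1 w_0\,dy=0$, giving the clean splitting of the gradient part into $E$-type terms in $\hat\psi_0$ plus $\frac12\int_{\R\times\T}(|\partial_x w_0|^2+\lambda^2|\partial_y w_0|^2)$. For the potential I would expand $(1-|\psi|^2)^2$ with $1-|\psi|^2=(1-|\hat\psi_0|^2)+\big(-2\langle\hat\psi_0,w_0\rangle_\C-|w_0|^2\big)$ and integrate in $y$ first, the orthogonality $\int_0^1 w_0\,dy=0$ again killing the contribution linear in $w_0$ in the mixed term and leaving precisely the quadratic, cubic and quartic terms of \eqref{eq:dev-E-1-2}. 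All resulting integrals are finite: $\hat\psi_0$ is bounded since $X(\R)\subset Z^1(\R)$, the control $w_0\in L^2\cap L^4$ handles the terms $\langle\hat\psi_0,w_0\rangle_\C^2$, $|w_0|^2\langle\hat\psi_0,w_0\rangle_\C$ and $|w_0|^4$, and $1-|\hat\psi_0|^2\in L^2$ together with $|w_0|^2\in L^2$ controls $\int_{\R\times\T}|w_0|^2(1-|\hat\psi_0|^2)$ by Cauchy--Schwarz. Recognizing $\frac12\int_\R|\hat\psi_0'|^2+\frac14\int_\R(1-|\hat\psi_0|^2)^2=E(\hat\psi_0)$ then gives \eqref{eq:dev-E-1-2}.
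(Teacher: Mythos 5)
Your proof is correct, and its overall architecture (decomposition, Poincar\'e--Wirtinger for $w_0$, integrability of $1-|\hat{\psi}_0|^2$, then expansion via orthogonality) matches the paper's. The one step where you genuinely diverge is the proof that $1-|\hat{\psi}_0|^2\in L^2(\R)$, which both you and the paper correctly identify as the only delicate point. The paper works with the pointwise identity $1-|\hat{\psi}_0|^2 = (1-|\psi|^2) + 2\langle\psi,w_0\rangle_\C - |w_0|^2$ on $\R\times\T$ and must then control the cross term $\langle\psi,w_0\rangle_\C$; since $\psi\in X(\R\times\T)$ need not be bounded in two dimensions, this requires the splitting $|\psi|\leq \sqrt{2}\,\1_{|\psi|\leq\sqrt{2}} + 2\sqrt{|\psi|^2-1}\,\1_{|\psi|>\sqrt{2}}$ combined with the embedding $H^1(\R\times\T)\hookrightarrow L^4$. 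You instead average the identity in $y$ first, so that the term linear in $w_0$ vanishes by the mean-zero property, leaving only $\int_0^1(1-|\psi|^2)\,dy$ (handled by Jensen) and $\int_0^1|w_0|^2\,dy$ (handled by $w_0\in L^4$). Your route buys a slightly cleaner argument that sidesteps the unboundedness of $\psi$ entirely; both ultimately rest on the same $L^4$ embedding, and both are valid.
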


\begin{rem}
\label{rem:1-2}
In view of statement $(i)$, we have made the choice to use the same notation for all the objects and quantities that are defined identically on $\R$ and $\R \times \T$. With a slight abuse of notation, we have also identified any function in $X(\R)$ with the corresponding function in $X(\R \times \T)$.
\end{rem}

\begin{proof}
Statement $(i)$ is a direct consequence of the property that the torus $\T$ has a finite measure equal to $1$ and that the derivative $\partial_y \psi$ of a function $\psi \in X(\R \times \T)$ depending only on the variable $x$ is equal to $0$.

Concerning statement $(ii)$, we first infer from the Plancherel formula that the gradients $\nabla \hat{\psi}_0$ and $\nabla w_0$ belong to $L^2(\R \times \T)$, with moreover $\partial_y \hat{\psi}_0 = 0$ and $\partial_y w_0 = \partial_y \psi$. Invoking the Poincar\'e-Wirtinger inequality, we obtain
$$
\big\| w_0 \big\|_{L^2(\R \times \T)} \leq \frac{1}{2 \pi} \big\| \partial_y \psi \big\|_{L^2(\R \times \T)},
$$
so that the function $w_0$ is indeed in $H^1(\R \times \T)$.

By definition, we also compute
$$
1 - |\hat{\psi}_0|^2 = 1 - |\psi|^2 + 2 \langle \psi, w_0 \rangle_\C - |w_0|^2.
$$
Using the inequality
$$
|\psi| \leq \sqrt{2} \, \1_{|\psi| \leq \sqrt{2}} + 2\sqrt{|\psi|^2 - 1} \, \1_{|\psi| > \sqrt{2}},
$$
we deduce from the Sobolev embedding theorem that the functions $\langle \psi, w_0 \rangle_\C$, and then $1 - |\hat{\psi}_0|^2$ are in $L^2(\R \times \T)$. Since $\hat{\psi}_0$ only depends on the variable $x$, we conclude that this function lies in $X(\R)$. Formula~\eqref{eq:dev-E-1-2} finally follows from the fact that the functions $w_0$ and $\nabla w_0$ are orthogonal in $L^2(\R \times \T)$ to all the functions depending only on the variable $x$.
\end{proof}

\begin{rem}
\label{rem:X+H-2}
Arguing as for the proof that the function $1 - |\hat{\psi}_0|^2$ is in $L^2(\R \times \T)$, we can show that a function of the form $\psi + w$ belongs to $X(\R \times \T)$ when $\psi$ and $w$ are in $X(\R \times \T)$, respectively $H^1(\R \times \T)$.
\end{rem}

Statement $(ii)$ in Proposition~\ref{prop:X-1-2} provides a uniquely determined decomposition of an arbitrary function $\psi \in X(\R \times \T)$ as a function in $X(\R)$ plus a function in $H^1(\R \times \T)$. It is natural to take into account this decomposition in order to endow the energy set $X(\R \times \T)$ with a metric structure. In this direction, we first set
$$
H(\R \times \T) := \Big\{ \psi = \hat{\psi}_0 + w_0 \in H_\text{loc}^1(\R \times \T) \text{ s.t. } \hat{\psi}_0 \in H(\R) \text{ and } w_0 \in H^1(\R \times \T) \Big\}.
$$
The set $H(\R \times \T)$ is then a Hilbert space for the norms given by the formula
\begin{equation}
\label{def:norm-Hc-2}
\| \psi \|_{H_c}^2 = \int_{\R \times \T} \big( |\nabla \psi|^2 + \eta_c |\psi|^2 \big),
\end{equation}
for $0 \leq c < \sqrt{2}$. This definition is exactly the same as the one of the norm $\| \cdot \|_{H_c}$ in $H(\R)$, so that we have kept the same notation. Observe in particular that the norm $\| \psi \|_{H_c}$ in $H(\R \times \T)$ of a function $\psi \in H(\R)$ is exactly equal to its norm $\| \psi \|_{H_c}$ in $H(\R)$.

Note also that the previous norm is equivalent to the norm given by
\begin{equation}
\label{eq:equiv-norm}
\| \psi \|^2 = \| \hat{\psi}_0 \|_{H_c}^2 + \| w_0 \|_{H^1}^2.
\end{equation}
Due to the orthogonality of the functions $\nabla \hat{\psi}_0$ and $\nabla w_0$
in $L^2(\R \times \T)$, the norm $\| \psi \|_{H_c}$ indeed controls the norms $\| \nabla \hat{\psi}_0 \|_{L^2}$ and $\| \nabla w_0 \|_{L^2}$, and then the norm $\| w_0 \|_{L^2}$ by the Poincar\'e inequality. The reverse inequality follows from the property that the norm $\| w_0 \|_{H^1}$ controls the norm $\| w_0 \|_{H_c}$.

At this stage, it is natural to endow the energy set $X(\R \times \T)$ with the metric structure corresponding to the distances
$$
d_c(\psi_1, \psi_2) := \Big( \| \psi_1 - \psi_2 \|_{H_c}^2 + \| \eta_1 - \eta_2 \|_{L^2}^2 \Big)^\frac{1}{2},
$$
with $\eta_1 = 1 - |\psi_1|^2$ and $\eta_2 = 1 - |\psi_2|^2$, as before. This definition is again exactly the same as in $X(\R)$, and the distance $d_c(\psi_1, \psi_2)$ in $X(\R \times \T)$ between functions $\psi_1$ and $\psi_2$ in $X(\R)$ remains equal to their distance in $X(\R)$. This is the reason why we have again kept the same notation for the two quantities. A useful property of this metric structure is

\begin{lem}
\label{lem:conv-1-2}
$(i)$ Let $\psi = \hat{\psi}_0 + w_0 \in X(\R \times \T)$. Consider a sequence of functions $\psi^n \in X(\R \times \T)$ such that $\psi^n \to \psi$ in $X(\R \times \T)$ as $n \to \infty$ and denote $\psi^n = \hat{\psi}_0^n + w_0^n$ the decomposition given by Proposition~\ref{prop:X-1-2}. In the limit $n \to \infty$, we have
$$
\hat{\psi}_0^n \to \hat{\psi}_0 \in X(\R) \quad \text{and} \quad w_0^n \to w_0 \text{ in } H^1(\R \times \T).
$$

$(ii)$ Let $g \in X(\R)$, $h \in H^1(\R \times \T)$, and set $\psi = g + h$.
Consider sequences of functions $g_n \in X(\R)$ and $h_n \in H^1(\R
\times \T)$ such that $g_n \to g$ in $X(\R)$, and $h_n \to h$ in $H^1(\R \times \T)$, as $n \to \infty$. Then, the functions $\psi_n = g_n + h_n$ satisfy
\begin{equation}
\label{eq:conv-1-2}
\psi_n \to \psi \text{ in } X(\R \times \T),
\end{equation}
as $n \to \infty$.
\end{lem}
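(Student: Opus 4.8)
The plan is to reduce both statements to the equivalence of norms~\eqref{eq:equiv-norm} together with the two-dimensional Sobolev embedding $H^1(\R \times \T) \hookrightarrow L^4(\R \times \T)$. For statement $(i)$, I would first observe that the decomposition of Proposition~\ref{prop:X-1-2} is linear, so that $\hat{\psi}_0^n - \hat{\psi}_0$ is the zero-th Fourier mode of $\psi^n - \psi$ and $w_0^n - w_0 = (\psi^n - \psi) - (\hat{\psi}_0^n - \hat{\psi}_0)$. Since convergence in $X(\R \times \T)$ entails $\| \psi^n - \psi \|_{H_c} \to 0$, the equivalence between $\| \cdot \|_{H_c}$ and the norm in~\eqref{eq:equiv-norm} immediately yields $\| \hat{\psi}_0^n - \hat{\psi}_0 \|_{H_c} \to 0$ and $\| w_0^n - w_0 \|_{H^1} \to 0$. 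This already provides the convergence of $w_0^n$ in $H^1(\R \times \T)$ and the convergence of $\hat{\psi}_0^n$ in $H(\R)$.

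To upgrade the latter to convergence in $X(\R)$, it remains to prove that $1 - |\hat{\psi}_0^n|^2 \to 1 - |\hat{\psi}_0|^2$ in $L^2(\R)$. Here I would use the pointwise identity $1 - |\hat{\psi}_0|^2 = \int_\T (1 - |\psi|^2) \, dy + \int_\T |w_0|^2 \, dy$, which follows from $\int_\T w_0 \, dy = 0$ and the orthogonality relation $\int_\T |\psi|^2 \, dy = |\hat{\psi}_0|^2 + \int_\T |w_0|^2 \, dy$. The difference then splits into the $y$-average of $(1 - |\psi^n|^2) - (1 - |\psi|^2)$, whose $L^2(\R)$-norm is bounded by $\| (1 - |\psi^n|^2) - (1 - |\psi|^2) \|_{L^2(\R \times \T)} \to 0$ via the Cauchy-Schwarz inequality and $|\T| = 1$, and the $y$-average of $|w_0^n|^2 - |w_0|^2$, which is controlled by $\| w_0^n - w_0 \|_{L^4} \| w_0^n + w_0 \|_{L^4}$ thanks to the Sobolev embedding and the $H^1$-convergence just obtained.

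For statement $(ii)$, the functions $\psi_n$ and $\psi$ lie in $X(\R \times \T)$ by Remark~\ref{rem:X+H-2}. The control of $\| \psi_n - \psi \|_{H_c}$ is immediate: writing $\psi_n - \psi = (g_n - g) + (h_n - h)$ and bounding the $H_c$-norm of the $H^1$-part by a constant times its $H^1$-norm, I obtain $\| \psi_n - \psi \|_{H_c} \leq \| g_n - g \|_{H_c} + C \| h_n - h \|_{H^1} \to 0$. The substantial part is the convergence $\| (1 - |\psi_n|^2) - (1 - |\psi|^2) \|_{L^2(\R \times \T)} \to 0$. Expanding $|\psi_n|^2 = |g_n|^2 + 2 \langle g_n, h_n \rangle_\C + |h_n|^2$ and likewise for $\psi$, the difference splits into $(1 - |g_n|^2) - (1 - |g|^2)$, which tends to $0$ in $L^2(\R)$ by definition of convergence in $X(\R)$; the quantity $|h_n|^2 - |h|^2$, handled as above by the $L^4$-embedding; and the cross term $2 (\langle g_n, h_n \rangle_\C - \langle g, h \rangle_\C)$.

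I expect the cross term to be the only genuine difficulty, because convergence in $X(\R)$ does \emph{not} provide $\| g_n - g \|_{L^\infty} \to 0$; it only yields $\| |g_n|^2 - |g|^2 \|_{L^\infty} \to 0$ through Lemma~\ref{lem:cont-modulus}. I would split the cross term as $\langle g_n, h_n - h \rangle_\C + \langle g_n - g, h \rangle_\C$. The first piece is estimated in $L^2$ by $\| g_n \|_{L^\infty} \| h_n - h \|_{L^2}$, where $\| g_n \|_{L^\infty}$ stays bounded precisely because Lemma~\ref{lem:cont-modulus} forces $\| |g_n|^2 - |g|^2 \|_{L^\infty} \to 0$ and $g$ is bounded. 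For the second piece, rather than an $L^\infty$-bound on $g_n - g$, I would invoke dominated convergence: the bound $\| g_n - g \|_{L^\infty} \leq M$ is uniform in $n$, while $g_n \to g$ uniformly on every compact interval (since $\| g_n - g \|_{H_c} \to 0$ controls the $H^1([-R, R])$-norm for every $R$), so that $|g_n - g|^2 |h|^2 \to 0$ pointwise and is dominated by $M^2 |h|^2 \in L^1(\R \times \T)$. This gives $\| \langle g_n - g, h \rangle_\C \|_{L^2} \to 0$ and completes the proof.
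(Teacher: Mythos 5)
Your proof is correct, and for statement $(ii)$ it follows essentially the same route as the paper: decompose $(1-|\psi_n|^2)-(1-|\psi|^2)$ into the pieces $(|g|^2-|g_n|^2)$, $(|h|^2-|h_n|^2)$ and the two cross terms, bound the first cross term by $\| g_n \|_{L^\infty}\| h_n - h\|_{L^2}$ using the uniform bound from Lemma~\ref{lem:cont-modulus}, and handle the second by dominated convergence combined with local uniform convergence of $g_n$. For statement $(i)$, however, you take a genuinely cleaner path. The paper expands $(1-|\hat{\psi}_0^n|^2)-(1-|\hat{\psi}_0|^2)$ into four terms including the cross terms $\langle \hat{\psi}_0^n-\hat{\psi}_0, w_0\rangle_\C$ and $\langle \hat{\psi}_0^n, w_0^n-w_0\rangle_\C$, and must again invoke uniform bounds on $\hat{\psi}_0^n$ and the dominated convergence theorem to dispose of them. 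Your identity $1-|\hat{\psi}_0|^2 = \int_\T (1-|\psi|^2)\,dy + \int_\T |w_0|^2\,dy$, which follows from $\int_\T w_0\,dy = 0$, kills the cross terms before any estimate is made; the remaining two terms are controlled by Jensen's inequality in $y$ and the embedding $H^1(\R\times\T)\hookrightarrow L^4(\R\times\T)$ respectively. What your version buys is the elimination of the dominated-convergence step in part $(i)$ entirely, at the modest cost of invoking the two-dimensional $L^4$ Sobolev embedding (which the paper also implicitly needs to make sense of the $|w_0^n|^2-|w_0|^2$ term). Both arguments are complete; yours is arguably the more transparent one for part $(i)$.
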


\begin{proof}
Concerning statement $(i)$, we deduce from the equivalence between the $H_c$-norms and the norms in~\eqref{eq:equiv-norm} that $\hat{\psi}_0^n \to \hat{\psi}_0$ in $H(\R)$ and $w_0^n \to w_0$ in $H^1(\R \times \T)$. The fact that $1 - |\hat{\psi}_0^n|^2 \to 1 - |\hat{\psi}_0|^2$ in $L^2(\R)$ then follows from the identity
\begin{equation}
\label{montpellier}
\big( 1 - |\hat{\psi}_0^n|^2 \big) - \big( 1 - |\hat{\psi}_0|^2 \big) = \big( |\psi|^2 - |\psi^n|^2 \big) + \big( |w_0^n|^2 - |w_0|^2 \big) + 2 \langle \hat{\psi}_0^n - \hat{\psi}_0, w_0 \rangle_\C + 2 \langle \hat{\psi}_0^n, w_0^n - w_0 \rangle_\C.
\end{equation}
The first term in the right-hand side of this expression tends to $0$ in $L^2(\R
\times \T)$ due to the convergence $\psi_n \to \psi$ in $X(\R \times
\T)$. The second one also tends to $0$ in $L^2(\R \times \T)$ due to the
convergence $w_0^n \to w_0$ in $H^1(\R \times \T)$ and the Sobolev
embedding theorem. For the third one, we recall that the convergence in
$H(\R)$ implies the convergence in $L_\text{loc}^\infty(\R)$ by the
Sobolev embedding theorem. Since the energy set $X(\R)$ is a subset of
$Z^1(\R)$, the function $\hat{\psi}_0$ is also bounded on $\R$. In
particular, it follows from the dominated convergence theorem that the
third term in~\eqref{montpellier} also converges to $0$ in $L^2(\R
\times \T)$. In view of Lemma~\ref{lem:cont-modulus}, the functions
$\hat{\psi}_0^n$ are then uniformly bounded on $\R$. Similarly, the fourth term in~\eqref{montpellier} also tends to $0$ in $L^2(\R \times \T)$ due to the convergence $w_0^n \to w_0$ in $H^1(\R \times \T)$. In conclusion, the left-hand side of~\eqref{montpellier} converges to $0$ in $L^2(\R \times \T)$, and then in $L^2(\R)$ since it only depends on the variable $x$.

The proof of statement $(ii)$ is very similar. Observe first that the functions
$\psi$ and $\psi_n$ are in $X(\R \times \T)$ by
Proposition~\ref{prop:X-1-2} and Remark~\ref{rem:X+H-2}. The convergence
$\psi_n \to \psi$ in $H(\R\times \T)$ then follows from the fact that the $H^1$-norm controls the $H_c$-norms. Moreover, we compute
$$
\big( 1 - |\psi_n|^2 \big) - \big( 1 - |\psi|^2) = \big( |g|^2 - |g_n|^2 \big) + \big( |h|^2 - |h_n|^2 \big) + 2 \langle g - g_n, h \rangle_\C + 2 \langle g_n, h - h_n \rangle_\C.
$$
The convergence $1 - |\psi_n + w_n|^2 \to 1 - |\psi + w|^2$ in $L^2(\R \times \T)$ follows as for~\eqref{montpellier}. This completes the proofs of~\eqref{eq:conv-1-2} and of Lemma~\ref{lem:conv-1-2}.
\end{proof}

Note also that the energies $E_\lambda$ are continuous with respect to the distances $d_c$. Moreover, we can show the following density result, which is useful for describing the minimal energy $\boI_\lambda$.

\begin{cor}
\label{cor:dens-X}
Let $\lambda$ be a positive number. Consider a function $\psi \in X(\R \times \T)$ and decompose it as $\psi = \hat{\psi}_0 + w_0$ according to Proposition~\ref{prop:X-1-2}. There exist two sequences of functions $g_n \in X(\R)$ and $h_n \in H^1(\R \times \T)$, which satisfy the following properties.

$(i)$ The functions $g_n$ are smooth on $\R$ and there exist numbers $R_n^\pm > 0$ and $\theta_n^\pm \in \R$ for which $g_n(x) = e^{i \theta_n^\pm}$ for any $\pm x \geq \pm R_n^\pm$. 

$(ii)$ The functions $h_n$ are smooth on $\R \times \T$ and compactly supported in $[- R_n^-, R_n^+] \times \T$.

$(iii)$ We have the convergences
\begin{equation}
\label{eq:conv-g-h}
g_n \to \hat{\psi}_0 \text{ in } X(\R) \quad \text{and} \quad h_n \to w_0 \text{ in } H^1(\R \times \T),
\end{equation}
as $n \to \infty$.

$(iv)$ The functions $\psi_n = g_n + h_n$ are in $X(\R \times \T)$, with
$$
\psi_n \to \psi \text{ in } X(\R \times \T) \quad \text{ and } \quad E_\lambda(\psi_n) \to E_\lambda(\psi),
$$
as $n \to \infty$.
\end{cor}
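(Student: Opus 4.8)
The plan is to build the two sequences $g_n$ and $h_n$ separately, approximating the two pieces of the canonical decomposition $\psi = \hat{\psi}_0 + w_0$ supplied by Proposition~\ref{prop:X-1-2}, and then to invoke statement $(ii)$ of Lemma~\ref{lem:conv-1-2} together with the continuity of $E_\lambda$ with respect to the distances $d_c$ in order to deduce the combined convergences in $(iv)$. In this way the entire content of $(iii)$ reduces to two independent approximation problems, one in $X(\R)$ and one in $H^1(\R \times \T)$, while $(iv)$ becomes essentially automatic. The coordination between the two constructions enters only through the common numbers $R_n^\pm$: once each piece is built, I would enlarge $R_n^\pm$ so that simultaneously $g_n$ is constant for $\pm x \geq \pm R_n^\pm$ and $h_n$ is supported in $[- R_n^-, R_n^+] \times \T$.

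The construction of $h_n$ is routine. Since $w_0 \in H^1(\R \times \T)$, a truncation by a cutoff in the variable $x$ followed by mollification produces smooth functions, compactly supported in some $[- b_n, b_n] \times \T$, with $h_n \to w_0$ in $H^1(\R \times \T)$; it suffices to record the support interval.

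The delicate step, and the main obstacle, is the construction of $g_n$: approximating $\hat{\psi}_0 \in X(\R)$ by smooth functions that are \emph{exactly} a unimodular constant outside a compact set. The idea is a two-scale truncation. As $\eta := 1 - |\hat{\psi}_0|^2 \in H^1(\R)$ vanishes at infinity, I would fix $R$ large so that $\rho := |\hat{\psi}_0| \geq 1/2$ on the tails $I_R^\pm$ and the tail quantities $\int_{|x| \geq R} (|\hat{\psi}_0'|^2 + \eta^2)$ are as small as desired. On $I_R^\pm$ the theory recalled in Appendix~\ref{sec:dimension-one} permits the lift $\hat{\psi}_0 = \rho e^{i \theta}$, and I would define a continuous $g_R$ equal to $\hat{\psi}_0$ on $[- R, R]$, which over an interval of length one raises the modulus to $1$ while freezing the phase at the value $\theta(\pm R)$, and equals the constant $e^{i \theta(\pm R)}$ beyond; a final mollification at a small scale $\delta$ yields a smooth function still constant for large $\pm x$, producing $g_n$ together with the numbers $\theta_n^\pm$ and $R_n^\pm$.

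The point that makes this approximation valid is the asymmetric nature of the distance $d_c$ in~\eqref{def:d-c}: it controls the derivative globally in $L^2$, but the function values only through the exponentially decaying weight $\eta_c$. Freezing the phase beyond $R$ forces $g_R' = 0$ there, so the derivative discrepancy is bounded by $\| \hat{\psi}_0' \|_{L^2(I_R^\pm)}$, the tail of a globally $L^2$ function and hence small, plus the contribution of the length-one transition, which is of order $|1 - \rho(\pm R)| \to 0$. The possibly large pointwise gap $|g_R - \hat{\psi}_0|$ on the tails, and here I would stress that $\theta$ need not even converge at infinity so that $\theta_n^\pm$ genuinely depends on $n$, is harmless because it is weighted by $\eta_c$ and $\int_{|x| \geq R} \eta_c \to 0$; likewise $\big\| |g_R|^2 - |\hat{\psi}_0|^2 \big\|_{L^2}$ is controlled by $\| \eta \|_{L^2(I_R^\pm)}$ plus the small transition term. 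Thus $d_c(g_R, \hat{\psi}_0) \to 0$ as $R \to \infty$, and since mollification is a small perturbation in $X(\R)$ that preserves both membership and the constant-at-infinity structure, one gets $g_n \to \hat{\psi}_0$ in $X(\R)$. With $(iii)$ established, Lemma~\ref{lem:conv-1-2}$(ii)$ gives $\psi_n = g_n + h_n \to \psi$ in $X(\R \times \T)$ and the continuity of $E_\lambda$ yields $E_\lambda(\psi_n) \to E_\lambda(\psi)$, which completes $(iv)$.
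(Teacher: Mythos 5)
Your proposal is correct, but it follows a genuinely different route from the paper. For the approximation of $\hat{\psi}_0$, the paper invokes P.~G\'erard's structure theorem ($\hat{\psi}_0 = e^{i\phi} + \varpi$ with $\phi' \in L^2(\R)$ and $\varpi \in H^1(\R)$), approximates $\phi'$ and $\varpi$ separately by smooth compactly supported functions, and sets $g_n = e^{i\phi_n} + \varpi_n$; the constant-at-infinity structure then comes for free from the compact support of $\phi_n'$ and $\varpi_n$. You instead modify $\hat{\psi}_0$ directly on its tails, using the local lifting $\rho e^{i\theta}$ on $I_R^\pm$ (valid since $\eta \in H^1 \hookrightarrow C_0$ forces $\rho \geq 1/2$ there for $R$ large), freezing the phase at $\theta(\pm R)$, interpolating the modulus to $1$ over a unit interval, and mollifying. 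Your tail estimates are the right ones and exploit exactly the asymmetry of $d_c$ that makes the statement true: the derivative and $1-|\cdot|^2$ discrepancies are controlled by tails of globally $L^2$ quantities plus a transition term of size $O(|\eta(\pm R)|)$, while the $O(1)$ pointwise gap is killed by the weight $\eta_c$. What the paper's route buys is that it outsources the delicate global topology of the phase to G\'erard's theorem; what yours buys is self-containedness, at the cost of having to carry out the lifting-and-gluing by hand. The remaining pieces (density of $\boC_c^\infty(\R\times\T)$ in $H^1(\R\times\T)$ for $h_n$, then Lemma~\ref{lem:conv-1-2}$(ii)$ and the continuity of $E_\lambda$ for statement $(iv)$) coincide with the paper's. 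The only point you should spell out a little more is the final mollification: since $g_R$ is merely $H^1_{\textup{loc}}$ on $[-R,R]$, the mollification is doing real work there, and one should note that $g_R$ is $1/2$-H\"older (so the mollified function converges uniformly and in the weighted $L^2$ norm) and that $1-|g_R|^2$ is supported in a fixed compact set (so the $L^2$ convergence of the last component of $d_c$ follows), before concluding by a diagonal extraction in $(R,\delta)$. This is routine, not a gap.
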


\begin{proof}
The proof is based on a decomposition of the functions in $X(\R)$, which was established by P.~G\'erard in~\cite[Theorem 1.8]{Gerard2}. Given an arbitrary function $\tilde{\psi} \in X(\R)$, there exist a real-valued function $\tilde{\phi} \in \boC^0(\R)$, with $\tilde{\phi}' \in L^2(\R)$, and a complex-valued function $\tilde{\varpi} \in H^1(\R)$ such that
\begin{equation}
\label{eq:psi-decomp}
\tilde{\psi} = e^{i \tilde{\phi}} + \tilde{\varpi}.
\end{equation}
Moreover, the phase function $\tilde{\phi}$ is determined up to adding a real-valued function $\varphi \in \boC^0(\R)$, with $\varphi' \in L^2(\R)$, and such that there exist $k_\pm \in \Z$ with $\varphi - 2 \pi k_\pm \in L^2(\R_\pm)$. Since $\hat{\psi}_0$ belongs to $X(\R)$ by Proposition~\ref{prop:X-1-2}, we can decompose it as $\hat{\psi}_0 = e^{i \phi} + \varpi$, with $\phi$ and $\varpi$ satisfying the previous conditions. 

We next invoke the density of smooth, compactly supported functions in $L^2(\R)$ and $H^1(\R)$ so as to find two sequences of functions $\varphi_n$ and $\varpi_n$ in $\boC_c^\infty(\R)$ such that $\varphi_n \to \phi'$ in $L^2(\R)$, and $\varpi_n \to \varpi$ in $H^1(\R)$, as $n \to \infty$. Since the function $\phi$ is continuous, we are then allowed to define functions $\phi_n$ by the formula
$$
\phi_n(x) = \phi(0) + \int_0^x \varphi_n(t) \, dt, 
$$
for any $x \in \R$. By the inequality
$$
|\phi_n(x) - \phi(x)| \leq \bigg| \int_0^x \big( \varphi_n(t) - \phi'(t) \big) \, dt \bigg| \leq \sqrt{R} \, \big\| \varphi_n - \phi' \big\|_{L^2([- R, R])},
$$
which holds for any positive number $R$, we obtain that $\phi_n \to \phi$ in $L_\text{loc}^\infty(\R)$, while in addition $\phi'_n \to \phi'$ in $L^2(\R)$, when $n \to \infty$.

At this stage, we set $g_n = e^{i \phi_n} + \varpi_n$. The functions $g_n$ satisfy statement $(i)$ in Corollary~\ref{cor:dens-X}. Given a number $0 \leq c < \sqrt{2}$, we moreover have
$$
\eta_c^\frac{1}{2} \big( g_n - \hat{\psi}_0 \big) = \eta_c^\frac{1}{2} \big( e^{i \phi_n} - e^{i \phi} \big) + \eta_c^\frac{1}{2} \big( \varpi_n - \varpi \big),
$$
$$
g_n' - \hat{\psi}_0' = i \big( \phi_n' - \phi' \big) e^{i \phi_n} + i \phi' (e^{i \phi_n} - e^{i \phi}) + \varpi_n' - \varpi',
$$
as well as
$$
\big( 1 - |g_n|^2 \big) - \big( 1 - |\hat{\psi}_0|^2 \big) = 2 \langle e^{i \phi} - e^{i \phi_n}, \varpi \rangle_\C + 2 \langle e^{i \phi_n}, \varpi - \varpi_n \rangle_\C + |w|^2 - |w_n|^2.
$$
Invoking the Sobolev embedding theorem, and applying the dominated convergence theorem when necessary, we are led to
$$
d_c(g_n, \hat{\psi}_0) \to 0,
$$
as $n \to \infty$. Note also that $g_n \to \hat{\psi}_0$ in $L_\text{loc}^\infty(\R)$ by the Sobolev embedding theorem.

We finally complete the proof of statement $(iii)$, and provide the one of statement $(ii)$, by introducing a further sequence of functions $h_n \in \boC_c^\infty(\R \times \T)$ such that $h_n \to w_0$ in $H^1(\R \times \T)$, as $n \to \infty$.

The convergence of the functions $\psi_n = g_n + h_n$ towards the function $\psi$ in $X(\R \times \T)$ is then a direct consequence of statement $(iii)$ and Lemma~\ref{lem:conv-1-2}. The convergence of the energies $E_\lambda(\psi_n)$ towards the energy $E_\lambda(\psi)$ follows by continuity of the energy $E_\lambda$ on $X(\R \times \T)$. This completes the proof of Corollary~\ref{cor:dens-X}.
\end{proof}

%%%%%%%%%%%%%%%%%%%%%%%%%%%%%%%
%%%%%%%%%%%%%%%%%%%%%%%%%%%%%%%
%%%%%%%%%%%%%%%%%%%%%%%%%%%%%%%
\section{Definition and properties of the momentum}
\label{sec:momentum}
%%%%%%%%%%%%%%%%%%%%%%%%%%%%%%%
%%%%%%%%%%%%%%%%%%%%%%%%%%%%%%%
%%%%%%%%%%%%%%%%%%%%%%%%%%%%%%%

In this section, we provide the definition of the momentum in the energy set $X(\R \times \T)$ and describe its main properties. Our starting point is the decomposition $\psi = \hat{\psi}_0 + w_0$ of a function $\psi \in X(\R \times \T)$, which is given by Proposition~\ref{prop:X-1-2}. Using this decomposition, the formal density of momentum writes as
$$
\langle i \partial_x \psi, \psi \rangle_\C = \langle i \partial_x \hat{\psi}_0, \hat{\psi}_0 \rangle_\C + \langle i \partial_x \hat{\psi}_0, w_0 \rangle_\C + \langle i \partial_x w_0, \hat{\psi}_0 \rangle_\C + \langle i \partial_x w_0, w_0 \rangle_\C.
$$
The first term in the right-hand side of this identity is the formal density of
the momentum of a function $\hat{\psi}_0 \in X(\R)$, so that we can define it
rigorously by invoking Lemma~\ref{lem:def-P-1}. The second and third terms are
scalar products of functions, which are at least formally orthogonal in $L^2(\R \times \T)$. Hence, their integral is at least formally equal to $0$. Finally, the last term is integrable on $\R \times \T$ since $w_0 \in H^1(\R \times \T)$. As a conclusion, it is natural to define the momentum of the function $\psi$ as
$$
P(\psi) = Q(\hat{\psi}_0) + \frac{1}{2} \int_{\R \times \T} \langle i \partial_x w_0, w_0 \rangle_\C.
$$
In this expression, the quantity $Q(\hat{\psi}_0)$ refers to a 1D momentum of the function $\hat{\psi}_0$, which can be either equal to the quantity $P_{\theta_0}(\hat{\psi}_0)$ in~\eqref{def:P-1}, the momentum $P(\hat{\psi}_0)$ when $\hat{\psi}_0 \in NV\!X(\R)$, or the untwisted momentum $[P](\hat{\psi}_0)$. More precisely, we have

\begin{lem}
\label{lem:def-P-2}
Given a function $\psi$ in $X(\R \times \T)$, decompose it as $\psi = \hat{\psi}_0 + w_0$, with $\hat{\psi}_0$ and $w_0$ as in Proposition~\ref{prop:X-1-2}. Consider a positive number $R_0$ such that $|\hat{\psi}_0(x)| \geq 1/2$ for $|x| \geq R_0$ and a phase function $\theta_0 \in \boC^0(I_{R_0}^\pm)$ such that $\hat{\psi}_0 = |\hat{\psi}_0| e^{i \theta_0}$ on $I_{R_0}^\pm$. Choose a smooth cut-off function $\chi : \R \to [0, 1]$ such that $\chi(x) = 0$ for $|x| \leq 1$ and $\chi(x) = 1$ for $|x| \geq 2$, and set $\chi_r(x) = \chi(x/r)$ for a number $r > R_0$.

$(i)$ The quantity
\begin{equation}
\label{def:P-2}
P_{\theta_0}(\psi) = P_{\theta_0}(\hat{\psi}_0) + \frac{1}{2} \int_{\R \times \T} \langle i \partial_x w_0, w_0 \rangle_\C,
\end{equation}
is well-defined and does not depend on the choice of neither the function $\chi$, nor the number $r$.

$(ii)$ When the function $\hat{\psi}_0$ does not vanish on $\R$, the quantity $P_{\theta_0}(\psi)$ does not depend on the choice of the phase function $\theta_0$. In the sequel, this quantity is called momentum and simply denoted by $P(\psi)$.

$(iii)$ In the general case, the value modulo $\pi$ of the quantity $P_{\theta_0}(\psi)$ does not depend on the choice of the phase function $\theta_0$, and it is possible to fix this choice such that $P_{\theta_0}(\psi) \in (- \pi/2, \pi/2]$. In particular, the untwisted momentum $[P] : X(\R \times \T) \to \R / \pi \Z$ defined by $[P](\psi) = P_{\theta_0}(\psi)$ modulo $\pi$ is well-defined.
\end{lem}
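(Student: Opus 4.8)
The plan is to reduce all three assertions to the one-dimensional statement already recorded in Lemma~\ref{lem:def-P-1}. The starting observation is that, by Proposition~\ref{prop:X-1-2}$(ii)$, the decomposition $\psi = \hat{\psi}_0 + w_0$ is canonical: the function $\hat{\psi}_0(x) = \int_0^1 \psi(x,y)\,dy$ lies in $X(\R)$ and $w_0 = \psi - \hat{\psi}_0$ lies in $H^1(\R \times \T)$, and both are uniquely determined by $\psi$ alone. Consequently, in the defining formula~\eqref{def:P-2} the entire dependence on the auxiliary data $(\theta_0, \chi, r)$ is carried by the first summand $P_{\theta_0}(\hat{\psi}_0)$, which is precisely the one-dimensional quantity attached to $\hat{\psi}_0 \in X(\R)$ by formula~\eqref{def:P-1}; the second summand $\frac{1}{2} \int_{\R \times \T} \langle i \partial_x w_0, w_0 \rangle_\C$ is a fixed real number depending only on $w_0$, hence only on $\psi$.

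For statement $(i)$, I first check that the correction term is finite. Since $w_0 \in H^1(\R \times \T)$, both $\partial_x w_0$ and $w_0$ belong to $L^2(\R \times \T)$, so the Cauchy--Schwarz inequality gives
$$
\Big| \int_{\R \times \T} \langle i \partial_x w_0, w_0 \rangle_\C \Big| \leq \| \partial_x w_0 \|_{L^2} \, \| w_0 \|_{L^2} < + \infty.
$$
The first summand $P_{\theta_0}(\hat{\psi}_0)$ is well-defined and independent of the choice of $\chi$ and of $r$ by Lemma~\ref{lem:def-P-1}$(i)$, while the correction term does not involve $\chi$ or $r$ at all. Adding the two yields statement $(i)$.

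For statement $(ii)$, when $\hat{\psi}_0$ does not vanish on $\R$ it belongs to $NV\!X(\R)$, and Lemma~\ref{lem:def-P-1}$(ii)$ ensures that $P_{\theta_0}(\hat{\psi}_0)$ is independent of the phase function $\theta_0$; since the correction term never involved $\theta_0$, the sum $P_{\theta_0}(\psi)$ is likewise independent of $\theta_0$, and may be denoted $P(\psi)$. For statement $(iii)$, the correction term is a fixed real number, whereas by Lemma~\ref{lem:def-P-1}$(iii)$ the value of $P_{\theta_0}(\hat{\psi}_0)$ modulo $\pi$ does not depend on $\theta_0$; hence the value of $P_{\theta_0}(\psi)$ modulo $\pi$ is independent of $\theta_0$ as well. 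Moreover, replacing $\theta_0$ by $\theta_0 + 2\pi k_\pm$ on $I_{R_0}^\pm$ shifts $P_{\theta_0}(\hat{\psi}_0)$, and therefore $P_{\theta_0}(\psi)$, by $\pi(k_+ - k_-) \in \pi \Z$; choosing these integers appropriately normalizes $P_{\theta_0}(\psi)$ into $(-\pi/2, \pi/2]$ and makes $[P](\psi) := P_{\theta_0}(\psi)$ modulo $\pi$ well-defined.

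Since the genuine analytic content --- the integrability at infinity of the one-dimensional momentum density and the $\pi$-indeterminacy produced by the phase lifting --- is entirely absorbed into Lemma~\ref{lem:def-P-1}, there is no substantial obstacle here; the only work is bookkeeping, namely isolating which summand carries which dependence and verifying the finiteness of the intrinsic correction term. The mildest point requiring care is simply to confirm that the correction term truly depends on nothing but the canonical component $w_0$, which follows from the uniqueness of the decomposition in Proposition~\ref{prop:X-1-2}.
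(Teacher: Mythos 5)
Your proposal is correct and follows exactly the paper's argument: the paper simply observes that the lemma is a direct consequence of Lemma~\ref{lem:def-P-1}, since the $w_0$-term in~\eqref{def:P-2} is well-defined for $w_0 \in H^1(\R \times \T)$ and carries no dependence on $(\theta_0, \chi, r)$. Your write-up merely spells out this bookkeeping in more detail.
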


\begin{rem}
\label{rem:def-P-1-2}
In view of Remark~\ref{rem:1-2}, a function $\psi \in X(\R)$ is also a function in $X(\R \times \T)$, so that we can define its momentum as a function in $X(\R)$ or in $X(\R \times \T)$. Lemma~\ref{lem:def-P-2} guarantees that these definitions are identical whatever is the definition of the momentum ($P_\theta(\psi)$, $P(\psi)$ or $[P](\psi)$) under consideration. In this case, the functions $\psi$ and $\hat{\psi}_0$ are indeed equal, so that the function $w_0$ identically vanishes.
\end{rem}

\begin{proof}
Lemma~\ref{lem:def-P-2} is a direct consequence of Lemma~\ref{lem:def-P-1} since
the term depending on the function $w_0$ in~\eqref{def:P-2} is
well-defined for $w_0 \in H^1(\R \times \T)$.
\end{proof}

At this stage, it is natural to introduce the set
$$
Y(\R \times \T) := \Big\{ \psi = \hat{\psi}_0 + w_0 \in X(\R \times \T) \text{ s.t. } \hat{\psi}_0 \in NV\!X(\R) \Big\}.
$$
Though this open set plays the role of the set $NV\!X(\R)$ in the context of the
product space $\R \times \T$, it is not the subset $NV\!X(\R \times \T)$ of
non-vanishing functions in $X(\R \times \T)$. With the definition of $Y(\R
\times \T)$ at hand, we can extend Lemmas~\ref{lem:cont-P} and~\ref{lem:cont-[P]} as

\begin{lem}
\label{lem:cont-P2}
$(i)$ The momentum $P$ is continuous on the subset $Y(\R \times \T)$. Moreover, given a function $\psi \in Y(\R \times \T)$, there exists a positive number $\delta$ such that the ball $B(\psi, \delta) := \{ \psi + h : h \in H^1(\R \times \T) \text{ s.t. } \| h \|_{H^1} < \delta \}$ is a subset of $Y(\R \times \T)$ on which
\begin{equation}
\label{eq:dev-P-H1-2}
P(\psi + h) = P(\psi) + \int_{\R \times \T} \langle i \partial_x \psi, h \rangle_\C + \frac{1}{2} \int_{\R \times \T} \langle i \partial_x h, h \rangle_\C.
\end{equation}
In particular, the restriction of the momentum $P$ to the ball $B(\psi, \delta)$ is continuously~\footnote{With respect to the metric structure induced by the $H^1$-norm.} differentiable, with
$$
dP(\psi)(h) = \int_{\R \times \T} \langle i \partial_x \psi, h \rangle_\C,
$$
for any function $h \in H^1(\R \times \T)$.

$(ii)$ The untwisted momentum $[P]$ is continuous on $X(\R \times \T)$.
\end{lem}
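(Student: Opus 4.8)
The plan is to exploit the additive structure of the momentum recorded in~\eqref{def:P-2}, namely that $P_{\theta_0}(\psi) = P_{\theta_0}(\hat{\psi}_0) + \frac12\int_{\R\times\T}\langle i\partial_x w_0, w_0\rangle_\C$ splits into a one-dimensional momentum of the zero-mode $\hat{\psi}_0 \in X(\R)$ and a globally well-defined quadratic term in the remainder $w_0 \in H^1(\R\times\T)$. Both pieces are controlled by objects already understood: the first by Lemmas~\ref{lem:cont-P} and~\ref{lem:cont-[P]}, the second by the elementary fact that $w_0 \mapsto \frac12\int_{\R\times\T}\langle i\partial_x w_0, w_0\rangle_\C$ is a continuous quadratic form on $H^1(\R\times\T)$, since $|\int_{\R\times\T}\langle i\partial_x w_0, w_0\rangle_\C| \le \|\partial_x w_0\|_{L^2}\|w_0\|_{L^2}$. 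The bridge between the two settings is Lemma~\ref{lem:conv-1-2}$(i)$, which guarantees that convergence in $X(\R\times\T)$ descends to the decomposition, i.e. $\hat{\psi}_0^n \to \hat{\psi}_0$ in $X(\R)$ and $w_0^n \to w_0$ in $H^1(\R\times\T)$.

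For statement $(i)$, continuity of $P$ on $Y(\R\times\T)$ then follows directly: if $\psi^n \to \psi$ in $Y(\R\times\T)$, then $\hat{\psi}_0 \in NV\!X(\R)$, which is open in $X(\R)$ by Lemma~\ref{lem:cont-modulus}, so $\hat{\psi}_0^n \in NV\!X(\R)$ for $n$ large, whence $P(\hat{\psi}_0^n)\to P(\hat{\psi}_0)$ by Lemma~\ref{lem:cont-P}, while the quadratic term converges by the bound above. For the expansion~\eqref{eq:dev-P-H1-2}, the key step is to split the perturbation $h = \hat{h}_0 + \tilde{h}_0$ into its own $y$-average $\hat{h}_0(x) = \int_\T h(x,y)\,dy$ and zero-mean part $\tilde{h}_0 = h - \hat{h}_0$; then $\psi + h = (\hat{\psi}_0 + \hat{h}_0) + (w_0 + \tilde{h}_0)$ is precisely the decomposition of $\psi+h$ furnished by Proposition~\ref{prop:X-1-2}. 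Since $\|\hat{h}_0\|_{H^1(\R)} \le \|h\|_{H^1(\R\times\T)}$ by Plancherel, the function $\hat{h}_0$ is a small $H^1(\R)$-perturbation of $\hat{\psi}_0$, so I may apply the exact one-dimensional expansion~\eqref{eq:dev-P-H1} to $P(\hat{\psi}_0 + \hat{h}_0)$ and expand the quadratic remainder by bilinearity. Collecting terms, the cross contributions mixing an $x$-only function with a zero-$y$-mean function vanish by $L^2(\R\times\T)$-orthogonality, while the two surviving cross terms $\frac12\int_{\R\times\T}\langle i\partial_x w_0, \tilde{h}_0\rangle_\C$ and $\frac12\int_{\R\times\T}\langle i\partial_x \tilde{h}_0, w_0\rangle_\C$ coincide after one integration by parts in $x$ (using the antisymmetry $\langle ia, b\rangle_\C = -\langle ib, a\rangle_\C$), hence combine into $\int_{\R\times\T}\langle i\partial_x w_0, \tilde{h}_0\rangle_\C$; reassembling yields exactly $\int_{\R\times\T}\langle i\partial_x\psi, h\rangle_\C + \frac12\int_{\R\times\T}\langle i\partial_x h, h\rangle_\C$. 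Continuous differentiability is then immediate, since~\eqref{eq:dev-P-H1-2} presents $h\mapsto P(\psi+h)$ as a constant plus the bounded linear form $h\mapsto\int_{\R\times\T}\langle i\partial_x\psi, h\rangle_\C$ (bounded because $\partial_x\psi\in L^2$) plus an $O(\|h\|_{H^1}^2)$ remainder.

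For statement $(ii)$, I would argue identically modulo $\pi$: by Lemma~\ref{lem:def-P-2}$(iii)$ one has $[P](\psi) = [P](\hat{\psi}_0) + \frac12\int_{\R\times\T}\langle i\partial_x w_0, w_0\rangle_\C$ in $\R/\pi\Z$, and given $\psi^n\to\psi$ in $X(\R\times\T)$, Lemma~\ref{lem:conv-1-2}$(i)$ together with the continuity of the one-dimensional untwisted momentum (Lemma~\ref{lem:cont-[P]}) and of the quadratic form gives $[P](\psi^n)\to[P](\psi)$ in $\R/\pi\Z$; here no non-vanishing hypothesis is required, precisely because the indeterminacy of the one-dimensional phase is absorbed in the quotient. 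The bulk of the argument is genuinely routine; the one point demanding care is the bookkeeping in the expansion formula --- tracking which cross terms die by orthogonality and verifying that the surviving pair assembles correctly under integration by parts --- which I expect to be the single place where a sign or a factor of $\tfrac12$ could easily go astray.
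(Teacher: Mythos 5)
Your proposal is correct and follows essentially the same route as the paper: continuity via Lemma~\ref{lem:conv-1-2}$(i)$ combined with Lemmas~\ref{lem:cont-P} and~\ref{lem:cont-[P]}, and the expansion~\eqref{eq:dev-P-H1-2} obtained by splitting $h$ into its $y$-average and zero-mean part, applying the one-dimensional formula~\eqref{eq:dev-P-H1} to $\hat{\psi}_0+\hat{h}_0$, and killing the mixed cross terms by $L^2(\R\times\T)$-orthogonality while the surviving pair is merged by an integration by parts in $x$. The bookkeeping you flag as the delicate point is carried out identically in the paper, with the same outcome.
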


\begin{proof}
The continuity of the momentum $P$ and untwisted momentum $[P]$ is a direct consequence of Lemmas~\ref{lem:cont-P} and~\ref{lem:cont-[P]} applying statement $(i)$ of Lemma~\ref{lem:conv-1-2}.

Concerning the proof of~\eqref{eq:dev-P-H1-2}, we consider a function $\psi = \hat{\psi}_0 + w_0 \in Y(\R \times \T)$ and invoke Lemma~\ref{lem:cont-P} in order to exhibit a positive number $\delta$ such that the functions $\hat{\psi}_0 + g$ lie in $NV\!X(\R)$ when $g \in H^1(\R)$ with $\| g \|_{H^1} < \delta$. Assume here that $h \in H^1(\R \times \T)$ with $\| h \|_{H^1} < \delta$. We can decompose $h$ as $h = \hat{h}_0 + w$, with $\hat{h}_0(x) = \int_\T h(x, y) \, dy$ as before, and use the orthogonality of this decomposition in order to check that $\| \hat{h}_0 \|_{H^1} < \delta$. As a consequence, the function $\psi + h = \hat{\psi}_0 + \hat{h}_0 + w_0 + w$ lies in $Y(\R \times \T)$, which amounts to say that the ball $B(\psi, \delta)$ is a subset of $Y(\R \times \T)$. Moreover, we can combine~\eqref{eq:dev-P-H1} and~\eqref{def:P-2} in order to develop the quantity $P(\psi + h)$ as
\begin{align*}
P \big( \psi + h \big) = & P \big( \hat{\psi}_0 + \hat{h}_0 \big) + \frac{1}{2} \int_{\R \times \T} \big\langle i \partial_x (w + w_0), w +w_0 \rangle_\C \\
= & P \big( \hat{\psi}_0 \big) + \frac{1}{2} \int_{\R \times \T} \big\langle i \partial_x w_0, w_0 \big\rangle_\C + \int_{\R \times \T} \Big( \big\langle i \partial_x \hat{\psi}_0, \hat{h}_0 \big\rangle_\C + \big\langle i \partial_x w_0, w \big\rangle_\C \Big) \\
& + \frac{1}{2} \int_{\R \times \T} \Big( \big\langle i \partial_x \hat{h}_0, \hat{h}_0 \big\rangle_\C + \big\langle i \partial_x w, w \big\rangle_\C \Big).
\end{align*}
Formula~\eqref{eq:dev-P-H1-2} then follows from the orthogonality conditions between the functions $\hat{\psi}_0$ and $\hat{h}_0$ on the one hand, and $w$ and $w_0$ on the other hand. The value of the differential $dP(\psi)$ and its continuity are then a direct consequence of this formula. This ends the proof of Lemma~\ref{lem:cont-P2}.
\end{proof}

We next relate the momentum of a function $\psi \in X(\R \times \T)$ with the untwisted momenta of its slices $\psi(\cdot, y)$ for $y$ ranging in $\T$.

\begin{lem}
\label{lem:P-1-2}
Let $\psi = \hat{\psi}_0 + w_0 \in X(\R \times \T)$. Consider a positive number $R_0$ such that $|\hat{\psi}_0(x)| \geq 1/2$ for $|x| \geq R_0$ and a phase function $\theta_0 \in \boC^0(I_{R_0}^\pm)$ such that $\hat{\psi}_0 = |\hat{\psi}_0| \, e^{i \theta_0}$ on $I_{R_0}^\pm$. For almost every $y \in \T$, the functions $w_0(\cdot, y)$ and $\psi(\cdot, y)$ are well-defined in $H^1(\R)$, respectively in $X(\R)$. In particular, the quantities
\begin{equation}
\label{def:p-slice}
p_{\theta_0} \big( \psi(\cdot, y) \big) := P_{\theta_0}(\hat{\psi}_0) + \int_\R \big\langle i \hat{\psi}_0', w_0(\cdot , y) \big\rangle_\C + \frac{1}{2} \int_\R \big\langle i \partial_x w_0(\cdot, y), w_0(\cdot , y) \big\rangle_\C,
\end{equation}
are well-defined for almost any $y \in \T$, and they satisfy
\begin{equation}
\label{eq:P-Fubini}
P_{\theta_0} \big( \psi \big) = \int_\T p_{\theta_0} \big( \psi(\cdot, y) \big) \, dy,
\end{equation}
as well as
\begin{equation}
\label{eq:p-slice}
p_{\theta_0} \big( \psi(\cdot, y) \big) = [P] \big( \psi(\cdot, y) \big) \quad \text{ modulo } \pi.
\end{equation}
\end{lem}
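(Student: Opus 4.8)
The plan is to reduce everything to the one-dimensional statements already established in Appendix~\ref{sec:dimension-one}, by slicing in the variable $y$ and invoking the expansion formula~\eqref{eq:dev-[P]-H1}. First I would fix the decomposition $\psi = \hat{\psi}_0 + w_0$ of Proposition~\ref{prop:X-1-2}, so that $\hat{\psi}_0 \in X(\R)$ and $w_0 \in H^1(\R \times \T)$. Since $\int_\T \big( \| w_0(\cdot, y) \|_{L^2(\R)}^2 + \| \partial_x w_0(\cdot, y) \|_{L^2(\R)}^2 \big) \, dy \leq \| w_0 \|_{H^1(\R \times \T)}^2 < \infty$, Fubini's theorem guarantees that $w_0(\cdot, y) \in H^1(\R)$ for almost every $y \in \T$. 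For each such $y$, the slice $\psi(\cdot, y) = \hat{\psi}_0 + w_0(\cdot, y)$ is then the sum of a function in $X(\R)$ and a function in $H^1(\R)$, hence lies in $X(\R)$ by the stability of $X(\R)$ under addition of $H^1(\R)$ functions recalled in Appendix~\ref{sec:dimension-one} (see~\cite[Lemma~1]{Gerard1}). The well-definedness of the three terms in~\eqref{def:p-slice} is then immediate: the quantity $P_{\theta_0}(\hat{\psi}_0)$ is well-defined by Lemma~\ref{lem:def-P-1}, while the two integrals are finite by the Cauchy-Schwarz inequality, using $\hat{\psi}_0' \in L^2(\R)$ together with $w_0(\cdot, y), \partial_x w_0(\cdot, y) \in L^2(\R)$.

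Next I would establish the Fubini identity~\eqref{eq:P-Fubini} by integrating~\eqref{def:p-slice} over $y \in \T$. Since $\T$ has unit measure, the constant term integrates to $P_{\theta_0}(\hat{\psi}_0)$, and by Fubini (justified since $|\langle i \partial_x w_0, w_0 \rangle_\C| \leq |\partial_x w_0| \, |w_0|$ is integrable on $\R \times \T$) the quadratic term integrates to $\frac{1}{2} \int_{\R \times \T} \langle i \partial_x w_0, w_0 \rangle_\C$. The crucial observation is that the cross term vanishes: by Fubini and the definition of $\hat{\psi}_0$,
$$
\int_\T \int_\R \langle i \hat{\psi}_0', w_0(\cdot, y) \rangle_\C \, dy = \int_\R \Big\langle i \hat{\psi}_0', \int_\T w_0(x, y) \, dy \Big\rangle_\C \, dx = 0,
$$
since $\int_\T w_0(x, y) \, dy = \hat{\psi}_0(x) - \hat{\psi}_0(x) = 0$ by the very definition of $\hat{\psi}_0$. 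Comparing with the definition~\eqref{def:P-2} of $P_{\theta_0}(\psi)$ then yields~\eqref{eq:P-Fubini}.

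Finally, for the congruence~\eqref{eq:p-slice}, I would apply the expansion~\eqref{eq:dev-[P]-H1} of Lemma~\ref{lem:cont-[P]} with base point $\hat{\psi}_0 \in X(\R)$ and perturbation $h = w_0(\cdot, y) \in H^1(\R)$; note that this formula holds for \emph{arbitrary} $h \in H^1(\R)$, which is essential here since $w_0(\cdot, y)$ need not be small. This gives
$$
[P]\big( \psi(\cdot, y) \big) = [P](\hat{\psi}_0) + \int_\R \langle i \hat{\psi}_0', w_0(\cdot, y) \rangle_\C + \frac{1}{2} \int_\R \langle i \partial_x w_0(\cdot, y), w_0(\cdot, y) \rangle_\C \quad \text{modulo } \pi.
$$
Since $[P](\hat{\psi}_0) = P_{\theta_0}(\hat{\psi}_0)$ modulo $\pi$ by statement $(iii)$ of Lemma~\ref{lem:def-P-1}, the right-hand side is exactly $p_{\theta_0}(\psi(\cdot, y))$ modulo $\pi$, which is precisely~\eqref{eq:p-slice}.

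The argument is essentially a bookkeeping assembly of the one-dimensional results, so there is no deep obstacle. The only point requiring genuine care is ensuring that the various almost-everywhere statements (integrability of the slices, membership of $\psi(\cdot, y)$ in $X(\R)$, and validity of~\eqref{eq:dev-[P]-H1} applied to each slice) hold simultaneously for almost every $y \in \T$, and that each application of Fubini's theorem is legitimately justified by an $L^2 \times L^2$ Cauchy-Schwarz bound.
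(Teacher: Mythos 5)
Your proposal is correct and follows essentially the same route as the paper: slice-wise membership via Fubini and the stability of $X(\R)$ under addition of $H^1(\R)$ perturbations, the vanishing of the cross term by orthogonality of $\hat{\psi}_0'$ and $w_0$, and the congruence~\eqref{eq:p-slice} obtained from the expansion~\eqref{eq:dev-[P]-H1} together with $P_{\theta_0}(\hat{\psi}_0) = [P](\hat{\psi}_0)$ modulo $\pi$. No gaps; the only cosmetic remark is a missing $dx$ in your displayed cross-term computation.
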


\begin{proof}
Recall first that $w_0$ is in $H^1(\R \times \T)$, so that the slices
$w_0(\cdot, y)$ belong to $H^1(\R)$ for almost any $y \in \T$. Since the
energy set $X(\R)$ remains stable by addition of functions in $H^1(\R)$,
the slices $\psi(\cdot, y)$ are in $X(\R)$ for almost any $y \in \T$.
The quantity $p_{\theta_0}(\psi(\cdot, y))$ is also well-defined and
depends only on the function $\psi(\cdot, y)$ due to the uniqueness of
the decomposition $\psi(\cdot, y) = \hat{\psi}_0 + w_0(\cdot, y)$.

Going back to the definition of the quantity $P_{\theta_0}(\psi)$ in Lemma~\ref{lem:def-P-2} and using the fact that $\hat{\psi}_0'$ and $w_0$ are orthogonal in $L^2(\R \times \T)$, we next invoke the Fubini theorem in order to write
\begin{align*}
P_{\theta_0}(\psi) = & P_{\theta_0}(\hat{\psi_0}) + \frac{1}{2} \int_{\R \times \T} \langle i \partial_x w_0, w_0 \rangle_\C \\
= & \int_\T \bigg( P_{\theta_0}(\hat{\psi}_0) + \int_\R \big\langle i \hat{\psi}_0', w_0(\cdot , y) \big\rangle_\C + \frac{1}{2} \int_\R \big\langle i \partial_x w_0(\cdot, y), w_0(\cdot , y) \big\rangle_\C \bigg) dy.
\end{align*}
This is exactly~\eqref{eq:P-Fubini}, so that it only remains to establish~\eqref{eq:p-slice}. This latter inequality is a direct consequence of~\eqref{eq:dev-[P]-H1} since $P_{\theta_0}(\hat{\psi}_0) = [P](\hat{\psi}_0)$ modulo $\pi$ by definition of the untwisted momentum. This completes the proof of Lemma~\ref{lem:P-1-2}.
\end{proof}

Going back to the density result in Corollary~\ref{cor:dens-X}, we finally
derive the following useful formula for the momentum of smooth functions with compactly supported gradients.

\begin{lem}
\label{lem:val-P-2}
Let $g$ be a smooth function in $X(\R)$ such that there exist numbers $R^\pm > 0$ and $\theta^\pm \in \R$ for which $g(x) = e^{i \theta^\pm}$ for any $\pm x \geq R^\pm$. Consider a function $h \in \boC_c^\infty(\R \times \T)$ with support in $[- R^-, R^+] \times \T$ and set $\psi = g + h$. Then, the function $\hat{\psi}_0$ writes as $\hat{\psi}_0(x) = e^{i \theta_0(x)}$ for $\pm x \geq R^\pm$, with $\theta_0(x) = \theta^+$ if $x \geq R^+$ and $\theta_0(x) = \theta^-$ for $x \leq - R^-$. Moreover, the quantities $p_{\theta_0}(\psi(\cdot, y))$ in Lemma~\ref{lem:P-1-2} are given by
\begin{equation}
\label{eq:val-p-2}
p_{\theta_0} \big( \psi(\cdot, y) \big) = \frac{1}{2} \int_\R \langle i \partial_x \psi(\cdot, y), \psi(\cdot, y) \rangle_\C + \frac{1}{2} \big( \theta^+ - \theta^-\big).
\end{equation}
for almost any $y \in \T$. As a consequence, we have
\begin{equation}
\label{eq:val-P-2}
P_{\theta_0}(\psi) = \int_\T p_{\theta_0} \big( \psi(\cdot, y) \big)\, dy,
\end{equation}
with $P_{\theta_0}(\psi) = [P](\psi)$ modulo $\pi$, and $p_{\theta_0} \big( \psi(\cdot, y) \big) = [P] \big( \psi(\cdot, y) \big)$ modulo $\pi$, for almost any $y \in \T$. When the function $\hat{\psi}_0$ does not vanish on $\R$, the momentum $P(\psi)$ is also given by~\eqref{eq:val-P-2}.
\end{lem}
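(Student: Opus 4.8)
The plan is to reduce everything to the definitions in Lemmas~\ref{lem:def-P-1}, \ref{lem:def-P-2} and~\ref{lem:P-1-2}, exploiting throughout the compact support of the derivatives of $g$ and $h$. First I would record the behaviour of $\hat{\psi}_0$ at infinity. Since $h$ is supported in $[-R^-, R^+] \times \T$, for $\pm x \geq R^\pm$ we have $\psi(x, y) = g(x) = e^{i \theta^\pm}$ for every $y$, whence $\hat{\psi}_0(x) = \int_\T \psi(x, y)\, dy = e^{i \theta^\pm}$. Thus $|\hat{\psi}_0| \equiv 1$ outside $[-R^-, R^+]$, and with $R_0 = \max\{R^-, R^+\}$ we may choose the phase function $\theta_0$ on $I_{R_0}^\pm$ to be the constants $\theta^\pm$, as claimed. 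In particular $\hat{\psi}_0'$ vanishes for $\pm x \geq R^\pm$, so it is compactly supported; likewise $w_0 = \psi - \hat{\psi}_0 = h - \hat{h}_0$ is supported in $[-R^-, R^+] \times \T$, so each slice $w_0(\cdot, y)$ has compact support.

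Next I would evaluate $P_{\theta_0}(\hat{\psi}_0)$ from~\eqref{def:P-1}. Choosing $r > R_0$, the derivative $\chi_r'$ is supported in $\{r \leq |x| \leq 2r\}$, where $\theta_0$ equals the constant $\theta^\pm$, so $\int_\R (\chi_r \theta_0)'$ telescopes to $\theta^+ - \theta^-$ (using $\chi_r(\pm\infty) = 1$ and $\chi_r(0) = 0$). This yields
\[
P_{\theta_0}(\hat{\psi}_0) = \frac{1}{2} \int_\R \langle i \hat{\psi}_0', \hat{\psi}_0 \rangle_\C + \frac{1}{2} \big( \theta^+ - \theta^- \big),
\]
the integral being finite thanks to the compact support of $\hat{\psi}_0'$. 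Inserting this into the definition~\eqref{def:p-slice} of $p_{\theta_0}(\psi(\cdot, y))$ produces the stated quantity plus the two terms involving $w_0(\cdot, y)$.

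It then remains to match this against $\tfrac{1}{2} \int_\R \langle i \partial_x \psi(\cdot, y), \psi(\cdot, y) \rangle_\C$, an integral over the compact support of $\partial_x \psi(\cdot, y) = g' + \partial_x h(\cdot, y)$, hence well-defined. Expanding with $\psi(\cdot, y) = \hat{\psi}_0 + w_0(\cdot, y)$ produces the diagonal terms $\tfrac{1}{2} \int \langle i \hat{\psi}_0', \hat{\psi}_0 \rangle_\C$ and $\tfrac{1}{2} \int \langle i \partial_x w_0, w_0 \rangle_\C$ together with two cross terms. Using the antisymmetry $\langle i u, v \rangle_\C = - \langle i v, u \rangle_\C$ and integrating by parts — the boundary contributions vanishing by the compact support of $w_0(\cdot, y)$ — the two cross terms collapse into the single term $\int_\R \langle i \hat{\psi}_0', w_0(\cdot, y) \rangle_\C$. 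Comparing the two computations gives precisely~\eqref{eq:val-p-2}.

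Finally, integrating~\eqref{eq:val-p-2} in $y$ and invoking~\eqref{eq:P-Fubini} from Lemma~\ref{lem:P-1-2} yields~\eqref{eq:val-P-2}; the congruences modulo $\pi$ follow from~\eqref{eq:p-slice} for the slices and from the definition of $[P]$ via $P_{\theta_0}$ in Lemma~\ref{lem:def-P-2}$(iii)$ for the full momentum, while the identity $P_{\theta_0}(\psi) = P(\psi)$ when $\hat{\psi}_0$ does not vanish is exactly Lemma~\ref{lem:def-P-2}$(ii)$. The computation is essentially bookkeeping; the only point requiring genuine care — the main obstacle such as it is — is the sign tracking in the integration by parts that merges the two cross terms, together with the verification that the compact support of $w_0(\cdot, y)$ really does annihilate the boundary contributions.
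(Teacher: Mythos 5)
Your proposal is correct and follows essentially the same route as the paper: identify $\hat{\psi}_0$ with $e^{i\theta^\pm}$ outside $[-R^-, R^+]$, evaluate $\int_\R (\chi_r\,\theta_0)' = \theta^+ - \theta^-$, and use the compact support of $w_0(\cdot, y)$ together with an integration by parts (and the antisymmetry of $\langle i\,\cdot,\cdot\rangle_\C$) to merge the cross terms, before concluding via Lemmas~\ref{lem:def-P-2} and~\ref{lem:P-1-2}. The sign bookkeeping you flag as the delicate point is handled identically in the paper, which states the same integration-by-parts identity $\int_\R \langle i \hat{\psi}_0', w_0(\cdot,y)\rangle_\C = \tfrac12\int_\R \big(\langle i \hat{\psi}_0', w_0(\cdot,y)\rangle_\C + \langle i\partial_x w_0(\cdot,y), \hat{\psi}_0\rangle_\C\big)$.
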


\begin{proof}
Observe first that $\psi(x, y) = g(x)$ when $\pm x \geq R^\pm$, so that
$$
\hat{\psi}_0(x) = \int_\T \psi(x, y) \, dy = g(x) = e^{i \theta^\pm} = e^{i \theta_0(x)}.
$$
For almost every $y \in \T$, we therefore deduce from~\eqref{def:p-slice} that
\begin{equation}
\label{francais}
\begin{split}
p_{\theta_0} \big( \psi(\cdot, y) \big) = \frac{1}{2} \int_\R \Big( \big\langle i \hat{\psi}_0', \hat{\psi}_0 \big\rangle_\C + \big( \chi_r \, \theta_0 \big)' + 2 \big\langle i \hat{\psi}_0', w_0(\cdot , y) \big\rangle_\C + \big\langle i \partial_x w_0(\cdot , y), w_0(\cdot , y) \big\rangle_\C \Big),
\end{split}
\end{equation}
with $w_0 = \psi - \hat{\psi}_0$ and $r > \max \{ R^-, R^+ \}$. We next have
$$
\int_\R (\chi_r \, \theta_0)' = \theta^+ - \theta^-.
$$
Since $w_0(x, y) = 0$ for $\pm x \geq R^\pm$, we also deduce from an integration
by parts that
$$
\int_\R \big\langle i \hat{\psi}_0', w_0(\cdot , y) \big\rangle_\C = \frac{1}{2}
\int_\R \Big( \big\langle i \hat{\psi}_0', w_0(\cdot , y) \big\rangle_\C
+ \big\langle i \partial_x w_0(\cdot , y), \hat{\psi}_0 \big\rangle_\C
\Big).
$$
Formula~\eqref{eq:val-p-2} then follows from~\eqref{francais}. Formula~\eqref{eq:val-P-2}, as well as the other statements in Lemma~\ref{lem:val-P-2}, then result from the definitions in Lemma~\ref{lem:def-P-2} and the properties in Lemma~\ref{lem:P-1-2}. This concludes the proof of Lemma~\ref{lem:val-P-2}.
\end{proof}

\begin{merci}
The authors acknowledge support from the project ``Dispersive and random waves'' (ANR-18-CE40-0020-01) of the Agence Nationale de la Recherche. A.~de Laire was also supported by the the Labex CEMPI (ANR-11-LABX-0007-01), and
P.~Gravejat, by CY Initiative of Excellence (Grant ``Investissements d'Avenir'' ANR-16-IDEX-0008).
\end{merci}

\bibliographystyle{plain}
\bibliography{Bibliogr}

\begin{thebibliography}{10}

\bibitem{BetGrSa2}
F.~Bethuel, P.~Gravejat, and J.-C. Saut.
\newblock Existence and properties of travelling waves for the
  {Gross}-{Pitaevskii} equation.
\newblock In A.~Farina and J.-C. Saut, editors, {\em Stationary and time
  dependent {Gross}-{Pitaevskii} equations}, volume 473 of {\em Contemp.
  Math.}, pages 55--104. Amer. Math. Soc., Providence, RI, 2008.

\bibitem{BetGrSa1}
F.~Bethuel, P.~Gravejat, and J.-C. Saut.
\newblock Travelling waves for the {Gross}-{Pitaevskii} equation {II}.
\newblock {\em Comm. Math. Phys.}, 285(2):567--651, 2009.

\bibitem{BeGrSaS1}
F.~Bethuel, P.~Gravejat, J.-C. Saut, and D.~Smets.
\newblock Orbital stability of the black soliton for the {Gross}-{Pitaevskii}
  equation.
\newblock {\em Indiana Univ. Math. J}, 57(6):2611--2642, 2008.

\bibitem{BetGrSm1}
F.~Bethuel, P.~Gravejat, and D.~Smets.
\newblock Stability in the energy space for chains of solitons of the
  one-dimensional {Gross}-{Pitaevskii} equation.
\newblock {\em Ann. Inst. Fourier}, 64(1), 2014.

\bibitem{BetGrSm2}
F.~Bethuel, P.~Gravejat, and D.~Smets.
\newblock Asymptotic stability in the energy space for dark solitons of the
  {Gross}-{Pitaevskii} equation.
\newblock {\em Ann. Sci. \'Ec. Norm. Sup.}, 48(6):1327--1381, 2015.

\bibitem{BethSau1}
F.~Bethuel and J.-C. Saut.
\newblock Travelling waves for the {Gross}-{Pitaevskii} equation {I}.
\newblock {\em Ann. Inst. Henri Poincar\'e, Physique Th\'eorique},
  70(2):147--238, 1999.

\bibitem{Brezis0}
H.~Brezis.
\newblock {\em Functional analysis, {Sobolev} spaces and partial differential
  equations}.
\newblock Universitext. Springer, New York, 2011.

\bibitem{deLaMen1}
A.~de~Laire and P.~Mennuni.
\newblock Traveling waves for some nonlocal {1D} {Gross}-{Pitaevskii} equations
  with nonzero conditions at infinity.
\newblock {\em Discrete Contin. Dyn. Syst.}, 40(1):635--682, 2020.

\bibitem{Gerard1}
P.~G\'erard.
\newblock The {Cauchy} problem for the {Gross}-{Pitaevskii} equation.
\newblock {\em Ann. Inst. Henri Poincar\'e, Analyse Non Lin\'eaire},
  23(5):765--779, 2006.

\bibitem{Gerard2}
P.~G\'erard.
\newblock The {Gross}-{Pitaevskii} equation in the energy space.
\newblock In A.~Farina and J.-C. Saut, editors, {\em Stationary and time
  dependent {Gross}-{Pitaevskii} equations}, volume 473 of {\em Contemp.
  Math.}, pages 129--148. Amer. Math. Soc., Providence, RI, 2008.

\bibitem{GravSme1}
P.~Gravejat and D.~Smets.
\newblock Asymptotic stability of the black soliton for the
  {Gross}-{Pitaevskii} equation.
\newblock {\em Proc. London Math. Soc.}, 111(2):305--353, 2015.

\bibitem{Gross1}
E.P. Gross.
\newblock Hydrodynamics of a superfluid condensate.
\newblock {\em J. Math. Phys.}, 4(2):195--207, 1963.

\bibitem{KivsLut1}
Y.S. Kivshar and B.~Luther-Davies.
\newblock Dark optical solitons: physics and applications.
\newblock {\em Phys. Rep.}, 298(2-3):81--197, 1998.

\bibitem{Pitaevs1}
L.P. Pitaevskii.
\newblock Vortex lines in an imperfect {Bose} gas.
\newblock {\em Sov. Phys. JETP}, 13(2):451--454, 1961.

\bibitem{RousTzv1}
F.~Rousset and N.~Tzvetkov.
\newblock Transverse nonlinear instability of solitary waves for some
  {Hamiltonian} {PDE}'s.
\newblock {\em J. Math. Pures Appl.}, 90(6):550--590, 2008.

\bibitem{RousTzv3}
F.~Rousset and N.~Tzvetkov.
\newblock A simple criterion of transverse linear instability for solitary
  waves.
\newblock {\em Math. Res. Lett.}, 17(1):157--169, 2010.

\bibitem{TerTzVi1}
S.~Terracini, N.~Tzvetkov, and N.~Visciglia.
\newblock The nonlinear {Schr\"odinger} equation ground states on product
  spaces.
\newblock {\em Anal. PDE}, 7(1):73--96, 2014.

\end{thebibliography}

\end{document}